\newtheorem{ass}{Assumption}[]
\newtheorem{thm}{Theorem} 
\newtheorem{deff}[thm]{Definition}
\newtheorem{lem}[thm]{Lemma}
\newtheorem{cor}[thm]{Corollary}
\newtheorem{rem}[thm]{Remark}
\numberwithin{thm}{section} 
\numberwithin{equation}{section} 
\newcommand{\E}{\mathbb{E}}
\newcommand{\U}{\mathcal{U}}
\newcommand{\Cc}{\mathcal{C}_c}
\newcommand{\Prb}{\mathbb{P}}
\newcommand{\mcM}{\mathcal{M}}
\newcommand{\ncN}{\mathcal{N}}
\newcommand{\Pp}{\mathcal{P}}
\newcommand{\N}{\mathbb{N}}
\newcommand{\R}{\mathbb{R}}
\newcommand{\Id}{\mathbbm{1}}
\newcommand{\inv}{\mathcal{M}_+}
\newcommand{\D}{\mathbb{D}}
\newcommand{\Dm}{\mathcal{D}}
\newcommand{\Dme}{\mathcal{D}^{ext}}
\newcommand{\K}{\mathcal{K}}
\newcommand{\C}{\mathcal{C}}
\newcommand{\PP}{\mathscr{P}}
\newcommand{\UU}{\bar{U}}
\title[Stochastic Extinction]{Stochastic Extinction \\
\large An Average Lyapunov Function Approach}
\author[F\"oldes and Stacy]{Juraj F\"oldes and Declan Stacy}
\address{Juraj F\"oldes
\footnote{Juraj F\"oldes was partly supported under the grant NSF DMS-1816408}
\newline 
\indent Department of Mathematics, University of Virginia \indent 
\newline \indent   Kerchof Hall, Charlottesville, VA 22904-4137,\indent }
\email{foldes@virginia.edu}
\address{Declan Stacy
\newline 
\indent Department of Mathematics, University of Virginia (Undergraduate student) \indent 
\newline \indent   Kerchof Hall, Charlottesville, VA 22904-4137,\indent }
\email{fmw3cs@virginia.edu}
\begin{document}

\begin{abstract}
We study the stability of $\mathcal{M}_0$, an invariant subset
of a Markov process $(X_t)_{t\geq 0}$ on a metric space $\mathcal{M}$. By building the theory of average Lyapunov functions, we formulate general criteria based on the signs of Lyapunov exponents that guarantee extinction ($X_t \to \mathcal{M}_0$ as $t \to \infty$). Additionally, we provide applications to a stochastic SIS epidemic model on a network with regime-switching, a stochastic differential equation version of the Lorenz system, a general class of discrete-time ecological models, and stochastic Kolmogorov systems. In many examples we improve  existing results by removing unnecessary assumptions or providing sharper criteria for the extinction.

\smallskip
\noindent \textbf{Keywords:} Stability, general Markov processes, invariant subsets, extinction, Lyapunov function, Lyapunov exponent, Martingale, average Lyapunov function, invariant measures\\

\noindent
\textbf{2020 MSC: 37H20, 37H30, 60H10, 60J05, 60J25, 60J70 60F15, 92D25}

\end{abstract}

\maketitle

\section{Introduction}
The stability or instability of an invariant subset of a state space is a central problem in the analysis of stochastic and deterministic dynamical systems. 
Specifically, suppose a Markov process $(X_t)_{t\geq 0}$ takes values in a metric space $(\mcM, d)$ and there is a closed subset $\mcM_0 \subset \mcM$, called the extinction set, which is invariant: $X_t \in \mcM_0$ for all $t \geq 0$ if $X_0 \in \mcM_0$.
We investigate criteria ensuring that $\mcM_0$ is stable, meaning (roughly) that $d(X_0, \mcM_0) \ll 1$ implies $d(X_t,\mcM_0) \to 0$ as $t \to \infty$ (extinction).
In addition, we assume that
$X_t$ can only approach $\mcM_0$ ``asymptotically," meaning $X_t$ cannot enter $\mcM_0$ in finite time. For the theory of extinction in finite time we refer the reader to \cite{FiniteTime} and for a detailed discussion on the finite versus asymptotic extinction see \cite{persistence, SchSurvey}.
The stability and closely related questions, such as the rate of 
convergence of $d(X_t,\mcM_0) \to 0$ and the dependence on certain parameters, have been extensively investigated in many contexts.

In this manuscript we consider stochastic processes $(X_t)_{t\geq 0}$, and therefore the stability and the rate of convergence can in general depend on the realization of the process. Also, we allow $(X_t)_{t\geq 0}$ to have jump discontinuities, and in particular our results also apply to discrete-time Markov chains. 
Some processes which can be analyzed using our methods include:

 \textit{Ecological models}, including stochastic replicator dynamics in evolutionary game theory and more traditional predator-prey type systems (see \cite{ecologicalDiscrete, ecologicalContinuous, rps, Lot-Vol-Degenerate, Pred-Prey-Regime, ecologicalGeneral2, oldec1, oldec2, oldec3, oldec4, replicator, wierdSwitching1, ecologicalGeneral, wierdSwitching2}), where points in $\mcM$ represent the population densities of some interacting species and $\mcM_0$ is the subset where certain species are extinct.
 
 \textit{Epidemiological models} (see \cite{SIS2, SIRS,SIRS2, SIRS3, SIRS4, SIS, SIRV}), where points in $\mcM$ represent the prevalence of some disease and $\mcM_0$ (usually a single point) is the subset where the disease is eradicated.

 \textit{Chemostat models} (see \cite{Chemostat-general, Chemostat-1}), where points in $\mcM$ represent the concentration levels of bacteria and substrate and $\mcM_0$ is the subset where the concentration of bacteria is $0$.

 \textit{Piecewise Deterministic Markov Processes (PDMP)}
 (see \cite{pdmp1, pdmp2, pdmp3}), where $\mcM$ is the product of some phase space and a finite set of ``switches," which can be thought of as different environments or regimes. For a fixed regime, the process induces a deterministic flow on the phase space with a fixed point $0$. Then $\mcM_0$ consists of points which have $0$ in the phase space component.
 
\textit{Models for turbulence} of fluid flow (\cite{Lorenz, NS2, NS3, stochasticPDE}), where $\mcM_0$ consists of non-turbulent states. 

In this paper we introduce a general framework that provides stability criteria, rates of convergence, and dependence on the parameters with wide applicability, in particular, to the settings mentioned above. We leverage the powerful average Lyapunov function technique, substantially extending the ideas from \cite{Hofb1, AvgLyap, 71, 32}, where average Lyapunov functions were used to analyze deterministic ecological models.

The present work is partly motivated by \cite{persistence}, where the question of instability of $\mcM_0$, or so called persistence, was investigated. We remark that although our general setting and technical assumptions are quite similar and our results are in some sense dual to those in \cite{persistence}, the methods involved in the proofs and applications of our results are quite different, as discussed in detail below.

\subsection{Average Lyapunov Function.}
Lyapunov functions are a powerful tool for analyzing the stability of dynamical systems. Lyapunov functions are functions $f$ which have certain properties, such as $f \geq 0$ is nonnegative or $f(x) \to \infty$ as $x \to \mcM_0$. Also, a bound on the evolution of $f(X_t)$ is assumed,  for example $f(X_{t+1}) - f(X_t) \leq 0$, so that $f(X_t)$ is decreasing along positive integer time steps. For continuous-time processes, it is more common to postulate bounds on $\mathcal{L}f$, where $\mathcal{L}f$ encodes the average rate of change of $f(X_t)$. For example, if $X_t$ solves a deterministic differential equation $\frac{d}{dt}X_t = F(X_t)$, then $\mathcal{L}f = \nabla f \cdot F$ since $\frac{d}{dt}f(X_t) = \nabla f(X_t) \cdot \frac{d}{dt} X_t = (\nabla f \cdot F)(X_t)$. For random $X_t$, $\mathcal{L}f$ is something like $\frac{d}{dt}\E[f(X_t)]$. For precise definitions and properties $\mathcal{L}f$ see the main body of the manuscript below. After finding a Lyapunov function $f$ with suitable properties and bounds on $\mathcal{L} f$, one can derive various conclusions about the long-term behavior of $X_t$,
such as convergence to some point or staying inside some set forever.
However, even for deterministic $X_t$, Lyapunov functions are often extremely difficult to construct, usually because  $\mathcal{L} f$ needs to satisfy bounds on the entirety of $\mcM$ or $\mcM \backslash \mcM_0$.

Unlike a traditional Lyapunov function, an average Lyapunov function $V$ does not require bounds on $\mathcal{L} V$ everywhere. Instead, one considers the average value of $\mathcal{L} V$ with respect to certain measures $\mu$, meaning one only needs to estimate 
$$
\mu \mathcal{L} V := \int \mathcal{L} V(x) d\mu(x) \,,
$$ 
which we refer to as an (average) Lyapunov exponent. For example, in our paper we consider $\mu \mathcal{L} V$ for all  invariant measures $\mu$ on $\mcM_0$. 
Due to their weaker assumptions, 
average Lyanpunov functions are a lot easier to construct than traditional Lyapunov functions, but using them to derive properties of $X_t$ is
significantly harder. Further complications arise in the stochastic setting because $\mathcal{L} V$ only gives information about the expectation of the derivative of $V(X_t)$, and so additional requirements on $V$ are needed to show that, with a high probability, the expectation accurately captures the behavior of $X_t$. For example, often sufficient additional information is an estimate on the growth of $Var(V(X_t))$, quantified by $\Gamma V := \mathcal{L} V^2 - 2V \mathcal{L} V$. To summarize, traditional Lyapunov functions are difficult to construct and are problem-specific, 
 whereas average Lyapunov functions are often obvious to construct and instead the challenge lies in computing the Lyapunov exponents (which is problem-specific) and in using their properties to determine the long term behavior of $X_t$, which should not be problem-specific. We remark that both approaches are usually more involved in the stochastic setting. 

Most of the previous attempts to use an average Lyapunov function $V$ to analyze the long-term behavior of $X_t$ involve constructing a more traditional Lyapunov function which is some sort of combination of $e^{cV}$ for some $c \in \R$ and an additional Lyapunov function $W$ which prevents $X_t \to \infty$ in finite time. The combination of $e^{cV}$ and $W$ is delicate, and moreover it is quite nontrivial to deduce bounds on $\mathcal{L}e^{cV}$ or $\E[e^{cV(X_{t+T})} - e^{cV(X_t)}]$ for some $T > 0$ large enough assuming only some bounds on $\mu \mathcal{L} V$ for certain measures $\mu$. As a consequence, these approaches are unsuitable for generalization and require additional unnecessary assumptions. 

There are two notable arguments we found in the literature that avoid this sort of construction. The first one in \cite{Khas} only applies to linear stochastic differential equations, and therefore it cannot be easily generalized. However, its use of a change of coordinates is crucial, and we often leverage this technique
 when applying our theory to examples.
 The second is the proof by contradiction method presented in \cite{persistence} for solving the problem of persistence (coexistence), which cannot be applied to our problem of extinction. However, \cite{persistence} and the present work share the use of several tools such as certain martingales and empirical occupation measures.
 Both of these arguments are discussed in greater detail below in the introduction.

\subsection{Our Method}

In this paper we use an entirely new approach in order to prove our general results, which, in particular, apply to all of the 
examples listed above, see below for more details. 
Although the proof of our main result \Cref{main2} is somewhat involved, the result itself is a universal tool that can be easily applied.
We show that the long-term behavior of many processes is completely determined by Lyapunov exponents depending only on the behavior on an arbitrarily small neighborhood of the extinction set $\mcM_0$. In practice, this allows one to linearize the system about $\mcM_0$ and study that instead of the original system. Since our results apply to many different types of processes, each of which has a different notion of linearization, we do not phrase our results in terms of linearization, but
we discuss this connection to linearization later in the introduction.

We formulate sufficient conditions that guarantee $X_t \to \mcM_0$ as $t \to \infty$ (extinction) in terms of 
an average Lyapunov function $V$ defined on some subset of $\mcM \backslash \mcM_0$ and a corresponding Lyapunov exponent $\alpha$ which depends only on the behavior of $X_t$ near $\mcM_0$. The central idea of the proof is to construct a clever sequence of stopping times $\tau_n$ so that the discrete-time process $V(Y_n) \coloneqq V(X_{\tau_n})$ behaves similarly to a
Brownian motion with drift  (the mean and variance of $V(Y_n)$ grow linearly in $n$). This allows us to conclude that $X_t$ stays close to $\mcM_0$ for all times (stability in probability), and consequently  ergodic theory-type arguments imply that $X_t$ must approach $\mcM_0$ as $t \to \infty$ (asymptotic stability). If $\mcM_0$ satisfies an accessibility criterion,
then a renewal-type argument yields that the extinction occurs with probability $1$, regardless of the initial condition (global asymptotic stability). Additionally, the speed of the convergence is given explicitly as a function of the Lyapunov exponent, for example, if $V(x) = -\log{d(x,\mcM_0)}$, then $d(X_t^x,\mcM_0) = O(e^{-\alpha t})$.

The main difficulties and novelties of the proof occur in the construction of the stopping times $\tau_n$, which can be split into two phases. In the first phase we run the process until  $V(X_t)$ or its approximation $\int_0^t \mathcal{L} V(X_s)ds$ increases enough, and otherwise we stop the process before they decrease too much.
Obtaining the appropriate bounds is quite nontrivial
because our average Lyapunov function $V$ is not defined on $\mcM_0$, the set on which we have the most information about the long-term behavior of $X_t$. However, we assume that $\mathcal{L} V$ extends continuously to $\mcM_0$, and so for initial conditions $X_0 \in \mcM_0$ we can use ergodic arguments to control $\int_0^t \mathcal{L} V (X_s) ds$, a proxy for $V(X_t) - V(X_0)$.
To extend this control to initial conditions $X_0$ outside of $\mcM_0$
we require a strong continuous dependence of $X_t$ on $X_0$, namely continuity of the law of $\{X_t\}_{t \geq 0}$ viewed as a random element of the Skorokhod space $\D_{[0,\infty)}(\mcM)$.  It is well-known that such property holds when $X_t$ is Feller, but, 
motivated by applications, we need to extend it to 
 ``$C_b$-Feller" processes, which requires new proofs and additional stability assumptions. In the second phase, we let the process (quickly) return to a fixed compact set. The compactness is essential since the success of the first phase relies on having uniform bounds on $\E[V(X_{\tau_1}) - V(X_0)]$ and $\E[(V(X_{\tau_1}) - V(X_0))^2]$, but for each point in $\mcM_0$ we can only obtain bounds which hold 
  in a small neighborhood. Additionally, we must use clever manipulations of certain martingales to show that the second phase does not diminish the increase of $V(X_t)$ from the first phase.

\subsection{A Brief Summary of Persistence/Coexistence Theory}
Before further discussing the details of our manuscript, we summarize
the persistence result proved in \cite{persistence} in order to introduce important notation and underlying ideas for our manuscript in a technically easier setting.
Since the assumptions of the persistence result are 
almost complementary to ours, together \cite{persistence} and our paper completely characterize the long-term behavior of many Markov Processes. However, we again stress that our paper solves a different problem than \cite{persistence} does, and thus the methods and proofs are quite different.

In some sense the opposite to extinction,  $d(X_t,\mcM_0) \to 0$ as $t \to \infty$, is persistence, which heuristically means that $X_t$ spends an arbitrarily large fraction of time in compact subsets of $\mcM \setminus \mcM_0$. To formalize this definition, we introduce 
the empirical occupation measures on $\mcM$ as 
    \begin{equation}\label{intro-occ}
    \mu_t = \frac{1}{t}\int_0^t \delta_{X_s}ds,    
    \end{equation}
which are random measures tracking the time-averaged behavior of $X_t$. Then under mild stability assumptions it is known (see \Cref{lim-is-inv} and \Cref{lem:bhcon}) that $(\mu_t)_{t \geq 0}$ is tight and all limiting measures as $t \to \infty$ lie in the set $P_{inv}(\mcM)$ of invariant measures on $\mcM$, which represent the possible stationary behaviors of $X_t$. In other words, the invariant measures completely characterize the possible long-term time-averaged behaviors of $X_t$. While extinction forces all limiting measures of $(\mu_t)_{t \geq 0}$ as $t \to \infty$ to be supported on $\mcM_0$, persistence means that they are all supported on $\mcM \setminus \mcM_0$. Thus, in the context of empirical occupation measures, the notions of persistence and extinction are in some sense dual.

The crucial assumption that guarantees persistence in  \cite{persistence} is the existence of an average Lyapunov function 
$V: \mcM \setminus \mcM_0 \to \R$ such that $\mathcal{L} V: \mcM \setminus \mcM_0 \to \R$ extends to a continuous function $H$ on $\mcM$ and 
 the ``$H$-exponent" or ``average Lyapunov exponent"
\begin{equation}\label{other-exponent}
    \Lambda \coloneqq \sup_{\mu \in P_{inv}(\mcM_0)} \mu H \coloneqq \sup_{\mu \in P_{inv}(\mcM_0)} \int H(x) d\mu(x)
\end{equation}
is strictly negative. In our paper we instead assume that a similar quantity
\begin{equation}\label{our-exponent}
\alpha \coloneqq \inf_{\mu \in P_{inv}(\mcM_0)} \mu H
\end{equation}
is strictly positive.
Heuristically, $\Lambda < 0$ (resp. $\alpha > 0$) implies an average decrease (resp. increase) of $t \mapsto V(X_t)$ when $X_t$ is close to $\mcM_0$. 
In the case of $\Lambda < 0$, it is further assumed that $V \geq 0$, and so we obtain that $X_t$ cannot spend much time close to $\mcM_0$, while for $\alpha > 0$ we further assume that $V(x) \to \infty$ as $x \to \mcM_0$, and so $X_t$ must approach $\mcM_0$.

To provide more justifications, we summarize the argument given in \cite{persistence} for the easier case of $\Lambda < 0$ (persistence). Along with the empirical occupation measures \eqref{intro-occ}, a key tool in the proof is 
\begin{equation}\label{other-mg}
    M^V_t \coloneqq V(X_t) - V(X_0) - \int_0^t H(X_s)ds \,,
\end{equation}
 a martingale which allows us to relate the behavior of $V(X_t)$, which is not even defined if $X_0 \in \mcM_0$, to that of $\int_0^t H(X_s)ds$, which can be easily analyzed using ergodic theory.

Specifically, for any initial condition $X_0 \in \mcM \setminus \mcM_0$ and all large times $t$ it holds that
\begin{equation}\label{eq:iswap}
\frac{V(X_t)}{t} \approx \frac{1}{t} \int_0^t H(X_s)ds = \mu_t H \approx \int H(x) d\mu(x) \,,
\end{equation}
where $\mu \in P_{inv}(\mcM)$ is a limit point of $\mu_t$.
Hence,  $V \geq 0$ implies $\int H(x) d\mu(x) \geq 0$. Suppose for contradiction that $\mu(\mcM_0) > 0$. Since it is assumed that $\mcM_0$ and $\mcM \setminus \mcM_0$ are both invariant sets, we can decompose $\mu$ as 
$$
\mu = \mu(\mcM_0) \mu_1 + (1 - \mu(\mcM_0)) \mu_2 \,,
$$ 
where $\mu_1 \in P_{inv}(\mcM_0), \mu_2 \in P_{inv}(\mcM \setminus \mcM_0)$. If $\mu_2$ is ergodic and  
$X_0$ is distributed as $\mu_2$, then 
$$
\mu_2 H = \lim_{t \to \infty} \mu_t H = \lim_{t \to \infty} \frac{V(X_t)}{t} = 0 \,,
$$
where in the last equality we used Birkhoff's ergodic theorem to conclude that $X_t$ enters a compact subset of $\mcM \setminus \mcM_0$ infinitely often ($V$ is bounded on such a compact subset). Then by ergodic decomposition $\mu_2 H = 0$ for any 
$\mu_2 \in P_{inv}(\mcM \setminus \mcM_0)$. However,  we assumed $\mu_1 H \leq \Lambda < 0$, and consequently we obtain
$$0 \leq \mu H = \mu(\mcM_0) \mu_1 H + (1 - \mu(\mcM_0)) \mu_2 H \leq \mu(\mcM_0) \Lambda < 0 \,,$$
a contradiction.

Rigorously justifying \eqref{eq:iswap} requires verifying
two technical details. The first is $\frac{V(X_t)}{t} \approx \frac{1}{t} \int_0^t H(X_s)ds$, which is
equivalent to the strong law of large numbers for \eqref{other-mg}: $\lim_{t \to \infty} \frac{ M^V_t}{t} = 0$, which is assumed to hold in \cite{persistence}.
The second is $\mu_t H \approx \int H(x) d\mu(x)$, which by the ergodic theorems is valid for $\mu$-a.e. initial condition. However, the proof requires the validity for any initial condition in $\mcM \setminus \mcM_0$, which is deduced via the existence of a traditional Lyapunov function $W$ with certain properties (\Cref{as3}). In practice, $W$ can be almost always constructed. The existence of $W$ also yields the correct stability criteria that ensure the tightness and the invariance of limit points of the empirical occupation measures, which are also key to the proof.

The assumptions (besides $\alpha > 0$ versus $\Lambda < 0$) in our work and in  \cite{persistence} are similar and
let us highlight, mostly technical, differences. For example, instead of assuming that martingales satisfy the strong law of large numbers, in our manuscript we assume the stronger condition given in our \Cref{as5}, which involves another traditional Lyapunov function $U$. Although \Cref{as5} is stronger, in all examples we were able to locate in the literature, the strong law of large numbers is verified via \Cref{as5}, and so for practical purposes there is no difference in the assumptions. In addition, for the special cases of pure jump processes and SDEs (see \Cref{discretization} and \Cref{cts-paths}), we show that $U$ can be frequently constructed as a function of $W$, so  \Cref{as5} is automatically satisfied. Additionally,  \Cref{as5} is satisfied if $\mcM$ is compact (in fact almost all of the technical assumptions are trivial in this case, see \Cref{compact}).

Other differences in our assumptions and \cite{persistence} are intrinsic to the problem of extinction. For extinction we need to assume that $V(X_t) \to \infty$ implies that $d(X_t,\mcM_0) \to 0$ since by our techniques we cannot directly estimate $d(X_t,\mcM_0)$, but we can obtain a lot of information about $V(X_t)$. On the other hand, compared to \cite{persistence} we do not need to assume that $V \geq 0$ or that $V$ is defined on all of $\mcM \setminus \mcM_0$ (we only require $V$ to be defined on an open dense subset), which is essential for certain applications. 

The similarities of our assumptions and the ones in \cite{persistence} have significant practical consequences; for a large class of Markov processes, it suffices to investigate the sign of the $H$-exponents $\Lambda, \alpha$ to
determine persistence or extinction. 
 This is an improvement over the existing literature, where two completely separate arguments were needed, one for persistence and one for extinction. 
  In the examples below we focus on extinction, but with little additional extra work it is possible to formulate assumptions guaranteeing persistence. Similarly, the examples in \cite{persistence} have extinction analogues based on our theorems.

\subsection{Our Results as Linearization}

Before applying our techniques to complicated systems, we discuss a more classical example (linear SDEs) which illustrates the connection between the $H$-exponent $\alpha$ and the long-term behavior of $X_t$. We highlight \cite{Khas}'s use of a change of coordinates, a tool which is also applicable to more complicated examples considered in this manuscript. Finally, we remark that in the context of many (nonlinear) SDEs with a fixed point $0$, our \Cref{main} yields that $0$ is stable for the nonlinear SDE if $0$ is stable for its corresponding linearized system, and a similar principle also applies to SDEs with more complicated extinction sets.
 
In \cite{Khas} the authors  discuss the long term behavior of the linear stochastic system
\begin{equation}\label{linear-SDE}
    dx_t = Ax_tdt + \Sigma x_t dW_t \,,
\end{equation}  
where $A, \Sigma \in \R^{m \times m}$ are fixed $m \times m$ matrices, $x_t \in \mcM := \R^m$, and $W_t$ is a Brownian motion. In this case $\mcM_0 \coloneqq \{0\}$ contains only the origin, and we would like to find conditions that guarantee $x_t \to 0$ as $t \to \infty$ and the rate of convergence. 

Since the system is linear, we expect an exponential convergence (if it happens). Thus we set the average Lyapunov function to be $V(x) = - \log|x|$, where $|x| \coloneqq \Big(\sum_{i=1}^m x_i^2\Big)^{1/2}$ denotes the Euclidean norm, defined for all $x \notin \mcM_0$. Since this function only depends on $|x|$, it is natural to transform the problem to polar coordinates $(v,r) \in S^{m-1} \times [0,\infty) \eqqcolon \ncN$, where $S^{m-1} \coloneqq \{v \in \R^m \mid \|v\| = 1\}$ is a sphere.

Using It\^ o formula one can easily show that $\mathcal{L}$ is a differential operator and compute $\mathcal{L}V$
(see \Cref{verify-assumptions} below for details). Then one  notices that $\lim_{x \to 0} \mathcal{L}V(x)$ depends on the direction that $x$ approaches $0$, that is, $\lim_{t \to 0} \mathcal{L}V(tv)$ with $v \in S^{m-1}$ depends on $v$, and in particular $\mathcal{L}V$ cannot be continuously extended to $\mcM$.
 However, in polar coordinates the origin can be interpreted as $\ncN_0 \coloneqq S^{m-1} \times \{0\}$, that is, $\mcM_0$ becomes a sphere, and then $\mathcal{L} V$ indeed extends to a continuous function on $\ncN$, as detailed in the argument below.
Such approach is common in our examples, where  one establishes a ``boundary" of the possible trajectories of $X_t$ by expanding $\mcM_0$ to account for all the directions along which $X_t$ could approach $\mcM_0$. We formalize this change of variables  in \Cref{change-of-variables} below.
 
To provide more details, we set $x_t = r_tv_t$ and then It\^ o's formula yields that $(v_t,r_t)$ satisfies an SDE 
\begin{align*}
    dv_t &= f(v_t)dt + \sigma(v_t)dW_t \\
    dr_t &= r_tg(v_t)dt + r_t\eta(v_t)dW_t
\end{align*}
for continuous functions $f,\sigma: S^{m-1} \to \R^m$, $g,\eta: S^{m-1} \to \R$, and $V(v,r) \coloneqq -\log{r}$ satisfies $$dV_t = \Big[-g(v_t) + \frac{\eta(v_t)^2}{2}\Big]dt - \eta(v_t) dW_t \,.
$$
For
$\mathcal{L}V(v,r) \coloneqq -g(v) + \frac{\eta(v)^2}{2}$ we have that 
$$
M^V_t \coloneqq V(v_t,r_t) - V(v_0,r_0) - \int_0^t \mathcal{L}V(v_s,r_s)ds = -\int_0^t \eta(v_s)dW_s
$$ 
is a square-integrable martingale whose quadratic variation $\int_0^t \eta^2(v_s)ds$ is bounded by $Ct$ for some constant $C > 0$. Applying the strong law of large numbers for martingales we have that $\lim_{t \to \infty} \frac{M_t^V}{t} = 0$ a.s. Thus, if   $P_{inv}(S^{m-1})$ denotes the set of invariant measures of $v_t$ on $S^{m-1}$ and 
$$
\alpha \coloneqq \inf_{\mu \in P_{inv}(S^{m-1})} \int \mathcal{L}V(v,0) d\mu(v)
$$
is the Lyapunov exponent, 
then Birkhoff's ergodic theorem implies that for most initial conditions
\begin{equation}\label{birkhoff-application}
\liminf_{t \to \infty} \frac{-\log r_t}{t} \geq \alpha \,.
\end{equation}
In particular, if $\alpha > 0$ then $x_t \to 0$ exponentially fast with rate $\alpha$. Of course, determining the positivity of  $\alpha$ may be involved and possible only in special cases.  For example in the deterministic case of $\Sigma = 0$, we may explicitly solve \eqref{linear-SDE} to see that $-\alpha$ is simply the largest real part of eigenvalues of $A$.

Observe that this argument, in particular the conclusion \eqref{birkhoff-application}, crucially depends on the fact that both $\mathcal{L}V$ and the right hand side of $dv_t$ are only functions of $v_t$ as opposed to $(v_t,r_t)$. This does not necessarily hold for the more general system
\begin{equation}\label{nonlinear-SDE}
    dx_t = \hat{A}(x_t)dt + \hat{\Sigma}(x_t) dW_t \,,
\end{equation}  
where $\hat{A}, \hat{\Sigma} : \R^m \to \R^m$ are continuously differentiable functions with $\hat{A}(0) = \hat{\Sigma}(0) = 0$. A natural question is whether \eqref{nonlinear-SDE} has the same  long term behavior as the 
 ``linearized" system \eqref{linear-SDE} with 
 $A = \nabla \hat{A}(0)$ and $\Sigma = \nabla \hat{\Sigma}(0)$. After the  polar change of coordinates as above, we obtain that \eqref{linear-SDE} and \eqref{nonlinear-SDE} have the same behavior when the initial distribution is supported on the set $\{r = 0\}$, and also that $\mathcal{L}V(v,0)$ and $\alpha$ are the same for both systems. Since in this paper we show (see, for example, \Cref{main}) that the stability of $\mcM_0$ is determined by $\alpha$ (determined by behavior on $\mcM_0$ only), we conclude that if $0$ is attractive for the linearized system \eqref{linear-SDE}, meaning $\alpha > 0$, then the nonlinear system \eqref{nonlinear-SDE} is stable as well (compare to \cite[Theorem 7.1]{Khas}). Additionally, our \Cref{robust} immediately implies that $\alpha$ depends continuously on $A,\Sigma$, and thus the stability is preserved if the coefficients of the equation are perturbed slightly.
 
The reasoning presented above is a general principle that extends beyond the case where $\mcM_0$ is a singleton. Although it is not formally stated in our manuscript, our main results imply that to determine whether $\mcM_0$ is stable (meaning extinction occurs), it suffices to linearize the system about $\mcM_0$ and determine whether $\mcM_0$ (or a suitable ``blown up" version $\ncN_0$ after a change of coordinates) is stable for the linearized system.
Indeed, we show that 
the stability of $\ncN_0$ for the nonlinear and linearized systems occurs when the corresponding $H$-exponent $\alpha$ is positive, and since $\alpha$ only depends on the behavior of $X_t$ near the extinction set, both values of $\alpha$ are exactly the same for the linearized and nonlinear systems. This principle of linearization is one of the main advantages of our technique; we only need to analyze $X_t$ and $\mathcal{L}V$ near $\mcM_0$, as opposed to on the entirety of $\mcM$, greatly simplifying our analysis of the important examples discussed in further detail below.

\subsection{Discussion of our Examples}
Since we consider general Markov processes including jump processes and SDEs with Markovian switching, our results have a wide range of applications to both discrete-time and continuous-time Markov chains. Due to the multitude of applications and also because our focus is on the development of universal tools, we decided to present 
  representative examples that  motivated the development of our general theory. Notably, we omit a discussion of chemostat models, but we remark that our results can be used to simplify the proof of the general result \cite[Theorem 2.2]{Chemostat-general} in the case of $\lambda < 0$.

\subsubsection{SIS Model}
The first example is the SIS (susceptible-infected-susceptible) epidemic model on a network with Markovian switching analyzed in  \cite{SIS}.
Each node $i$ of the network can be infected at time $t$
with probability $x_i(t)$ based on the interactions given by the network. 
 The network, the rate of transmission, and the rate of recovery are changing in time according to a Markov process on a finite state space, where each state represents a different environment or regime. In \cite{SIS}, conditions ensuring that $x(t) \to 0$ exponentially fast are established by examining the Lyapunov function $V(x) = -\frac{1}{2}\log{\|x\|^2}$. Specifically, using their assumptions and It\^ o's formula \cite{SIS} proves that $\mathcal{L} V$ is positive and then the rate of change of the quadratic variation of the martingale 
 \begin{equation}\label{V-mg}
    V(x(t)) - V(x(0)) - \int_0^t \mathcal{L}V(x(s)) ds
\end{equation} 
is bounded and thus \eqref{V-mg} satisfies the strong law of large numbers. Unlike our approach, the authors of \cite{SIS} needed to bound $\mathcal{L} V$ for all possible inputs $x \neq 0$, not just for $x$ approaching $0$. Consequently, the assumptions posited in \cite{SIS} to guarantee extinction include an extra term (which they denote by $K$) that is related to the strength of the noise on the entire state space.

The theory developed in the present paper uses simpler analysis, only requiring us to compute $\mathcal{L} V$ as $x$ approaches $0$. This allows us to remove the unnecessary $K$ term in our improved conditions that guarantee extinction. Using the corresponding persistence theory \cite{persistence}, one can show that our extinction conditions are optimal if the network topology remains constant and only the rates of transmission and recovery are switching.

\subsubsection{Lorenz System}
In the second example, we consider the Lorenz system, which is 
a well-studied simplified (deterministic) model of fluid dynamics and a prototypical model for chaos in three variables $(X,Y,Z) \in \R^3$. The stochastic Lorenz system with additive white noise of strength $\hat{\alpha}$ in the $Z$ component was analyzed in \cite{Lorenz}. Here we provide a simplified analysis of the stochastic Lorenz system, focusing on the range of parameters for which the
  $Z$-axis is a global attractor for the deterministic system. In this case, it is shown in \cite{Lorenz} that for small $\hat{\alpha}$, the solutions converge to the $Z$-axis almost surely, while for large $\hat{\alpha}$, persistence occurs.

  The proof in \cite{Lorenz} is divided into multiple steps which connect the persistence or extinction to the sign of a Lyapunov-type exponent $\lambda_{\hat{\alpha}}$ (\cite[Theorem 4.1]{Lorenz}), and then the sign of $\lambda_{\hat{\alpha}}$ is analyzed (\cite[Theorem 5.2]{Lorenz}). To connect the persistence or extinction to the sign of $\lambda_{\hat{\alpha}}$,
  first the known Lyapunov function $V_1$ for the deterministic system is used to control the process at spatial infinity.
  Next, after a cylindrical-like change of coordinates from $(X,Y,Z)$ to $(r,\theta,z)$, where $r = -\infty$ (a cylinder) corresponds to $X=Y=0$ (the $Z$-axis), the authors use the average Lyapunov function $V(r,\theta,z) = -r$ to construct a more traditional Lyapunov function $V_0 : = e^{- \kappa r}(1 - \kappa g_{\hat{\alpha}} + V_2)$, where $V_2$ is similar to $V_1$, the sign of $\kappa$ is the same as the sign of $\lambda_{\hat{\alpha}}$, and $g_{\hat{\alpha}}$ (not given explicitly) is the solution to a PDE related to the linearization of the system about the $Z$-axis. Then they show that $V := V_0 + V_1$ satisfies $\mathcal{L} V \leq K - cV$ for some constants $K,c > 0$ and $V$ blows-up at infinity. If $\lambda_{\hat{\alpha}} > 0$, $V$ also blows up near the $Z$-axis, which provides persistence. If $\lambda_{\hat{\alpha}} < 0$, then $V  \approx 0$ near the $Z$-axis and satisfies $\mathcal{L} V \lesssim - \Gamma{V}$ in a small neighborhood around the $Z$-axis, where $\Gamma{V}$ is the average rate of change of the quadratic variation. By a renewal-type argument and a comparison to Brownian motion with drift, the authors obtain extinction, that is, convergence to the $Z$-axis.

On the other hand, our theory immediately implies \cite[Theorem 4.1]{Lorenz} using only the standard Lyapunov function $V_1$ and the average Lyapunov function $V(r,\theta,z) = -r$, bypassing all of the difficult steps outlined in the previous paragraph. 
We are also able to greatly simplify the arguments made in \cite[Section 5]{Lorenz}, which analyses the behavior of the Lyapunov exponent $\lambda_{\hat{\alpha}}$ for small and large $\hat{\alpha}$. 
Specifically, \Cref{robust} below implies that if we perturb the coefficients of an SDE, then the average Lyapunov exponent $\alpha$ from \eqref{our-exponent} does not change a lot (recall that $P_{inv}(\mcM_0)$ is the set of invariant measures supported on $\mcM_0$). If $\hat{\alpha} = 0$, then $P_{inv}(\mcM_0)$ may have multiple ergodic measures $\mu_0$ which are each supported on a point or a circle, but they can all be easily characterized and thus $\alpha$ can be computed and shown to be positive. However, when $\hat{\alpha} > 0$ then $P_{inv}(\mcM_0)$ has exactly one measure $\mu_{\hat{\alpha}}$, which is supported on a cylinder, and both $\mu_{\hat{\alpha}}$ and $\mu_{\hat{\alpha}} H = -\lambda_{\hat{\alpha}}$
are difficult to analyze or estimate.
Yet, the continuity from \Cref{robust} implies that $\mu_{\hat{\alpha}} H > 0$ for small enough $\hat{\alpha}$, and thus extinction occurs. Since our paper is focused on extinction rather than persistence, we 
do not discuss the other half of \cite[Theorem 5.2]{Lorenz} (large $\hat{\alpha}$ implies persistence), but it would be interesting to see if that analysis can be simplified as well.

\subsubsection{Ecological Models}

Historically, the average Lyapunov function technique was primarily developed for deterministic ecological models by 
mathematical biologists and ecologists \cite{Hofb1, AvgLyap, 71, 32} who recognized the importance of the so-called ``invasion" rates $r_i(\mu)$ associated to each species $i$ and invariant measure $\mu$. Intuitively, $r_i(\mu)$ measures how quickly a 
small population of the $i$ species will grow (invade the environment) when the populations of all the species are distributed according to $\mu$.

Conditions ensuring whether or not a collection of species can coexist in terms of $(r_i(\mu))_{i, \mu}$ were known mainly for special cases until the recent works of \cite{ecologicalContinuous} (for the case of SDEs), \cite{ecologicalDiscrete} (for discrete time systems with a globally attractive compact set), \cite{ecologicalGeneral} (for general discrete time systems and also SDEs), and
\cite{Functional-1, Functional-2} (for stochastic equations with delay). Since the invasion rates are essentially average Lyapunov exponents, average Lyapunov functions are fundamental to their proofs.

However, instead of working with the average Lyapunov function $V$, the arguments in cited papers typically use tools like the log-laplace transform to instead analyze a more traditional Lyapunov function $\hat{V}$ which is constructed using $V$.
The analysis of $\hat{V}$ is quite involved since it came from an average Lyapunov function and thus a sufficient time needs to pass before $\hat{V}$ starts behaving like a traditional Lyapunov function. In other words, instead of investigating $\mathcal{L}\hat{V}$, one has to analyze $\E[\hat{V}(X(t+T)) - \hat{V}(X(t))]$ for all $T$ in some interval, which is typically harder and often involves rather strong conditions.
The theory developed in this manuscript allows us to easily recover many of these results while also weakening their assumptions.

For example, under our weaker assumptions we show that \cite[Theorem 2.4]{ecologicalGeneral} is a consequence of our \Cref{main3} and \cite[Theorem 2.1]{ecologicalGeneral}  follows from the corresponding theory developed in \cite{persistence}. We are also able to recover \cite[Theorem 2.5]{ecologicalGeneral}. 
Since the continuous- and discrete-time ecological models share 
the same main ideas, we decided to provide details only for the latter one. This is partly because an application to SDEs is already illustrated via the noisy Lorenz system above, and partly because the calculations involved in the analysis of the discrete-time models cannot be done explicitly and thus require more work. In \Cref{example-ecological-continuous} we formulate the proper (weaker) assumptions so that the continuous-time analogues of the results mentioned above (studied in \cite{ecologicalContinuous}) hold, and briefly discuss how they can be proven using our techniques.

We emphasize that while almost all of the applications discussed above are direct consequences of the theory developed in this manuscript and require very little additional work to analyze, the proof of our improved version of \cite[Theorem 2.5]{ecologicalGeneral} is quite nontrivial and illustrates the power of using our robustness result \Cref{robust} in conjunction with the change of variables technique from \Cref{change-of-variables}. We construct an average Lyapunov function $V$, but $\mathcal{L}V$ does not extend continuously to $\mcM$ without some change of variables, the choice of which is not obvious. Since
$\mcM_0$ has a ``corner", we consider changes of variables $\pi_p$ for $1 > p > 0$ which approximate $\mcM_0$ by a smooth 
surface which resembles the unit ball in $L^p$. Each $\pi_p$ maps a simplex-like set $\ncN_0$ to $\mcM_0$ differently, and thus each $p$ induces a different Markov process $Y^p$ (which can be thought of as $\pi_p^{-1}(X)$) on $\ncN_0$, a different set of invariant measures $P_{inv}^p(\ncN_0)$ for $Y^p$, and also a different $H_p$, which is defined as the continuous extension of $\mathcal{L}V \circ \pi_p$ to $\ncN_0$. Each of these objects is quite hard to analyze for any individual $p > 0$. However, we show that, as $p \to 0$, $H_p$ converges to a (simpler) function $H_0$ and also the dynamics governing $Y^p$ converge in some sense to those of a simple Markov process $Y^0$, making $\inf_{\mu \in P_{inv}^0(\ncN_0)} \mu  H_0 > 0$ easy to calculate. Using the continuity result \Cref{robust}, we conclude that if $p$ is small enough, then $\inf_{\mu \in P_{inv}^p(\ncN_0)} \mu H_p  > 0$, so that $\pi_p$ gives a change of variables for which \Cref{change-of-variables} can be applied to, allowing us to conclude our improved version of \cite[Theorem 2.5]{ecologicalGeneral}.

\subsection{Organization of Paper}
 In \Cref{notation} we introduce our notation and key assumptions. In \Cref{main-results} we give precise formulations of our main results and techniques. 
 \Cref{general-facts} is devoted to proving fundamental estimates on important martingales related to our Lyapunov functions, some ergodic-type facts about empirical occupation measures, and a result concerning the long-term behavior of a class of discrete-time semimartingales. These results are the backbone of the proof of the crucial \Cref{main2}. In \Cref{feller-stuff} we generalize some results known for $C_0$-Feller processes to the more general class of ``$C_b$-Feller" processes. Here we also discuss the Skorokhod topology and the continuity of relevant functions on these Skorokhod spaces. 
The main theoretical novelties occur in \Cref{main-arguments}, where we prove \Cref{main2}. 
Another main \Cref{main3} is proved in 
\Cref{global}. In \Cref{generator-bs} we include technical results relating the extension of the generator $\mathcal{L}$ from the domains $\Dm(\mcM),\Dm_2(\mcM)$ (\Cref{gen}, \Cref{Gamma}) to the extended domains $\Dme_+(\mcM),\Dme_2(\mcM)$ (\Cref{D+}, \Cref{D2}). 
In \Cref{verify-assumptions} we provide sufficient conditions that verify the bulk of our technical assumptions in special cases: if the Markov process is a switching diffusion or an SDE driven by Brownian motion. Also, we show how our theory can be applied to discrete-time Markov chains.
Finally, the applications of our theory to the SIS model, Lorenz system, and ecological models are given 
in \Cref{examples}.

For the reader primarily interested in applications, we recommend first reading \Cref{notation} and \Cref{main-results} (skipping the proofs), and then for each example in \Cref{examples} first reading the corresponding section of \Cref{verify-assumptions} (skipping the proofs). For example, one should be familiar with the statements in \Cref{discrete-time} before reading \Cref{example-ecological-discrete}.

\section{Notation and Assumptions} \label{notation}
Let $(\mcM,d)$ be a locally compact Polish (complete and separable) metric space, $\mcM_0 \subset \mcM$ be a closed set, and $\inv \subset \mcM_0^c$ be open and dense in $\mcM$ (where $\mcM_0^c$ denotes the complement of $\mcM_0$). The set $\mcM_0$ can be viewed as an ``extinction" set and $\inv$ as the set of possible initial conditions. Without loss of generality we assume $d \leq 1$, otherwise we replace $d$ by $\min(d, 1)$. We endow $\mcM$ with the Borel $\sigma$- algebra. For each $x \in \mcM$, let $X_t^x$ be a homogeneous Markov process defined on $\mcM$ which has cadlag sample paths. By this we mean that there is a filtered probability space $(\Omega, \mathcal{F}, \{\mathcal{F}_t\}_{t \geq 0}, \Prb)$ (which we assume is complete and right-continuous) and a family of $\mcM-$valued random variables $\{X_t^x\}_{x \in \mcM, t \geq 0}$ such that:

\begin{itemize}
    \item $X_0^x = x$ and $t \mapsto X_t^x$ is cadlag (right continuous with left limits) a.s.
    \item $X^x_\cdot$ is adapted to $\{\mathcal{F}_t\}_{t \geq 0}$, meaning $X_t^x$ is $\mathcal{F}_t$ measurable for each $t \geq 0$.
    \item \label{semigroup} For all bounded measurable functions $f:\mcM \to \mathbb{R}$, the map
    $$
    [0,\infty) \times \mcM \ni (t,x) \mapsto \Pp_t f(x) \coloneqq \E[f(X_t^x)] 
    $$ 
    is measurable and for any $s, t \geq 0$ we assume (homogeneity)
    that
    $$
    \Pp_s f(X_t^x) = \E[f(X_{t + s}^x)|\mathcal{F}_t] \,.
    $$    
\end{itemize}

It is standard to prove that $(\mathcal{P}_t)_{t \geq 0}$ defines 
a semigroup: for any $s, t \geq 0$ it holds that $\Pp_{s+t}f = \Pp_s \Pp_tf$. Also, the definition of $\Pp_tf$ makes sense as long as $f: \mcM \to \R$ is measurable and bounded from below (or above), possibly taking on the value $\infty$ (or $-\infty$). 

Our setup is summarized in the following definition.

\begin{deff}\label{quadruple}
For a Polish space $\mcM$, $\mcM_0 \subset \mcM$ closed, $\inv \subset \mcM_0^c$ open and dense, and $\{X_t^x\}_{x \in \mcM, t \geq 0}$ a Markov process on $\mcM$ with cadlag sample paths as defined above, we call $(\mcM, \mcM_0, \inv, \{X_t^x\}_{x \in \mcM, t \geq 0})$ a Markov quadruple.
\end{deff}

For the rest of the section, fix a Markov quadruple $(\mcM, \mcM_0, \inv, \{X_t^x\}_{x \in \mcM, t \geq 0})$.

\subsection{Invariant Sets and the Feller Property }
Next we list two basic assumptions on our process $X_t^x$ and the sets $\mcM_0, \inv$. First, we suppose that if an initial condition is in $\mcM_0$ (respectively $\inv$) then $X_t^x$ belongs to $\mcM_0$ (respectively $\inv$) for all $t \geq 0$. Second, we assume a standard continuity on the law of $X_t^x$ as a function of the initial condition $x$ and time $t$. Such a condition is commonly known as the ``Feller" property.

\begin{deff}
\label{inv-set}
    A measurable set $A \subset \mcM$ is invariant if $x \in A$ implies that almost surely $X_t^x \in A$ for all $t \geq 0$.
\end{deff}

\begin{rem}
    Some authors use the expression ``forward invariant set" instead of our invariant set, 
    since we do not require that $X_t^x \in A$ for some $t > 0$ implies $x \in A$. However, there should not be any confusion, since we do not use backward evolution in the present manuscript. 
\end{rem}

\begin{ass} \label{as1}
    $\mcM_0$ and $\mcM_+$ are invariant.
\end{ass}

Let $C_b(\mcM)$ denote the space of bounded continuous functions on $\mcM$ endowed with the supremum norm $\|f\| = \sup_{x \in \mcM} |f(x)|$. The following continuity assumption is vital for our analysis:

\begin{ass} \label{as2}
    Assume that $X_t^x$ is Feller in the sense that for any $f \in C_b(\mcM)$ and $t \geq 0$, $\Pp_tf \in C_b(\mcM)$, and also $\Pp_tf \to f$ pointwise as $t \downarrow 0$.
\end{ass}

\begin{rem}
    Note that we use ``$C_b$" Feller continuity as opposed to the usual ``$C_0$" Feller continuity (just replace $C_b$ by $C_0$ in \Cref{as2}, where $C_0$ is the space of functions vanishing at infinity) because in many examples (particularly ecological ones) the semigroup is not $C_0$ Feller. See \cite[Remark 1 and Example 1]{persistence} for more details. We also note that the requirement of right-continuity in time ($\Pp_tf \to f$ pointwise as $t \downarrow 0$) is automatically satisfied since our process is assumed to have cadlag paths.
\end{rem}

\begin{rem}
    Under an additional stability assumption which we define below in \Cref{as3}, our definition of Feller is equivalent to the seemingly stronger requirements of \cite[Hypothesis 2]{persistence}. This is a consequence of the quasi-left continuity showed in \Cref{quasi-left} and (a slight modification of the proof of) \Cref{unif-in-t}, which is similar to the proof of strong continuity for $C_0$ Feller semigroups.
\end{rem}

\begin{deff}
    \label{feller-quadruple}
If $(\mcM, \mcM_0, \inv, \{X_t^x\}_{x \in \mcM, t \geq 0})$ is a Markov quadruple such that \Cref{as1} and \Cref{as2} are satisfied, then  $(\mcM, \mcM_0, \inv, \{X_t^x\}_{x \in \mcM, t \geq 0})$ is called a Feller quadruple.
\end{deff}

 \subsection{Two Important Martingales} \label{generator-stuff}
In this section we introduce important definitions that are used in \Cref{lyap}, as well as two martingales that are central to our  analysis.

A central object in the study of Feller processes is the generator $\mathcal{L}$ of the Markov semigroup $\Pp_t$ defined on its domain $\Dm(\mcM) \subset C_b(\mcM)$ as follows:

\begin{deff}
\label{gen}
    Suppose that $\{X_t^x\}_{x \in \mcM, t \geq 0}$ is a Feller process (see \Cref{as2}) with the Markov semigroup $\Pp_s$ (see \Cref{semigroup}). Then 
    the domain $\Dm(\mcM) \subset C_b(\mcM)$ of
    the generator $\mathcal{L}$ is the set of all $f \in C_b(\mcM)$ such that:
    \begin{enumerate}[label=(\roman*)]
        \item\label{first} for all $x \in \mcM$,  $\lim_{s \downarrow 0} \frac{\Pp_sf(x) - f(x)}{s}$ exists and we set it equal to $\mathcal{L}f(x)$.
        \item\label{second} $\mathcal{L}f \in C_b(\mcM)$.
        \item\label{third} $\sup_{s > 0} \Big\|\frac{\Pp_sf - f}{s}\Big\| < \infty$.
    \end{enumerate}
\end{deff}

Heuristically, since $\mathcal{L}f(x)$ is the average rate of change of $\E[f(X_t^x)]$ at $t = 0$, then
\begin{equation} \label{Mtf}
     M_t^f(x) \coloneqq f(X_t^x) - f(x) - \int_0^t \mathcal{L}f(X_s^x)ds   
    \end{equation}
    should be a martingale, and this is proven in \Cref{martingale} below for $f \in \Dm(\mcM)$.
    
    The variance of \eqref{Mtf} is related to the Carre du Champ operator $\Gamma$ defined on $\Dm_2(\mcM) = \{f \in \Dm(\mcM) \mid f^2 \in \Dm(\mcM)\}$ as $\Gamma f = \mathcal{L}f^2 - 2f\mathcal{L}f$. It is a folklore result that $\Gamma$ gives the (predictable) quadratic variation of $M_t^f(x)$ as $\langle M^f(x) \rangle_t = \int_0^t \Gamma f(X_s^x)ds$, or equivalently that
    \begin{equation}\label{Mtf-quad-var}
        (M^f_t(x))^2 - \int_0^t \Gamma f(X_s^x)ds
    \end{equation}
is a martingale. We make this precise in \Cref{def:sqrv} and \Cref{Gamma-is-quad-var} below.

\begin{rem}
Informally, 
in the context of SDEs, $\mathcal{L}f$ (resp. $\Gamma f$) is the
 coefficient of the ``$dt$" part of ``$df$" (resp. ``$(df)^2$"). Then the formula for $\Gamma$ is given by It\^ o's formula: $df^2 = 2fdf + (df)^2$ so that $(df)^2 = df^2 - 2fdf$.
\end{rem}

We wish to apply the operators $\mathcal{L}$ and $\Gamma$ to Lyapunov functions introduced below that are unbounded and possibly defined only on a subset $A$ of $\mcM$. This motivates the definitions of $\Dme_+(A)$ and $\Dme_2(A)$ below, which are general classes of functions for which \eqref{Mtf} and possibly \eqref{Mtf-quad-var} are (local) martingales for suitable functions $\mathcal{L}f, \Gamma f$. In \Cref{generator-bs} and \Cref{verify-assumptions} we show that $\Dme_+(A)$ and $\Dme_2(A)$ are in some sense the closures of $\{f \in \Dm(\mcM) \mid f \geq 0\}$ and $\Dm_2(\mcM)$, respectively, and we  give easily verifiable general conditions for ensuring that functions are in $\Dme_+(A)$ or $\Dme_2(A)$.

\begin{deff}
\label{D+}
    Let $A \subset \mcM$ be an open invariant set (see \Cref{inv-set}). We let $\Dme_+(A)$ be the set of all continuous $f:A \to [0,\infty)$ such that there exists a continuous function $\mathcal{L}f: A \to \R$ such that $M^f_{\cdot}(x)$
   in \eqref{Mtf} is a cadlag local martingale for all $x \in A$.
\end{deff}

\begin{deff}
    \label{D2}
   Let $A \subset \mcM$ be an open invariant set (see \Cref{inv-set}). We let $\Dme_2(A)$ be set of all continuous $f:A \to \R$ such that there exist continuous functions $\mathcal{L}f: A \to \R$ and $\Gamma f: A \to [0,\infty)$ such that $M^f_{\cdot}(x)$
   in \eqref{Mtf} is a cadlag square integrable martingale and the stochastic process in \eqref{Mtf-quad-var} is a martingale for all $x \in A$.
\end{deff}

Next, we formulate a sufficient condition which ensures that \eqref{Mtf} satisfies the strong law for martingales:

\begin{deff} \label{lin-bdd}
    Let $A \subset \mcM$ be an open invariant set and $f \in \Dme_2(A)$. Then we say $f$ has linearly bounded quadratic variation if for all $x \in A$
    it holds that 
    $$
     \sup_{t \geq 1} \frac{1}{t}\int_0^t \Pp_s\Gamma f(x) ds \leq C(x)
     $$ for some continuous function $C: \overline{A} \to \mathbb{R}$, where $\overline{A}$ denotes the closure of $A$.
\end{deff}

In the following lemma we formulate a law of large numbers for the martingales $(M_t^f(x))_{t\geq 0}$. 

\begin{lem}
    \label{strong-law}
    Assume that $A \subset \mcM$ is an open invariant set and $f \in \Dme_2(A)$ has linearly bounded quadratic variation (see \Cref{D2}, \Cref{lin-bdd}). Then for all $x \in A$, $\lim_{t \to \infty} \frac{M_t^f(x)}{t} = 0$ a.s. (recall that $M_t^f(x)$ was defined in \eqref{Mtf}).
\end{lem}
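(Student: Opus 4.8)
The plan is to sample $M^f_\cdot(x)$ at integer times, apply the classical strong law for $L^2$ martingales to the resulting skeleton, and then control the oscillations on each interval $[n,n+1]$ via Doob's inequality and Borel--Cantelli.

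Fix $x \in A$ and abbreviate $M_t := M_t^f(x)$. By \Cref{D2} this is a cadlag square-integrable martingale with $M_0 = 0$, and $(M_t)^2 - \int_0^t \Gamma f(X^x_s)\,ds$ is a martingale; taking expectations and using Tonelli's theorem (legitimate since $\Gamma f \ge 0$) gives $\E[(M_t)^2] = \int_0^t \Pp_s\Gamma f(x)\,ds =: \phi(t)$, a finite nondecreasing function. By \Cref{lin-bdd}, $\phi(t) \le C(x)\,t$ for $t \ge 1$, and after enlarging $C(x)$ we may assume $\phi(t) \le C(x)(1+t)$ for all $t \ge 0$. Put $b_n := \E[(M_n - M_{n-1})^2] = \phi(n) - \phi(n-1)$, so that $\sum_{n=1}^N b_n = \phi(N) \le C(x)(1+N)$.

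The next step is the elementary observation that a linear bound on the partial sums $\sum_{n \le N} b_n$ forces $\sum_{n \ge 1} b_n/n^2 < \infty$; this is a one-line Abel (summation by parts) estimate using $\tfrac{1}{n^2} - \tfrac{1}{(n+1)^2} = O(n^{-3})$. Consequently $\big((M_n - M_{n-1})/n\big)_{n \ge 1}$ is a martingale-difference sequence whose partial sums form an $L^2$-bounded martingale, hence converge a.s.; Kronecker's lemma then yields $M_N/N \to 0$ a.s. along integers. To pass to real times, set $D_n := \sup_{n \le t \le n+1} |M_t - M_n|$; Doob's $L^2$ maximal inequality gives $\E[D_n^2] \le 4 b_{n+1}$, so $\Prb(D_n > \varepsilon n) \le 4 b_{n+1}/(\varepsilon^2 n^2)$, which is summable in $n$ for each $\varepsilon > 0$. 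Borel--Cantelli (applied along $\varepsilon = 1/k$, $k \in \N$) gives $D_n/n \to 0$ a.s., and then for $t \in [n, n+1]$,
$$\frac{|M_t|}{t} \le \frac{|M_n| + D_n}{n} \to 0 \qquad \text{a.s.},$$
which is the claim.

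The only point requiring genuine care is that \Cref{lin-bdd} controls $\int_0^t \Pp_s \Gamma f(x)\,ds$ --- i.e., the quadratic variation only \emph{in expectation}, not pathwise --- so one cannot invoke the ``$M_t/\langle M\rangle_t \to 0$'' form of the martingale strong law directly. The bound $\sum_n b_n/n^2 < \infty$ extracted by Abel summation is exactly what bridges this gap, simultaneously feeding the discrete strong law and the Borel--Cantelli step. The remaining ingredients (Tonelli to handle a possibly unbounded $\Gamma f$, Doob's inequality, Kronecker's lemma) are standard.
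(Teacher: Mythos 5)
Your proof is correct, and it takes a genuinely different (though equally classical) route from the paper. The paper's argument is a single dyadic-block estimate: for each integer $n$ it applies Doob's $L^2$ maximal inequality to $\sup_{t \le 2^{n+1}} |M_t|$, uses $\E[M_{2^{n+1}}^2] \le 2^{n+1}C(x)$ to bound $\Prb\big(\sup_{2^n \le t \le 2^{n+1}} |M_t|/t \ge \epsilon\big)$ by $2C(x)/(2^n\epsilon^2)$, and concludes by Borel--Cantelli, never separating the skeleton from the oscillations. You instead decompose the problem into the integer skeleton, handled by the Chow-type strong law (orthogonality of increments, the Abel-summation bound $\sum_n b_n/n^2 < \infty$, $L^2$-bounded martingale convergence, Kronecker), and the unit-interval oscillations $D_n$, handled by Doob plus Borel--Cantelli. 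Each of your steps checks out: $\E[M_t^2] = \int_0^t \Pp_s \Gamma f(x)\,ds$ does follow from \Cref{D2} together with Tonelli, $\E[D_n^2] \le 4b_{n+1}$ is legitimate since $(M_t - M_n)_{t \ge n}$ is a cadlag $L^2$ martingale, and the summability $\sum_n b_{n+1}/n^2 < \infty$ feeds both halves of your argument. What the dyadic argument buys is brevity --- three lines and one inequality. What yours buys is an explicit identification of the minimal hypothesis actually used, namely $\sum_n \big(\phi(n)-\phi(n-1)\big)/n^2 < \infty$ for $\phi(t) := \int_0^t \Pp_s\Gamma f(x)\,ds$, so the linear-growth assumption of \Cref{lin-bdd} could be weakened without changing your proof; the paper's dyadic version admits essentially the same relaxation but does not make it visible. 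Both proofs are complete as written.
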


\begin{proof}
The proof closely follows the argument in \cite[page 76]{persistence} and we repeat it here for completeness. 

Since $f$ has linearly bounded quadratic variation, we have 
$\E[(M_t^f(x))^2] \leq tC(x)$ for $t \geq 1$. For any integer $n$ and any $\epsilon > 0$, Doob's inequality for right continuous martingales implies that 
\begin{align*}
\Prb\Big(\sup_{2^n \leq t \leq 2^{n+1}} \frac{|M_t^f(x)|}{t} \geq \epsilon\Big) &\leq \Prb\Big(\sup_{t \leq 2^{n+1}} |M_t^f(x)| \geq 2^n\epsilon\Big) \leq \frac{1}{2^{2n}\epsilon^2}2^{n+1}C(x) \\
&= \frac{2C(x)}{2^n\epsilon^2}
\end{align*}
 and the assertion follows from Borel-Cantelli lemma.
\end{proof}

 \subsection{Lyapunov Functions} \label{lyap}
In this section we provide the assumptions necessary for our main results. In \Cref{as3} we suppose that there is a Lyapunov function which is large near spatial infinity and we use it to prove tightness of appropriate measures and to estimate return times to compact sets. \Cref{as4} is crucial as it gives the existence of an ``average Lyapunov function" that forces $X_t^x$ to (on average) move towards $\mcM_0$ if $X_t^x$ is close to $\mcM_0$.
\Cref{as5} guarantees that the variances of the mentioned Lyapunov functions evaluated at $X_t^x$ have a controlled growth in time.

\begin{deff}
\label{proper}
   For $f:\mcM \to [0,\infty)$, we call $f$ proper if $f$ is continuous and has compact sublevel sets, that is for each $m \in [0, \infty)$ the set
   $\{f \leq m\} \coloneqq \{x: f(x) \leq m\}$ is compact. 
\end{deff}

\begin{ass} \label{as3}
    There are proper maps $W, W': \mcM \to [1,\infty)$ and a constant $K > 0$ such that:
    \begin{enumerate}[label=(\roman*)]
        \item \label{3.1} $W \in \Dme_{2}(\mcM)$ (see \Cref{D2}).
        \item \label{3.2} $\mathcal{L}W \leq K - W'$.
    \end{enumerate}  
\end{ass}

\begin{rem}
    \label{sigma-compact}
       Since $\mcM$ is a locally compact Polish space, if $A \subset \mcM$ is open, then there are compact sets $K_n \subset \mcM$ such that $K_n \subset K_{n+1}^\circ$ (where $K^\circ$ denotes the interior of $K$) and $\cup_n K_n = A$. When $A = \mcM$, the fact that $W$ is proper means that we may take $K_n = \{W \leq n\}$ (the sublevel sets).
\end{rem}

\begin{deff}
\label{meas}
    Let $P(\mcM)$ denote the set of all Borel probability measures on $\mcM$. 
    For $\mu \in P(\mcM)$, $S \subset \mcM$ measurable and $f:S \to \mathbb{R}$ measurable, we use the shorthand
$$
\mu f := \int_S f d\mu \,.
$$
    We endow $P(\mcM)$ with topology of weak convergence, that is, $\mu_n \to \mu$ if for all $f \in C_b(\mcM)$, $\mu_nf \to \mu f$. 
\end{deff}

\begin{deff} \label{inv-meas}
    For $F \subset \mcM$ a closed invariant set (see \Cref{inv-set}), let $P_{inv}(F) \subset P(\mcM)$ denote the set of invariant measures on $F$, that is $\mu \in P_{inv}(F)$ if $\mu(F) = 1$ and $\mu \Pp_tf = \mu f$ for all $f \in C_b(\mcM)$ and $t \geq 0$.
\end{deff}

\begin{rem}\label{sub-inv}
    By the Tietze extension theorem, $\mu \in P_{inv}(F)$ is equivalent to $\mu \in P(F)$ and $\mu \Pp_t f = \mu f$ for all $f \in C_b(F)$ and $t \geq 0$. Thus, if we view $\{X_t^x\}_{x \in F, t \geq 0}$ as a Feller Markov process on the Polish space $F$ there is no ambiguity in writing $P_{inv}(F)$.
\end{rem}

\begin{deff}
    \label{vanish}
    We say a function $f: A \to \R$ (where $A \subset \mcM$) vanishes over a function $g: \mcM \to (0,\infty)$ if there is a sequence of compact sets $(V_n)_{n \geq 1}$ such that $\cup_{n} V_n = \mcM$ and 
    $$
    \lim_{n \to \infty} \sup_{x \in A \setminus V_n} \frac{|f(x)|}{g(x)} \leq 0 \,.
    $$
Note that we have $\leq 0$ instead of $=0$ only because of the convention $\sup \emptyset = -\infty$. If $A \setminus V_n \neq \emptyset$ then the limit is equal to $0$.
\end{deff}

\begin{ass} \label{as4}
For $W'$ as in \Cref{as3}, 
    there is $V \in \Dme_2(\inv)$ such that
    \begin{enumerate}[label=(\roman*)]
        \item \label{4.1} For $x_n \in \inv$, $V(x_n) \to \infty$ implies $x_n \to \mcM_0$ (meaning $d(x_n,\mcM_0) \to 0$).
        \item \label{4.2} $\mathcal{L}V$ vanishes over $W'$.
        \item \label{4.3} $\mathcal{L}V$ extends to a continuous function $H: \mcM \to \mathbb{R}$ and there is a constant $\alpha > 0$ such that $\mu H \geq \alpha$ for all $\mu \in P_{inv}(\mcM_0)$.
    \end{enumerate}
\end{ass}

\begin{rem}\label{W'-vanish}
    In view of \Cref{vanish}, the density of $\inv$ implies that $H$, the continuous extension of $\mathcal{L}V$, also vanishes over $W'$. 
\end{rem}

\begin{rem}
    It is shown below in \Cref{inv-is-compact} that under our assumptions $H$ is $\mu$-integrable for all $\mu \in P_{inv}(\mcM)$, so the condition $\mu H \geq \alpha$ in \Cref{as4} \ref{4.3} makes sense.
\end{rem}

The function $V$ from \Cref{as4} could be called an ``average Lyapunov function." Heuristically, under our continuity assumptions, to assess if $\mcM_0$ is an ``attractor" in some sense, it should be enough to examine the dynamics of $X_t^x$ on $\mcM_0$.  \Cref{as4} \ref{4.3} asserts that on $\mcM_0$, the time-averaged value of $H$ should be at least $\alpha$, so if $X_t^x$ stays near $\mcM_0$ for a long time then $V(X_t^x)$ should grow at least as $t \mapsto \alpha t$, and consequently $V(X_t^x) \to \infty$ as $t \to \infty$. Then, by \Cref{as4} \ref{4.1},  $X_t^x$  approaches $\mcM_0$. To show that $X_t^x$ stays near $\mcM_0$ most of the time is not straightforward 
 and it is one of the major challenges of the present paper.  

The next assumption is to ensure that $V$ and $W$ have linearly bounded quadratic variation (see \Cref{lin-bdd}):

\begin{ass} \label{as5}
Let $W$ and $V$ be respectively as in \Cref{as3} and \Cref{as4}. Assume that 
there are continuous maps $U,U': \mcM \to [0,\infty)$ and a constant $K > 0$ such that:
   \begin{enumerate}[label=(\roman*)]
       \item \label{5.1} $U \in \Dme_+(\mcM)$ (see \Cref{D+}).
       \item \label{5.2} $\mathcal{L}U \leq K - U'$.
       \item \label{5.3} $\Gamma W \leq KU'$.
       \item \label{5.4} $\Gamma V \leq KU'$.
   \end{enumerate}
\end{ass}
\begin{rem}
    Note that we may without loss of generality assume that the constants $K$ 
    in \Cref{as3} and \Cref{as5} are the same, since increasing $K$ keeps the relevant inequalities valid.
\end{rem}

Finally, we list some consequences of our assumptions that will be useful for examples and also needed for the proof of \Cref{robust}.

\begin{lem}\label{inv-is-compact}
Suppose \Cref{as3} and \Cref{as5} \ref{5.1}-\ref{5.3} hold true and let 
$W', K$ be as in \Cref{as3}. Then for any $\mu \in P_{inv}(\mcM)$ 
we have $\mu W' \leq K$, and thus $P_{inv}(\mcM)$ is compact. 
Consequently, if $H: \mcM \to \R$ is a continuous function which 
vanishes over $W'$, then $H$ is $\mu$-integrable for all $\mu \in 
P_{inv}(\mcM)$. In addition, $\mu \mapsto \mu H$ is a continuous 
function on $P_{inv}(\mcM)$ and thus $\inf_{\mu \in P_{inv}
(\mcM_0)}\mu H > 0$ is equivalent to $\mu H > 0$ for all ergodic 
$\mu \in P_{inv}(\mcM_0)$.
\end{lem}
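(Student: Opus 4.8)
The plan is to establish the five assertions of the lemma in turn: (i) $\mu W'\le K$ for every $\mu\in P_{inv}(\mcM)$; (ii) compactness of $P_{inv}(\mcM)$; (iii) $\mu$-integrability of $H$; (iv) continuity of $\mu\mapsto\mu H$; and (v) the stated equivalence. Throughout, the standing hypotheses \Cref{as1} and \Cref{as2} are in force. The core of the argument is Step (i). First I would show that $\int_0^t\Pp_sU'(x)\,ds\le U(x)+Kt$ for all $x\in\mcM$, $t\ge0$: since $U\in\Dme_+(\mcM)$ with $\mathcal{L}U\le K-U'\le K$ and $U\ge0$ (\Cref{as5} \ref{5.1}, \ref{5.2}), the local martingale $M_\cdot^U(x)$ from \eqref{Mtf} is bounded below on each finite time interval, hence is a supermartingale with $\E[M_t^U(x)]\le M_0^U(x)=0$; combining this with $\mathcal{L}U\le K-U'$, $U\ge0$, and Tonelli's theorem gives the bound. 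In particular $\sup_{t\ge1}\tfrac1t\int_0^t\Pp_sU'(x)\,ds\le U(x)+K$, so by $\Gamma W\le KU'$ (\Cref{as5} \ref{5.3}) the function $W\in\Dme_2(\mcM)$ (\Cref{as3} \ref{3.1}) has linearly bounded quadratic variation (\Cref{lin-bdd}) with continuous majorant $C(x)=K(U(x)+K)$, and \Cref{strong-law} yields $M_t^W(x)/t\to0$ a.s.\ for every $x$. Next, writing $W(X_t^x)=W(x)+\int_0^t\mathcal{L}W(X_s^x)\,ds+M_t^W(x)$, dividing by $t$, and using $W\ge0$ together with $\mathcal{L}W\le K-W'$ (\Cref{as3} \ref{3.2}), I obtain $\limsup_{t\to\infty}\tfrac1t\int_0^tW'(X_s^x)\,ds\le K$ a.s., for \emph{every} $x\in\mcM$. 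Finally, fixing $\mu\in P_{inv}(\mcM)$ and letting $\Prb^\mu$ be the law of the process started from $\mu$, for each $N$ the process $Y_t^N:=\tfrac1t\int_0^t(W'\wedge N)(X_s)\,ds$ is bounded by $N$ and, by invariance of $\mu$, satisfies $\E^\mu[Y_t^N]=\mu(W'\wedge N)$ for all $t$; applying Fatou's lemma to $N-Y_t^N\ge0$ gives $\mu(W'\wedge N)\le\E^\mu\big[\limsup_tY_t^N\big]\le K$, and letting $N\to\infty$ by monotone convergence yields $\mu W'\le K$.

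For Steps (ii)--(iii): since $W'$ is proper, $\{W'\le R\}$ is compact and Markov's inequality gives $\mu(\{W'>R\})\le K/R$ uniformly over $\mu\in P_{inv}(\mcM)$, so $P_{inv}(\mcM)$ is tight, hence relatively compact by Prokhorov; it is also closed, since if $\mu_k\to\mu$ with each $\mu_k$ invariant then $\mu\Pp_tf=\lim_k\mu_k\Pp_tf=\lim_k\mu_kf=\mu f$ for all $f\in C_b(\mcM)$ by \Cref{as2}. Thus $P_{inv}(\mcM)$ is compact. Since $H$ vanishes over $W'$ (\Cref{vanish}), there is a compact $V$ with $|H|\le W'$ off $V$, so $|H|\le M+W'$ on $\mcM$ with $M:=\max_V|H|<\infty$, and therefore $\mu|H|\le M+K<\infty$ for all $\mu\in P_{inv}(\mcM)$.

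For Step (iv) the crucial point is uniform integrability of $H$ over $P_{inv}(\mcM)$: given $\varepsilon>0$, \Cref{vanish} yields a compact $V_\varepsilon$ with $|H|\le\varepsilon W'$ off $V_\varepsilon$ and $|H|\le M_\varepsilon<\infty$ on $V_\varepsilon$, so for $R>M_\varepsilon$ and every $\mu\in P_{inv}(\mcM)$ we have $\mu\big(|H|\,\mathbbm{1}_{\{|H|\ge R\}}\big)\le\varepsilon\,\mu W'\le\varepsilon K$. Then, for $\mu_k\to\mu$ in $P_{inv}(\mcM)$, decomposing $H=H^{(R)}+(H-H^{(R)})$ with $H^{(R)}:=\max(-R,\min(R,H))\in C_b(\mcM)$, we get $\mu_kH^{(R)}\to\mu H^{(R)}$ by weak convergence while $\mu_k|H-H^{(R)}|,\ \mu|H-H^{(R)}|\le\varepsilon K$ once $R>M_\varepsilon$; letting $R\to\infty$ and then $\varepsilon\to0$ gives $\mu_kH\to\mu H$, and since $P(\mcM)$ is metrizable this sequential continuity is continuity.

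Finally, Step (v): by \Cref{as1}, $\mcM_0$ is closed and invariant, so $P_{inv}(\mcM_0)=\{\mu\in P_{inv}(\mcM):\mu(\mcM_0)=1\}$ is closed in $P_{inv}(\mcM)$ (the map $\mu\mapsto\mu(\mcM_0)$ is upper semicontinuous for closed $\mcM_0$), hence compact and convex, and by Step (iv) the continuous functional $\mu\mapsto\mu H$ attains its minimum over $P_{inv}(\mcM_0)$ at some $\mu^*$. If $\nu H>0$ for every ergodic $\nu\in P_{inv}(\mcM_0)$, then the ergodic decomposition gives $\mu^*=\int\nu\,d\rho(\nu)$ with $\rho$ concentrated on ergodic measures, and $\mu^*H=\int\nu H\,d\rho(\nu)>0$ (passing $H$ through the barycenter is justified by $H\in L^1(\mu^*)$ as in Step (iii), and the integrand is $\rho$-a.s.\ positive); hence $\inf_{\mu\in P_{inv}(\mcM_0)}\mu H=\mu^*H>0$, while the converse is immediate since ergodic measures lie in $P_{inv}(\mcM_0)$. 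The step I expect to be the main obstacle is Step (i): the unboundedness of $W,W',U'$ rules out a naive application of Dynkin's formula, forcing the drift estimate to go through the martingale law of large numbers \Cref{strong-law} (and thus through the quadratic-variation control of \Cref{as5} \ref{5.1}--\ref{5.3}), together with a careful truncation-and-invariance argument to interchange limits and expectations; the remaining steps are routine once $\mu W'\le K$ and the pointwise bound $|H|\le M+W'$ are in hand.
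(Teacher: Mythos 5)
Your proof is correct, and the overall skeleton (Chebyshev tightness from $\mu W'\le K$, continuity of $\mu\mapsto\mu H$ from the fact that $H$ vanishes over $W'$, reduction to ergodic measures) coincides with the paper's. The difference is that the paper outsources both nontrivial ingredients: the bound $\mu W'\le K$ is cited to \cite[Theorem 2.2ii]{persistence}, whose argument runs through the empirical occupation measures, Birkhoff's ergodic theorem, and the ergodic decomposition theorem, and the continuity of $\mu\mapsto\mu H$ is cited to \cite[Proposition 4.15]{BenaimHurth22} (restated as \Cref{lem:bhcon}). You instead prove both from scratch. For $\mu W'\le K$ your route is genuinely different and arguably cleaner: after re-deriving \Cref{ineq1} for $U$ via the supermartingale property of the bounded-below local martingale $M^U$, and the pathwise bound $\limsup_t\frac1t\int_0^tW'(X_s^x)\,ds\le K$ for \emph{every} $x$ (which is exactly \Cref{occ-is-tight}), you pass to an arbitrary invariant $\mu$ by stationarity of $\E^{\mu}[(W'\wedge N)(X_s)]$, Fatou applied to $N-Y_t^N\ge 0$, and monotone convergence in $N$ --- no ergodic theorem and no ergodic decomposition are needed for this step, at the mild cost of the (correct, but worth stating) observation that a $\Prb^x$-a.s.\ statement holding for every $x$ holds $\Prb^{\mu}$-a.s.\ since $\Prb^{\mu}=\int\Prb^x\,d\mu(x)$. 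For the continuity of $\mu\mapsto\mu H$ you essentially reprove \Cref{lem:bhcon} by uniform integrability and truncation, which is fine. For the final equivalence the paper appeals to convexity of $P_{inv}(\mcM_0)$ with ergodic extreme points, whereas you apply the ergodic decomposition to a minimizer $\mu^*$ obtained by compactness; both are valid, and your justification for pushing $H$ through the barycenter ($H\in L^1(\mu^*)$ plus $\rho$-a.s.\ positivity of $\nu\mapsto\nu H$) is adequate. Net effect: your argument is more self-contained and more elementary where the paper leans on external references, while the paper's version is shorter precisely because those references do the work.
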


\begin{proof}
    For the proof of $\mu W' \leq K$ we refer an interested reader to 
    \cite[Theorem 2.2ii]{persistence}, and we remark that the argument follows from \Cref{lim-is-inv}, \Cref{occ-is-tight}, Birkhoff's ergodic theorem, and ergodic decomposition theorem. 

    The compactness of $P_{inv}(\mcM)$ is a consequence of the tightness of $P_{inv}(\mcM)$ which follows from Chebyshev inequality since $\mu W' \leq K$ and $W'$ is proper (see \Cref{proper}). By \cite[Proposition 4.15]{BenaimHurth22} (detailed in \Cref{lem:bhcon}), $\mu \mapsto \mu H$ is a continuous function of $\mu \in P_{inv}(\mcM)$, and so $\inf_{\mu \in P_{inv}(\mcM_0)}\mu H > 0$ is equivalent to $\mu H > 0$ for all $\mu \in P_{inv}(\mcM_0)$. It suffices to consider only ergodic $\mu$ since $P_{inv}(\mcM_0)$ is convex with extreme points being ergodic measures. 
\end{proof}

\section{Main Results} \label{main-results}
In this section we introduce the main theorems that are proved in the remainder of the paper. In particular, we claim that under 
\Cref{as1} -- \ref{as5}, 
 on the Markov process $X_t^x$,  the set $\mcM_0$ is an ``attractor" in the following sense. If an initial condition $y$ is close to $\mcM_0$, then with high probability $X_t^y$ approaches $\mcM_0$ as $t \to \infty$ with rate determined by $V$ and $\alpha$. For precise statements see \Cref{main} and \Cref{main2}. Since \Cref{main} is a direct consequence of \Cref{main2} we only prove the latter in \Cref{main-arguments} after proving preliminary lemmas in \Cref{general-facts} and \Cref{feller-stuff}.

The next main result, \Cref{main3},  asserts that if $\mcM_0$ satisfies some accessbility conditions, then almost surely $X_t^y$ approaches $\mcM_0$ as $t \to \infty$ for any $y \in \mcM_+$. 
 The proof that \Cref{main2} is a consequence of \Cref{main3} is given in \Cref{global}.

In many cases, we need to enlarge $\mcM_0$ in order to fully capture the dynamics of $X_t^x$ as $x \to \mcM_0$. For example, if $\mcM = \R^n$ and $\mcM_0$ is a single point, then it is natural to blow up $\mcM_0$ into a sphere $S^{n-1}$ and $\mcM$ into $S^{n-1} \times [0,\infty)$, essentially transforming the problem to polar coordinates. Such transformation is formalized in \Cref{quadruple-map}. The ``blow up'' procedure is used if  $\mathcal{L}V$ from \Cref{as4} cannot be extended continuously to all of $\mcM$ (in particular to $\mcM_0$), but it can be extended continuously to a larger ``blown up" space. Then \Cref{change-of-variables} applies and asserts that the conclusions of \Cref{main}, \Cref{main2}, and \Cref{main3} still hold.

The most crucial of all of our assumptions is \Cref{as4} \ref{4.3}, which states that $\inf_{\mu \in P_{inv}(\mcM_0)} \mu H > 0$. Then, informally,  \Cref{robust} claims that $(H, X_t^x) \mapsto \inf_{\mu \in P_{inv}(\mcM_0)} \mu H$ is (lower semi-)continuous.

Finally, in \Cref{compact} we state that several assumptions of the main results are immediately satisfied if  $\mcM$ is compact.
Let us proceed with the rigorous statements of the main results.

\begin{thm}
\label{main}
Suppose that \Cref{as1} -- \ref{as5} are valid for the Markov quadruple $(\mcM, \mcM_0, \inv, \{X_t^x\}_{x \in \mcM, t \geq 0})$.
    If $x \in \mcM_0$ is such that $\lim_{y \to x} V(y) = \infty$, then
    $$\lim_{y \to x, y \in \inv}\Prb\Big(\liminf_{t \to \infty} \frac{V(X_t^y)}{t} \geq \alpha\Big) = 1.$$
\end{thm}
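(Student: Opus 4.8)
The plan is to reduce \Cref{main} to the more technical \Cref{main2} (whose statement is not in the excerpt but is advertised as the engine of the theory), and, since only \Cref{main2} is proved directly later, here I will instead sketch the self-contained argument that makes the statement plausible from the assumptions already available. The overall strategy is a two-step scheme: (i) show that if $X_0^y$ starts near $\mcM_0$, then with high probability the process stays in a small neighborhood of $\mcM_0$ forever (stability in probability), and (ii) on that event, use the strong law for martingales (\Cref{strong-law}) together with ergodic control of $\int_0^t H(X_s)\,ds$ to conclude $\liminf_{t\to\infty} V(X_t^y)/t \ge \alpha$.

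First I would set up the martingale decomposition. By \Cref{as4}, $V \in \Dme_2(\inv)$ with continuous extension $H$ of $\mathcal{L}V$, and by \Cref{as5} \ref{5.4} together with \Cref{inv-is-compact}-type reasoning, $V$ has linearly bounded quadratic variation, so $M_t^V(y)/t \to 0$ a.s.\ by \Cref{strong-law}. Hence along any trajectory that remains in $\inv$,
\[
\frac{V(X_t^y)}{t} = \frac{V(y)}{t} + \frac{1}{t}\int_0^t H(X_s^y)\,ds + \frac{M_t^V(y)}{t} = \frac{1}{t}\int_0^t H(X_s^y)\,ds + o(1)\,.
\]
Thus the problem becomes: show that the empirical occupation measures $\mu_t = \frac{1}{t}\int_0^t \delta_{X_s^y}\,ds$ have all limit points in $P_{inv}(\mcM_0)$ with probability close to $1$ when $y$ is close to $x$, because then $\mu_t H \to \mu H \ge \alpha$ (using $\mu\mapsto \mu H$ continuous on the compact set $P_{inv}(\mcM)$ from \Cref{inv-is-compact}, and \Cref{as4} \ref{4.3}). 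The tightness of $(\mu_t)$ and invariance of its limit points follow from \Cref{as3} (the Lyapunov function $W$) via the cited results \Cref{lim-is-inv}, \Cref{occ-is-tight}.

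The main obstacle—and the part that genuinely requires \Cref{main2}—is establishing the stability-in-probability: that for $y$ near $x$, with probability approaching $1$, $X_t^y$ never leaves a prescribed small neighborhood of $\mcM_0$ (equivalently, $V(X_t^y)$ never drops below a large threshold). This is where the $C_b$-Feller continuity (\Cref{as2}) and continuity of the law in Skorokhod space enter: for initial conditions on $\mcM_0$ itself one controls $\int_0^t \mathcal{L}V(X_s)\,ds$ purely by ergodic theory (since $H$ is continuous on $\mcM_0$ and $\mu H \ge \alpha$), and then one transfers this control to nearby initial conditions $y \in \inv$ by continuous dependence. The delicate construction of stopping times $\tau_n$ described in the introduction—running the process until $V$ increases enough, otherwise stopping before it decreases too much, then returning to a fixed compact set—produces a discrete-time semimartingale $V(X_{\tau_n})$ with mean and variance growing linearly, which by the comparison with Brownian motion with drift (the "discrete-time semimartingale" result promised in \Cref{general-facts}) stays above any fixed level with overwhelming probability, uniformly for $y$ close to $x$. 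Granting \Cref{main2}, the statement of \Cref{main} follows by taking $y \to x$ and combining the stability event with the a.s.\ convergence $M_t^V(y)/t \to 0$ and the ergodic limit of $\mu_t H$. The quantitative rate $\alpha$ in the $\liminf$ is exactly $\inf_{\mu\in P_{inv}(\mcM_0)}\mu H$, inherited from \Cref{as4} \ref{4.3}.
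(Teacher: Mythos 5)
Your proposal takes exactly the paper's route: \Cref{main} is deduced as an immediate corollary of \Cref{main2}, and the bulk of your sketch (stopping times, stability in probability, the semimartingale comparison) is really a summary of the proof of \Cref{main2} rather than anything needed for the deduction itself. The only content specific to \Cref{main} — which you should make explicit instead of leaving at ``taking $y \to x$'' — is the choice $M = W(x)+1$ together with the observation that continuity of $W$ and the hypothesis $\lim_{y\to x}V(y)=\infty$ force every $y \in \inv$ sufficiently close to $x$ to lie in $\inv \cap \{V \ge D\} \cap \{W \le M\}$, so that \Cref{main2} applies with arbitrary $\delta>0$.
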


\begin{thm}\label{main2} 
    Suppose that \Cref{as1} -- \ref{as5} are valid for the Markov quadruple $(\mcM, \mcM_0, \inv, \{X_t^x\}_{x \in \mcM, t \geq 0})$.
    Then for every $M, \delta > 0$ there is $D > 0$ such that for any $y \in \inv \cap \{V \geq D\} \cap \{W \leq M\}$ we have 
    $$
    \Prb\Big(\liminf_{t \to \infty} \frac{V(X_t^y)}{t} \geq \alpha\Big) \geq 1 - \delta.
    $$
\end{thm}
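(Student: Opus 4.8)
The plan is to construct a sequence of stopping times $0 = \tau_0 \leq \tau_1 \leq \tau_2 \leq \cdots$ adapted to $\{\mathcal{F}_t\}$ such that the discrete-time process $Y_n \coloneqq V(X_{\tau_n}^y)$ behaves like a random walk with positive drift and linearly bounded increments. Once such a construction is available, a comparison with Brownian motion with drift (or a direct Azuma/Kolmogorov-type estimate applied to the martingale part of $Y_n$) yields that $\Prb(Y_n \to \infty \text{ and } Y_n \gtrsim \alpha \tau_n) \geq 1 - \delta$, and an argument interpolating between the stopping times (controlling how much $V(X_t^y)$ can dip between $\tau_n$ and $\tau_{n+1}$, using the martingale $M_t^V(y)$ from \eqref{Mtf} together with \Cref{as5} \ref{5.4} and the strong law \Cref{strong-law}) upgrades this to $\liminf_{t\to\infty} V(X_t^y)/t \geq \alpha$ on that event. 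The role of $D$ large is to guarantee that $X_0^y = y$ starts deep enough inside a neighborhood of $\mcM_0$ (where $V$ is large, hence by \Cref{as4} \ref{4.1} where $d(\cdot,\mcM_0)$ is small) that the "bad" event of $V(X_t^y)$ ever returning to moderate values has probability at most $\delta$; the role of $M$ is to fix a compact set $\{W \leq M\}$ on which the required uniform estimates hold.

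The construction of each step $\tau_{n+1} - \tau_n$ splits into two phases, as sketched in the introduction. In \textbf{Phase 1}, starting from a point $z$ near $\mcM_0$ (with $W(z)$ controlled), I run the process and wait until either $\int_0^t H(X_s^z)\,ds$ — a proxy for $V(X_t^z) - V(z)$ via the martingale $M_t^V$ — has increased by a fixed amount, or a moderate time budget is exhausted. The key input here is that for $x \in \mcM_0$ the ergodic properties of the empirical occupation measures $\mu_t$ (tight with limit points in $P_{inv}(\mcM_0)$ by \Cref{as3}, \Cref{as5}, via \Cref{lim-is-inv}, \Cref{occ-is-tight}) together with \Cref{as4} \ref{4.3} force $\frac{1}{t}\int_0^t H(X_s^x)\,ds \to \mu H \geq \alpha$; then the $C_b$-Feller continuity (\Cref{as2}, extended in \Cref{feller-stuff} to continuity of the law on Skorokhod space) transfers this to nearby initial conditions $z \in \inv$. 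This gives uniform-in-$z$ lower bounds on $\E[V(X_{\tau_1}^z) - V(z)]$ and, using \Cref{as5} \ref{5.4}, on $\E[(V(X_{\tau_1}^z) - V(z))^2]$, valid on a small neighborhood of each point of $\mcM_0$. In \textbf{Phase 2}, I let the process return to the fixed compact set $\{W \leq M'\}$ for suitable $M'$; the return time is controlled by \Cref{as3} \ref{3.2} ($\mathcal{L}W \leq K - W'$) via a standard supermartingale/Dynkin argument, and the martingale manipulations with $M_t^V(y)$ and its quadratic variation bound $\Gamma V \leq K U'$ (\Cref{as5} \ref{5.3}, \ref{5.4}, \ref{5.2}) ensure that the (possibly negative) excursion of $V$ during Phase 2 does not erase the gain from Phase 1 — in expectation and with high probability.

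The \textbf{main obstacle} is gluing the local estimates: because $V$ is not defined on $\mcM_0$, the uniform control on $\E[V(X_{\tau_1}) - V(z)]$ and its second moment is only available on a small neighborhood of each individual point of $\mcM_0$, and one must cover a \emph{compact} piece of (a neighborhood of) $\mcM_0$ by finitely many such neighborhoods to get genuinely uniform constants — this is exactly why Phase 2 forces a return to a compact set and why the statement carries the parameter $M$. A second, more technical, difficulty is establishing the needed continuity of $z \mapsto \mathrm{Law}(\{X_t^z\}_{t\geq0})$ in the Skorokhod topology for merely $C_b$-Feller processes (not $C_0$-Feller), which is the content of \Cref{feller-stuff} and relies on quasi-left-continuity (\Cref{quasi-left}) and the stability afforded by \Cref{as3}; one uses this to pass the Phase-1 ergodic estimate from $\mcM_0$ to a neighborhood. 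Finally, one must verify that the increments $Y_{n+1} - Y_n$ have enough integrability and a uniform positive lower bound on their conditional mean so that the random-walk comparison actually closes — here the linearly bounded quadratic variation of $V$ and $W$ (guaranteed by \Cref{as5} through \Cref{strong-law}) is what keeps the variance from blowing up, and one chooses $D$ large enough that the stopped process never leaves the regime where all these estimates apply except on an event of probability $\leq \delta$.
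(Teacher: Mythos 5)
Your construction of the step $\tau_{n+1}-\tau_n$ (a ``gain'' phase near $\mcM_0$ obtained from the ergodic behaviour on $\mcM_0$ transferred to nearby initial data via Skorokhod continuity, followed by a return to a fixed compact set, with the local estimates glued by compactness of $\mcM_0\cap\{W\leq M\}$) matches the paper's proof of \Cref{part1} quite closely, including the need for second-moment bounds so that \Cref{discrete-semimartingale-fact} applies. However, your final step contains a genuine gap: interpolating between the stopping times cannot produce the sharp rate $\alpha$. The sampled chain $Z_n=V(X_{\tau_n}^y)-V(y)$ has drift $b=\tfrac{\alpha}{8}\min_i T_i$ \emph{per step}, while the expected duration of a step is of order $(K+1)T_i+W$ (see \Cref{V-small}, \Cref{can-choose-M-big}, \Cref{sigma-small}): the Phase-1 gain is only $\tfrac{\alpha}{4}T$ over a time $\leq T$, a constant fraction of it is spent covering the Phase-2 loss, and Phase 2 adds time of order $W+KT$ during which no gain is guaranteed. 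So the drift per unit time of the sampled process is at best of order $\alpha/(K+1)$, and any direct comparison of $V(X_{\tau_n}^y)$ with $\alpha\tau_n$ fails. Relatedly, your claim that $D$ large keeps the process ``in the regime where the estimates apply'' is not available: the process genuinely leaves $\{W\leq M\}$, and while $W<M$ holds you only control $d(X_t^y,\mcM_0)$, not $V(X_t^y)/t$.

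The missing idea is an ergodic bootstrap. The paper uses the random-walk construction only to conclude the two qualitative facts in \Cref{part1}: $d(X_t^y,\mcM_0)\Id_{W(X_t^y)<M}\leq 1/M$ for all $t$, and $V(X_{t_n}^y)\to\infty$ along times with $W\leq M$. It then iterates this over the growing compacta $\{W\leq M+n\}$ (Section \ref{proof-of-main2}) to obtain $\lim_{M\to\infty}\limsup_t d(X_t^y,\mcM_0)\Id_{W(X_t^y)<M}=0$; the time spent outside $\{W\leq M\}$ is negligible by \Cref{occ-is-tight}. By \Cref{distance-small-makes-V-big} and \Cref{muH-big}, this forces every limit point of the empirical occupation measures $\mu_t^y$ to lie in $P_{inv}(\mcM_0)$, whence $\liminf_t\mu_t^y H\geq\alpha$ by \Cref{as4}~\ref{4.3}, and the strong law for $M_t^V$ (\Cref{strong-law}) converts this into $\liminf_t V(X_t^y)/t\geq\alpha$. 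It is this detour through invariant measures, not interpolation between the $\tau_n$, that recovers the constant $\alpha$; you would need to add this (or an equivalent mechanism) to close your argument.
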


\begin{rem}
    Note that \Cref{main2} implies \Cref{main} by setting $M = W(x) + 1$. Also, by \Cref{as4} \ref{4.1}, $V(X_t^y) \to \infty$ implies $X_t^y \to \mcM_0$ as $t \to \infty$, and therefore $\liminf_{t \to \infty} \frac{V(X_t^y)}{t} \geq \alpha$ could be replaced with $X_t^y \to \mcM_0$ in \Cref{main} and \Cref{main2}. Alternatively, using results in \Cref{empirical-facts} one could replace $\liminf_{t \to \infty} \frac{V(X_t^y)}{t} \geq \alpha$ by the statement that all limit points of the empirical occupation measures $\mu_t^y$ (defined in \Cref{occ-meas}) as $t \to \infty$ lie in $P_{inv}(\mcM_0)$.
\end{rem}

For our next result, we need to review the definition of accessibility. The equivalence of the listed conditions in 
\Cref{accessible}
is proved in \Cref{equal-acc}.

\begin{deff}\label{accessible}
      For $x \in \mcM$ and open $\U \subset \mcM$ we say $\U$ is accessible from $x$ if any of the following equivalent conditions hold:
      \begin{enumerate}
        \item[(1)]     $\int_0^\infty e^{-t}\Prb(X_t^x \in \U)dt > 0$.
        \item[(2)] There exists $t \geq 0$ such that $\Prb(X_t^x \in \U) > 0$.
        \item[(3)] $\Prb(\exists t \geq 0 \text{ such that } X_t^x \in \U) > 0$.
    \end{enumerate}
\end{deff}

\begin{thm}\label{main3}
Suppose that \Cref{as1} -- \ref{as5} are valid for the Markov quadruple $(\mcM, \mcM_0, \inv, \{X_t^x\}_{x \in \mcM, t \geq 0})$.
Assume additionally that $\mcM = \inv \cup \mcM_0$ and that every point $x \in \inv$ satisfies the following accessibility condition:
 \begin{equation}\label{acc}
        \exists M > 0 \text{ such that } \forall D > 0, \{V > D\} \cap \{W < M\} \text{ is accessible from } x \,.
    \end{equation}
    Then $$\Prb\Big(\liminf_{t \to \infty} \frac{V(X_t^x)}{t} \geq \alpha\Big) = 1$$ for all $x \in \inv$.
\end{thm}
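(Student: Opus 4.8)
The plan is to upgrade the local statement of \Cref{main2} to the global one by exploiting the accessibility hypothesis \eqref{acc} together with the Markov property via a renewal-type argument. First, fix $\delta \in (0,1)$. By \Cref{main2} there is a threshold $D_0 > 0$ (depending on some fixed $M$ and on, say, $\delta/2$) such that starting from any point in the ``good set'' $G \coloneqq \inv \cap \{V \geq D_0\} \cap \{W \leq M\}$ the event $A \coloneqq \{\liminf_{t\to\infty} V(X_t^y)/t \geq \alpha\}$ has probability at least $1-\delta/2$. Here I would also want $M$ large enough that \eqref{acc} holds with this $M$ for the starting point $x$, shrinking $\{W < M\} \subset \{W \le M\}$ if necessary; since \eqref{acc} provides \emph{some} $M$, and enlarging $M$ only makes the set bigger hence still accessible, there is no conflict. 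Now fix an arbitrary $x \in \inv$. The point is that, by accessibility and the equivalence in \Cref{accessible}, the open set $\{V > D_0\} \cap \{W < M\}$ — which is contained in $G$ up to the closed null-ish boundary, and in any case we may pick $D > D_0$ and use $\{V > D\}$ to be safely inside $\{V \ge D_0\}$ — is hit from $x$ with positive probability.

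The second step is to turn ``positive probability of hitting $G$'' into ``probability one of eventually hitting $G$, or else the process does something we can control.'' I would introduce the stopping time $\sigma \coloneqq \inf\{t \geq 0 : X_t^x \in G\}$ (with $X$ cadlag, $G$ is not open, but one can instead use the open set $\{V>D\}\cap\{W<M\}$ to define a genuine hitting time of an open set and note $X_\sigma$ lands there). On the event $\{\sigma < \infty\}$, the strong Markov property gives, conditionally on $\mathcal F_\sigma$, that $X_{\sigma+\cdot}^x$ is the process started from $X_\sigma^x \in G$, so $\Prb(A \mid \mathcal F_\sigma)\geq 1-\delta/2$ on $\{\sigma<\infty\}$, and hence
$$
\Prb(A) \;\geq\; \Prb(A \cap \{\sigma < \infty\}) \;\geq\; (1-\tfrac{\delta}{2})\,\Prb(\sigma < \infty).
$$
So it remains to show $\Prb(\sigma < \infty) = 1$, i.e. that the good set is reached almost surely. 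This is the renewal/zero-one part: accessibility gives only that from \emph{every} state in $\inv$ the good set is hit with positive probability, but a priori this probability could degenerate to $0$ along an escaping sequence of states. Here is where \Cref{as3} enters: the proper Lyapunov function $W$ with $\mathcal{L}W \le K - W'$ controls return times to compact sets and prevents $X_t^x$ from escaping to infinity, so the process stays, with high probability, in a compact region where a uniform lower bound on the hitting probability of $\{V>D\}\cap\{W<M\}$ can be extracted (using the Feller property, \Cref{as2}, and a compactness/continuity argument on $x \mapsto \Prb(X_{t_x}^x \in \U) $, suitably uniformized, possibly following the scheme already present in \Cref{global} / the proof that \Cref{main2}$\Rightarrow$\Cref{main3} is to be read in reverse). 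Iterating over disjoint time windows and using Borel–Cantelli gives $\Prb(\sigma<\infty)=1$, hence $\Prb(A) \ge 1-\delta/2$; letting $\delta \downarrow 0$ yields $\Prb(A)=1$.

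The main obstacle I anticipate is precisely the uniformity in the last step: converting pointwise accessibility into a genuine ``infinitely-often, with uniform positive probability'' statement. Pointwise positivity of $\Prb(X_{t_x}^x \in \U)$ does not immediately give a uniform lower bound over a compact set of starting points, because both the time $t_x$ and the probability may vary wildly and the semigroup is only $C_b$-Feller, not strong Feller. The way around it is to first use \Cref{as3} to confine the dynamics: there is a compact set $\K$ and a (large-probability) event on which $X_t^x$ spends a positive fraction of time in $\K$, and return times to $\K$ have controlled tails. On $\K$, one then needs a measurable-selection/compactness argument — or an appeal to the Krylov–Bogolyubov / ergodic machinery already set up in \Cref{empirical-facts} and \Cref{inv-is-compact} — to produce, on the event that $\sigma=\infty$, a limiting empirical occupation measure $\mu$ supported in $\inv \setminus (\{V>D\}\cap\{W<M\})$; but accessibility of $\{V>D\}\cap\{W<M\}$ from every point of $\supp\mu$, combined with invariance of $\mu$ and a standard support argument, contradicts $\mu(\{V>D\}\cap\{W<M\})=0$. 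This contradiction argument — ``an invariant measure cannot avoid an everywhere-accessible open set'' — is the cleanest route and sidesteps the need for explicit uniform hitting bounds; I expect the bulk of the work is in making the localization to $\K$ and the empirical-measure limit rigorous, all of which leans on \Cref{as3}, \Cref{as5}, and the results of \Cref{general-facts}.
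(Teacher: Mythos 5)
Your high-level architecture (apply \Cref{main2} on a good set $G = \inv \cap \{V>D\}\cap\{W<M\}$, then use accessibility and the strong Markov property at the hitting time $\sigma$ of $G$) matches the paper's strategy, but the step you yourself flag as the crux --- showing $\Prb(\sigma<\infty)=1$ --- contains a genuine gap, and the contradiction you propose for it does not exist. On $\{\sigma=\infty\}$ the limit points $\mu$ of the empirical occupation measures are indeed invariant and satisfy $\mu(G)=0$, but they are \emph{not} ``supported in $\inv\setminus G$'': they can perfectly well satisfy $\mu(\mcM_0)=1$. Since $G\subset\inv$, any invariant measure on $\mcM_0$ trivially avoids $G$, and accessibility of $G$ is only hypothesized from points of $\inv$, so no contradiction with \eqref{acc} arises. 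Worse, ``$\mu$ concentrated on $\mcM_0$'' is exactly the favorable scenario: by \Cref{muH-big} it is a.s.\ equivalent to $\liminf_t V(X_t^x)/t\ge\alpha$. So the correct outcome of your occupation-measure analysis is not $\Prb(\sigma=\infty)=0$ but rather a dichotomy: either every limit point charges only $\mcM_0$ (then \Cref{muH-big} gives the conclusion directly, without ever entering $G$), or some limit point gives positive mass to a compact subset of $\inv$, and only in that branch does accessibility enter. In particular your renewal bound $\Prb(A)\ge(1-\delta/2)\Prb(\sigma<\infty)$ must be supplemented by $\{\sigma=\infty\}\subset A$ (mod null sets); as written, your argument proves neither $\Prb(\sigma<\infty)=1$ nor the theorem.

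The branch where a limit point charges $\inv$ also needs more care than ``Borel--Cantelli over disjoint time windows'': pointwise accessibility gives no uniform-in-time hitting bound, and the paper circumvents this by sampling at Poisson times so that $\Prb(Y_{n+1}\in\Psi_{D,N}\mid Y_n)=F(Y_n)$ with $F(y)=\int_0^\infty e^{-t}\Prb(X_t^y\in\Psi_{D,N})\,dt$ lower semicontinuous (hence bounded below on the compact set visited infinitely often), followed by two applications of L\'evy's 0--1 law. A secondary looseness: \eqref{acc} provides an $M=M(x)$ depending on the starting point, and to run any argument uniformly over a compact set of states you need the single $N$ produced by \Cref{accessibility-conditions} via the return-time estimate of \Cref{return-time-fast}; ``enlarging $M$'' does not by itself deliver this uniformization, because the threshold $D$ from \Cref{main2} depends on $M$.
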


\begin{rem}\label{suff-acc}
A sufficient condition for \eqref{acc} to hold for $x \in \inv$ is that there is $y \in \mcM_0$ such that $\lim_{z \to y, z \in \inv} V(z) = \infty$ and $y$ is accessible from $x$ in the sense that for all open sets $\U$ containing $y$, $\U$ is accessible from $x$. Indeed, set $M = W(y) + 1$ and for any $D > 0$ choose $\epsilon > 0$ small enough that $\U \coloneqq \{d(y,\cdot) < \epsilon\}$, the ball of radius $\epsilon$ around $y$, satisfies $\U \cap \inv \subset \{V > D\} \cap \{W < M\}$. We have by assumption that $\U$ is accessible from $x$. Since $x \in \inv$ and $\inv$ is invariant by \Cref{as1}, we have $\Prb(X_t^x \in \U) \leq \Prb(X_t^x \in \{V > D\} \cap \{W < M\})$ for any $t > 0$, and thus $\{V > D\} \cap \{W < M\}$ is also accessible from $x$. 
\end{rem}

\begin{deff}\label{quadruple-map}
    Let $(\ncN, \ncN_0, \ncN_+, \{Y_t^y\}_{y \in \ncN, t \geq 0})$ and $(\mcM, \mcM_0, \inv, \{X_t^x\}_{x \in \mcM, t \geq 0})$
    be Markov quadruples (see \Cref{quadruple}). Then
     $\pi: \ncN \to \mcM$ is called a quadruple map if $\pi$ is a continuous surjection such that:
    \begin{enumerate}
        \item $\ncN_0 = \pi^{-1}(\mcM_0)$ and $\ncN_+ = \pi^{-1}(\inv)$.
        \item $\pi^{-1}(\K)$ is compact for all $\K \subset \mcM$ compact.
        \item For $y_n \in \ncN_+$, $\pi(y_n) \to \mcM_0$ implies $y_n \to \ncN_0$.
        \item For all $y \in \ncN$, $\pi(Y_\cdot^y) = X_\cdot^{\pi(y)}$ almost surely.
    \end{enumerate}
\end{deff}

\begin{rem}\label{pushforward-is-inv}
        It follows from \Cref{quadruple-map} that if $(\ncN, \ncN_0, \ncN_+, \{Y_t^y\}_{y \in \ncN, t \geq 0})$ is a Feller quadruple (\Cref{feller-quadruple}) then so is $(\mcM, \mcM_0, \inv, \{X_t^x\}_{x \in \mcM, t \geq 0})$, but we do not use this fact. In this case, if $\mu \in P_{inv}(\ncN)$ (respectively $P_{inv}(\ncN_0)$) then it standard to prove from \Cref{quadruple-map} that the pushforward measure $\pi^* \mu$ (given by $\pi^* \mu f = \mu f \circ \pi$ for $f \in C_b(\mcM)$) is in $P_{inv}(\mcM)$ (respectively in $P_{inv}(\mcM_0)$).
\end{rem}

\begin{thm}\label{change-of-variables}
    Let $(\ncN, \ncN_0, \ncN_+, \{Y_t^y\}_{y \in \ncN, t \geq 0})$ and $(\mcM, \mcM_0, \inv, \{X_t^x\}_{x \in \mcM, t \geq 0})$ be  Feller quadruples (see \Cref{feller-quadruple}) and $\pi: \ncN \to \mcM$ is a quadruple map.
    Then  \Cref{main2} (and thus its corollaries \Cref{main} and \Cref{main3}) remain valid with \Cref{as4} \ref{4.3} replaced with the assumption that:
    \begin{equation}\label{as4.3-sub}
    \parbox{0.8\textwidth}{
    $\mathcal{L}V \circ \pi$ extends to a continuous function $H: \ncN \to \mathbb{R}$ and there is a constant $\alpha > 0$ such that $\mu H \geq \alpha$ for all $\mu \in P_{inv}(\ncN_0)$.}    
    \end{equation}

\end{thm}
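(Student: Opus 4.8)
The plan is to reduce \Cref{change-of-variables} to \Cref{main2} itself, by transporting the entire hypothesis package up through the quadruple map $\pi$, applying \Cref{main2} on $\ncN$, and pushing the conclusion back down. Concretely, I would set $\tilde W := W\circ\pi$, $\tilde W' := W'\circ\pi$, $\tilde U := U\circ\pi$, $\tilde U' := U'\circ\pi$ on $\ncN$, and $\tilde V := V\circ\pi$ on $\ncN_+ = \pi^{-1}(\inv)$, and show that $(\ncN, \ncN_0, \ncN_+, \{Y_t^y\}_{y\in\ncN,t\ge0})$ together with this pulled-back Lyapunov data satisfies \Cref{as1}--\Cref{as5}, with \Cref{as4} \ref{4.3} now supplied verbatim by the replacement hypothesis \eqref{as4.3-sub}. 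The workhorse identity throughout is the intertwining relation $\E[g(\pi(Y_t^y))] = \E[g(X_t^{\pi(y)})] = (\Pp_t g)(\pi(y))$ for $g\in C_b(\mcM)$, which is immediate from property (4) of \Cref{quadruple-map}.

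The first step is to show that $f\circ\pi$ inherits the domain membership of $f$, with $\mathcal{L}(f\circ\pi) = (\mathcal{L}f)\circ\pi$ and $\Gamma(f\circ\pi) = (\Gamma f)\circ\pi$. Evaluated along the path of $Y^y$, property (4) gives $M_t^{f\circ\pi}(y) = M_t^{f}(\pi(y))$, where the right side is the corresponding functional evaluated on $\pi(Y_\cdot^y)$, a process with the law of $X_\cdot^{\pi(y)}$; hence it is a cadlag (local, square-integrable) martingale for the natural filtration of $\pi(Y_\cdot^y)$, and the Markov property of $Y$ (exactly as in the proof of \Cref{martingale}, localizing where needed) upgrades it to the ambient filtration of $\ncN$. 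Continuity of $(\mathcal{L}f)\circ\pi$ and $(\Gamma f)\circ\pi$ is clear, and the carré-du-champ identity $\Gamma(f\circ\pi) = \mathcal{L}((f\circ\pi)^2) - 2(f\circ\pi)\mathcal{L}(f\circ\pi) = (\mathcal{L}f^2)\circ\pi - 2(f\mathcal{L}f)\circ\pi = (\Gamma f)\circ\pi$ follows by composing with $\pi$. Thus $\tilde W\in\Dme_2(\ncN)$, $\tilde V\in\Dme_2(\ncN_+)$, and $\tilde U\in\Dme_+(\ncN)$.

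Given this, verifying \Cref{as1}--\Cref{as5} on $\ncN$ is bookkeeping. \Cref{as1}: $\ncN_0 = \pi^{-1}(\mcM_0)$ and $\ncN_+ = \pi^{-1}(\inv)$ are invariant since $\pi(Y_t^y) = X_t^{\pi(y)}$ and $\mcM_0,\inv$ are invariant; \Cref{as2} holds because $\ncN$ is a Feller quadruple. For \Cref{as3}, $\tilde W,\tilde W'$ map to $[1,\infty)$, are continuous, and $\{\tilde W\le m\} = \pi^{-1}(\{W\le m\})$ is compact by property (2), so both are proper, while $\mathcal{L}\tilde W = (\mathcal{L}W)\circ\pi \le (K-W')\circ\pi = K - \tilde W'$. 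For \Cref{as4}: \ref{4.1} follows from \Cref{as4} \ref{4.1} on $\mcM$ together with property (3) of the quadruple map; \ref{4.2} follows because the compacts $V_n$ witnessing ``$\mathcal{L}V$ vanishes over $W'$'' pull back to compacts $\pi^{-1}(V_n)$ covering $\ncN$ with $\ncN_+\setminus\pi^{-1}(V_n) \subseteq \pi^{-1}(\inv\setminus V_n)$, so the relevant supremum over $\ncN$ is dominated by the one over $\mcM$; and \ref{4.3} is exactly \eqref{as4.3-sub}, since $\mathcal{L}\tilde V = (\mathcal{L}V)\circ\pi$. \Cref{as5} follows by composing the inequalities $\mathcal{L}U\le K-U'$, $\Gamma W\le KU'$, $\Gamma V\le KU'$ with $\pi$. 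Now \Cref{main2} applies to $\ncN$: for every $M,\delta>0$ there is $D>0$ such that $\Prb\big(\liminf_{t\to\infty}\tilde V(Y_t^y)/t\ge\alpha\big)\ge1-\delta$ for all $y\in\ncN_+\cap\{\tilde V\ge D\}\cap\{\tilde W\le M\}$. Finally, given $x\in\inv\cap\{V\ge D\}\cap\{W\le M\}$, surjectivity of $\pi$ lets me pick $y\in\pi^{-1}(x)\subseteq\pi^{-1}(\inv)=\ncN_+$, whence $\tilde V(y)=V(x)\ge D$, $\tilde W(y)=W(x)\le M$, and $\tilde V(Y_t^y)=V(\pi(Y_t^y))=V(X_t^x)$ a.s.\ turns the bound into $\Prb\big(\liminf_{t\to\infty} V(X_t^x)/t\ge\alpha\big)\ge1-\delta$ --- which is \Cref{main2} for $\mcM$. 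Since \Cref{main} and \Cref{main3} are pure consequences of \Cref{main2} (for \Cref{main3}, together with accessibility hypotheses imposed directly on $X$), they transfer automatically.

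The main obstacle I anticipate is precisely the filtration point in the second paragraph: showing that $M^{f\circ\pi}(y)$ is a (local, square-integrable) martingale for the \emph{ambient} filtration of the quadruple $\ncN$, rather than merely for the natural filtration of the path $\pi(Y_\cdot^y)$, since a priori $\pi$ may collapse information. This is resolved by combining the Markov property of $Y$ with the intertwining relation, mirroring the proof of \Cref{martingale}; everything else is routine manipulation of compositions with $\pi$ and the axioms of a quadruple map.
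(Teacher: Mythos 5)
Your proposal is correct and follows essentially the same route as the paper's proof: pull back $W,W',U,U',V$ through $\pi$, verify \Cref{as1}--\ref{as5} for the quadruple on $\ncN$ (with \eqref{as4.3-sub} supplying \Cref{as4} \ref{4.3}), apply \Cref{main2} there, and transfer the conclusion back via surjectivity of $\pi$ and $\pi(Y_\cdot^y)=X_\cdot^{\pi(y)}$. The only difference is that you spell out the domain-membership and filtration details that the paper dispatches as an immediate consequence of \Cref{D+}, \Cref{D2}, and \Cref{quadruple-map}.
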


\begin{rem}
    The assumptions of \Cref{change-of-variables} are somewhat redundant, since by \Cref{feller-quadruple}, \Cref{as1} -- \ref{as2} are automatically satisfied.
\end{rem}

\begin{proof}
As an consequence of  \Cref{D+}, \Cref{D2},  and \Cref{quadruple-map} we immediately obtain that for any $f: \mcM \to \R$ it holds that
    \begin{enumerate}
        \item[(1)] If $f \in \Dme_+(\mcM)$, then $f \circ \pi \in \Dme_+(\ncN)$ and we may take $\mathcal{L}(f \circ \pi) = \mathcal{L}f \circ \pi$.
        \item[(2)] If $f \in \Dme_{2}(\mcM)$, then $f \circ \pi \in \Dme_{2}(\ncN)$ and we may take $\mathcal{L}(f \circ \pi) = \mathcal{L}f \circ \pi$, $\Gamma (f \circ \pi) = \Gamma f \circ \pi$.
    \end{enumerate}
    
Also, (1) and (2) hold with $\mcM,\ncN$ replaced with $\inv,\ncN_+$.

Suppose \Cref{as1} -- \ref{as5} are valid for $(\mcM, \mcM_0, \inv, \{X_t^x\}_{x \in \mcM, t \geq 0})$,  where we replace \Cref{as4} \ref{4.3} with \eqref{as4.3-sub}. Then it follows easily from \Cref{feller-quadruple} and \Cref{quadruple-map} that
\Cref{as1} -- \ref{as5} are satisfied for $(\ncN, \ncN_0, \ncN_+, \{Y_t^y\}_{y \in \ncN, t \geq 0})$ with $W,W',U,U'$ from \Cref{as3} and \Cref{as5} being replaced respectively by $W \circ \pi,W' \circ \pi,U \circ \pi,U' \circ \pi,V \circ \pi$.

Thus, by \Cref{main2} for every $M > 0, \delta > 0$ there is a $D > 0$ such that for any $y \in \ncN_+ \cap \{V \circ \pi \geq D\} \cap \{W \circ \pi \leq M\}$ we have 
    $$
    \Prb\Big(\liminf_{t \to \infty} \frac{V(\pi(Y_t^y))}{t} \geq \alpha\Big) \geq 1 - \delta.
    $$
    If $x \in \inv \cap \{V \geq D\} \cap \{W \leq M\}$ then by surjectivity of $\pi$ and $\ncN_+ = \pi^{-1}(\inv)$ there is some $y \in \ncN_+ \cap \{V \circ \pi \geq D\} \cap \{W \circ \pi \leq M\}$ such that $\pi(y) = x$ and so the claim follows by noting that $\pi(Y_\cdot^y) = X_\cdot^x$.
\end{proof}

\begin{thm}\label{robust}
    Let $\Theta$ be a compact metric space and let 
    $\{\{X_{\theta,t}^x\}_{x \in \mcM, t \geq 0}\}_{\theta \in \Theta}$ be a collection of Markov processes. For each $\theta \in \Theta$ denote $P_{inv}^\theta(\mcM)$ the set of invariant probability measures  (see \Cref{inv-meas}) for $X_{\theta,t}^x$. Define the Markov process $Y_t^{(\theta,x)}$ on $\Theta \times \mcM$ by $Y_t^{(\theta,x)} = (\theta, X_{\theta,t}^x)$.
    
    Suppose that $Y_t^{(\theta,x)}$ satisfies \Cref{as2} and that each $\{X_{\theta,t}^x\}_{x \in \mcM, t \geq 0}$ satisfies \Cref{as3} and \Cref{as5} \ref{5.1}-\ref{5.3} with the same $W,W',K,U,U'$ (independent of $\theta$).
    Then if $\theta_n, \theta_\infty \in \Theta$ and $\mu_n \in P^{\theta_n}_{inv}(\mcM)$ are such that $\theta_n \to \theta_\infty$, then there is a subsequence of $\mu_n$ converging to some $\mu \in P_{inv}^{\theta_\infty}(\mcM)$.
    
    Thus, if $H: \Theta \times \mcM \to \R$ is a continuous function which vanishes over $(\theta,x) \mapsto W'(x)$ (see \Cref{vanish}), then for all $\theta_\infty \in \Theta$ we have
    \begin{equation}\label{eq:lgbm}
      \liminf_{\theta \to \theta_\infty} \inf_{\mu \in P^\theta_{inv}(\mcM)} \mu H_\theta \geq \inf_{\mu \in P^{\theta_\infty}_{inv}(\mcM)} \mu H_{\theta_\infty} \,,  
    \end{equation}
    where $H_\theta(x) \coloneqq H(\theta,x)$. In other words, the function $\theta \mapsto \inf_{\mu \in P^\theta_{inv}(\mcM)} \mu H_\theta$ is lower semicontinuous.
\end{thm}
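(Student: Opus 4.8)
The plan is to establish the compactness assertion first and then deduce the lower semicontinuity \eqref{eq:lgbm} from it, together with a continuity-of-integration argument for the (possibly unbounded) function $H$. For the compactness assertion, the starting point is that since each $\{X_{\theta,t}^x\}_{x\in\mcM,t\geq0}$ satisfies \Cref{as3} and \Cref{as5} \ref{5.1}--\ref{5.3} with the \emph{same} data (in particular the same $W',K$), applying \Cref{inv-is-compact} separately to each process yields $\nu W'\leq K$ for \emph{every} $\theta\in\Theta$ and every $\nu\in P^\theta_{inv}(\mcM)$; hence $\mu_nW'\leq K$ for all $n$. Passing to the product space via the measures $\delta_{\theta_n}\otimes\mu_n$ on $\Theta\times\mcM$, properness of $W'$ (\Cref{proper}) and compactness of $\Theta$ give tightness of $\{\delta_{\theta_n}\otimes\mu_n\}_n$ by Chebyshev's inequality, so along a subsequence $\delta_{\theta_n}\otimes\mu_n\to\nu_\infty$ weakly; since $\theta_n\to\theta_\infty$, the $\Theta$-marginal of $\nu_\infty$ is $\delta_{\theta_\infty}$, so $\nu_\infty=\delta_{\theta_\infty}\otimes\mu$ and $\mu_n\to\mu$ weakly along that subsequence. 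To see $\mu\in P^{\theta_\infty}_{inv}(\mcM)$: for $g\in C_b(\mcM)$ and $f(\theta,x)\coloneqq g(x)$ one has $\mathcal{P}^Y_t f(\theta,x)=\E[g(X^x_{\theta,t})]=\mathcal{P}^\theta_t g(x)$, and from this and $\mu_n\in P^{\theta_n}_{inv}(\mcM)$ a short computation shows each $\delta_{\theta_n}\otimes\mu_n$ is invariant for $Y$; since $Y$ is Feller (\Cref{as2}) one passes to the limit in $(\delta_{\theta_n}\otimes\mu_n)\mathcal{P}^Y_t f=(\delta_{\theta_n}\otimes\mu_n)f$ over $f\in C_b(\Theta\times\mcM)$ to conclude that $\delta_{\theta_\infty}\otimes\mu$ is invariant for $Y$, and taking $f$ to depend only on $x$ gives $\mu\mathcal{P}^{\theta_\infty}_t g=\mu g$ for all $g\in C_b(\mcM)$, i.e. $\mu\in P^{\theta_\infty}_{inv}(\mcM)$.

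For \eqref{eq:lgbm}, write $g(\theta)\coloneqq\inf_{\nu\in P^\theta_{inv}(\mcM)}\nu H_\theta$ and pick $\theta_n\to\theta_\infty$ along which $g(\theta_n)\to\liminf_{\theta\to\theta_\infty}g(\theta)\eqqcolon\ell$. We may assume $\ell<\infty$ (otherwise \eqref{eq:lgbm} is trivial), so for large $n$ the set $P^{\theta_n}_{inv}(\mcM)$ is nonempty, and $g(\theta_n)>-\infty$ because $H$ vanishes over $(\theta,x)\mapsto W'(x)$ together with $\nu W'\leq K$ forces $|\nu H_\theta|\leq\varepsilon K+C_\varepsilon$ uniformly. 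Choose $\mu_n\in P^{\theta_n}_{inv}(\mcM)$ with $\mu_nH_{\theta_n}\leq g(\theta_n)+1/n$. By the first part a subsequence satisfies $\mu_{n_k}\to\mu\in P^{\theta_\infty}_{inv}(\mcM)$; granting the key convergence $\mu_{n_k}H_{\theta_{n_k}}=(\delta_{\theta_{n_k}}\otimes\mu_{n_k})H\to(\delta_{\theta_\infty}\otimes\mu)H=\mu H_{\theta_\infty}$, the sandwich $g(\theta_{n_k})\leq\mu_{n_k}H_{\theta_{n_k}}\leq g(\theta_{n_k})+1/n_k$ forces $\ell=\lim_kg(\theta_{n_k})=\mu H_{\theta_\infty}\geq g(\theta_\infty)$, which is \eqref{eq:lgbm}. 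Note that this route avoids having to know that the infimum defining $g(\theta)$ is attained.

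The hard part is the key convergence $\mu_{n_k}H_{\theta_{n_k}}\to\mu H_{\theta_\infty}$, since $H$ is only continuous (not bounded) and the $\mu_{n_k}$ are invariant for \emph{different} processes $X_{\theta_{n_k}}$, so one cannot simply invoke weak convergence nor reduce to a single fixed process. I would prove it exactly as in the proofs of \Cref{inv-is-compact} and \cite[Proposition 4.15]{BenaimHurth22} (see \Cref{lem:bhcon}): that argument uses only weak convergence plus a uniform Lyapunov moment bound, and both hold here on $\Theta\times\mcM$, namely $\delta_{\theta_{n_k}}\otimes\mu_{n_k}\to\delta_{\theta_\infty}\otimes\mu$ weakly and $\int W'(x)\,d(\delta_{\theta_{n_k}}\otimes\mu_{n_k})\leq K$. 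Concretely, one exhausts $\Theta\times\mcM$ by compacts on which $H$ is bounded, inserts continuous cut-offs subordinate to them, bounds the tail by $|H|\leq\varepsilon W'$ outside a large compact so that the tail contribution is $\leq\varepsilon K$ uniformly in $k$, applies weak convergence on the compact core, and lets the compacts grow. The essential feature making the whole scheme go through is that the Lyapunov data in \Cref{as3} and \Cref{as5} is assumed independent of $\theta$, so the single bound $\mu_nW'\leq K$ holds simultaneously for the entire sequence.
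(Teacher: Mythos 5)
Your proposal is correct and follows essentially the same route as the paper: lift to the product measures $\delta_{\theta_n}\otimes\mu_n$ on $\Theta\times\mcM$, use the $\theta$-independent Lyapunov bound $\mu_nW'\leq K$ for tightness, identify the limit as an invariant measure for $Y$ via its Feller property and project back, then deduce \eqref{eq:lgbm} from the continuity of $\nu\mapsto\nu H$ supplied by \Cref{lem:bhcon} on the product space. The only cosmetic difference is that the paper applies \Cref{inv-is-compact} directly to $Y$ (with the lifted Lyapunov functions) where you re-derive the tightness and the $\mu_n W'\leq K$ bound by hand, and you spell out the near-minimizer/sandwich argument that the paper leaves implicit.
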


\begin{rem}
    The condition that $\{Y_t^{(\theta,x)}\}_{(\theta,x) \in \Theta \times \mcM}$ satisfies \Cref{as2} is equivalent to the following convergence of the semigroups $\Pp_t^\theta$ of $\{X_{\theta,t}^x\}_{x \in \mcM, t \geq 0}$: if $f \in C_b(\mcM)$ and $\Theta \ni \theta_n \to \theta \in \Theta$, then $\Pp_t^{\theta_n} f \to \Pp_t^{\theta_{\infty}} f$ uniformly on compact subsets of $\mcM$.
\end{rem}

\begin{proof}
    We use $\tilde{W}$, $\tilde{W'}$, $\tilde{U}$, $\tilde{U'}$ to denote the maps defined by precomposing $W$, $W'$, $U$, $U'$ with $(\theta,x) \mapsto x$ so that by the definition $Y_t^{(\theta,x)} = (\theta, X_{\theta,t}^x)$ it follows that \Cref{as3} and \Cref{as5} \ref{5.1}-\ref{5.3} are satisfied for $Y_t^{(\theta,x)}$ with $\tilde{W}$, $\tilde{W'}$, $\tilde{U}$, $\tilde{U'}$ in place of $W$, $W'$, $U$, $U'$.
    Let $\theta_n, \theta_\infty \in \Theta$ and $\mu_n \in P^{\theta_n}_{inv}(\mcM)$ be such that $\theta_n \to \theta_\infty$. Let $P_{inv}(\Theta \times \mcM)$ denote the set of invariant measures for $\{Y_t^{(\theta,x)}\}_{(\theta,x) \in \Theta \times \mcM}$. It readily follows from \Cref{inv-meas} and $Y_t^{(\theta,x)} = (\theta, X_{\theta,t}^x)$ that $\nu_n \coloneqq \delta_{\theta_n} \otimes \mu_n \in P_{inv}(\Theta \times \mcM)$, where $\delta_{\theta_n}$ is the dirac delta measure at $\theta_n$ and $\otimes$ denotes the usual product of measures.
    By \Cref{inv-is-compact} we have
    that there is a subsequence of $\nu_n$ converging to some $\nu \in P(\Theta \times \mcM)$. Without loss of generality we  assume $\nu = \lim_{n \to \infty} \nu_n$ (otherwise pass to a sub-sequence). By Portmanteau theorem, $\nu = \delta_{\theta_\infty} \otimes \mu$ for some $\mu \in P(\mcM)$ such that $\mu = \lim_{n \to \infty} \mu_n$. We conclude the proof by showing that $\mu \in P_{inv}^{\theta_\infty}(\mcM)$, or equivalently that $\nu \in P_{inv}(\Theta \times \mcM)$. Indeed,  if $f \in C_b(\Theta \times \mcM)$ and $t \geq 0$ then $$\nu f = \lim_{n \to \infty} \nu_n f = \lim_{n \to \infty} \nu_n \Pp^Y_t f = \nu \Pp^Y_t f \,,$$ where $\Pp^Y_t$ denotes the semigroup of $\{Y_t^{(\theta,x)}\}_{(\theta,x) \in \Theta \times \mcM}$.

    The claim \eqref{eq:lgbm} is a consequence of the first claim and \Cref{inv-is-compact}.
\end{proof}

\begin{lem}\label{compact}
For compact $\mcM$,  \Cref{as3}, \Cref{as5}, and \Cref{as4} \ref{4.2} are  satisfied if $\sup_{x \in \inv} \Gamma V(x) < \infty$.
\end{lem}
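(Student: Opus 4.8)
The plan is to use the fact that on a compact metric space every continuous nonnegative function is automatically proper, since its sublevel sets are closed subsets of $\mcM$ and hence compact; consequently the ``geometric'' Lyapunov functions appearing in \Cref{as3} and \Cref{as5} only need to control $\mathcal{L}$ and $\Gamma$, and on a compact space this can be arranged with constants. Concretely I would take $W \equiv W' \equiv 1$, $U \equiv 0$, $U' \equiv C$, where $C := \sup_{x \in \inv}\Gamma V(x)$ (finite by hypothesis and nonnegative since $\Gamma V \geq 0$ by \Cref{D2}), and $K := \max(1, C)$, using the same $K$ for \Cref{as3} and \Cref{as5} as permitted by the remark following \Cref{as5}.

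The first step is to verify \Cref{as3}. The constant functions $W, W'$ map into $[1,\infty)$ and are proper on compact $\mcM$. To see $W \in \Dme_2(\mcM)$, observe that for a constant function the natural choice $\mathcal{L}W \equiv 0$, $\Gamma W \equiv 0$ works: then $M_t^W(x) = 1 - 1 - 0 \equiv 0$, which is a cadlag square integrable martingale, and the process in \eqref{Mtf-quad-var} is also identically $0$, hence a martingale; here I rely on the fact that the extended-domain definitions \Cref{D+} and \Cref{D2} demand only (local) martingale properties, which the zero process trivially satisfies. Finally $\mathcal{L}W = 0 \leq K - 1 = K - W'$ because $K \geq 1$.

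The second step is to verify \Cref{as5}. Exactly as above, $U \equiv 0 \in \Dme_+(\mcM)$ with $\mathcal{L}U \equiv 0$, so $\mathcal{L}U = 0 \leq K - C = K - U'$ since $K \geq C$; the bound $\Gamma W = 0 \leq K U'$ is trivial; and on $\inv$ we have $\Gamma V \leq C \leq K\,C = K U'$ using $K \geq 1$. This is the one and only place where the hypothesis $\sup_{x \in \inv}\Gamma V(x) < \infty$ is used: it is precisely what allows $U'$ to be chosen constant, since otherwise $\Gamma V$ could grow without bound near $\mcM_0$ and no continuous function on $\mcM$ could dominate it.

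The last step is \Cref{as4} \ref{4.2}, that $\mathcal{L}V$ vanishes over $W'$ in the sense of \Cref{vanish}. This is vacuous on a compact space: take the constant sequence $V_n := \mcM$, so that $\cup_n V_n = \mcM$ and $\inv \setminus V_n = \emptyset$ for every $n$, whence $\sup_{x \in \inv \setminus V_n}|\mathcal{L}V(x)|/W'(x) = \sup\emptyset = -\infty \leq 0$. I do not expect any real obstacle in this proof; the only point requiring (minimal) care is confirming that constant functions genuinely belong to $\Dme_+(\mcM)$ and $\Dme_2(\mcM)$, which, as noted, reduces to the triviality that the identically zero process is a martingale.
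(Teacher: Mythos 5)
Your proof is correct and follows essentially the same route as the paper: on a compact space constants are proper, constants lie in the extended domains with $\mathcal{L}$ and $\Gamma$ equal to zero, and the ``vanishes over'' condition is vacuous by taking $V_n = \mcM$. The paper simply chooses $W = W' = U = U' \equiv 1$ with $K = 1 + \sup_{x\in\inv}\Gamma V(x)$ instead of your $U\equiv 0$, $U'\equiv C$, $K=\max(1,C)$; both choices verify the assumptions identically.
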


\begin{proof}
    If $\mcM$ is compact then constant functions $g(x) = c > 0$ are proper (as defined in \Cref{proper}) and all functions vanish over $g$ (take $V_n = \mcM$ in \Cref{vanish}). Thus, we may simply take $W,U,W',U' \equiv 1$ and $K = 1 + \sup_{x \in \inv} \Gamma V(x)$.
\end{proof}

\section{General Facts}\label{general-facts}

This section contains statements and proofs of some technical results related to the martingales $M_t^W,M_t^V,M_t^U$ (see \eqref{Mtf}), the empirical occupation measures (see \Cref{occ-meas}), and discrete semimartingales. Recall that functions 
$W, V$, and $U$ were defined in \Cref{as3}--\ref{as5}. In \Cref{martingale-inequalities} we prove estimates on stopping times that are frequently used in \Cref{feller-stuff} and \Cref{main-arguments}. In \Cref{empirical-facts} we show that almost surely the empirical occupation measures are tight and that all of their limit points (with time approaching to infinity) are invariant measures. We also  obtain a sufficient conditions that imply  \Cref{main2} and these conditions are verified below in \Cref{part1}. 
 Finally, in \Cref{semimg} we recall and prove some basic facts about discrete-time (semi-)martingales which serve as the motivation for the proof of \Cref{part1}. In the entire section we fix a Markov quadruple $(\mcM, \mcM_0, \inv, \{X_t^x\}_{x \in \mcM, t \geq 0})$ satisfying \Cref{as1}--\ref{as5}.

\subsection{Optional Stopping Inequalities} \label{martingale-inequalities}

\begin{lem} \label{ineq1}
Let $W, W'$ and $U, U'$ be as in \Cref{as3} and \Cref{as5} respectively. 
    For any $x \in \mcM$ and a stopping time $\tau$ such that $\E[\tau] < \infty$, we have
    $$
    \E\Big[W(X_{\tau}^x) + \int_0^{\tau} W'(X_s^x)ds\Big] 
    \leq W(x) + K\E[\tau].
    $$ 
    In particular, $\Pp_tW + \int_0^t \Pp_sW'ds \leq W + Kt$. The same inequalities hold with $W,W'$ replaced by $U,U'$ respectively.
\end{lem}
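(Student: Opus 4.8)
The plan is to apply the optional stopping theorem to the martingale $M^W_\cdot(x)$ from \eqref{Mtf} at the bounded stopping times $t\wedge\tau$, and then send $t\to\infty$. Since $W\in\Dme_2(\mcM)$ by \Cref{as3} \ref{3.1}, the process $M^W_\cdot(x)$ is a cadlag martingale, so optional stopping at the bounded stopping time $t\wedge\tau$ gives $\E[M^W_{t\wedge\tau}(x)]=0$ for every $t\ge 0$ (as $M^W_0(x)=0$). To sidestep any integrability worry about the integrand $\mathcal{L}W$, I would first rewrite the defining identity of $M^W_{t\wedge\tau}(x)$ as
$$W(X^x_{t\wedge\tau})+\int_0^{t\wedge\tau}\big(K-\mathcal{L}W(X^x_s)\big)\,ds \;=\; W(x)+K(t\wedge\tau)+M^W_{t\wedge\tau}(x)\,,$$
and observe that by \Cref{as3} \ref{3.2} the left-hand side is a sum of nonnegative terms ($W\ge 1$ and $K-\mathcal{L}W\ge W'\ge 0$), whereas the right-hand side is integrable (a finite constant, a bounded random variable, and the value $M^W_{t\wedge\tau}(x)$ of a martingale at a bounded stopping time). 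Hence the left-hand side is integrable with the same expectation, and dropping $K-\mathcal{L}W\ge W'$ yields
$$\E\big[W(X^x_{t\wedge\tau})\big]+\E\Big[\int_0^{t\wedge\tau}W'(X^x_s)\,ds\Big]\;\le\; W(x)+K\,\E[t\wedge\tau]\;\le\; W(x)+K\,\E[\tau]\,.$$

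It remains to let $t\to\infty$. Because $\E[\tau]<\infty$ forces $\tau<\infty$ a.s., we have $X^x_{t\wedge\tau}=X^x_\tau$ for all $t\ge\tau$, so $W(X^x_{t\wedge\tau})\to W(X^x_\tau)$ a.s.\ and Fatou's lemma gives $\liminf_{t\to\infty}\E[W(X^x_{t\wedge\tau})]\ge\E[W(X^x_\tau)]$; simultaneously $\int_0^{t\wedge\tau}W'(X^x_s)\,ds\uparrow\int_0^{\tau}W'(X^x_s)\,ds$, so monotone convergence gives $\E[\int_0^{t\wedge\tau}W'(X^x_s)\,ds]\to\E[\int_0^{\tau}W'(X^x_s)\,ds]$. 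Combining these with the inequality above proves the first assertion. The ``in particular'' statement is the special case $\tau\equiv t$, where Tonelli's theorem (legitimate since $W'\ge 0$) rewrites $\E[\int_0^t W'(X^x_s)\,ds]=\int_0^t\Pp_sW'(x)\,ds$.

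For $U,U'$ the argument is identical, using \Cref{as5} \ref{5.1}--\ref{5.2} (so that $K-\mathcal{L}U\ge U'\ge 0$ and $U\ge 0$), with one caveat: \Cref{as5} \ref{5.1} only provides that $M^U_\cdot(x)$ is a cadlag \emph{local} martingale. I would therefore fix a localizing sequence $\sigma_k\uparrow\infty$, run the displayed computation with $t\wedge\tau$ replaced by $t\wedge\tau\wedge\sigma_k$ (for which $M^U_{\cdot\wedge\sigma_k}(x)$ is a true martingale, so optional stopping applies), and then let $k\to\infty$ via Fatou and monotone convergence exactly as in the limit $t\to\infty$ above; this recovers the inequality with $t\wedge\tau$, after which one concludes as before. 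The only real care needed anywhere is this localization and the bookkeeping of keeping the nonnegative quantities ($W\ge 0$ and $K-\mathcal{L}W\ge W'\ge 0$, resp.\ their $U$-counterparts) grouped on one side so that every expectation is taken of a genuinely nonnegative or genuinely integrable random variable; I do not expect a deeper obstacle.
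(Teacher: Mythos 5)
Your proposal is correct and follows essentially the same route as the paper: optional stopping of $M^W$ (resp.\ a localized $M^U$) at a truncated stopping time, the drift bound $\mathcal{L}W \le K - W'$ to control signs, and a passage to the limit via Fatou and monotone convergence. The only cosmetic difference is that the paper localizes along $\tau_n \wedge \tau$ and applies Fatou using the integrable minorant $-W(x)-K\tau$, whereas you truncate at $t\wedge\tau$ and rearrange so that all terms on one side are nonnegative; both handle the unboundedness of $\tau$ equally well, and your explicit localization for $U$ (where $\Dme_+$ only gives a local martingale) is exactly what is needed.
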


\begin{proof}
 Let $M_t \coloneqq M_t^W(x)$ be the local martingale defined in \eqref{Mtf}. Let $(\tau_n)_{n \in \N}$ being a localizing sequence, meaning $\tau_n$ is an increasing sequence of bounded stopping times such that $\tau_n \uparrow \infty$ almost surely and $M_{t \wedge \tau_n}$ is a martingale for all $n$. Then, by the 
optional stopping theorem and the definition \eqref{Mtf}, for each $n \geq 1$ we have 
$$
0 = \E[M_{\tau_n \wedge \tau}] = \E\Big[W(X_{\tau_n \wedge \tau}^x) - W(x) - \int_0^{\tau_n \wedge \tau} \mathcal{L}W(X_s^x)ds\Big].
$$
By \Cref{as3} we have $W \geq 1$ a d $\mathcal{L}W \leq K - W' \leq K$, and therefore
$$
 W(X_{\tau_n \wedge \tau}^x) - W(x) - \int_0^{\tau_n \wedge \tau} \mathcal{L}W(X_s^x)ds \geq -W(x) - K(\tau_n \wedge \tau) \geq -W(x) - K\tau\,.
$$
Since $\E[W(x) + K\tau] < \infty$,  Fatou's lemma and 
$X^x_{\tau_n \wedge \tau} \to X^x_{\tau}$ almost surely as $n \to \infty$ give 
$$
0 \geq \E\Big[W(X_{\tau}^x) - W(x) - \int_0^{\tau} \mathcal{L}W(X_s^x)ds\Big]
\,.
$$ Using $\mathcal{L}W \leq K - W'$ again, we have
$$0 \geq \E\Big[W(X_{\tau}^x) - W(x) -K\tau + \int_0^{\tau} W'(X_s^x)ds\Big]
\,.$$
Adding $W(x) + K\E[\tau] < \infty$ to both sides proves the first claim.

Setting $\tau =t$ gives 
    $$
    \E\Big[W(X_t^x) + \int_0^t W'(X_s^x)ds\Big] \leq W(x) + Kt.
    $$ 
    By definition of $\Pp_t$ we have
    $\E[W(X_t^x)] = \Pp_tW(x)$,  and consequently
    $$
    \E\Big[\int_0^t W'(X_s^x)ds\Big] = \int_0^t \E[W'(X_s^x)]ds = \int_0^t \Pp_sW'(x)ds
    $$ 
    by Tonelli's theorem and the definition of $\Pp_s$, proving the second claim. The proof for $U,U'$ is analogous with \Cref{as3} replaced by \Cref{as5}.
\end{proof}

\begin{cor}
\label{V-W-linearly-bdd}
For any $x \in \mcM$ and any stopping time $\tau$ with $\E[\tau] < \infty$ we have 
    \begin{equation}\label{eq:sqbom}
    \E[M^W_{\tau}(x)^2] = \E\Big[\int_0^{\tau} \Gamma W(X_s^x)ds\Big] \leq K(U(x) + K\E[\tau]) \,,
    \end{equation}
    where the martingale $M_t^f(x)$ was defined in \eqref{Mtf}. Similarly, for any $x \in \inv$ and any stopping time $\tau$ with $\E[\tau] < \infty$ we have 
    $$
    \E[M^V_{\tau}(x)^2] = \E\Big[\int_0^{\tau} \Gamma V(X_s^x)ds\Big] \leq K(U(x) + K\E[\tau]).
    $$
Specifically,  $V$ and $W$ have linearly bounded quadratic variation (see \Cref{lin-bdd}).
\end{cor}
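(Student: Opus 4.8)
The plan is to identify $\E[M^W_\tau(x)^2]$ with $\E\big[\int_0^\tau \Gamma W(X_s^x)\,ds\big]$ via an optional-stopping argument for the martingale in \eqref{Mtf-quad-var}, and then to control this quantity through $\Gamma W \leq KU'$ (\Cref{as5} \ref{5.3}) together with the $(U,U')$-version of \Cref{ineq1}. First I would note that, since $W \in \Dme_2(\mcM)$ by \Cref{as3} \ref{3.1}, both $M^W_\cdot(x)$ and the process $N_t \coloneqq (M^W_t(x))^2 - \int_0^t \Gamma W(X_s^x)\,ds$ are genuine (not merely local) martingales for each $x \in \mcM$; the same holds for the $V$-analogues for each $x \in \inv$, the trajectory $X^x_\cdot$ remaining in $\inv$, the domain of $\Gamma V$, because $\inv$ is invariant (\Cref{as1}). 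Since stopping a cadlag martingale at a stopping time preserves the martingale property, $(N_{t\wedge\tau})_{t\geq0}$ is a martingale, and taking expectations gives the identity $\E[(M^W_{t\wedge\tau}(x))^2] = \E\big[\int_0^{t\wedge\tau}\Gamma W(X_s^x)\,ds\big]$ for every $t\geq0$.

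Next I would let $t\to\infty$. On the right, monotone convergence yields $\E\big[\int_0^{t\wedge\tau}\Gamma W(X_s^x)\,ds\big] \uparrow \E\big[\int_0^\tau \Gamma W(X_s^x)\,ds\big]$, and by \Cref{as5} \ref{5.3} and \Cref{ineq1} applied to $(U,U')$ together with $U\geq0$, this limit is at most $K\,\E\big[\int_0^\tau U'(X_s^x)\,ds\big] \leq K\big(U(x) + K\E[\tau]\big) < \infty$. Hence $(M^W_{t\wedge\tau}(x))_{t\geq0}$ is an $L^2$-bounded martingale, so it converges in $L^2$; as $\E[\tau]<\infty$ forces $\tau<\infty$ almost surely, its limit is $M^W_\tau(x)$, and passing to the limit in the displayed identity produces $\E[M^W_\tau(x)^2] = \E\big[\int_0^\tau \Gamma W(X_s^x)\,ds\big] \leq K\big(U(x)+K\E[\tau]\big)$. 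The argument for $V$ is identical, with \Cref{as5} \ref{5.4} replacing \ref{5.3} and $x$ restricted to $\inv$.

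Finally, to obtain linearly bounded quadratic variation (\Cref{lin-bdd}) I would take $\tau = t$ deterministic: Tonelli's theorem and the definition of $\Pp_s$ give $\int_0^t \Pp_s\Gamma W(x)\,ds = \E\big[\int_0^t\Gamma W(X_s^x)\,ds\big] \leq KU(x)+K^2 t$, so $\frac1t\int_0^t\Pp_s\Gamma W(x)\,ds \leq KU(x)+K^2$ for all $t\geq1$, which is the required bound with $C(x)\coloneqq KU(x)+K^2$ (continuous since $U$ is); the same $C$ works for $\Gamma V$ on $\inv$, and, $\inv$ being dense, $\overline{\inv}=\mcM$, so $C$ is defined and continuous on $\overline{\inv}$. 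The only delicate point — and it is a mild one — is the passage from the finite-time identity to the identity at the possibly unbounded $\tau$: one cannot merely cite optional stopping at $\tau$, but must use the $L^2$-boundedness from \Cref{ineq1} to get true $L^2$-convergence, since Fatou's lemma alone would give only the inequality $\E[M^W_\tau(x)^2]\leq \E\big[\int_0^\tau\Gamma W(X_s^x)\,ds\big]$, not the stated equality.
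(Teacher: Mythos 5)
Your proposal is correct and follows essentially the same route as the paper: optional stopping applied to the martingale in \eqref{Mtf-quad-var}, the bound $\Gamma W \leq KU'$ combined with \Cref{ineq1} to get uniform $L^2$-boundedness of $M^W_{t\wedge\tau}(x)$, the martingale convergence theorem to pass to the limit (rather than Fatou, exactly as you flag), and Tonelli's theorem for the linearly-bounded-quadratic-variation conclusion. No gaps.
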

\begin{proof}
We only prove the claim for $W$ as the proof is analogous for $V$. 

Since $W \in \Dme_2(\mcM)$ (see \Cref{as3}),  $M_t^W(x)^2 - \int_0^t \Gamma W(X_s^x) ds$ is a martingale by \Cref{D2}. By the Optional stopping theorem 
$$
\E\Big[(M_{t \wedge \tau}^W(x))^2 - \int_0^{t \wedge \tau} \Gamma W(X_s^x) ds\Big] = 0.
$$ 
By \Cref{as5}, $\Gamma W \leq KU'$ and so \Cref{ineq1} gives \begin{equation} \label{eq:gamma-W-bound}
\begin{aligned}
\E[(M_{t \wedge \tau}^W(x))^2] &= \E\Big[\int_0^{t \wedge \tau} \Gamma W(X_s^x) ds\Big]  \\
&\leq
K\E\Big[\int_0^{t \wedge \tau}  U'(X_s^x) ds\Big] \leq
K(U(x) + K\E[t \wedge \tau]).
\end{aligned}
\end{equation}
Thus, 
$$
\sup_{t \geq 0} \E\Big[(M_{t \wedge \tau}^W(x))^2\Big] \leq K(U(x) + K\E[\tau]) < \infty
$$ 
and by the Martingale convergence theorem, $M_{t \wedge \tau}^W(x) \to M_{\tau}^W(x)$ in $L^2$. Passing $t \to \infty$ in \eqref{eq:gamma-W-bound} implies that 
$$
\E[M^W_{\tau}(x)^2] = \E\Big[\int_0^{\tau} \Gamma W(X_s^x)ds\Big] \leq K(U(x) + K\E[\tau])
$$ 
and \eqref{eq:sqbom} follows.
To show that $W$ has linearly bounded quadratic variation (see \Cref{lin-bdd}), we note that for $x \in \mcM$, by \eqref{eq:gamma-W-bound} with $\tau = t$ and Tonelli's theorem 
$$
\sup_{t \geq 1} \frac{1}{t}\int_0^t \Pp_s\Gamma W(x) ds \leq \sup_{t \geq 1} \frac{K(U(x) + Kt)}{t} \leq KU(x) + K^2 \,,
$$
as desired.
\end{proof}

\begin{cor}
    \label{V-is-integral-of-H}
    If $x \in \inv$ and $\tau$ is a stopping time such that $\E[\tau] < \infty$, then  
    $$
    \E\Big[\int_0^{\tau} |H(X_s^x)|ds\Big] < \infty \qquad \textrm{and} \quad 
    \E[V(X_{\tau}^x) - V(x)] = \E\Big[\int_0^{\tau} H(X_s^x)ds\Big].$$
\end{cor}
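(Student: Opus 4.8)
The plan is to first bound $|H|$ by a constant multiple of $W'$ --- which, combined with \Cref{ineq1}, gives the claimed integrability --- and then to read off the identity from the optional stopping theorem applied to the square integrable martingale $M^V_\cdot(x)$ of \eqref{Mtf}, after passing to the limit in a stopped version.

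For the integrability, I would invoke \Cref{W'-vanish}, which says that $H$ vanishes over $W'$ in the sense of \Cref{vanish}. Taking $\epsilon = 1$ there produces a compact set $V_N \subseteq \mcM$ with $|H(y)| \le W'(y)$ for all $y \notin V_N$; on the compact set $V_N$ the continuous function $H$ is bounded, say by $C_N$, and since $W' \ge 1$ this gives $|H| \le C_N W'$ on $V_N$, hence $|H| \le C W'$ on all of $\mcM$ with $C := \max(1, C_N)$. Since $x \in \inv$ and $\inv$ is invariant by \Cref{as1}, \Cref{ineq1} (applied to $W, W'$) then yields
$$
\E\Big[\int_0^{\tau} |H(X^x_s)|\,ds\Big] \;\le\; C\,\E\Big[\int_0^{\tau} W'(X^x_s)\,ds\Big] \;\le\; C\big(W(x) + K\E[\tau]\big) \;<\; \infty ,
$$
which is the first assertion.

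For the identity, observe that since $\inv$ is invariant we have $\mathcal{L}V(X^x_s) = H(X^x_s)$ along the trajectory, so by \Cref{D2} the process $M^V_t(x) = V(X^x_t) - V(x) - \int_0^t H(X^x_s)\,ds$ is a cadlag square integrable martingale with $M^V_0(x) = 0$. For each $t \ge 0$ the stopping time $\tau \wedge t$ is bounded, so optional stopping gives $\E[M^V_{\tau\wedge t}(x)] = 0$, that is,
$$
\E[V(X^x_{\tau \wedge t})] \;=\; V(x) + \E\Big[\int_0^{\tau \wedge t} H(X^x_s)\,ds\Big],
$$
where every term is finite by the previous step and \Cref{V-W-linearly-bdd}. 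Since $\E[\tau] < \infty$ forces $\tau < \infty$ almost surely, as $t \to \infty$ each of $\int_0^{\tau\wedge t} H(X^x_s)\,ds$ and $M^V_{\tau\wedge t}(x)$ is eventually constant in $t$ (equal to its value at $t = \tau$); the former is dominated by the integrable $\int_0^{\tau}|H(X^x_s)|\,ds$, and $\{M^V_{\tau \wedge t}(x)\}_{t \ge 0}$ is an $L^2$-bounded martingale by \Cref{V-W-linearly-bdd}, so both converge in $L^1$, and hence so does $V(X^x_{\tau\wedge t}) = V(x) + \int_0^{\tau\wedge t}H(X^x_s)\,ds + M^V_{\tau\wedge t}(x)$, with $L^1$-limit $V(X^x_\tau)$. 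Passing to the limit in the displayed identity, together with $\E[M^V_\tau(x)] = \lim_{t\to\infty}\E[M^V_{\tau\wedge t}(x)] = 0$, gives $\E[V(X^x_\tau) - V(x)] = \E\big[\int_0^{\tau} H(X^x_s)\,ds\big]$, as desired.

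The one point that needs care is the passage to the limit in $\E[V(X^x_{\tau\wedge t})]$: because $V$ is neither bounded nor of fixed sign, this interchange is not automatic, and it is precisely here that \Cref{V-W-linearly-bdd} is essential --- the decomposition of $V(X^x_{\tau\wedge t})$ into the constant $V(x)$, the integral term controlled in the first step, and the stopped martingale $M^V_{\tau\wedge t}(x)$ controlled by the $L^2$ bound reduces the convergence to facts already in hand. Everything else is routine optional stopping and dominated convergence.
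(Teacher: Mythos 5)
Your proof is correct and follows essentially the same route as the paper: both arguments obtain the integrability of $\int_0^\tau |H(X_s^x)|\,ds$ from the fact that $H$ vanishes over $W'$ (so $|H|\lesssim W'$, or $|H|\le A+bW'$ in the paper) combined with \Cref{ineq1}, and both obtain $\E[M^V_\tau(x)]=0$ from the uniform $L^2$ bound on the stopped martingale supplied by \Cref{V-W-linearly-bdd} together with martingale convergence. The only difference is presentational --- you spell out the optional stopping at $\tau\wedge t$ and the termwise $L^1$ passage to the limit, where the paper invokes the $L^2$ martingale convergence theorem directly --- and your handling of the delicate step (the convergence of $\E[V(X^x_{\tau\wedge t})]$) is exactly the paper's mechanism.
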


\begin{proof}
Recall from \Cref{as4} that $V \in \Dme_2(\inv)$ and $\mathcal{L}V$ is the restriction of $H$ to $\inv$. Hence, by \Cref{D2} and \Cref{V-W-linearly-bdd}, for each $t \geq 0$ we have that
  $$
  M_t \coloneqq M_{\tau \wedge t}^V(x) = V(X_{\tau \wedge t}^x) - V(x) - \int_0^{\tau \wedge t} H(X_s^x)ds
  $$ 
  is a square-integrable martingale with  
 $$
 \sup_{t \geq 0} \E[M_t^2] \leq K(U(x) + K\E[\tau]).
 $$ 

By Martingale convergence theorem, $M_t \to V(X_{\tau}^x) - V(x) - \int_0^{\tau} H(X_s^x)ds$ in $L^2$ as $t \to \infty$ and thus

 \begin{equation}\label{eq:smim}
 0 = \E\Big[V(X_{\tau}^x) - V(x) - \int_0^{\tau} H(X_s^x)ds \Big] \,.    
 \end{equation}
  Since $H$ vanishes over $W'$ (see \Cref{W'-vanish}, \Cref{vanish}), there are constants $A > 0, b > 0$ such that $|H| \leq A + bW'$. Consequently, \Cref{ineq1} gives 
 \begin{equation}\label{eq:puboh}
 \begin{aligned}
      \E\Big[\int_0^{\tau} |H(X_s^x)|ds\Big] &\leq A\E[\tau] + b\E\Big[\int_0^{\tau} W'(X_s^x)ds\Big] \\
 &\leq bW(x) + (A+bK)\E[\tau] < \infty,
 \end{aligned}
 \end{equation}
 which proves the claim after adding $\E[\int_0^{\tau} H(X_s^x)ds]$ to both sides of \eqref{eq:smim}.
\end{proof}

\subsection{Empirical Occupation Measures} \label{empirical-facts}

The empirical occupation measures as defined below in \Cref{occ-meas} capture the behavior of $X_t^x$ averaged over time, and are central to the proof of \Cref{main2}.

\begin{deff}
\label{occ-meas}
    For any $x \in \mcM$ and $t > 0$ let $\mu_t^x$ denote the empirical occupation measure 
    \begin{equation}\label{eq:docm}
    \mu_t^x(\omega) \coloneqq \frac{1}{t}\int_0^t \delta_{X_s^x(\omega)}ds,    
    \end{equation}
    where $\delta_y$ is the Dirac  measure concentrated at $y$, that is, $\delta_y f = f(y)$. Note that $\omega \in \Omega$, and therefore $\mu_t^x$ is a random measure which depends on $\omega$, but often this dependence is suppressed.
\end{deff}

\begin{rem}
    Since the paths $t \mapsto X_t^x$ are cadlag a.s., almost surely all continuous functions on $\mcM$ (in particular $W'$ and $H$) are $\mu_t^x$-integrable for all $t > 0$.
\end{rem}

For what follows, recall \Cref{meas} and  \Cref{inv-meas}.

\begin{lem} \label{lim-is-inv}
   If $x \in \mcM$ (respectively $x \in \mcM_0$), then almost surely the following holds. For every sequence $t_n \to \infty$ such that $\mu_{t_n}^x(\omega)$ converges to some $\mu \in P(\mcM)$, then $\mu \in P_{inv}(\mcM)$ (respectively $\mu \in P_{inv}(\mcM_0)$).\end{lem}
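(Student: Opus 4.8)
The plan is to show that any weak limit $\mu$ of $\mu_{t_n}^x$ satisfies $\mu \Pp_s f = \mu f$ for all $f \in C_b(\mcM)$ and all $s \geq 0$, which is exactly the definition of $\mu \in P_{inv}(\mcM)$ in \Cref{inv-meas}. First I would fix $f \in C_b(\mcM)$ and $s \geq 0$, and use the semigroup/homogeneity property from \Cref{semigroup} together with the Markov property to compare $\mu_t^x f$ and $\mu_t^x (\Pp_s f)$. Concretely, by Fubini/Tonelli and the definition \eqref{eq:docm},
\[
\mu_t^x (\Pp_s f) = \frac{1}{t}\int_0^t \Pp_s f(X_r^x)\, dr = \frac{1}{t}\int_0^t \E\big[f(X_{r+s}^x)\mid \mathcal F_r\big]\, dr,
\]
so taking expectations and changing variables $r \mapsto r - s$ shows
\[
\E\big[\mu_t^x(\Pp_s f)\big] - \E\big[\mu_t^x f\big] = \frac{1}{t}\E\Big[\int_t^{t+s} f(X_r^x)\,dr - \int_0^s f(X_r^x)\,dr\Big],
\]
whose absolute value is bounded by $\tfrac{2s\|f\|}{t} \to 0$. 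This is the ``in expectation'' version; the pathwise statement requires a bit more care, which I address next.

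To get the almost-sure statement, I would instead argue pathwise. For a fixed bounded continuous $f$, consider the additive functional $t \mapsto \int_0^t \big(\Pp_s f(X_r^x) - f(X_r^x)\big)\,dr$. Using the martingale $N_t := f(X_{t}^x) \text{-type decomposition}$ — more precisely, applying the Markov property as above but keeping the conditional expectations — one writes $\int_0^t \Pp_s f(X_r^x)\,dr = \int_0^t \E[f(X_{r+s}^x)\mid\mathcal F_r]\,dr$, and $\int_0^t f(X_{r+s}^x)\,dr - \int_0^t \E[f(X_{r+s}^x)\mid \mathcal F_r]\,dr$ is (the integral of) a martingale difference; by a standard maximal inequality / law of large numbers for such continuous-time martingale-like objects (the same tool used for \Cref{strong-law}, via Doob's inequality and Borel–Cantelli along a dyadic subsequence), this difference divided by $t$ tends to $0$ a.s. Since $\big|\int_0^t f(X_{r+s}^x)\,dr - \int_0^t f(X_r^x)\,dr\big| \le 2s\|f\|$, we conclude that almost surely
\[
\lim_{t\to\infty} \big(\mu_t^x(\Pp_s f) - \mu_t^x f\big) = 0 \qquad \text{for this fixed } f, s.
\]
Then along the subsequence $t_n$ on which $\mu_{t_n}^x \to \mu$ weakly, both $\mu_{t_n}^x f \to \mu f$ and $\mu_{t_n}^x(\Pp_s f) \to \mu(\Pp_s f)$ (the latter because $\Pp_s f \in C_b(\mcM)$ by the Feller property \Cref{as2}), so $\mu \Pp_s f = \mu f$.

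The remaining issue is that the null set on which the above fails a priori depends on $f$ and $s$, whereas the lemma asserts a single almost-sure event working for all limit points simultaneously. I would handle this by a separability argument: since $\mcM$ is Polish and locally compact, $C_0(\mcM)$ (and hence a dense-enough subalgebra of $C_b(\mcM)$ determining weak convergence) is separable, so I can fix a countable family $\{f_k\}$ that is convergence-determining, and fix a countable dense set of times $s$; intersecting countably many null sets gives one good event. On that event, $\mu \Pp_s f_k = \mu f_k$ for all $k$ and all rational $s$, then for all $s \ge 0$ by right-continuity of $s \mapsto \Pp_s f_k$ (which follows from \Cref{as2}, $\Pp_s f \to f$ pointwise, plus dominated convergence against the finite measure $\mu$ — using tightness to pass Pp_s f_k through), and finally for all $f \in C_b(\mcM)$ by density. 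For the $x \in \mcM_0$ case, one additionally uses that $\mcM_0$ is invariant (\Cref{as1}), so $X_r^x \in \mcM_0$ for all $r$ a.s., hence every $\mu_t^x$ and every limit point is supported on the closed set $\mcM_0$, giving $\mu \in P_{inv}(\mcM_0)$.

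I expect the main obstacle to be precisely this interchange-of-quantifiers point: upgrading the ``fixed $f$, a.s.'' statement to an ``a.s., all $f$'' statement, and in particular checking that a countable convergence-determining class suffices and that the continuity in $s$ can be pushed through the limiting measure $\mu$ (which requires knowing $\mu$ is a genuine probability measure, i.e.\ no mass escapes — this is where local compactness and the implicit tightness from the $W'$-bounds, cf.\ \Cref{inv-is-compact}, enter). The martingale LLN step is routine given \Cref{strong-law}'s method, and the $2s\|f\|/t$ estimate is elementary.
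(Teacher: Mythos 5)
Your argument is essentially the standard proof of this fact; the paper itself does not reprove it but defers to \cite[Theorem 2.2ii]{persistence}, whose proof follows exactly your outline (compare $\mu_t^x(\Pp_s f)$ with $\mu_t^x f$ via the Markov property, kill the boundary terms with the $2s\|f\|/t$ bound, control the conditional-expectation error by a martingale law of large numbers, and reduce to a countable convergence-determining class and rational $s$). The one step you should tighten is the ``martingale-like'' one: with $\epsilon_r := f(X_{r+s}^x)-\E[f(X_{r+s}^x)\mid\mathcal F_r]$, the process $t\mapsto\int_0^t\epsilon_r\,dr$ is \emph{not} a martingale, because $\epsilon_r$ is only $\mathcal F_{r+s}$-measurable; the standard fix is to block the integral into increments $Y_k=\int_{ks}^{(k+1)s}\epsilon_r\,dr$ (bounded by $2s\|f\|$, with $\E[Y_k\mid\mathcal F_{ks}]=0$ and $Y_k$ being $\mathcal F_{(k+2)s}$-measurable) and split the sum into even and odd indices, each of which is a genuine discrete martingale with bounded increments to which the Doob/Borel--Cantelli argument of \Cref{strong-law} (or \Cref{discrete-martingale-fact}) applies. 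Two further small remarks: the passage from rational to general $s$ needs only $\|\Pp_{s_n}f\|\le\|f\|$ and dominated convergence against the probability measure $\mu$ (no tightness is required, since the lemma already assumes $\mu\in P(\mcM)$), and once $\mu\Pp_sf_k=\mu f_k$ holds for a countable measure-determining class one concludes $\Pp_s^*\mu=\mu$ as measures, hence the identity for all $f\in C_b(\mcM)$. The $\mcM_0$ case via invariance and the Portmanteau theorem is exactly what the paper does.
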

   \begin{proof}
   The claim is proven for $\mcM$ in \cite[Theorem 2.2ii]{persistence}.

For any $x \in \mcM_0$, additionally note that  Portmanteau theorem and \Cref{as1} imply that if $\mu_{t_n}^x \to \mu$, then $\mu(\mcM_0) \geq \limsup_{n \to \infty} \mu_{t_n}^x(\mcM_0) = 1$ a.s.
\end{proof}

\begin{lem} \label{lem:bhcon} \cite[Proposition 4.15]{BenaimHurth22}
    If $f:\mcM \to \mathbb{R}_+$ is proper and $\mu_n \in P(\mcM)$ is such that $\limsup_{n \to \infty} \mu_nf < \infty$, then $(\mu_n)_{n \in \N}$ is tight. If furthermore $g:\mcM \to \mathbb{R}$ is a continuous function which vanishes over $f$, then $\mu_n \to \mu$ implies $\mu_ng \to \mu g$. 
\end{lem}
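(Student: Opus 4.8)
The plan is to establish tightness via a Chebyshev-type estimate and then to prove $\mu_n g \to \mu g$ by a truncation argument, approximating the unbounded, signed function $g$ by bounded continuous functions whose approximation error is controlled by $f$.

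\textbf{Tightness.} Set $C \coloneqq \limsup_{n \to \infty} \mu_n f < \infty$, so that $\mu_n f \leq C+1$ for all $n \geq N_0$. Given $\delta > 0$, pick $m > (C+1)/\delta$; since $f$ is proper (see \Cref{proper}), the set $\{f \leq m\}$ is compact, and Markov's inequality gives $\mu_n(\{f > m\}) \leq \mu_n f / m \leq \delta$ for all $n \geq N_0$. Each of the finitely many measures $\mu_1, \dots, \mu_{N_0-1}$ is tight on its own (a single Borel probability measure on a Polish space is tight), so enlarging $\{f \leq m\}$ by finitely many compact sets produces a single compact $\K$ with $\mu_n(\K^c) \leq \delta$ for all $n$. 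Hence $(\mu_n)_{n \in \N}$ is tight.

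\textbf{Convergence $\mu_n g \to \mu g$.} First, $f$ is continuous and nonnegative, so the portmanteau theorem gives $\mu f \leq \liminf_{n} \mu_n f \leq C < \infty$. Fix $\epsilon > 0$. Since $g$ vanishes over $f$ (see \Cref{vanish}), there is a compact set $V = V_k$ with $|g(x)| \leq \epsilon f(x)$ for all $x \in \mcM \setminus V$. By Urysohn's lemma together with local compactness of $\mcM$, choose $\chi \in C_b(\mcM)$ with compact support, $0 \leq \chi \leq 1$, and $\chi \equiv 1$ on $V$. Then $g\chi \in C_b(\mcM)$, so $\mu_n \to \mu$ yields $\mu_n(g\chi) \to \mu(g\chi)$. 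Moreover $|g(1-\chi)| \leq |g|\,\Id_{\mcM \setminus V} \leq \epsilon f\,\Id_{\mcM \setminus V} \leq \epsilon f$ pointwise, whence $|\mu_n g - \mu_n(g\chi)| \leq \epsilon\, \mu_n f \leq \epsilon(C+1)$ for $n \geq N_0$, and likewise $|\mu g - \mu(g\chi)| \leq \epsilon\, \mu f \leq \epsilon C$. Combining these three bounds, $\limsup_{n \to \infty} |\mu_n g - \mu g| \leq \epsilon(2C+1)$, and letting $\epsilon \downarrow 0$ completes the proof.

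\textbf{Main difficulty.} The delicacy is entirely in the second claim: because $g$ is unbounded and of unspecified sign, the portmanteau theorem cannot be applied to $g$ directly, and the hypothesis ``$g$ vanishes over $f$'' is precisely what makes the truncation error $\mu_n(|g|\,\Id_{\mcM \setminus V})$ uniformly small once we know $\sup_n \mu_n f < \infty$. One should also note the convention $\sup \emptyset = -\infty$ in \Cref{vanish}: if $\mcM$ is compact then the $V_n$ eventually exhaust $\mcM$ and $g$ is bounded, so the statement is trivial; otherwise $\mcM \setminus V_n \neq \emptyset$ for every $n$ and the limit in \Cref{vanish} is a genuine $0$, which is what the argument uses. (This lemma is \cite[Proposition 4.15]{BenaimHurth22}; we include the argument for completeness.)
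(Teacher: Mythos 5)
The paper does not prove this lemma at all --- it simply cites \cite[Proposition 4.15]{BenaimHurth22} --- so there is nothing internal to compare against; your proposal supplies a correct, self-contained argument for the cited result. The structure is the standard one: Markov's inequality on the compact sublevel sets $\{f \leq m\}$ (plus Ulam's theorem for the finitely many initial measures) gives tightness, and the cutoff $\chi \in C_c(\mcM)$ with $\chi \equiv 1$ on the compact set $V$ outside of which $|g| \leq \epsilon f$ reduces the convergence claim to weak convergence tested against the bounded continuous function $g\chi$, with the truncation error controlled uniformly by $\epsilon \sup_n \mu_n f$. Two small points you may wish to make explicit: (i) the bound $|g| \leq \epsilon f + \sup_V |g|$ is what guarantees $g$ is $\mu_n$- and $\mu$-integrable in the first place, so that $\mu_n g$ and $\mu g$ are defined; and (ii) the step $\mu f \leq \liminf_n \mu_n f$ for the unbounded continuous $f$ is the lower-semicontinuity half of portmanteau, obtained by testing against $f \wedge m$ and letting $m \to \infty$ --- worth a clause since $f \notin C_b(\mcM)$. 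Neither affects correctness.
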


\begin{cor} \label{occ-is-tight}
For $\mu_t^x$ as in \Cref{occ-meas} we have 
$\limsup_{t \to \infty} \mu_t^x(W') \leq K$ a.s. Thus, almost surely $(\mu_{t_n}^x)_{n \in \N}$ is tight for all $t_n \to \infty$. 
\end{cor}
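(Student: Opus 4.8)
The plan is to reduce the almost-sure bound on $\mu_t^x(W')$ to two facts already established above: the linearly bounded quadratic variation of $W$ (\Cref{V-W-linearly-bdd}) and the strong law of large numbers for the martingale $M_t^W(x)$ (\Cref{strong-law}). First I would rearrange the defining identity \eqref{Mtf} for $M^W_{\cdot}(x)$ and use $\mathcal{L}W \le K - W'$ from \Cref{as3} \ref{3.2} to obtain, for every $t \ge 0$,
\[
\int_0^t W'(X_s^x)\,ds \;\le\; \int_0^t \big(K - \mathcal{L}W(X_s^x)\big)\,ds \;=\; Kt - \big(W(X_t^x) - W(x) - M_t^W(x)\big) \;\le\; Kt + W(x) + M_t^W(x),
\]
where the equality is \eqref{Mtf} and the last step uses $W \ge 1 > 0$. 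Dividing by $t$ and recalling \eqref{eq:docm} gives $\mu_t^x(W') \le K + W(x)/t + M_t^W(x)/t$.

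Next I would invoke \Cref{V-W-linearly-bdd}, which states that $W$ has linearly bounded quadratic variation, and then apply \Cref{strong-law} with $f = W$ and $A = \mcM$ (open and invariant) to conclude $\lim_{t\to\infty} M_t^W(x)/t = 0$ almost surely. Taking $\limsup_{t\to\infty}$ in the displayed bound immediately yields $\limsup_{t\to\infty}\mu_t^x(W') \le K$ almost surely, which is the first assertion.

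For the second assertion I would work on the full-measure event on which $\limsup_{t\to\infty}\mu_t^x(W') \le K$ holds. On that event, for \emph{any} sequence $t_n \to \infty$ one has $\limsup_{n\to\infty}\mu_{t_n}^x(W') \le \limsup_{t\to\infty}\mu_t^x(W') \le K < \infty$; since $W'$ is proper by \Cref{as3}, the tightness part of \Cref{lem:bhcon} applies and shows that $(\mu_{t_n}^x)_{n\in\N}$ is tight. The only point of care is the order of quantifiers — one establishes the uniform-in-$t$ bound first and then deduces tightness of an arbitrary discrete subsequence — but this is automatic because the continuous-time $\limsup$ dominates every subsequential one.

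I do not anticipate any real obstacle here: all the ingredients (the pathwise martingale identity, linearly bounded quadratic variation, \Cref{strong-law}, and \Cref{lem:bhcon}) are already in hand, and the argument is essentially a one-line manipulation followed by citing the strong law.
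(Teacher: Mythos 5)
Your proof is correct and follows essentially the same route as the paper: both rearrange the martingale identity \eqref{Mtf} for $M^W_t(x)$, use $\mathcal{L}W \le K - W'$ and $W \ge 1$, invoke \Cref{V-W-linearly-bdd} and \Cref{strong-law} to kill the $M_t^W(x)/t$ term, and then cite \Cref{lem:bhcon} with properness of $W'$ for tightness. The only difference is cosmetic (you drop the $-W(X_t^x)$ term directly rather than passing through $\liminf_{t\to\infty} W(X_t^x)/t \ge 0$ as the paper does).
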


\begin{proof}
   A similar proof can be found in \cite[Theorem 2.2ii]{persistence} but we include the details here for completeness.
    
    Since $W$ has linearly bounded quadratic variation (\Cref{V-W-linearly-bdd}), $M_t^W(x)$ satisfies the strong law for martingales (\Cref{strong-law}), and thus $\frac{-W(X_t^x)}{t} + \mu_t^x(\mathcal{L}W) \to 0$ a.s. as $t \to \infty$. Moreover, from $W \geq 0$ and $\mathcal{L}W \leq K - W'$ (see \Cref{as3}) it follows that
    \begin{align*}
        0 &\leq \liminf_{t \to \infty} \frac{W(X_t^x)}{t} = \liminf_{t \to \infty} \mu_t^x(\mathcal{L}W) 
        \leq \liminf_{t \to \infty} K - \mu_t^x W'     \,,   
    \end{align*}
    and therefore 
    $$
    \limsup_{t \to \infty} \mu_t^x W' \leq K \,.
    $$
    Hence, since $W'$ is proper, by \Cref{lem:bhcon} almost surely $(\mu_{t_n}^x(\omega))_{n \in \N}$ is tight for all $t_n \to \infty$.
\end{proof}

\begin{lem}
\label{muH-big}
    For all $x \in \inv \cup \mcM_0$ and $\mu_t^x$ defined in \eqref{eq:docm}, $\Prb$-a.s the following are equivalent:
    \begin{enumerate}[label=(\roman*)]
        \item \label{H} $\liminf_{t \to \infty} \mu_t^xH \geq \alpha$.
        \item \label{V} $\liminf_{t \to \infty} \frac{V(X_t^x)}{t} \geq \alpha$ or $x \in \mcM_0$.
        \item \label{occ-limit} If $t_n \to \infty$ with  $\mu_{t_n}^x \to \mu$ for some $\mu \in P(\mcM)$, then $\mu(\mcM_0) = 1$.
    \end{enumerate}
\end{lem}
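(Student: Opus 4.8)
The plan is to prove the cyclic chain $\ref{V}\Rightarrow\ref{occ-limit}\Rightarrow\ref{H}\Rightarrow\ref{V}$ on a single almost sure event $\Omega_x$, treating $x\in\mcM_0$ separately since it is essentially trivial. Fix $x\in\inv\cup\mcM_0$ and let $\Omega_x$ be the intersection of the following full-probability events: the path $t\mapsto X_t^x$ is cadlag; $\limsup_{t\to\infty}\mu_t^x W'\le K$ and $(\mu_{t_n}^x)_n$ is tight for every $t_n\to\infty$ (\Cref{occ-is-tight}); every subsequential weak limit of $(\mu_t^x)$ as $t\to\infty$ lies in $P_{inv}(\mcM)$ (\Cref{lim-is-inv}); and, when $x\in\inv$, $M_t^V(x)/t\to 0$ (which holds by \Cref{strong-law}, since $V\in\Dme_2(\inv)$ has linearly bounded quadratic variation by \Cref{V-W-linearly-bdd}). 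When $x\in\mcM_0$, \Cref{as1} gives $X_s^x\in\mcM_0$ for every $s$, so each $\mu_t^x$ is a probability measure on the closed set $\mcM_0$, and hence so is every weak limit; thus \ref{occ-limit} holds, \ref{V} holds by definition, and \ref{H} will follow from \ref{occ-limit} via the implication below. So from now on we assume $x\in\inv$.

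For $\ref{V}\Rightarrow\ref{occ-limit}$: assuming $\liminf_{t\to\infty}V(X_t^x)/t\ge\alpha>0$ forces $V(X_t^x)\to\infty$, and then \Cref{as4} \ref{4.1} (applied along an arbitrary sequence $t_n\to\infty$) gives $d(X_t^x,\mcM_0)\to 0$. For fixed $\epsilon>0$ the open set $G_\epsilon\coloneqq\{y:d(y,\mcM_0)>\epsilon\}$ is then avoided by $X_s^x$ for all $s$ past some a.s.\ finite time $T_\epsilon$, so $\mu_t^x(G_\epsilon)\le T_\epsilon/t\to 0$; if $\mu_{t_n}^x\to\mu$, the Portmanteau theorem for open sets yields $\mu(G_\epsilon)\le\liminf_n\mu_{t_n}^x(G_\epsilon)=0$, and letting $\epsilon=1/k\downarrow 0$ gives $\mu(\mcM_0^c)=\mu(\cup_k G_{1/k})=0$, i.e.\ \ref{occ-limit}. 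The one thing to get right here is to use the \emph{open} sets $G_\epsilon$ rather than the closed sets $\{d(\cdot,\mcM_0)\ge\epsilon\}$, so that the Portmanteau inequality points in the useful direction.

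For $\ref{occ-limit}\Rightarrow\ref{H}$: write $L\coloneqq\liminf_{t\to\infty}\mu_t^x H$, pick $t_n\to\infty$ with $\mu_{t_n}^x H\to L$, and use tightness to pass to a subsequence with $\mu_{t_n}^x\to\mu$; then $\mu\in P_{inv}(\mcM)$ by \Cref{lim-is-inv}, $\mu(\mcM_0)=1$ by \ref{occ-limit}, hence $\mu\in P_{inv}(\mcM_0)$ and $\mu H\ge\alpha$ by \Cref{as4} \ref{4.3}. Since $H$ is continuous and vanishes over $W'$ (\Cref{W'-vanish}) while $\limsup_n\mu_{t_n}^x W'\le K<\infty$ on $\Omega_x$, \Cref{lem:bhcon} gives $\mu_{t_n}^x H\to\mu H$ along this subsequence, so $L=\mu H\ge\alpha$; this is precisely the point where \Cref{occ-is-tight} and \Cref{W'-vanish} are needed. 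Finally $\ref{H}\Rightarrow\ref{V}$ is immediate: since $\mathcal{L}V=H|_{\inv}$, the martingale \eqref{Mtf} reads $M_t^V(x)=V(X_t^x)-V(x)-t\,\mu_t^x H$, so dividing by $t$ and using $M_t^V(x)/t\to 0$ and $V(x)/t\to 0$ on $\Omega_x$ gives $V(X_t^x)/t-\mu_t^x H\to 0$, whence $\liminf_{t\to\infty}V(X_t^x)/t=\liminf_{t\to\infty}\mu_t^x H\ge\alpha$. No step is a genuine obstacle: the argument simply assembles \Cref{strong-law}, \Cref{occ-is-tight}, \Cref{lim-is-inv}, \Cref{lem:bhcon} and \Cref{as4}, the only care needed being the bookkeeping of the full-probability events and the direction of Portmanteau in $\ref{V}\Rightarrow\ref{occ-limit}$.
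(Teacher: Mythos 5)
Your proposal is correct and follows essentially the same route as the paper: the same cyclic chain of implications, the same ingredients (\Cref{strong-law} plus \Cref{V-W-linearly-bdd} for \ref{H}$\Rightarrow$\ref{V}, \Cref{as4} \ref{4.1} plus Portmanteau for \ref{V}$\Rightarrow$\ref{occ-limit}, and \Cref{occ-is-tight}, \Cref{lim-is-inv}, \Cref{lem:bhcon} for \ref{occ-limit}$\Rightarrow$\ref{H}). The only cosmetic differences are that you isolate the $x\in\mcM_0$ case up front and apply Portmanteau to the open sets $\{d(\cdot,\mcM_0)>\epsilon\}$ rather than the closed sets $\{d(\cdot,\mcM_0)\le\epsilon\}$, which is an equivalent bookkeeping choice.
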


\begin{proof}
    To show \ref{H} implies \ref{V} we assume $x \in \inv$ and show that a.s.
    $$
    \liminf_{t \to \infty} \mu_t^xH \geq \alpha \quad 
    \textrm{ implies }
    \quad \liminf_{t \to \infty} \frac{V(X_t^x)}{t} \geq \alpha\,.
    $$ By the definition of $M^V_t$ in \eqref{Mtf} and $\mathcal{L}V$ being the restriction of $H$ to $\inv$, we have 
    $$
    \frac{M^V_t(x)}{t} =  \frac{V(X_t^x)}{t} -  \frac{V(x)}{t} - 
    \frac{1}{t} \int_0^t H(X_s^x) ds =  \frac{V(X_t^x)}{t} -  \frac{V(x)}{t} 
    - \mu_t^xH \,.
    $$
    By \Cref{V-W-linearly-bdd}, $V$ has linearly bounded quadratic variation, so by \Cref{strong-law} $\frac{M^V_t(x)}{t} \to 0$ as $t \to \infty$, proving the claim.

    To show \ref{V} implies \ref{occ-limit}, we first note that if $x \in \inv$ and $\liminf_{t \to \infty} \frac{V(X_t^x)}{t} \geq \alpha$ a.s., then by \Cref{as4} \ref{4.1} it follows that $d(X_t^x,\mcM_0) \to 0$ a.s. as $t \to \infty$. Also, if $x \in \mcM_0$, then $d(X_t^x,\mcM_0) = 0$ for each $t \geq 0$ by \Cref{as1} (see \Cref{inv-set}). In either case, the Portmanteau Theorem implies
    $$
    \mu(\mcM_0) = \lim_{\epsilon \downarrow 0} \mu(\{d(\cdot,\mcM_0) \leq \epsilon\}) \geq \lim_{\epsilon \downarrow 0} \limsup_{n \to \infty} \mu_{t_n}^x(\{d(\cdot,\mcM_0) \leq \epsilon\}) = 1 \,.
    $$    
   
    To show \ref{occ-limit} implies \ref{H}, first note that by \Cref{occ-is-tight}, almost surely for all $t_n \to \infty$, the sequence $(\mu_{t_n}^x)_{n \in \N}$ is tight and thus has a further subsequence $\mu_{t_{n_k}}^x$ which converges to some $\mu$. Applying \Cref{lim-is-inv} and \ref{occ-limit}, $\mu \in P_{inv}(\mcM_0)$, so \Cref{as4} \ref{4.3} implies $\mu H \geq \alpha$. Since $H$ vanishes over $W'$ (\Cref{W'-vanish}) and $\limsup_{t \to \infty}  \mu_t^x(W') \leq K$ (\Cref{occ-is-tight}), then $\mu_{t_{n_k}}^xH \to \mu H \geq \alpha$ by \Cref{lem:bhcon}. Thus, $\liminf_{t \to \infty} \mu_t^xH \geq \alpha$. 
\end{proof}

\begin{cor}
    \label{distance-small-makes-V-big}
    For $y \in \inv$, we have a.s. 
    $$
    \lim_{M \to \infty} \limsup_{t \to \infty}d(X_t^y,\mcM_0) \Id_{W(X_t^y) < M} = 0 \quad \textrm{implies} \quad  \liminf_{t \to \infty} \frac{V(X_t^y)}{t} \geq \alpha.
    $$ 
\end{cor}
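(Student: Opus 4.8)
The plan is to deduce the conclusion from \Cref{muH-big}. Since $y\in\inv$, that lemma asserts that, almost surely, $\liminf_{t\to\infty}V(X_t^y)/t\geq\alpha$ is \emph{equivalent} to condition \ref{occ-limit}, namely that every subsequential weak limit of the empirical occupation measures $\mu_t^y$ is supported on $\mcM_0$. Thus it suffices to show that, on the almost sure event where the stated hypothesis holds, condition \ref{occ-limit} of \Cref{muH-big} is satisfied.

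First I would put the hypothesis in a usable form. Writing $d(\cdot)$ for $d(\cdot,\mcM_0)\in[0,1]$, the quantity $\limsup_{t\to\infty}d(X_t^y)\,\Id_{W(X_t^y)<M}$ is nonnegative and nondecreasing in $M$, so the assumption that its limit as $M\to\infty$ equals $0$ forces it to equal $0$ for \emph{every} fixed $M>0$. Hence for each $M>0$ and $\epsilon>0$ there is a (random) time $T=T(M,\epsilon)$ after which $W(X_t^y)<M$ implies $d(X_t^y)<\epsilon$; equivalently, the Lebesgue measure of $\{s\in[0,t]:W(X_s^y)<M,\ d(X_s^y)>\epsilon\}$ is at most $T$ for all $t\geq T$, so $\mu_t^y\big(\{W<M\}\cap\{d>\epsilon\}\big)\leq T/t\to 0$ as $t\to\infty$.

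Next I would run the occupation-measure argument. Let $t_n\to\infty$ with $\mu_{t_n}^y\to\mu\in P(\mcM)$ and fix $\epsilon>0$. The set $\{W<M\}\cap\{d>\epsilon\}$ is open (by continuity of $W$ and closedness of $\mcM_0$), so the Portmanteau theorem together with the previous bound gives $\mu\big(\{W<M\}\cap\{d>\epsilon\}\big)=0$. Therefore $\mu(\{d>\epsilon\})\leq\mu(\{W\geq M\})$ for every $M$; since $W$ is finite-valued, $\Id_{\{W\geq M\}}\to 0$ pointwise as $M\to\infty$, so dominated convergence gives $\mu(\{W\geq M\})\to 0$, whence $\mu(\{d>\epsilon\})=0$. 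As $\epsilon>0$ is arbitrary and $\mcM\setminus\mcM_0=\bigcup_{k\geq 1}\{d>1/k\}$, we conclude $\mu(\mcM_0)=1$, which is condition \ref{occ-limit}. Invoking \Cref{muH-big} (condition \ref{occ-limit} implies condition \ref{V}, and $y\in\inv$) then yields $\liminf_{t\to\infty}V(X_t^y)/t\geq\alpha$ on the event in question.

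I do not expect a serious difficulty. The one point requiring care is that the hypothesis controls the path only on each sublevel set $\{W<M\}$, and in terms of $W$ rather than the function $W'$ from \Cref{occ-is-tight}; consequently the two limits must be taken in the right order — first check that the limit measure $\mu$ charges no set $\{W<M\}\cap\{d>\epsilon\}$, and only afterwards let $M\to\infty$, using the (pointwise) finiteness of $W$ to dispose of mass escaping to infinity, and its properness to ensure $\{W<M\}$ is relatively compact.
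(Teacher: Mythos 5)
Your proof is correct, and it follows the same overall strategy as the paper: reduce to condition \ref{occ-limit} of \Cref{muH-big}, split $\{d(\cdot,\mcM_0)>\epsilon\}$ into its intersections with $\{W<M\}$ and $\{W\geq M\}$, and kill the first piece using the hypothesis together with the Portmanteau theorem applied to the open set $\{W<M\}\cap\{d(\cdot,\mcM_0)>\epsilon\}$. The one genuine difference is how the mass on $\{W\geq M\}$ is disposed of. The paper controls it uniformly in $t$ at the level of the occupation measures themselves, via Chebyshev's inequality and \Cref{occ-is-tight} (so it invokes the Lyapunov estimate $\limsup_{t\to\infty}\mu_t^y W'\leq K$ and the properness of $W'$), and thereby proves the stronger intermediate statement $\lim_{\epsilon\downarrow 0}\limsup_{t\to\infty}\mu_t^y(\{d(\cdot,\mcM_0)>\epsilon\})=0$ before passing to limit measures. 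You instead pass to a subsequential limit $\mu$ first and observe that condition \ref{occ-limit} of \Cref{muH-big} only quantifies over limits that are already probability measures on $\mcM$, so $\mu(\{d(\cdot,\mcM_0)>\epsilon\})\leq\mu(\{W\geq M\})\to 0$ follows from continuity from above and the finiteness of $W$ alone, with no Lyapunov input. This is a mild but genuine simplification; what it gives up is only the uniform-in-$t$ statement, which the paper does not use elsewhere. Your preliminary observation that monotonicity of $M\mapsto\limsup_{t\to\infty}d(X_t^y,\mcM_0)\Id_{W(X_t^y)<M}$ forces the inner $\limsup$ to vanish for \emph{every} $M$ is also correct and cleanly justifies the step the paper handles by simply enlarging $M$.
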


\begin{proof}
We claim that a.s.  \begin{equation}\label{eq:srem}
\lim_{M \to \infty} \limsup_{t \to \infty}d(X_t^y,\mcM_0)\Id_{W(X_t^y) < M} = 0    
\end{equation}
implies

\begin{equation}\label{eq:eolm}
       \lim_{\epsilon \downarrow 0} \limsup_{t \to \infty} \mu_t^y(\{d(\cdot,\mcM_0) > \epsilon\}) = 0 \,.
    \end{equation}
    
Indeed, suppose \eqref{eq:srem} holds and fix any $\epsilon > 0$. By Chebyshev inequality and 
\Cref{occ-is-tight}, there is a $K_\epsilon = \frac{2K}{\epsilon}$ such that a.s. \begin{equation} \label{W'-big}
    \limsup_{t\to\infty} \mu_t^y(\{W' > K_\epsilon\}) < \epsilon \,.
\end{equation}

 Since $W'$ is proper (\Cref{as3}), the set $\{W' \leq K_\epsilon\}$ is compact, so by continuity of $W$ there is $M > 0$ such that $\{W \geq M\} \subset \{W' > K_\epsilon\}$. Combining this with \eqref{W'-big} gives
 \begin{equation} \label{eq:spcop}
\limsup_{t \to \infty} \mu_t^y(\{W \geq M\}) \leq \limsup_{t \to \infty} \mu_t^y (\{W' > K_\epsilon \}) < \epsilon \,.    
\end{equation}
 Using \eqref{eq:srem}, by making $M$ large enough we may also assume
$$\limsup_{t \to \infty}d(X_t^y,\mcM_0)\Id_{W(X_t^y) < M} < \epsilon \,.$$
In particular, there is a (random) $T > 0$ such that $d(X_t^y,\mcM_0)\Id_{W(X_t^y) < M} < \epsilon$ for all $t \geq T$. Thus, 
\begin{equation} \label{eq:distance-small}
    \limsup_{t \to \infty} \mu_t^y(\{W < M\} \cap \{d(\cdot,\mcM_0) > \epsilon\}) = 0 \,. 
\end{equation} 
Since 
$$
\{d(\cdot,\mcM_0) > \epsilon\} \subset \{W \geq M\} \cup (\{W < M\} \cap \{d(\cdot,\mcM_0) > \epsilon\}),
$$
then \eqref{eq:spcop} and \eqref{eq:distance-small}  show that 
$$
\limsup_{t \to \infty} \mu_t^y(\{d(\cdot,\mcM_0) > \epsilon\}) < \epsilon
$$
and since $\epsilon > 0$ is arbitrary, we have proven the claim that almost surely \eqref{eq:srem} implies \eqref{eq:eolm}.

To finish the proof, we assume \eqref{eq:eolm} holds and show that $\liminf_{t \to \infty} \frac{V(X_t^y)}{t} \geq \alpha$. By \Cref{muH-big}, it suffices to show that for all $t_n \to \infty$, every limit point $\mu$ of $(\mu_{t_n}^y)_{n \in \N}$ satisfies $\mu(\mcM_0) = 1$. Due to \eqref{eq:eolm} and the Portmanteau theorem,

$$\mu(\mcM_0^c) = \lim_{\epsilon \downarrow 0} \mu(\{d(\cdot,\mcM_0) > \epsilon\}) \leq \lim_{\epsilon \downarrow 0} \liminf_{n \to \infty} \mu^y_{t_n}(\{d(\cdot,\mcM_0) > \epsilon\}) = 0 \,,$$
as desired.
\end{proof}

\subsection{Discrete-Time Semimartingales} \label{semimg}

\begin{lem}
    \label{discrete-martingale-fact}
    Let $\{M_n\}_{n \in \N}$ be a (discrete-time) martingale with respect to some filtration $\{\mathcal{G}_n\}_{n \in \N}$ with $M_0 = 0$ and $\E[(M_n - M_{n-1})^2 \mid \mathcal{G}_{n-1}] \leq B$ for some constant $B > 0$. Then for every $b > 0$, $\delta > 0$ there is some $C > 0$ such that $$\Prb(|M_n| \leq bn + C \text{ for all } n \geq 0) > 1 - \delta,$$ and $C$ depends only on $\delta$, $B$, and $b$. 
\end{lem}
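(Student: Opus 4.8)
The plan is to mimic the dyadic-block argument already used in the proof of \Cref{strong-law}, combining the second-moment growth of $M_n$ with Doob's $L^2$ maximal inequality. First I would record the basic estimate $\E[M_n^2]\le Bn$: since $M_0=0$ and the increments $M_j-M_{j-1}$ are martingale differences, they are orthogonal in $L^2$, so $\E[M_n^2]=\sum_{j=1}^n\E[(M_j-M_{j-1})^2]=\sum_{j=1}^n\E\big[\E[(M_j-M_{j-1})^2\mid\mathcal{G}_{j-1}]\big]\le Bn$. (The conditional bound also guarantees $M_n\in L^2$, so $(M_n^2)_{n}$ is a genuine submartingale.)

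Next, fix $b,\delta>0$ and decompose the time axis into dyadic blocks. For each integer $k\ge 0$ set $A_k\coloneqq\{\sup_{2^k\le n\le 2^{k+1}}|M_n|>b\,2^k\}$, and note that on $A_k^c$ one has $|M_n|\le b\,2^k\le bn$ for every $n$ in the $k$-th block. Doob's maximal inequality applied to $(M_n^2)$ on $[0,2^{k+1}]$ together with $\E[M_{2^{k+1}}^2]\le B\,2^{k+1}$ gives
\[
\Prb(A_k)\le\Prb\Big(\sup_{0\le n\le 2^{k+1}}|M_n|>b\,2^k\Big)\le\frac{\E[M_{2^{k+1}}^2]}{b^2 2^{2k}}\le\frac{2B}{b^2 2^{k}}.
\]
Hence I can pick $k_0=k_0(B,b,\delta)$ with $2^{k_0}>8B/(b^2\delta)$, so that $\Prb\big(\bigcup_{k\ge k_0}A_k\big)\le\sum_{k\ge k_0}\tfrac{2B}{b^2 2^{k}}=\tfrac{4B}{b^2 2^{k_0}}<\delta/2$; on the complement of this union, $|M_n|\le bn$ for all $n\ge 2^{k_0}$.

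It remains to absorb the finitely many small indices $n\le 2^{k_0}$ into the additive constant $C$. Applying Doob's inequality once more on $[0,2^{k_0}]$, $\Prb\big(\sup_{0\le n\le 2^{k_0}}|M_n|>C\big)\le \E[M_{2^{k_0}}^2]/C^2\le B\,2^{k_0}/C^2$, so choosing $C=C(B,b,\delta)$ large enough (say $C\ge b\,2^{k_0}$ and $C>(2B\,2^{k_0}/\delta)^{1/2}$) makes this probability $<\delta/2$. On the intersection of the two good events, which has probability $>1-\delta$, we get $|M_n|\le C\le bn+C$ for $n\le 2^{k_0}$ and $|M_n|\le bn\le bn+C$ for $n>2^{k_0}$, which is the assertion; and $k_0,C$ were selected from $B,b,\delta$ only. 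There is no serious obstacle here — the only point needing a little care is splitting the failure budget $\delta$ between the geometric tail $\bigcup_{k\ge k_0}A_k$ and the initial block $[0,2^{k_0}]$, since the bound must hold uniformly over \emph{all} $n\ge 0$ and not merely for large $n$, where the term $bn$ alone would suffice.
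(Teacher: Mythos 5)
Your proof is correct and follows essentially the same route as the paper's: establish $\E[M_n^2]\le Bn$ from the conditional increment bound, apply Doob's maximal inequality over dyadic blocks to control the tail $n\ge 2^{k_0}$ with budget $\delta/2$, and absorb the initial segment into the constant $C$ with a second application of Doob's inequality and the remaining $\delta/2$. The only differences are cosmetic bookkeeping (you compare $|M_n|$ to $b\,2^k$ at the left endpoint of each block where the paper uses $b/2$ times the running index).
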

\begin{proof}
    The proof is similar to the proof of \Cref{strong-law}. First note that since $\E[M_n \mid \mathcal{G}_{n-1}] = M_{n-1}$,$$\E[(M_n-M_{n-1})^2 \mid \mathcal{G}_{n-1}] = \E[M_n^2 \mid \mathcal{G}_{n-1}] - M_{n-1}^2.$$ 
    Thus $\E[M_n^2 \mid \mathcal{G}_{n-1}] \leq B + M_{n-1}^2$, and by induction $\E[M_n^2] \leq Bn$. By Doob's inequality, for any integer $m \geq 0$
    $$
    \Prb\Big(\max_{2^m \leq k \leq 2^{m+1}} \frac{|M_k|}{k} \geq \frac{b}{2}\Big) \leq \Prb\Big(\max_{k \leq 2^{m+1}} |M_k| \geq 2^m\frac{b}{2}\Big) \leq \frac{4}{2^{2m}b^2}2^{m+1}B = \frac{8B}{2^mb^2} \,,
    $$ 
    and so, for any integer $n \geq 0$, 
    $$
    \Prb\Big(\sup_{2^n \leq k} \frac{|M_k|}{k} \geq b\Big) \leq \sum_{m=n}^{\infty} \Prb\Big(\max_{2^m \leq k \leq 2^{m+1}} \frac{|M_k|}{k} \geq \frac{b}{2}\Big) \leq \frac{64B}{2^nb^2}.
    $$ 
    Choose $n$ large enough so that $\frac{64B}{2^nb^2} < \frac{\delta}{2}$ (note that this $n$ only depends on $\delta,B$ and $b$). On the other hand, for any $C > 0$, applying Doob's inequality again gives 
    $$
    \Prb(\max_{0 \leq k \leq 2^n} |M_k| > C) \leq \frac{B2^n}{C^2}.
    $$ 
    For $C$ large enough, only depending on $\delta,B,$ and $n$, we have $\frac{B2^n}{C^2} < \frac{\delta}{2}$, which proves the claim.
\end{proof}

\begin{cor}
    \label{discrete-semimartingale-fact}
    Let $(Z_n)_{n \in \N}$ be a sequence of random variables adapted to some filtration $\{\mathcal{G}_n\}_{n \in \N}$ such that 
    $$
    Z_0 = 0, \qquad 
    \E[(Z_n - Z_{n-1})^2 \mid \mathcal{G}_{n-1}] \leq B, \qquad \E[Z_n - Z_{n-1} \mid \mathcal{G}_{n-1}] \geq b
    $$ for some constants $B,b > 0$. Then for every $\delta > 0$ there is $C > 0$ depending only on $\delta$, $B$, and $b$ such that 
    $$
    \Prb\Big(Z_n \geq \frac{b}{2}n-C \text{ for all } n \geq 0\Big) > 1 - \delta \,.
    $$ 
\end{cor}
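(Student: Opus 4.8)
The plan is to perform a Doob-type decomposition of $(Z_n)$ into a martingale plus a predictable drift, and then invoke \Cref{discrete-martingale-fact} for the martingale part. Concretely, I would set $A_0 \coloneqq 0$ and $A_n \coloneqq \sum_{k=1}^n \E[Z_k - Z_{k-1} \mid \mathcal{G}_{k-1}]$, and define $M_n \coloneqq Z_n - A_n$. By construction each $A_n$ is $\mathcal{G}_{n-1}$-measurable, and the hypothesis $\E[Z_n - Z_{n-1}\mid\mathcal{G}_{n-1}] \ge b$ gives $A_n \ge bn$ for every $n$. A one-line computation with the tower property yields $\E[M_n\mid\mathcal{G}_{n-1}] = \E[Z_n\mid\mathcal{G}_{n-1}] - A_n = Z_{n-1} - A_{n-1} = M_{n-1}$, so $(M_n)_{n\in\N}$ is a martingale with $M_0 = 0$.

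Next I would verify that $(M_n)$ satisfies the hypothesis of \Cref{discrete-martingale-fact}. Since $M_n - M_{n-1} = (Z_n - Z_{n-1}) - \E[Z_n - Z_{n-1}\mid\mathcal{G}_{n-1}]$, the conditional variance identity gives
\[
\E[(M_n - M_{n-1})^2 \mid \mathcal{G}_{n-1}] = \E[(Z_n-Z_{n-1})^2\mid\mathcal{G}_{n-1}] - \big(\E[Z_n-Z_{n-1}\mid\mathcal{G}_{n-1}]\big)^2 \le B .
\]
(Jensen's inequality applied to the bound $B$ also shows the drifts are bounded by $\sqrt{B}$, so $A_n$ and hence $M_n$ are genuinely integrable, and the decomposition is legitimate.) Now apply \Cref{discrete-martingale-fact} to $(M_n)$ with the parameter $b/2$ in the role of "$b$": for the given $\delta$ there is $C > 0$, depending only on $\delta$, $B$ and $b$, with $\Prb\big(|M_n| \le \tfrac{b}{2} n + C \text{ for all } n \ge 0\big) > 1 - \delta$. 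On that event, for every $n \ge 0$,
\[
Z_n = M_n + A_n \ge -\tfrac{b}{2}n - C + bn = \tfrac{b}{2}n - C ,
\]
which is exactly the desired conclusion.

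There is no real obstacle here; the content is a routine compensator argument plus \Cref{discrete-martingale-fact}. The only points requiring a touch of care are confirming that $(M_n)$ is a bona fide martingale (rather than merely a telescoping cancellation) and that the constant $C$ produced by \Cref{discrete-martingale-fact} depends on nothing beyond $\delta$, $B$, $b$ — which is immediate since the squared-increment bound fed in is precisely $B$ and the threshold parameter is $b/2$.
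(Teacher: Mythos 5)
Your proposal is correct and follows essentially the same route as the paper: the compensator $A_n$ you subtract off is exactly the sum the paper uses, the conditional-variance identity giving the bound $B$ on the squared increments is the same computation, and the application of \Cref{discrete-martingale-fact} with parameter $b/2$ followed by $Z_n = M_n + A_n \geq M_n + bn$ matches the paper's argument line for line.
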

\begin{proof}
   It is standard to show that  
   $$
   M_n \coloneqq \sum_{i = 1}^n Z_i - Z_{i-1} - \E[(Z_i - Z_{i-1}) \mid \mathcal{G}_{i-1}] 
   $$ 
   is a martingale with respect to 
   $\{\mathcal{G}_n\}_{n \geq 0}$. 
   Then, 
   \begin{multline*}
   \E[(M_n - M_{n-1})^2 \mid \mathcal{G}_{n-1}] = 
   \E[(Z_n - Z_{n-1} - \E[(Z_n - Z_{n-1}) \mid \mathcal{G}_{n-1}])^2 \mid \mathcal{G}_{n-1}]
   \\
   =
   \E[(Z_n - Z_{n-1})^2 \mid \mathcal{G}_{n-1}] - \E[(Z_n - Z_{n-1}) \mid \mathcal{G}_{n-1}]^2 \leq B,
    \end{multline*}
    and therefore the assumptions of \Cref{discrete-martingale-fact} are satisfied. Hence, there is some $C > 0$ such that 
    $$
    \Prb\Big(|M_n| \leq \frac{b}{2}n + C \text{ for all } n \geq 0\Big) > 1 - \delta,
    $$ 
    where $C$ depends only on $\delta$, $B$, and $b$. Since $$Z_n = M_n + \sum_{i = 1}^n \E[(Z_i - Z_{i-1}) \mid \mathcal{G}_{i-1}] \geq M_n + bn,$$
    then
   $$\Prb\Big(Z_n \geq \frac{b}{2}n-C \text{ for all } n \geq 0\Big) \geq \Prb\Big(|M_n| \leq \frac{b}{2}n + C \text{ for all } n \geq 0\Big) > 1 - \delta \,,$$
   as desired. 
\end{proof}

\section{Continuous Dependence on Initial Condition} \label{feller-stuff}

In this section we extend results known for $C_0$-Feller processes to our weaker ``$C_b$" definition of Feller (\Cref{as2}) combined with \Cref{as3}. Notably, we show that the strong Markov property holds and the entire path of the process (viewed as a random element of the Skorkhod space $\D_{[0,\infty)}(\mcM)$, defined later) depends continuously on the initial condition. We also recall some facts about the Skorokhod topology and show that certain functions between Skorokhod spaces are continuous. On a first read most of this section (especially the proofs) can be skipped, but the statements of \Cref{strong-Markov} and \Cref{cts-dep} should be understood well before reading \Cref{main-arguments}. We begin with a technical lemma that is not used outside of this section:

\begin{lem} \label{unif-in-t}
    If $\{X_t^x\}_{x \in \mcM, t \geq 0}$ is Feller (see \Cref{as2}) satisfying \Cref{as3} then for any $f \in C_b(\mcM)$ we have that $\Pp_tf \to f$ uniformly on compact sets as $t \downarrow 0$.
\end{lem}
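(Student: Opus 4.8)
\section*{Proof proposal for \Cref{unif-in-t}}

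The plan is to adapt the classical proof that a $C_0$-Feller semigroup is strongly continuous, carried out through the resolvent, with \Cref{as3} supplying the one ingredient (``no escape to infinity in short time, uniformly on compacts'') which is automatic in the $C_0$ setting but must be put in by hand in our $C_b$ setting.

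First I would set up the resolvent $R_\lambda f:=\int_0^\infty e^{-\lambda t}\Pp_t f\,dt$ for $\lambda>0$. Since $\Pp_t$ is a contraction on $C_b(\mcM)$ and $\Pp_tf$ is continuous for each $t$ (\Cref{as2}), dominated convergence gives $R_\lambda f\in C_b(\mcM)$, $\|R_\lambda f\|\le\|f\|/\lambda$, and the resolvent identity shows the subspace $\mathcal D:=R_\lambda(C_b(\mcM))$ is the same for every $\lambda$. For $g=R_\lambda h\in\mathcal D$ the semigroup property yields
\begin{equation*}
\Pp_tg-g=(e^{\lambda t}-1)g-e^{\lambda t}\int_0^te^{-\lambda u}\Pp_uh\,du,
\end{equation*}
so $\|\Pp_tg-g\|\le(e^{\lambda t}-1)\|h\|/\lambda+e^{\lambda t}t\|h\|\to0$ as $t\downarrow0$ (and likewise for finite linear combinations of such $g$). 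Also $\lambda R_\lambda f(x)=\int_0^\infty e^{-u}\Pp_{u/\lambda}f(x)\,du\to f(x)$ pointwise as $\lambda\to\infty$, with $\|\lambda R_\lambda f\|\le\|f\|$.

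Next I would pass from pointwise to uniform-on-compacts approximation. Fixing any finite Radon measure $\mu$ with compact support, dominated convergence (using $|\lambda R_\lambda f|\le\|f\|$) gives $\mu(\lambda R_\lambda f)\to\mu f$; since such $\mu$ represent the continuous linear functionals on $C_b(\mcM)$ for the topology of uniform convergence on compacts, the Hahn--Banach (Mazur) theorem applied to the convex set $C:=\{g\in\mathcal D:\|g\|\le\|f\|\}$, which contains every $\lambda R_\lambda f$, shows $f$ lies in the uniform-on-compacts closure of $C$. Thus for every compact $\mathcal K'$ and $\epsilon>0$ there is $g\in\mathcal D$ with $\|g\|\le\|f\|$ and $\sup_{\mathcal K'}|f-g|<\epsilon$. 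Separately, \Cref{ineq1} gives $\Pp_tW\le W+Kt$, so if $\mathcal K\subseteq\{W\le M\}$ then $\Prb(W(X_t^x)>R)\le(M+K)/R$ uniformly over $x\in\mathcal K$ and $t\in[0,1]$, and $\{W\le R\}$ is compact since $W$ is proper.

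Finally I would glue these: given $f\in C_b(\mcM)$, a compact $\mathcal K\subseteq\{W\le M\}$ and $\epsilon>0$, pick $R$ with $(M+K)/R$ small, then $g\in\mathcal D$ with $\|g\|\le\|f\|$ approximating $f$ uniformly on $\{W\le R\}$ within $\epsilon$; splitting $\E[(f-g)(X_t^x)]$ over $\{W(X_t^x)\le R\}$ and its complement bounds $\sup_{x\in\mathcal K}|\Pp_t(f-g)(x)|$ by $\epsilon+2\|f\|(M+K)/R$ for $t\le1$, while $\sup_{\mathcal K}|\Pp_tg-g|\le\|\Pp_tg-g\|\to0$ and $\sup_{\mathcal K}|g-f|\le\epsilon$; letting $t\downarrow0$ and then $\epsilon\downarrow0$ finishes the proof. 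I expect this gluing to be the crux: in the $C_0$ case $\mathcal D$ is dense in the sup norm and $\Pp_t$ is a sup-norm contraction, so approximation passes through trivially, whereas here $\Pp_t(f-g)$ on $\mathcal K$ feels the values of $f-g$ far from $\mathcal K$, which forces the use of the Lyapunov-based tightness from \Cref{as3}---and, crucially, requires the approximant to satisfy $\|g\|\le\|f\|$, which is why Mazur must be applied to the truncated set $C$ rather than directly to $\{\lambda R_\lambda f\}$ (whose uniform-on-compacts convergence to $f$ we cannot establish directly, that being circular with the statement itself).
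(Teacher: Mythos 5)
Your proposal is correct and follows essentially the same route as the paper's proof: resolvent functions $R_\lambda h$ satisfy sup-norm continuity of $t\mapsto \Pp_t R_\lambda h$, Mazur's lemma upgrades the pointwise convergence $\lambda R_\lambda f\to f$ to uniform-on-compacts convergence of norm-bounded convex combinations, and the Lyapunov bound $\Pp_tW\le W+Kt$ from \Cref{ineq1} controls the mass leaving a compact set so that the approximation survives the application of $\Pp_t$. The only (immaterial) difference is that the paper runs the Mazur step via Riesz--Markov on each $C_b(K_m)$ followed by a diagonal argument, whereas you invoke the dual of the compact-open topology on $C_b(\mcM)$ directly and truncate the convex set by $\|g\|\le\|f\|$.
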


\begin{proof}
Let $K_m = \{W \leq m\}$ and let $A \subset C_b(\mcM)$ be the set of functions $f$ such that $\Pp_tf \to f$ uniformly on each $K_m$ as $t \to 0$.

Let $f \in C_b(\mcM)$ and $\lambda > 0$ be arbitrary. We claim that $A$ contains the function $R_\lambda f$, where $R_\lambda = \int_0^\infty e^{-\lambda s}\Pp_sds$ denotes the resolvent operator. Indeed, by Fubini's theorem $\Pp_tR_{\lambda} = \int_0^{\infty} e^{-\lambda s}\Pp_{s+t} ds$  and since $\Pp_{t}$ is a contraction, we obtain
\begin{align*}
    \|\Pp_tR_{\lambda}f - R_{\lambda}f\| &= \Big\|\int_0^{\infty} e^{-\lambda s}\Pp_{s+t}f ds - \int_0^\infty e^{-\lambda s}\Pp_sfds\Big\| \\
    &\leq (e^{\lambda t} - 1)\Big\|\int_0^\infty e^{-\lambda s}\Pp_sfds\Big\| + e^{\lambda t}\Big\|\int_0^t e^{-\lambda s}\Pp_sfds\Big\| \\
    &\leq 2\|f\| \frac{e^{\lambda t} - 1}{\lambda} \,.
\end{align*}
and the claim follows after passing $t \to 0$.

Since $\lambda R_\lambda = \int_0^\infty e^{-s}\Pp_{s/\lambda}ds$, by  \Cref{as2} and Dominated Convergence Theorem,  $\lambda R_\lambda f \to f$ pointwise as $\lambda \to \infty$. For any integer $m \geq 1$, Riesz-Markov Representation Theorem implies that for any linear functional $F$ on $C_b(K_m)$ there is a finite Borel measure $\mu$ such that $F(f) = \mu f$ for any $f \in C_b(K_m)$. 
Since $\|\lambda R_\lambda f\| \leq \|f\|$, 
the Dominated Convergence Theorem implies $\mu(\lambda R_\lambda f) \to \mu f$, and therefore 
$\lambda R_\lambda f \rightharpoonup f$ in the weak topology on $C_b(K_m)$. By Mazur's lemma (see \cite[Corollary 3.8]{Brezis}), for every $m \in \N$ there are convex combinations $f_n$ of $\lambda R_\lambda f$ such that $f_n \to f$ uniformly on $K_m$. By a diagonal argument, we can assume the convex combinations satisfy $f_n \to f$ uniformly on each $K_m$. Since $A$ is closed under finite convex combinations, $f_n \in A$, and since 
$\|\lambda R_\lambda f\| \leq \|f\|$ we also have $\sup_n\|f_n\| \leq \|f\| < \infty$.

Next we show that our fixed $f\in C_b(\mcM)$ belongs to $A$. Fix $K_m$ and let $\|\cdot\|_m$ denote the sup norm over $K_m$, that is, $\|f\|_m = \sup_{x\in K_m}|f(x)|$.  For any $t>0$, $M > 0$, and $x \in \mcM$ we have 
\begin{multline*}
\Pp_tf_n(x) - \Pp_tf(x) = \E [f_n(X^x_t) - f(X^x_t)] 
\\
= 
\E [(f_n(X^x_t) - f(X^x_t))\Id_{X^x_t \in K_{Mm}}]
+ \E [(f_n(X^x_t) - f(X^x_t))\Id_{X^x_t \not \in K_{Mm}}] \,,
\end{multline*}
and therefore
$$
\|\Pp_tf_n - \Pp_tf\|_m \leq \|f_n - f\|_{Mm} + 
\|\Prb (X_t^x \notin K_{Mm})\|_m(\|f_n\| + \|f\|),
$$ 
where $\Prb(X_t^x \notin K_{Mm})$ is viewed as a function of $x$. By Markov inequality and \Cref{ineq1}, 
$$
\Prb(X_t^x \notin K_{Mm}) = \Prb(W(X_t^x) > Mm) \leq \frac{\Pp_tW(x)}{Mm} \leq \frac{W(x)+Kt}{Mm},
$$ 
and consequently 
$$
\|\Pp_tf_n - \Pp_tf\|_m \leq \|f_n - f\|_{Mm} + \frac{m+Kt}{Mm}(\|f_n\| + \|f\|).
$$
Also, 
$$
\|\Pp_tf - f\|_m \leq \|\Pp_tf_n - \Pp_tf\|_m + \|\Pp_tf_n - f_n\|_m + \|f_n - f\|_m \,.
$$
The middle term converges to $0$ as $t \to 0$ because $f_n \in A$. The last term and $\|f_n - f\|_{Mm}$ converge to $0$ as $n \to \infty$ since $f_n \to f$ uniformly on each $K_m$. By letting $t \to 0$ and then $n \to \infty$, we have for any $M > 0$, $$\lim_{t\to 0} \|\Pp_tf - f\|_m \leq \frac{\sup_n \|f_n\| + \|f\|}{M}.$$
Since $M$ and $m$ were arbitrary, therefore $\Pp_tf \to f$ uniformly on each $K_m$ as $t \to 0$. Consequently, $f \in A$. Since $f \in C_b(\mcM)$ was arbitrary, by our definition of $A$ the claim is proven.
\end{proof}

One important consequence of \Cref{as2} is the strong Markov property. 
In fact, the proof is identical to the one for $C_0$ Feller semigroups. 
In order to state the strong Markov property, we introduce the Skorokhod space $\D_{[0,\infty)}(\mcM)$, which consists of all cadlag functions $f:[0,\infty) \to \mcM$  endowed  with the Skorokhod ($J_1$-)topology. Specifically, $f_n \to f$ if and only if there exists increasing bijections $\lambda_n:[0,\infty) \to [0,\infty)$ such that $f_n \circ \lambda_n \to f$ and $\lambda_n \to \textnormal{Id}$ uniformly on compact subsets of $[0,\infty)$, where $\textnormal{Id}$ denotes the identity map on $[0,\infty)$. It is known that $\D_{[0,\infty)}(\mcM)$ is a Polish space and that the Borel $\sigma$- algebra is generated by the projections $\pi_t:f \mapsto f(t)$ (see \cite[Lemma A5.3]{Kallenberg}). Thus, there is no difference between processes $\{X_t\}_{t \in [0,\infty)}$ with cadlag sample paths and random elements $X$ of $\D_{[0,\infty)}(\mcM)$. In what follows, $X^x$ denotes the random variable given by $\Omega \ni \omega \mapsto X_\cdot^x(\omega) \in \D_{[0,\infty)}(\mcM)$.

\begin{lem} \label{strong-Markov}
If $\{X_t^x\}_{x \in \mcM, t \geq 0}$ is Feller (see \Cref{as2}), then it satisfies the strong Markov property, meaning that if $\phi: \D_{[0,\infty)}(\mcM)  \to [0,\infty)$ is a measurable function and $\tau$ is a stopping time, then $\tilde{\phi} : \mcM \to [0,\infty)$ given by
    $$\tilde{\phi}(y) \coloneqq \E[\phi(X^y)]$$ is measurable 
and for any $x \in \mcM$ we have
    $$
    \E[\Id_{\tau < \infty}\phi(X_{\tau+\cdot}^x) \mid \mathcal{F}_{\tau}] = \Id_{\tau < \infty}\tilde{\phi}(X_{\tau}^x).
    $$
\end{lem}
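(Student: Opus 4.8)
The plan is to run the classical proof of the strong Markov property for $C_0$-Feller semigroups, which goes through essentially verbatim once one notes that the only input actually used is that $\Pp_t$ maps $C_b(\mcM)$ into itself; this is exactly \Cref{as2}, so the weakening from $C_0$ to $C_b$ costs nothing here and \Cref{as3} is not needed for this statement. First I would prove the ordinary (simple) Markov property for \emph{cylinder} test functions $\phi(w) = f_1(w(t_1))\cdots f_n(w(t_n))$ with $f_i \in C_b(\mcM)$ and $0 \le t_1 \le \cdots \le t_n$. Conditioning on $\mathcal{F}_{s+t_{n-1}}$, pulling out the $\mathcal{F}_{s+t_{n-1}}$-measurable factor $f_1(X^x_{s+t_1})\cdots f_{n-1}(X^x_{s+t_{n-1}})$, and using homogeneity $\E[g(X^x_{u+r})\mid\mathcal{F}_u] = (\Pp_r g)(X^x_u)$ replaces $f_n$ by $\Pp_{t_n-t_{n-1}}f_n \in C_b(\mcM)$; iterating yields $\E[\phi(X^x_{s+\cdot})\mid\mathcal{F}_s] = \tilde\phi(X^x_s)$ with $\tilde\phi = \Pp_{t_1}\bigl(f_1\,\Pp_{t_2-t_1}(f_2\cdots)\bigr) \in C_b(\mcM)$. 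The only property of $\tilde\phi$ that the rest of the argument uses is that it is bounded and continuous.

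Next I would discretize $\tau$. Set $\tau_m := 2^{-m}\lceil 2^m\tau\rceil$ on $\{\tau<\infty\}$ and $\tau_m := \infty$ on $\{\tau = \infty\}$; then $\tau_m$ is a stopping time with countable range, $\tau_m \downarrow \tau$, $\{\tau_m<\infty\} = \{\tau<\infty\}$, and $\mathcal{F}_\tau \subseteq \mathcal{F}_{\tau_m}$. For a stopping time $\sigma$ of countable range one decomposes over its values $s_k$: for $A \in \mathcal{F}_\sigma$ one has $A\cap\{\sigma = s_k\}\in\mathcal{F}_{s_k}$, and summing the simple Markov property over $k$ gives, for cylinder $\phi$,
\[
\E[\Id_A\Id_{\sigma<\infty}\,\phi(X^x_{\sigma+\cdot})] = \E[\Id_A\Id_{\sigma<\infty}\,\tilde\phi(X^x_{\sigma})].
\]
Applying this with $\sigma = \tau_m$ and any $A \in \mathcal{F}_\tau \subseteq \mathcal{F}_{\tau_m}$, and using $\{\tau_m<\infty\}=\{\tau<\infty\}$, we obtain $\E[\Id_A\Id_{\tau<\infty}\,\phi(X^x_{\tau_m+\cdot})] = \E[\Id_A\Id_{\tau<\infty}\,\tilde\phi(X^x_{\tau_m})]$. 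Now let $m\to\infty$. Since the paths $t\mapsto X^x_t$ are cadlag and $\tau_m\downarrow\tau$, almost surely $X^x_{\tau_m+t}\to X^x_{\tau+t}$ for every $t\ge 0$ and $X^x_{\tau_m}\to X^x_\tau$; hence for cylinder $\phi$ both $\phi(X^x_{\tau_m+\cdot})\to\phi(X^x_{\tau+\cdot})$ and $\tilde\phi(X^x_{\tau_m})\to\tilde\phi(X^x_\tau)$ a.s. As all quantities are bounded, dominated convergence yields $\E[\Id_A\Id_{\tau<\infty}\,\phi(X^x_{\tau+\cdot})] = \E[\Id_A\Id_{\tau<\infty}\,\tilde\phi(X^x_\tau)]$ for every $A\in\mathcal{F}_\tau$. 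Since $X^x_\tau$ is $\mathcal{F}_\tau$-measurable (cadlag process, right-continuous filtration), so is $\Id_{\tau<\infty}\tilde\phi(X^x_\tau)$, and the displayed identity is precisely $\E[\Id_{\tau<\infty}\phi(X^x_{\tau+\cdot})\mid\mathcal{F}_\tau] = \Id_{\tau<\infty}\tilde\phi(X^x_\tau)$ for cylinder $\phi$.

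Finally I would remove the restriction on $\phi$. The cylinder functions form a multiplicative class generating the Borel $\sigma$-algebra of $\D_{[0,\infty)}(\mcM)$ (which is generated by the coordinate projections, cf. \cite[Lemma A5.3]{Kallenberg}), and for such $\phi$ the map $\tilde\phi$ is continuous (hence measurable) and the strong Markov identity holds. By the monotone class theorem for multiplicative systems of functions, the class of bounded measurable $\phi$ for which $\tilde\phi$ is measurable and the identity holds is all bounded measurable $\phi$; a further monotone-convergence step (apply the above to $\phi\wedge n$ and let $n\uparrow\infty$, using monotone convergence for conditional expectations) extends this to every measurable $\phi:\D_{[0,\infty)}(\mcM)\to[0,\infty]$. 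I expect the only genuinely delicate point to be the passage to the limit in the second step: one must secure the a.s.\ convergences $X^x_{\tau_m+t}\to X^x_{\tau+t}$ simultaneously for all $t$ off a single null set (right-continuity of paths handles this once it is known at the rationals) and check that dominated convergence is legitimate; everything else is routine bookkeeping.
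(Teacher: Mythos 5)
Your proposal is correct and is precisely the classical argument of \cite[Theorems 6.16 and 6.17]{LeGall} (cylinder functions, dyadic discretization of $\tau$, passage to the limit via right-continuity of paths and continuity of $\tilde\phi$, then a monotone class step), which is exactly what the paper invokes by citation, noting as you do that the only input needed is $\Pp_t: C_b(\mcM) \to C_b(\mcM)$ rather than the $C_0$-Feller property. The delicate limit step you flag is handled correctly: $\tau_m + t \downarrow \tau + t$ for every $t$ simultaneously off the single null set where the path fails to be cadlag, so no further work is needed.
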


\begin{proof}
The proof is the same as \cite[Theorems 6.16 and 6.17]{LeGall}. We remark that our notation differs from \cite{LeGall}. For example, we use $\mcM$ instead of $E$, $X_t^x$ instead of $Y_t$, $\Pp_t$ instead of $Q_t$, and $C_b(\mcM)$ instead of $C_0(E)$. Note that $C_b(\mcM)$ differs from $C_0(E)$, but the proof in \cite{LeGall} is unchanged if $C_0(E)$ is replaced by $C_b(\mcM)$ as it only uses continuity of functions in $C_0(E)$, but not the vanishing at infinity.
The book \cite{LeGall} also uses a stronger definition of Feller, but in the proofs of Theorems 6.16 and 6.17, it only uses that $Q_t: C_0(E) \to C_0(E)$, which in our notation translates to $\Pp_t : C_b(\mcM) \to C_b(\mcM)$ (see \Cref{as2}).
\end{proof}

\begin{rem}
For example, if $\tau$ is a finite stopping time, $s > 0$, and $f \in C_b(\mcM)$, then applying the strong Markov property detailed in \Cref{strong-Markov} with $\phi(x(\cdot)) := x(s)$ we have 
\begin{equation}\label{rem:gmp}
    \Pp_s f(X_\tau^x) = \E[f(X_{\tau + s}^x)|\mathcal{F}_\tau] \,.
\end{equation}
\end{rem}

The Feller property (\Cref{as2}) tells us that if $x_n \to x$, then $X_t^{x_n} \to X_t^x$ (in distribution). In other words, the law of the process at time $t$ depends continuously on the initial position. Using \Cref{as3}, we can actually show that the entire path of the process (viewed as a random element of $\D_{[0,\infty)}(\mcM)$) depends continuously on the initial condition.

First, we recall an Arzela-Ascoli-type result for weak convergence on $\D_{[0,\infty)}(\mcM)$:

\begin{lem} \label{AA}
    If $Y^n,Y$ are $\D_{[0,\infty)}(\mcM)$-valued random variables, then $Y^n \to Y$ in distribution if both statements hold:
    \begin{enumerate}[label=(\roman*)]
        \item \label{fd} For any $0 \leq t_1 < \ldots < t_m$, $(Y^n(t_1),\ldots,Y^n(t_m)) \to (Y(t_1),\ldots,Y(t_m))$  in distribution. 
        \item \label{st} For any $t > 0$, any sequence $(\tau_n)_{n \geq 1}$ of stopping times with $\tau_n \leq t$, and any sequence $(h_n)_{n \geq 1}$ of positive constants such that $h_n \to 0$, we have $\E[d(Y_{\tau_n}^n,Y_{\tau_n+h_n}^n)] \to 0$. 
\end{enumerate}

\begin{rem}
    By convergence in distribution we mean that the laws of the random variables converge weakly, and in \ref{fd} we view each $(Y^n(t_1),\ldots,Y^n(t_m))$ as a $\mcM^m$-valued random variable. In \ref{st}, we view each $Y^n$ as a collection of $\mcM$-valued random variables $Y^n_t \coloneqq Y^n(t)$ and $\tau_n$ is a stopping time with respect to the filtration generated by $\{Y^n_t\}_{t \geq 0}$.

\end{rem}

\end{lem}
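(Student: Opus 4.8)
The plan is to reduce \Cref{AA} to a standard criterion. Hypothesis \ref{st} is the Aldous modulus‑of‑continuity condition, so together with the marginal tightness that follows from \ref{fd} it forces $(Y^n)_n$ to be tight in $\D_{[0,\infty)}(\mcM)$, after which \ref{fd} pins down every subsequential weak limit as the law of $Y$. Concretely, I would first observe that taking $m=1$ in \ref{fd} gives $Y^n(t)\to Y(t)$ in distribution for each fixed $t\ge0$, and since $\mcM$ is Polish a weakly convergent sequence of laws is tight; hence for every $t$ and $\epsilon>0$ there is a compact $K\subset\mcM$ with $\inf_n\Prb(Y^n(t)\in K)\ge 1-\epsilon$, i.e.\ compact containment at fixed times. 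Hypothesis \ref{st} is a sequential form of the Aldous condition: a routine contradiction argument shows it is equivalent to
\[
\lim_{h\downarrow0}\ \limsup_{n\to\infty}\ \sup_{\tau\le t}\ \E\big[d(Y^n_\tau,Y^n_{\tau+h})\big]=0\qquad\text{for every }t>0,
\]
where $\tau$ ranges over $[0,t]$‑valued stopping times of the natural filtration of $Y^n$. By the Aldous tightness criterion for the Skorokhod space (see, e.g., \cite{Kallenberg}), these two facts make the laws of the $Y^n$ a tight --- hence, since $\D_{[0,\infty)}(\mcM)$ is Polish, relatively compact --- family of probability measures on $\D_{[0,\infty)}(\mcM)$.

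Next, let $Y^{n_k}\to Z$ in distribution along a subsequence; I claim $Z\stackrel{d}{=}Y$. The set $\Lambda=\{t>0:\Prb(Z_t\ne Z_{t-})>0\}$ of fixed discontinuities of $Z$ is countable: if infinitely many times $t_i$ satisfied $\Prb(d(Z_{t_i},Z_{t_i-})>\epsilon)>\eta$ for some $\epsilon,\eta>0$, then by the reverse Fatou lemma the event that $Z$ has infinitely many jumps of size $>\epsilon$ on a bounded interval would have positive probability, contradicting the cadlag property. For $t_1<\dots<t_m$ with all $t_i\in\Lambda^c$, the coordinate map $x\mapsto(x(t_1),\dots,x(t_m))$ on $\D_{[0,\infty)}(\mcM)$ is continuous at $\Prb$‑a.e.\ realization of $Z$, so the continuous mapping theorem gives $(Y^{n_k}(t_1),\dots,Y^{n_k}(t_m))\to(Z(t_1),\dots,Z(t_m))$ in distribution, and comparison with \ref{fd} yields $(Z(t_1),\dots,Z(t_m))\stackrel{d}{=}(Y(t_1),\dots,Y(t_m))$. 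Since $\Lambda^c$ is dense in $[0,\infty)$ and $Y,Z$ are right‑continuous, approximating each $t_i$ from the right by points of $\Lambda^c$ extends this equality to all finite‑dimensional distributions. Because the Borel $\sigma$‑algebra on $\D_{[0,\infty)}(\mcM)$ is generated by the coordinate projections (\cite[Lemma A5.3]{Kallenberg}), a law on $\D_{[0,\infty)}(\mcM)$ is determined by its finite‑dimensional distributions (Dynkin's theorem applied to the $\pi$‑system of cylinder sets), so $Z\stackrel{d}{=}Y$. Every subsequential weak limit of the tight family $(Y^n)_n$ is therefore $\mathrm{Law}(Y)$, and hence $Y^n\to Y$ in distribution.

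The main obstacle is the first step: verifying that \ref{st} is precisely the hypothesis demanded by whichever form of the Aldous criterion one cites --- in particular the passage from ``for every sequence of stopping times $\tau_n\le t$ and every null sequence $h_n$'' to the uniform ``$\lim_h\limsup_n\sup_\tau$'' statement above, and confirming that requiring the stopping times to be bounded by an arbitrary $t>0$ suffices on $[0,\infty)$, with no separate control at a right endpoint needed. Once tightness is in hand, identifying the limit through finite‑dimensional distributions is routine, apart from the usual care about fixed times of discontinuity.
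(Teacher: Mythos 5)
Your proposal is correct and follows essentially the same route as the paper, which simply cites \cite[Theorem 23.9i and Theorem 23.11]{Kallenberg} (the finite-dimensional-plus-tightness convergence theorem together with the Aldous tightness criterion); your argument is a sound reconstruction of exactly those two ingredients, including the routine reduction of the sequential form of \ref{st} to the $\lim_h\limsup_n\sup_\tau$ form and the standard identification of the limit via times outside the countable set of fixed discontinuities.
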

\begin{proof}
    Can be found in \cite[Theorem 23.9i and Theorem 23.11]{Kallenberg}.
\end{proof}

The proof of the following lemma is inspired by \cite[Theorem 17.25]{Kallenberg}.

\begin{lem} \label{cts-dep}
   Suppose $\{X_t^x\}_{x \in \mcM, t \geq 0}$ is Feller (see \Cref{as2}) and satisfies \Cref{as3}. If $x_n \to x$ then $X^{x_n} \to X^x$ in distribution. (Recall we use $X^x$ to denote the random variable given by $\Omega \ni \omega \mapsto X_\cdot^x(\omega) \in \D_{[0,\infty)}(\mcM)$.)
\end{lem}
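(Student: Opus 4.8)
The plan is to verify the two conditions of the Arzela-Ascoli-type criterion \Cref{AA} with $Y^n = X^{x_n}$ and $Y = X^x$. Condition \ref{fd}, convergence of finite-dimensional distributions, follows from the Feller property and the Markov property by a standard induction on $m$: for $m = 1$ it is exactly \Cref{as2}, and for the inductive step one writes, for $f_1, \dots, f_m \in C_b(\mcM)$,
$$
\E\Big[\prod_{j=1}^m f_j(X_{t_j}^{x_n})\Big] = \E\Big[\Big(\prod_{j=1}^{m-1} f_j(X_{t_j}^{x_n})\Big) \Pp_{t_m - t_{m-1}}f_m(X_{t_{m-1}}^{x_n})\Big]
$$
using \eqref{rem:gmp} (the Markov property), and then one uses that $\Pp_{t_m - t_{m-1}}f_m \in C_b(\mcM)$ by \Cref{as2} together with the inductive hypothesis applied to the bounded continuous functions $f_1, \dots, f_{m-2}, f_{m-1}\cdot \Pp_{t_m - t_{m-1}}f_m$ and times $t_1 < \dots < t_{m-1}$. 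This gives \ref{fd}.

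The substance of the proof is condition \ref{st}: for any $t > 0$, any sequence of stopping times $\tau_n \leq t$ (with respect to the natural filtration of $X^{x_n}$), and any $h_n \downarrow 0$, we must show $\E[d(X_{\tau_n}^{x_n}, X_{\tau_n + h_n}^{x_n})] \to 0$. By the strong Markov property \Cref{strong-Markov} applied with the map $\phi(y(\cdot)) = d(y(0), y(h_n))$, we have $\E[d(X_{\tau_n}^{x_n}, X_{\tau_n + h_n}^{x_n})] = \E[g_{h_n}(X_{\tau_n}^{x_n})]$, where $g_h(y) := \E[d(y, X_h^y)] = \E[d(X_0^y, X_h^y)]$. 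So it suffices to show that $\sup_{y \in K} g_{h}(y) \to 0$ as $h \to 0$ for every compact $K \subset \mcM$ (then use that $\{X_{\tau_n}^{x_n}\}$ stays in a fixed compact set with high probability uniformly in $n$: since $x_n \to x$, $\sup_n W(x_n) < \infty$, and by \Cref{ineq1} $\E[W(X_{\tau_n}^{x_n})] \leq W(x_n) + K\E[\tau_n] \leq \sup_n W(x_n) + Kt < \infty$, so by Markov's inequality $\Prb(X_{\tau_n}^{x_n} \notin \{W \leq m\})$ is small uniformly in $n$ for $m$ large, and on the complement $d \leq 1$). To prove $\sup_{y \in K} g_h(y) \to 0$: fix $\epsilon > 0$; since $d \leq 1$ it is enough to bound $\Prb(d(y, X_h^y) > \epsilon)$ uniformly for $y \in K$. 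Choose a function $f \in C_b(\mcM)$ (for instance built from $d(\cdot, \cdot)$ and a partition of unity on a neighborhood of $K$, or simply $f = \min(d(\cdot, y_0)/\epsilon, 1)$-type cutoffs) witnessing that the indicator of $\{d(\cdot, y) > \epsilon\}$ is controlled; more cleanly, cover $K$ by finitely many balls $B(y_i, \epsilon/3)$, pick $f_i \in C_b(\mcM)$ with $\Id_{B(y_i,\epsilon/3)} \leq f_i \leq \Id_{B(y_i, 2\epsilon/3)}$, and estimate $\Prb(d(y, X_h^y) > \epsilon) \leq 1 - \Pp_h f_i(y)$ for $y \in B(y_i, \epsilon/3)$. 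By \Cref{as2} and \Cref{unif-in-t}, $\Pp_h f_i \to f_i$ uniformly on compact sets as $h \downarrow 0$, and $f_i(y) \geq 1$ on $B(y_i, \epsilon/3)$, so $\sup_{y \in B(y_i, \epsilon/3)} (1 - \Pp_h f_i(y)) \to 0$. Taking the max over the finitely many $i$ gives $\sup_{y \in K} \Prb(d(y, X_h^y) > \epsilon) \to 0$, hence $\sup_{y \in K} g_h(y) \leq \epsilon + \sup_{y\in K}\Prb(d(y,X_h^y) > \epsilon) \to \epsilon$, and since $\epsilon$ is arbitrary, $\sup_{y \in K} g_h(y) \to 0$.

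Combining the uniform-in-$n$ tightness of $X_{\tau_n}^{x_n}$ in compacts with the uniform smallness of $g_{h_n}$ on compacts yields $\E[g_{h_n}(X_{\tau_n}^{x_n})] \to 0$, which is condition \ref{st}. Then \Cref{AA} gives $X^{x_n} \to X^x$ in distribution, as claimed. The main obstacle is condition \ref{st}, and within it the delicate point is that $\tau_n$ is merely a stopping time bounded by $t$ with no further control — this is precisely why the strong Markov property \Cref{strong-Markov} is needed to reduce the estimate to the deterministic quantity $\sup_{y \in K} g_h(y)$, and why the $W$-based tightness from \Cref{as3} and \Cref{ineq1} (rather than just $C_0$-Feller behavior at infinity) is essential to handle the lack of a priori compactness of the state space.
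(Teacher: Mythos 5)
Your proof is correct and follows the same skeleton as the paper's: verify the two conditions of \Cref{AA}, using the strong Markov property, the $W$-moment bound from \Cref{ineq1} to confine $X_{\tau_n}^{x_n}$ to a compact set with high probability, and \Cref{unif-in-t} to control the short-time behavior uniformly on compacts. The sub-arguments, however, differ in both halves. For the finite-dimensional distributions you condition at the \emph{last} time point and propagate $f_{m-1}\cdot \Pp_{t_m-t_{m-1}}f_m$ backwards through the induction; this needs only the ordinary Markov property and \Cref{as2}, and avoids the paper's route of conditioning at $t_1$, restricting to Lipschitz $f$, and proving continuity of $y \mapsto \E[f(y,X^y(t_2-t_1),\dots)]$. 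The only step you gloss over is the standard fact that convergence of $\E[\prod_j f_j(X^{x_n}_{t_j})]$ for all $f_j \in C_b(\mcM)$ yields weak convergence of the joint laws on $\mcM^m$ (take all but one $f_j\equiv 1$ to get tightness of each marginal, hence of the joint laws, and note that products of bounded continuous functions determine Borel measures on products of Polish spaces); a sentence to this effect should be added. For condition \ref{st}, your covering argument with cutoffs $\Id_{B(y_i,\epsilon/3)} \le f_i \le \Id_{B(y_i,2\epsilon/3)}$ gives $\sup_{y\in K} g_h(y) \to 0$ directly from \Cref{unif-in-t}, replacing the paper's equicontinuity-plus-Arzel\`a--Ascoli argument for the family $\tilde{\phi}_n(x) = \Pp_{h_n}d_x(x)$; both are valid, and yours is arguably more elementary since it requires no compactness argument in function space.
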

\begin{proof}
    It suffices to show conditions \ref{fd} and \ref{st} of \Cref{AA}. 

    \textit{Proof of \ref{fd}.} The proof is by induction on $m$. If $m = 1$, then for any $f \in C_b(\mcM)$, \Cref{as2} implies 
     $\Pp_{t_1}f \in C_b(\mcM)$, and therefore 
     $$
     \E[f(X^{x_n}(t_1))] = \Pp_{t_1}f(x_n) \to \Pp_{t_1}f(x) = \E[f(X^x(t_1))].
     $$ 
     Suppose the claim holds for $m-1$ and fix $0 \leq t_1 < \ldots < t_m$ and let $f \in C_b(\mcM^m)$ be Lipschitz. Recall that by Portmanteau theorem it suffices to consider only Lipschitz functions $f$ to show weak convergence of measures, or equivalently the convergence in distribution of the corresponding random variables. 
     Let $\phi: \D_{[0,\infty)}(\mcM) \to \R$ be given by $$\phi(x(\cdot)) = f(x(0),x(t_2 - t_1),\dots,x(t_m - t_1))$$ 
     and $\tilde{\phi}: \mcM \to \R$ be as defined in \Cref{strong-Markov}:
     \begin{equation}\label{phi-tilde0}
         \tilde{\phi}(y) \coloneqq \E[\phi(X^y)] = \E[f(y,X^y(t_2 - t_1),\dots,X^y(t_m - t_1))] \,.
     \end{equation}
     Applying the (strong) Markov property (\Cref{strong-Markov}) with $\tau = t_1$, for any $x \in \mcM$ we have
     \begin{equation}\label{f-m}
         \E[f(X^x(t_1),X^x(t_2),\dots,X^x(t_m))] = \E[\tilde{\phi}(X^x(t_1))] = \Pp_{t_1}\tilde{\phi}(x) \,.
     \end{equation}

     Next, we show that the left hand side of \eqref{f-m} is continuous in $x$, which by \Cref{as2} follows once we prove that $\tilde{\phi} \in C_b(\mcM)$. By our inductive hypothesis, 
     $$
     \mcM \ni y \mapsto (X^y(t_2 - t_1),\dots,X^y(t_m - t_1)) \in P(\mcM^{m-1})
     $$ 
     is continuous (again we conflate random variables with their laws).
     
     Since $f \in C_b(\mcM^m)$ is Lipschitz, 
     $$
     \mcM \ni y \mapsto f_y \in C_b(\mcM^{m-1}) \quad \text{where} \quad f_y(x_1,\dots,x_{m-1}) \coloneqq f(y,x_1,\dots,x_{m-1})
     $$ 
     is continuous. Since 
     $$
     P(\mcM^{m-1}) \times C_b(\mcM^{m-1}) \ni (\mu,f) \mapsto \mu f \in \R
     $$ 
     is continuous, by \eqref{phi-tilde0} $\tilde{\phi}$ is continuous, as desired.

   \textit{Proof of \ref{st}} Let $t > 0$, $(\tau_n)_{n \geq 1}$, and $(h_n)_{n \geq 1}$ be as in \Cref{AA} \ref{st}. Define $\phi_n: \D_{[0,\infty)}(\mcM) \to [0,1]$ as $\phi_n(Y) = d(Y(0),Y(h_n))$ and $\tilde{\phi}_n(y) = \E[\phi_n(X^y)]$. By the strong Markov property (\Cref{strong-Markov}), 
    \begin{equation}\label{nu-n}
    \E[d(X_{\tau_n}^{x_n},X_{\tau_n+h_n}^{x_n})] = \E[\tilde{\phi}_n(X_{\tau_n}^{x_n})] = \nu_n\tilde{\phi}_n,
    \end{equation}
    where $\nu_n$ denotes the law of $X_{\tau_n}^{x_n}$. Since $\tau_n \leq t$,  \Cref{ineq1} implies
    $$
    \nu_n W = \E[W(X_{\tau_n}^{x_n})] \leq W(x_n) + Kt.
    $$ 
    Since $x_n \to x$, $\sup_n W(x_n) < \infty$, and therefore $\sup_n \nu_n W < \infty$. This implies
    \begin{equation}\label{nu-n-of-Km}
        \lim_{m \to \infty} \sup_n \nu_n(K_m) = 0 \,.
    \end{equation}
    
    Note that by the Markov property, 
    \begin{equation}\label{phi-tilde}
        \tilde{\phi}_n(x) = \E[d(x,X_{h_n}^x)] = \E[d_x(X_{h_n}^x)] = \Pp_{h_n}d_x(x) \,,
    \end{equation}
    
    where $d_x:\mcM \to [0,1]$ given by $d_x(y) = d(x,y)$ is continuous. 
    
    Let $K_m = \{W \leq m\}$ and $\|\cdot \|_m$ be the supremum norm over $K_m$ as in the proof of \Cref{unif-in-t}. Then for any $m$ and any $x,y \in K_m$, 
    we obtain 
    \begin{align*}
        |\tilde{\phi}_n(x) - \tilde{\phi}_n(y)| &=
        |\Pp_{h_n}d_x(x) - \Pp_{h_n}d_y(y)|
        \\
        &\leq \|\Pp_{h_n}d_x - \Pp_{h_n}d_y\| + |\Pp_{h_n}d_x(x) - \Pp_{h_n}d_x(y)| \\
        &\leq d(x,y) + |\Pp_{h_n}d_x(x) - \Pp_{h_n}d_x(y)| 
    \end{align*}
    and by the Feller property (see \Cref{as2}) we have that $\Pp_{h_n}d_x$, and thus $\tilde{\phi}_n$, is continuous. 
In addition, by using $d_x(x) = 0$ it holds that 
\begin{align*}
    |\Pp_{h_n}d_x(x) - \Pp_{h_n}d_x(y)| &= 
    |\Pp_{h_n}d_x(x)  - d_x(x) - d_x(y) + d_x(y) - \Pp_{h_n}d_x(y)| 
    \\
    &\leq d(x,y) + 2\|\Pp_{h_n}d_x - d_x\|_m \,,
\end{align*}
and consequently
        \begin{align}\label{eq:sres}
        |\tilde{\phi}_n(x) - \tilde{\phi}_n(y)| 
        \leq 2d(x,y) + 2\|\Pp_{h_n}d_x - d_x\|_m \,.
    \end{align}

 \Cref{unif-in-t} implies $\|\Pp_{h_n}d_x - d_x\|_m \to 0$ as $n \to \infty$, and thus by \eqref{eq:sres} and continuity of each $\tilde{\phi}_n$ we have that $\{\tilde{\phi}_n\}_{n \in \mathbb{N}}$ is equicontinuous on $K_m$. 
By \Cref{as2} and $h_n \to 0$, for each $x \in \mcM$, $\Pp_{h_n}d_x \to d_x$ pointwise, which by \eqref{phi-tilde} means that $\tilde{\phi}_n \to 0$ pointwise since $d_x(x) = 0$. Then Arzela-Ascoli theorem yields that $\tilde{\phi}_n \to 0$ uniformly on each $K_m$. Since $\sup_n\|\tilde{\phi}_n\| \leq 1$, then by \eqref{nu-n} and \eqref{nu-n-of-Km}, $\E[d(X_{\tau_n}^{x_n},X_{\tau_n+h_n}^{x_n})] = \nu_n\tilde{\phi}_n \to 0$, which proves \ref{st}.
\end{proof}

The proof of the next lemma is standard for the more familiar topology of uniform convergence on compact sets, but since we work with the Skorokhod topology we provide  more details.

\begin{lem} \label{sk-cts}
    The following functions are continuous:
    \begin{enumerate}
        \item[(i)] $x(\cdot) \mapsto g(x(\cdot))$ from $\D_{[0,\infty)}(\mcM)$ to $\D_{[0,\infty)}(\R)$, where $g: \mcM \to \mathbb{R}$ is continuous.
        \item[(ii)] $x(\cdot) \mapsto \int_0^\cdot x(s)ds$ from $\D_{[0,\infty)}(\R)$ to $\D_{[0,\infty)}(\R)$.
        \item[(iii)] $x(\cdot) \mapsto \inf_{0 \leq s \leq \cdot} x(s)$  from $\D_{[0,\infty)}(\R)$ to $\D_{[0,\infty)}(\R)$.
        \item[(iv)] $x(\cdot) \mapsto x(T)$ from $\{x \in \D_{[0,\infty)}(\R) \mid x \text{ continuous}\} \eqqcolon C_{[0,\infty)}(\R) \to \R$, where $T$ is a constant.
    \end{enumerate}
\end{lem}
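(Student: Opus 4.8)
The plan is to reduce every claim to the definition of the Skorokhod ($J_1$) topology recalled above: $x_n \to x$ in $\D_{[0,\infty)}$ exactly when there exist increasing bijections $\lambda_n$ of $[0,\infty)$ (necessarily with $\lambda_n(0)=0$) such that $x_n \circ \lambda_n \to x$ and $\lambda_n \to \mathrm{Id}$, both uniformly on compact subsets of $[0,\infty)$. Two elementary facts will be used repeatedly. First, uniform-on-compacts convergence always implies $J_1$ convergence (take $\lambda_n = \mathrm{Id}$). Second, if $x_n \to x$ in $\D_{[0,\infty)}(\R)$ with time changes $\lambda_n$, then $x_n(t)\to x(t)$ at every continuity point $t$ of $x$ (write $x_n(t) = (x_n\circ\lambda_n)(\lambda_n^{-1}(t))$, note $\lambda_n^{-1}\to\mathrm{Id}$ uniformly on compacts so $\lambda_n^{-1}(t)\to t$, and combine u.o.c.\ convergence of $x_n\circ\lambda_n$ with continuity of $x$ at $t$); since a cadlag path has at most countably many discontinuities, this gives Lebesgue-a.e.\ convergence. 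Moreover the same bookkeeping shows $\{x_n\}$ is uniformly bounded on each $[0,T]$ for $n$ large, because eventually $\lambda_n^{-1}([0,T])\subseteq[0,T+1]$ and $x_n\circ\lambda_n\to x$ uniformly there.

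For (i) and (iii) I would exploit that the relevant map commutes with time reparametrization. If $\Phi$ denotes either $x\mapsto g\circ x$ or $x\mapsto \inf_{0\le s\le\cdot}x(s)$, then $\Phi(x_n)\circ\lambda_n = \Phi(x_n\circ\lambda_n)$, the second identity because $\lambda_n$ carries $[0,t]$ bijectively onto $[0,\lambda_n(t)]$. In case (i), $g\circ x$ is again cadlag, so it suffices to prove $g\circ(x_n\circ\lambda_n)\to g\circ x$ uniformly on each $[0,T]$. Here I would use that $C:=\overline{x([0,T])}$ is compact (a cadlag path on a compact interval has totally bounded, hence relatively compact, range in the complete space $\mcM$) and that $g$ is uniformly continuous on a neighborhood of $C$ in the sense that $\sup\{|g(y)-g(z)|:y\in C,\ d(y,z)<\delta\}\to 0$ as $\delta\downarrow 0$ (a short contradiction-and-subsequence argument); since $\sup_{s\le T}d\big((x_n\circ\lambda_n)(s),x(s)\big)\to 0$ and $x(s)\in C$, the conclusion follows. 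In case (iii), one first checks that $m(t):=\inf_{[0,t]}x(s)$ is nonincreasing and right-continuous (right-continuity because $\inf_{(t,t+\delta]}x\to x(t)\ge m(t)$ as $\delta\downarrow 0$ by right-continuity of $x$), hence lies in $\D_{[0,\infty)}(\R)$; then, since $\Phi$ is $1$-Lipschitz for the supremum norm on each $[0,T]$, $x_n\circ\lambda_n\to x$ u.o.c.\ forces $\Phi(x_n)\circ\lambda_n = \Phi(x_n\circ\lambda_n)\to \Phi(x)$ u.o.c., i.e.\ $m_n\to m$ in $J_1$.

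For (ii) and (iv) I would instead argue through pointwise convergence. In (ii), by the a.e.\ convergence and uniform local boundedness noted above, dominated convergence yields $\int_0^T|x_n(s)-x(s)|\,ds\to 0$ for every $T$, whence $\int_0^\cdot x_n(s)\,ds\to\int_0^\cdot x(s)\,ds$ uniformly on each $[0,T]$ and therefore in $J_1$. In (iv), if all paths are continuous, then $x_n(T)=(x_n\circ\lambda_n)(\lambda_n^{-1}(T))$ with $\lambda_n^{-1}(T)\to T$; the right-hand side converges to $x(T)$ because $|(x_n\circ\lambda_n)(\lambda_n^{-1}(T))-x(\lambda_n^{-1}(T))|\to 0$ by u.o.c.\ convergence and $x(\lambda_n^{-1}(T))\to x(T)$ by continuity of $x$. (The restriction to continuous paths is genuinely needed, since evaluation at a fixed time is discontinuous on all of $\D_{[0,\infty)}(\R)$ across jump times.)

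I expect the main obstacle to be part (i): it is the only one that interacts with the metric-space structure of $\mcM$ rather than being a real-variable statement, and it relies on the relative compactness of the range of a cadlag path over a compact time interval together with the ``collar'' uniform continuity of $g$ around that range. Parts (ii)--(iv) are then routine once the time-change bookkeeping ($\lambda_n^{-1}\to\mathrm{Id}$, commutation of $\Phi$ with reparametrization, uniform local bounds) is in place.
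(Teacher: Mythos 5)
Your proposal is correct. For (i), (iii) and (iv) it follows essentially the same route as the paper: reduce to continuity for the topology of uniform convergence on compacts together with the commutation identities $F(f_n)\circ\lambda_n = F(f_n\circ\lambda_n)$, and for (iv) the fact that on continuous paths the Skorokhod topology degenerates to uniform convergence on compacts (the paper cites Kallenberg for this; your direct computation via $\lambda_n^{-1}(T)\to T$ is equivalent). In fact for (i) you supply a step the paper only asserts, namely that $x\mapsto g\circ x$ is u.o.c.-continuous, via relative compactness of the range of a cadlag path on $[0,T]$ and the ``collar'' uniform continuity of $g$ around that compact set; this is a worthwhile addition. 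Where you genuinely diverge is (ii): the paper must control the mismatch $\int_0^{\lambda_n(t)}f_n(s)\,ds-\int_0^t f_n(\lambda_n(s))\,ds$ (no Jacobian appears), and does so by a partition argument using the cadlag modulus $\tilde w_{T+1}$ from the relative-compactness criterion in Kallenberg. You bypass this entirely by observing that $J_1$-convergence gives $x_n(t)\to x(t)$ at every continuity point of the limit, hence Lebesgue-a.e., together with uniform local bounds; dominated convergence then gives $L^1_{\mathrm{loc}}$ convergence and hence uniform convergence of the integrals on compacts, which implies $J_1$-convergence with the identity time changes. Your argument for (ii) is more elementary and avoids the modulus machinery, at the cost of invoking the (standard, and correctly sketched) fact about convergence at continuity points; both proofs are sound.
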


\begin{proof}
The claim (iv) holds true, since  the subspace topology of $C_{[0,\infty)}(\R) \subset \D_{[0,\infty)}(\R)$ coincides with the topology of uniform convergence on compact sets (see for example \cite[Theorem 23.9iii]{Kallenberg}). 

Next, we  prove (i) -- (iii). 
In all three cases the corresponding map denoted $F:\D_{[0,\infty)}(A) \to \D_{[0,\infty)}(\R)$ (where $A = \mcM$ or $\R$) is continuous if we endow $\D_{[0,\infty)}(A)$ and $\D_{[0,\infty)}(\R)$ with the topology of uniform convergence on compact sets.
Thus, by the definition of the Skorokhod topology it suffices to show that if $f_n \in \D_{[0,\infty)}(A)$ and there exists an increasing bijection $\lambda_n:[0,\infty) \to [0,\infty)$ such that $f_n \circ \lambda_n \to f$ and $\lambda_n \to \textnormal{Id}$ uniformly on compact subsets of $[0,\infty)$,  then 
\begin{equation}\label{eq:ucst}
F(f_n) \circ \lambda_n - F(f_n \circ \lambda_n) \to 0    
\end{equation}
 uniformly on compact subsets of $[0,\infty)$ (we already noted that $F(f_n \circ \lambda_n) \to F(f)$).

For (i) and (iii),  \eqref{eq:ucst} is immediate since respectively $(g \circ f_n) \circ \lambda_n = g \circ (f_n \circ \lambda_n)$, and $\inf_{0 \leq s \leq \lambda_n(t)} f_n(s) = \inf_{0 \leq s \leq t} f_n(\lambda_n(s))$ for each $t$.

For (ii) we need to show that for all $T > 0$ 
\begin{equation}\label{eq:icst}
\int_0^{\lambda_n(t)} f_n(s)ds - \int_0^t f_n(\lambda_n(s))ds \to 0
\end{equation}
uniformly for $t \in [0,T]$. Since $(f_n)_{n \geq 1}$ converges in $\D_{[0,\infty)}(\R)$, $(f_n)_{n \geq 1}$ is relatively compact in $\D_{[0,\infty)}(\R)$ and thus by \cite[Theorem A5.4]{Kallenberg} we have
\begin{equation}\label{eq:ksct}
\lim_{h \downarrow 0} \sup_n \tilde{w}_{T+1}(f_n,h) = 0 \,,    
\end{equation}
where for any $g:[0,T+1) \to \mcM$ and $h > 0$
$$
\tilde{w}_{T+1}(g,h) = \inf_{(I_k)} \max_k \sup_{r, s \in I_k} 
|g(r) - g(s)|
$$
and the infimum is taken over all partitions of the interval 
$[0,T+1)$ into sub-intervals $I_k = [u_k,v_k)$ with $v_k-u_k \geq h$ when 
$v_k < T+1$.

To prove \eqref{eq:icst} we 
fix any $T > 0$ and denote 
$$
C:= 1 + \sup_{t \in [0,T+1]} |f(t)|
$$
For any fixed $\epsilon > 0$,  by \eqref{eq:ksct} there is a finite partition of $[0, T+1)$ into $M = M(\epsilon)$ intervals $I_k = [u_k,v_k)$ such that $\sup_{s, t \in I_k} |f_n(s) - f_n(t)| < \epsilon$ for any $k \leq M$, $n \geq 1$. 
Fix $N \geq 1$ such that for any $n \geq N$ we have 
$$\sup_{t \in [0,T]}|\lambda_n(t) - t| < \frac{\epsilon}{CM} \wedge 1 \quad \text{ and } \quad  C \geq \sup_{t \in [0,T]} |f_n(\lambda_n(t))| \,.$$

Then for any $n \geq N$ and $t \in [0,T]$
\begin{align*}
   J_1(t) :=  \Big| \int_0^{\lambda_n(t)} f_n(s)ds - \int_0^t f_n(s)ds \Big| \leq C 
   \sup_{t \in [0, T]}|\lambda_n(t) - t| \leq \frac{\epsilon}{M} 
\end{align*}
and 
\begin{align*}
    Q_k &:= 
    \int_{u_k}^{v_k} |f_n(s) - f_n(\lambda_n(s))|ds \leq  \int_{u_k + \frac{\epsilon}{CM}}^{v_k - \frac{\epsilon}{CM}} |f_n(s) - f_n(\lambda_n(s))| ds \\
    &+ \int_{u_k}^{u_k + \frac{\epsilon}{CM}} |f_n(s) - f_n(\lambda_n(s))| ds
    + \int_{v_k - \frac{\epsilon}{CM}}^{v_k} |f_n(s) - f_n(\lambda_n(s))| ds \\
    &\leq 
    \epsilon (v_k - u_k) + 2C\frac{2\epsilon}{CM}
    = \epsilon (v_k - u_k) + \frac{4\epsilon}{M}
    \,,
\end{align*}
where in the last inequality we used that $s, \lambda_n(s) \in [u_k, v_k)$ if $s \in [u_k + \frac{\epsilon}{CM}, v_k - \frac{\epsilon}{CM})$. The desired result \eqref{eq:icst} follows once we observe that
$$
\Big|\int_0^{\lambda_n(t)} f_n(s)ds - \int_0^t f_n(\lambda_n(s))ds\Big| 
\leq J_1 + \sum_{k = 1}^M Q_k \leq \epsilon(T+1) + 4\epsilon + \frac{\epsilon}{M} \,. 
$$
\end{proof}

 \section{Proof of Main Theorem} \label{main-arguments}
 In this entire section, we fix a Markov quadruple $(\mcM, \mcM_0, \inv, \{X_t^x\}_{x \in \mcM, t \geq 0})$ (see \Cref{quadruple}) satisfying  \Cref{as1} -- \ref{as5}.
We aim to provide a proof of \Cref{main2} which is split into two parts. In the first step  we show that for any compact set $\K$, if  $V(x)$ is big enough for some $x \in \inv$, then, with high probability, 
$X_t^x$ is close to $\mcM_0$ whenever $t$ is such that $X_t^x \in \K$. 
 Furthermore, we establish that there is a sequence of times $t_n \to \infty$ such that $V(X_{t_n}^x) \to \infty$ as $n \to \infty$ and $X_{t_n}^x \in \K$
for all $n$. Specifically, we prove the following theorem.

\begin{thm} \label{part1}
There is an $N > 0$ such that for all $\delta > 0, M > N$ there is  $D > 0$ such that for all $y \in \inv \cap \{V > D\} \cap \{W \leq M\}$, we have
$$
\Prb\Big(\sup_{t \geq 0}d(X_t^y,\mcM_0)\Id_{W(X_t^y) < M} \leq \frac{1}{M} \text{ and } \limsup_{t \to \infty, W(X_t^y) \leq M} V(X_t^y) = \infty\Big) \geq 1 - \delta\,,
$$
where $\limsup_{t \to \infty, W(X_t^y) \leq M} V(X_t^y) = \infty$ means that there is some sequence of times $t_n \to \infty$ such that $W(X_{t_n}^y) \leq M$ and $V(X_{t_n}^y) \to \infty$.
\end{thm}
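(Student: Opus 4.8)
The plan is to sample $X_t^y$ along a cleverly chosen renewal sequence of stopping times and reduce the statement to the discrete-time comparison of \Cref{discrete-semimartingale-fact}. First I would fix $N$ once and for all, namely $N := \sup_{\{W' \le 2K\}} W$ with $K$ the constant of \Cref{as3}: this is finite since $\{W' \le 2K\}$ is compact and $W$ continuous, and it is exactly the threshold making the recurrent core $\{W' \le 2K\}$ sit inside $\{W < M\}$ for every $M > N$. Given $\delta > 0$ and $M > N$, I would construct stopping times $0 \le \tau_0 \le \tau_1 \le \cdots$ with $\tau_n \to \infty$ a.s.\ so that: (a) $X_{\tau_n}^y$ always lies in a fixed open neighborhood $\mathcal O$ of the $M$-independent compact set $\mcM_0 \cap \{W \le N+1\}$, chosen small enough that $\mathcal O \subset \{d(\cdot,\mcM_0) < 1/M\}$; and (b) writing $Z_n := V(X_{\tau_n}^y) - V(y)$ and $\mathcal G_n := \mathcal F_{\tau_n}$, the increments satisfy $\E[Z_{n+1} - Z_n \mid \mathcal G_n] \ge b$ and $\E[(Z_{n+1}-Z_n)^2 \mid \mathcal G_n] \le B$ for constants $b, B > 0$ independent of $M$. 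Granting (a)--(b), \Cref{discrete-semimartingale-fact} produces $C > 0$ with $\Prb(Z_n \ge \tfrac b2 n - C \text{ for all } n) > 1 - \tfrac\delta2$, hence $V(X_{\tau_n}^y) \to \infty$; combining this with a uniform-in-$t$ lower bound on $\int_0^t H(X_s^y)\,ds$ (bounded below along the renewal, since every cycle lowers this clock by at most a fixed amount) and with control of $M_t^V(y) = V(X_t^y) - V(y) - \int_0^t H(X_s^y)\,ds$ (via Doob's inequality and $\E[(M_\tau^V(y))^2] \le K(U(y) + K\E[\tau])$ from \Cref{V-W-linearly-bdd}, whose error is made $< \tfrac\delta2$ by taking $V(y)$, hence $D$, large) gives that, off an event of probability $\le \delta$, $V(X_t^y) \ge V(y) - C'$ for all $t$ and $V(X_t^y) \to \infty$. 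Taking $D$ so large that $\{V > D - C'\} \cap \inv \subset \{d(\cdot,\mcM_0) < 1/M\}$ (possible since $V(x_n) \to \infty$ forces $x_n \to \mcM_0$ by \Cref{as4}) then yields $\sup_{t \ge 0} d(X_t^y,\mcM_0) \le 1/M$, while the recurrence following from \Cref{as3} and \Cref{strong-law} (the process returns to $\{W' \le 2K\} \subset \{W \le N\} \subset \{W < M\}$ along some $t_n \to \infty$) together with $V(X_t^y) \to \infty$ gives the second conclusion.

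The heart of the matter is the per-cycle mean bound in (b), and this is where the two-phase construction enters. Since $y \in \inv$, \Cref{as1} keeps $X_t^y \in \inv$ for all $t$, so by the strong Markov property (\Cref{strong-Markov}) and \Cref{V-is-integral-of-H} one has $\E[Z_{n+1} - Z_n \mid \mathcal G_n] = \E[\int_{\tau_n}^{\tau_{n+1}} H(X_s^y)\,ds \mid \mathcal G_n]$; thus it suffices to arrange each cycle so that this clock increment is bounded (which, with the martingale part controlled by \Cref{V-W-linearly-bdd}, also yields the second-moment bound) and has positive conditional mean. The positivity comes from the dynamics on $\mcM_0$: for $x_0 \in \mcM_0$, \Cref{as1} keeps $X_t^{x_0}$ in $\mcM_0$, so by \Cref{lim-is-inv} every limit point of its empirical occupation measures lies in $P_{inv}(\mcM_0)$, whence $\liminf_{t\to\infty} \tfrac1t\int_0^t H(X_s^{x_0})\,ds \ge \alpha$ a.s.\ by \Cref{as4}. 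Hence there is a horizon $T_{x_0}$ for which the clock $t \mapsto \int_0^t H(X_s^{x_0})\,ds$ rises above a prescribed positive level while staying above a prescribed floor with probability close to $1$. As the maps $x(\cdot) \mapsto \int_0^T H(x(s))\,ds$ and $x(\cdot) \mapsto \inf_{0 \le s \le T} \int_0^s H(x(u))\,du$ are continuous on $\D_{[0,\infty)}(\mcM)$ (by \Cref{sk-cts}, using that $H$ is continuous and that cadlag paths have relatively compact range on compact time intervals), \Cref{cts-dep} transfers this high-probability event to all initial conditions in a neighborhood of $x_0$; a finite subcover of $\mcM_0 \cap \{W \le N+1\}$ then fixes $\mathcal O$, a common horizon $T^*$, and a common level and floor, all independent of $M$.

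Each cycle then runs in two phases. In Phase 1, starting from $X_{\tau_n}^y \in \mathcal O$, I would run $X^y$ while monitoring the clock $t \mapsto \int_{\tau_n}^t H(X_s^y)\,ds$ and stop at the first time it reaches the prescribed level (success), or it drops to the prescribed floor, or $X^y$ exits a fixed sublevel set $\{W \le N_1\}$, or the horizon $T^*$ elapses; since the clock is continuous in time, its value at this Phase-1 stopping time lies in the prescribed band, so Phase 1 contributes a bounded amount to $Z_{n+1} - Z_n$, positive in conditional mean by the previous paragraph. In Phase 2, if $X^y$ was not left in $\mathcal O$, I would run it further until it re-enters $\mathcal O$ --- of finite expected duration by the return-time estimates in \Cref{ineq1} --- and set $\tau_{n+1}$ to this re-entry time. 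The principal obstacle, flagged in the introduction as where the main novelty lies, is that over Phase 2 the clock $\int H$ may decrease by an amount comparable to $W$ at the end of Phase 1, which could cancel the Phase-1 gain unless handled carefully: the remedy is (i) to design Phase 1 so that, except on an event of small probability, $X^y$ never leaves a compact region chosen independently of $M$, so the Phase-1 endpoint has $W$ bounded by a universal constant, and (ii) to run a martingale argument over Phase 2 that pairs the increment of $\int H$ against the simultaneous decrease of $W$ (using $\mathcal{L}W \le K - W'$ from \Cref{as3} and the bound $|H| \le A + bW'$ valid since $H$ vanishes over $W'$) and against $M^V$ and $M^W$, so as to bound the Phase-2 loss by a universal constant strictly below the Phase-1 gain. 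A last delicate point is the order of the choices: the covering, the horizon $T^*$, the clock band, the sublevel set $\{W \le N_1\}$, and hence $b, B, C'$, must all be fixed before $M$, so that only $D$ is allowed to depend on $M$ and $\delta$.
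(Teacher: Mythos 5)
Your overall architecture matches the paper's: a two-phase renewal construction (run until an $\int H$-clock band is hit, then return to a compact region), transfer of the positive drift from $\mcM_0$ to nearby initial conditions via Skorokhod continuity (\Cref{sk-cts}, \Cref{cts-dep}), a finite subcover, and reduction to \Cref{discrete-semimartingale-fact}. However, there is a genuine gap in your Phase 2. You require $\tau_{n+1}$ to be the re-entry time into $\mathcal O$, a neighborhood of $\mcM_0 \cap \{W \le N+1\}$ contained in $\{d(\cdot,\mcM_0) < 1/M\}$, and you justify its finite expected duration by \Cref{ineq1}. But the Lyapunov estimates $\mathcal{L}W \le K - W'$ only control return times to \emph{sublevel sets of $W$} (this is exactly \Cref{return-time-fast}); nothing in the hypotheses guarantees that the process returns to a neighborhood of $\mcM_0$ in finite expected time --- a priori it could sit in $\{W \le N+1\}\setminus N_{1/M}(\mcM_0)$ forever, and proving it cannot is essentially the content of the theorem. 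The paper's resolution is to let Phase 2 return only to $\{W < M\}$ (\Cref{st2}), accept that the sampled chain $Y_n^y$ may land far from $\mcM_0$, stop the discrete semimartingale at the first exit time $\tau'$ from the good neighborhood (the process $Z_n^y = V(Y^y_{\tau'\wedge n}) + b(n-\tau')_+ - V(y)$), and then argue a posteriori that on the event $\{Z_n \ge \tfrac b2 n - C \ \forall n\}$ one has $\tau'=\infty$, because exiting would force $V(Y^y_{\tau'})$ below a threshold $A$, contradicting $Z_{\tau'} \ge -C$ once $D > A + C + \alpha\max_i T_i$. Without this (or an equivalent) stopping device your conditional drift and variance bounds are only available while the chain is in $\mathcal O$, and you cannot invoke \Cref{discrete-semimartingale-fact} for all $n$.

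Two smaller points. First, your uniform-in-$t$ control of $V(X_t^y)$ via Doob's inequality does not close: $\E[(M^V_t(y))^2] \le K(U(y)+Kt)$ grows linearly in $t$, so $\sup_{t\ge 0}|M^V_t(y)|$ is not bounded with high probability, and enlarging $V(y)$ does not shrink this error (the bound involves $U(y)$, not $V(y)$). The paper instead controls the excursion of $V$ \emph{between consecutive stopping times} pathwise, using that $\tau_{x,T}$ by definition stops before $|V - V(\mathrm{start})|$ exceeds $\alpha T$ and that $W \ge M$ during the return phase (\Cref{stopped-V-not-that-much-smaller}), and then propagates this along the renewal by the strong Markov property. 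Second, your insistence that the cover, horizon, band, and hence $b,B$ be fixed before $M$ is both unnecessary (the statement allows $D$ to depend on $M$ and $\delta$, and in the paper the cover of $\mcM_0\cap\{W\le M\}$ and the constants $b,B$ do depend on $M$) and in tension with your own requirement $\mathcal O \subset \{d(\cdot,\mcM_0)<1/M\}$, which forces $\mathcal O$ to depend on $M$.
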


In the second step we show that \Cref{part1} implies \Cref{main2}, which amounts to showing that if $V(x)$ is big enough, then almost surely all limit points of the sequence of measures $\mu_t^x$ (see \Cref{occ-meas}) are supported on $\mcM_0$.

To prove \Cref{part1} we sample $X_t^x$ at special stopping times that are 
introduced and analyzed 
in \Cref{good-stopping-time}, \Cref{return-time}, and \Cref{bounding-the-variance}. Then, we obtain a discrete-time Markov chain which, after composition with $V$, induces 
a discrete-time semimartingale satisfying the assumptions of \Cref{discrete-semimartingale-fact}. We make this argument precise in \Cref{proof-of-part1} to prove \Cref{part1} and then finally prove \Cref{main2} in \Cref{proof-of-main2}.

\subsection{Good Stopping Time} \label{good-stopping-time}
For this section,  recall \Cref{as4}. We start by showing that as long as $y$ is close enough to $\mcM_0$, there is some bounded stopping time $\tau_{y,T}$ (see \Cref{st1} below) for which $V(X_{\tau_{y,T}}^y) - V(y)$ is (on average) large. The idea is to 
estimate $V(X_t^y) - V(y)$ by bounding
$\int_0^t H(X^y_s)ds$. Since we have information about the behavior of $\int_0^t H(X^y_s)ds$ when $y \in \mcM_0$, we can use the continuity with respect to initial conditions shown in \Cref{feller-stuff} to infer similar behavior when $y$ is close to $\mcM_0$. 

\begin{lem} \label{H-big}
    For every $x \in \mcM_0$,  $\delta > 0$, and $S > 0$ there is $T \geq S$ and $\epsilon > 0$ such that for all $y \in \mcM$ with $d(y,x) < \epsilon$, 
    $$
    P_{y,T} \coloneqq \Prb\Big(\int_0^{T} H(X_s^y) ds > \frac{\alpha}{2}T \text{ and } \inf_{0 \leq t' \leq T} \int_0^{t'} H(X_s^y) ds > -\frac{\alpha}{2}T\Big) > 1 - \delta \,.
    $$
\end{lem}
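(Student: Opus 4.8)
The plan is to exploit two facts: first, that for initial conditions $x \in \mcM_0$ the ergodic theory developed earlier (\Cref{occ-is-tight}, \Cref{lim-is-inv}, \Cref{muH-big}) forces the time-average of $H$ to be at least $\alpha$ asymptotically; second, that by the continuity of the path law in the initial condition (\Cref{cts-dep}) together with the continuity of the relevant path functionals (\Cref{sk-cts}), the probability in question is lower semicontinuous in $y$, so control at $x \in \mcM_0$ transfers to a neighborhood of $x$.

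First I would fix $x \in \mcM_0$, $\delta > 0$, $S > 0$. Since $x \in \mcM_0$, by \Cref{muH-big} (applying \ref{H} with $x \in \mcM_0$, or directly \Cref{occ-is-tight} and \Cref{lim-is-inv} with \Cref{as4}\ref{4.3}) we have $\Prb$-a.s. that $\liminf_{t\to\infty} \mu_t^x H \geq \alpha$, i.e.\ $\liminf_{t\to\infty}\frac1t\int_0^t H(X_s^x)ds \geq \alpha$. A standard consequence is that $\frac1t\int_0^t H(X_s^x)ds > \frac{3\alpha}{4}$ for all $t \geq T_0$ where $T_0 = T_0(\omega)$ is a.s.\ finite, and moreover $\inf_{0\le t'\le T}\int_0^{t'}H(X_s^x)ds > -\tfrac{\alpha}{2}T$ eventually as $T\to\infty$ (the running integral is eventually increasing and the finite initial dip, which is a.s.\ finite since $t\mapsto\int_0^t H(X_s^x)ds$ is continuous, becomes negligible relative to $\frac{\alpha}{2}T$). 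Hence there is $T \geq S$ large enough (depending on $x, \delta$) so that $P_{x,T} > 1 - \delta/2$; this uses only that the a.s.\ event can be approximated by an event measurable with respect to the path on $[0,T]$ for $T$ large.

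Next I would upgrade this to a neighborhood of $x$. Consider the functional $\Phi: \D_{[0,\infty)}(\mcM) \to \R^2$ sending a path $w$ to $\bigl(\int_0^T H(w(s))ds,\ \inf_{0\le t'\le T}\int_0^{t'}H(w(s))ds\bigr)$. By \Cref{sk-cts}(i) $w \mapsto H(w(\cdot))$ is continuous $\D_{[0,\infty)}(\mcM) \to \D_{[0,\infty)}(\R)$ (here $H$ is continuous by \Cref{as4}\ref{4.3}), by (ii) integration is continuous, by (iii) the running infimum is continuous, and by (iv) evaluation at the fixed time $T$ is continuous on the subspace of continuous paths (the integral and running-infimum processes have continuous sample paths). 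So $\Phi$ is a.s.\ continuous along the relevant paths. The set $G \coloneqq \{(a,b) : a > \tfrac{\alpha}{2}T,\ b > -\tfrac{\alpha}{2}T\}$ is open, so $\{w : \Phi(w) \in G\}$ is (a.s.) an open event, and $P_{y,T} = \Prb(\Phi(X^y) \in G)$. By \Cref{cts-dep}, $y_n \to x$ implies $X^{y_n} \to X^x$ in distribution on $\D_{[0,\infty)}(\mcM)$, so by the Portmanteau theorem (lower semicontinuity of the mass of an open set under weak convergence, combined with a.s.\ continuity of $\Phi$ so that the discontinuity set of $\Phi$ carries no mass under the law of $X^x$) we get $\liminf_{y\to x} P_{y,T} \geq P_{x,T} > 1 - \delta/2$. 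Therefore there is $\epsilon > 0$ such that $P_{y,T} > 1 - \delta$ for all $y$ with $d(y,x) < \epsilon$, which is the claim.

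The main obstacle I anticipate is the rigorous handling of the ``eventually'' statements in the first step — specifically, converting the a.s.\ asymptotic lower bound $\liminf \frac1t\int_0^t H \geq \alpha$ and the a.s.\ finiteness of the initial dip of the running integral into a single event that, for $T$ sufficiently large, (a) has probability $> 1-\delta/2$ and (b) is measurable with respect to $\mathcal{F}_T$ (or at least well-approximated by such), so that the continuity argument via \Cref{cts-dep} and \Cref{sk-cts}(iv) applies cleanly. One must be careful that $\int_0^T H(X_s^y)ds$ has a.s.\ continuous paths (it does, being an integral of a locally bounded function along cadlag paths — here one uses that $H$ vanishes over $W'$ by \Cref{W'-vanish} together with \Cref{ineq1} to control the integrand, or simply local boundedness along a fixed cadlag trajectory) so that evaluation at $T$ is a continuous functional on the relevant subspace. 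A secondary subtlety is ensuring the discontinuity set of the functional $w \mapsto H(w(\cdot))$ from $\D(\mcM)$ to $\D(\R)$ is null under the law of $X^x$; since $H$ is globally continuous on $\mcM$, \Cref{sk-cts}(i) gives continuity everywhere, so this is automatic.
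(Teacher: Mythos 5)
Your proposal is correct and follows essentially the same route as the paper: first use \Cref{muH-big} (via the a.s.\ finiteness of the random time after which both the time-average bound and the running-infimum bound hold) to get $\liminf_{T\to\infty}P_{x,T}=1$ and hence a deterministic $T\geq S$ with $P_{x,T}>1-\delta/2$, then express the event as an open subset of $\D_{[0,\infty)}(\mcM)$ via the continuous functionals of \Cref{sk-cts} and apply \Cref{cts-dep} with the Portmanteau theorem to get $\liminf_{y\to x}P_{y,T}\geq P_{x,T}$. Your worry about the discontinuity set of the path functional is moot for exactly the reason you note — the composed functionals are continuous everywhere since $H$ is continuous and the integral map lands in $C_{[0,\infty)}(\R)$ — which is how the paper handles it as well.
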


\begin{proof}
Fix $x \in \mcM_0$. Then \Cref{muH-big}\ref{V} is satisfied, and therefore 
\Cref{muH-big}\ref{H} holds true, that is, $\liminf_{t \to \infty} \mu_t^x(H) \geq \alpha$ a.s. for $\mu_t^x$ as in \Cref{occ-meas}. 
 Hence, almost surely there is a finite (random) $\mathcal{T}$ such that $\mu_t^x H = \frac{1}{t}\int_0^t H(X_s^x) ds > \frac{\alpha}{2}$ for all $t \geq \mathcal{T}$. By making $\mathcal{T}$ larger if necessary, we may suppose $\inf_{0 \leq t' \leq t} \int_0^{t'} H(X_s^y) ds > -\frac{\alpha}{2}t$ for all $t \geq \mathcal{T}$. It follows that $\liminf_{T \to \infty} P_{x,T} = 1$, so there is a (deterministic) $T > S$ such that $P_{x, T} \geq 1 - \frac{\delta}{2}$.

 Define the map $F_T : \D_{[0,\infty)}(\mcM) \to \R$ which is the composition of the  following maps
$$
\D_{[0,\infty)}(\mcM) \ni x(\cdot) \mapsto H(x(\cdot)) \mapsto \int_0^\cdot H(x(s))ds \mapsto \inf_{0 \leq t \leq T} \int_0^t H(x(s))ds \,.
$$ 
Then, $F$ is continuous by \Cref{sk-cts}, and similarly 
$G_T:\D_{[0,\infty)}(\mcM) \to \R$ given by 
$$
G_T(x(\cdot)) := \int_0^{T} H(x(s)) ds
$$
is continuous. Thus, $Q := \big\{Y \in \D_{[0,\infty)}(\mcM): G_T(Y) > \frac{\alpha}{2}T \text{ and } F_T(Y) > -\frac{\alpha}{2}T \big\}$ is open and  
$$
P_{y,T} = \Prb(X^y_\cdot \in Q) \,.
$$
Then \Cref{cts-dep} and the Portmanteau theorem give us
$$
\liminf_{y \to x} P_{y,T} \geq P_{x,T} \geq 1 - \frac{\delta}{2} \,.
$$
Thus, by choosing $\epsilon > 0$ small enough, $d(y,x) < \epsilon $ 
 implies $P_{y,T} > 1 - \delta$.
\end{proof}

\begin{lem} \label{V-close-to-H}
For every $\delta > 0$ and compact $\K \subset \mcM$ there is $S > 0$ such that for all $x \in \K \cap \inv$ and $T \geq S$, 
$$
\Prb\Big(\sup_{0 \leq t \leq T} \Big|V(X_t^x) - V(x) - \int_0^t H(X_s^x)ds\Big| > \frac{\alpha}{2}T\Big) \leq \delta \,.
$$
\end{lem}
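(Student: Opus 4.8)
The plan is to recognize the expression inside the absolute value as the martingale $M^V_t(x)$ from \eqref{Mtf}. Indeed, since $x \in \inv$ and $\inv$ is invariant by \Cref{as1}, the whole path $(X^x_s)_{s \geq 0}$ stays in $\inv$, where $\mathcal{L}V$ coincides with (the restriction of) $H$ by \Cref{as4}; hence $V(X^x_t) - V(x) - \int_0^t H(X^x_s)\,ds = M^V_t(x)$, and by \Cref{as4} together with \Cref{D2} this is a cadlag square-integrable martingale. So the probability to be estimated is exactly $\Prb\big(\sup_{0 \le t \le T} |M^V_t(x)| > \tfrac{\alpha}{2}T\big)$.

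First I would apply Doob's $L^2$ maximal inequality for right-continuous martingales to get
\[
\Prb\Big(\sup_{0 \leq t \leq T} |M^V_t(x)| > \tfrac{\alpha}{2}T\Big) \;\leq\; \frac{4\,\E[(M^V_T(x))^2]}{\alpha^2 T^2}.
\]
Next I would invoke \Cref{V-W-linearly-bdd} with the (bounded, hence integrable) stopping time $\tau = T$, which yields $\E[(M^V_T(x))^2] = \E\big[\int_0^T \Gamma V(X^x_s)\,ds\big] \leq K\big(U(x) + KT\big)$. Finally I would use that $U$ is continuous and $\K$ is compact to set $C_\K \coloneqq \sup_{x \in \K} U(x) < \infty$, so that for every $x \in \K \cap \inv$ and every $T > 0$,
\[
\Prb\Big(\sup_{0 \leq t \leq T} |M^V_t(x)| > \tfrac{\alpha}{2}T\Big) \;\leq\; \frac{4K\big(C_\K + KT\big)}{\alpha^2 T^2}.
\]
The right-hand side is independent of $x \in \K \cap \inv$ and tends to $0$ as $T \to \infty$, so choosing $S > 0$ large enough that it is $\leq \delta$ for all $T \geq S$ completes the argument.

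I do not expect a genuine obstacle here: once the expression is identified with $M^V_t(x)$, the statement is a routine combination of Doob's inequality, the quadratic-variation estimate already proved in \Cref{V-W-linearly-bdd}, and compactness of $\K$ to make the bound on $U$ uniform. The only point needing a moment of care is the verification that the process really is the square-integrable martingale $M^V_\cdot(x)$ on $[0,T]$ (so that Doob applies), which rests on invariance of $\inv$ from \Cref{as1} and the identification $\mathcal{L}V = H|_{\inv}$ from \Cref{as4}.
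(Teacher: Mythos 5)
Your proposal is correct and follows essentially the same route as the paper: identify the expression with the square-integrable martingale $M^V_t(x)$, apply Doob's $L^2$ maximal inequality, bound $\E[(M^V_T(x))^2]$ via \Cref{V-W-linearly-bdd}, and use compactness of $\K$ together with continuity of the resulting bound to make the estimate uniform in $x$. The only cosmetic difference is that the paper phrases the second-moment bound through the continuous function $C(x)$ from \Cref{lin-bdd} rather than the explicit $K(U(x)+KT)$, which is the same estimate.
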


\begin{proof}
Let $M_t^{V}(x)$ be as in \eqref{Mtf}. 
By \Cref{V-W-linearly-bdd} $V$ has linearly bounded quadratic variation (see \Cref{lin-bdd}), so there is a continuous function $C: \overline{\inv} \to \mathbb{R}$ such that for $T \geq 1$ and $x \in \overline{\inv}$ it holds that
\begin{equation}\label{lbva}
\int_0^T \Pp_s \Gamma V(x)ds \leq C(x)T   \,.
\end{equation}
 By Tonelli's theorem and \Cref{V-W-linearly-bdd} 
$$
\E[(M_T^{V}(x))^2] = \int_0^T \Pp_s\Gamma V(x)ds \leq C(x)T \,.
$$
Then by Doob's inequality and \eqref{lbva}, 
$$
\Prb\Big(\sup_{0 \leq t \leq T} |M_t^{V}(x)| > \frac{\alpha }{2}T\Big) \leq \frac{4}{T^2\alpha^2} \E[(M_T^{V}(x))^2] \leq \frac{4}{T\alpha^2}C(x).
$$ 
Since $C$ is continuous and thus bounded on the compact set $\K \cap \overline{\inv}$, 
$$
\lim_{T \to \infty} \sup_{x \in \K \cap \inv} \Prb\Big(\sup_{0 \leq t \leq T} |M_t^{V}(x)| > \frac{\alpha}{2}T\Big) = 0,
$$ 
which proves the claim.
\end{proof}

\begin{deff}
    \label{st1} For any $x \in \inv$ and  $T > 0$ define the bounded stopping time 
    $$
    \tau_{x,T} \coloneqq \inf\{t \geq 0: |V(X_t^x) - V(x)| > \alpha T\} \wedge \inf\Big\{t \geq 0: \Big|\int_0^t H(X_s^x)ds\Big| > \frac{\alpha}{2}T\Big\} \wedge T.
    $$
\end{deff}

\begin{lem} \label{V-small}
    For every $x \in \mcM_0$ and $S > 0$ there is $\epsilon > 0$ and $T \geq S$ such that for all $y \in \inv$ with $d(y,x) < \epsilon$, $\E[V(X_{\tau_{y,T}}^y) - V(y)] \geq \frac{\alpha}{4}T$.
\end{lem}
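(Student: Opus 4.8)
\emph{Approach.} The plan is to run the process until the stopping time $\tau_{y,T}$ from \Cref{st1} and to show that on a high-probability event the increment $V(X^y_{\tau_{y,T}})-V(y)$ is pinned to $\tfrac{\alpha}{2}T$ (up to a mean-zero martingale term), while off that event it cannot be much more negative than $-\tfrac{\alpha}{2}T$; averaging then gives $\tfrac{\alpha}{4}T$. First I would use local compactness of $\mcM$ to pick a compact neighborhood $\K$ of $x$ and a radius $\epsilon_1>0$ with $\{d(\cdot,x)<\epsilon_1\}\subset\K$. Then I would invoke \Cref{V-close-to-H} with $\delta=\tfrac{1}{16}$ and this $\K$ to obtain a threshold $S_0>0$, and afterwards \Cref{H-big} (at the same point $x$, with $\delta=\tfrac{1}{16}$, and with $\max(S,S_0)$ in place of $S$) to obtain a single $T\ge\max(S,S_0)\ge S$ together with a radius $\epsilon_2>0$. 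Setting $\epsilon\coloneqq\min(\epsilon_1,\epsilon_2)$, for any $y\in\inv$ with $d(y,x)<\epsilon$ we have $y\in\K\cap\inv$, so both lemmas apply at $y$: with probability $\ge\tfrac{15}{16}$ the deviation $\sup_{0\le t\le T}|V(X_t^y)-V(y)-\int_0^t H(X_s^y)\,ds|$ is at most $\tfrac{\alpha}{2}T$ (call this event $A^1_y$), and with probability $\ge\tfrac{15}{16}$ we have both $\int_0^T H(X_s^y)\,ds>\tfrac{\alpha}{2}T$ and $\inf_{0\le t'\le T}\int_0^{t'}H(X_s^y)\,ds>-\tfrac{\alpha}{2}T$ (call this $A^2_y$); hence $A_y\coloneqq A^1_y\cap A^2_y$ has $\Prb(A_y)\ge 1-\tfrac{2}{16}=\tfrac{7}{8}$.

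\emph{Analyzing the integral term.} Next I would study $I_{\tau_{y,T}}$, where $I_t\coloneqq\int_0^t H(X_s^y)\,ds$; note $I$ is continuous in $t$ since $s\mapsto H(X_s^y)$ is cadlag, hence bounded on $[0,T]$, and $I_0=0$. Let $\sigma_1,\sigma_2$ be the two stopping times defining $\tau_{y,T}$, so $\tau_{y,T}=\sigma_1\wedge\sigma_2\wedge T\le\sigma_2$. Because $\{t:|I_t|>\tfrac{\alpha}{2}T\}$ is open and $I$ is continuous with $I_0=0$, one always has $|I_{\sigma_2}|=\tfrac{\alpha}{2}T$ when $\sigma_2<\infty$, and $|I_t|\le\tfrac{\alpha}{2}T$ for $t<\sigma_2$; this yields the surely-valid lower bound $I_{\tau_{y,T}}\ge-\tfrac{\alpha}{2}T$. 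The crux is the matching bound on $A_y$: there $I_T>\tfrac{\alpha}{2}T$ forces $\sigma_2\le T$ and, combining $|I_{\sigma_2}|=\tfrac{\alpha}{2}T$ with the lower bound $I_{t'}>-\tfrac{\alpha}{2}T$ from $A^2_y$, we get $I_{\sigma_2}=\tfrac{\alpha}{2}T$. I would then show $\sigma_1\ge\sigma_2$ on $A_y$: if instead $\sigma_1<\sigma_2$, then right-continuity of $t\mapsto V(X_t^y)$ produces a time $t_0<\sigma_2$ with $|V(X^y_{t_0})-V(y)|>\alpha T$; but on $A^1_y$ we get $|V(X^y_{t_0})-V(y)|\le\tfrac{\alpha}{2}T+|I_{t_0}|\le\alpha T$, using $|I_{t_0}|\le\tfrac{\alpha}{2}T$ (valid since $t_0<\sigma_2$), a contradiction. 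Hence $\tau_{y,T}=\sigma_2$ and $I_{\tau_{y,T}}=\tfrac{\alpha}{2}T$ on $A_y$.

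\emph{Conclusion.} Since $\tau_{y,T}\le T$ has finite expectation, \Cref{V-is-integral-of-H} gives $\E[V(X^y_{\tau_{y,T}})-V(y)]=\E[I_{\tau_{y,T}}]$, and splitting over $A_y$ and its complement,
\[
\E[I_{\tau_{y,T}}]\ \ge\ \tfrac{\alpha}{2}T\,\Prb(A_y)-\tfrac{\alpha}{2}T\,\Prb(A_y^c)\ =\ \tfrac{\alpha}{2}T\big(2\Prb(A_y)-1\big)\ \ge\ \tfrac{\alpha}{2}T\cdot\tfrac{3}{4}\ \ge\ \tfrac{\alpha}{4}T .
\]
I expect the main obstacle to be the ordering claim $\sigma_1\ge\sigma_2$ on $A_y$: this is where the conclusions of \Cref{V-close-to-H} and \Cref{H-big} must be combined precisely, and where one has to respect that $t\mapsto V(X_t^y)$ is only cadlag (so one argues at a threshold-exceeding time $t_0$ rather than at $\sigma_1$ itself) while $t\mapsto I_t$ is genuinely continuous (so its value at $\sigma_2$ is exactly $\tfrac{\alpha}{2}T$, not merely bounded by it). The ordering of the choices of $\delta$, $S_0$, $T$ and the two radii is routine bookkeeping, but must be arranged so that a single $T$ feeds both lemmas.
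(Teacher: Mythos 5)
Your proposal is correct and follows essentially the same route as the paper: combine \Cref{H-big} and \Cref{V-close-to-H} (with the same ordering of $\K$, $S_0$, $T$, $\epsilon$) to get a good event of probability at least $3/4$, show that on this event $\int_0^{\tau_{y,T}}H(X_s^y)\,ds = \tfrac{\alpha}{2}T$ while the surely-valid bound $\big|\int_0^{\tau_{y,T}}H(X_s^y)\,ds\big|\le\tfrac{\alpha}{2}T$ controls the complement, and conclude via \Cref{V-is-integral-of-H}. The only differences are cosmetic: your choice of $\delta=\tfrac{1}{16}$ versus the paper's $\tfrac18$ and $\tfrac18$, and your organization of the case analysis as the ordering claim $\sigma_1\ge\sigma_2$ rather than the paper's split on which stopping condition triggered.
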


\begin{proof}
For any $y \in \inv$ and $T > 0$ we define the events
\begin{multline*}
    Q \coloneqq \Big\{ 
    \int_0^T H(X_s^y) ds > \frac{\alpha}{2}T\Big\} \cap \Big\{\inf_{0 \leq t' \leq T} \int_0^{t'} H(X_s^y) ds > -\frac{\alpha}{2}T\Big\} \\
       \cap  \Big\{\sup_{0 \leq t \leq T} \Big|V(X_t^y) - V(y) - \int_0^t H(X_s^y)ds\Big| \leq \frac{\alpha}{2}T
    \Big\} =: Q_1 \cap Q_2 \cap Q_3 \,,
\end{multline*}
and 
$$
 \Big\{\int_0^{\tau_{y,T}} H(X_s^y)ds \geq \frac{\alpha}{2}T\Big\} =: R \,.
$$
Fix some compact neighborhood $\K$ of $x$ (recall that $\mcM$ is locally compact). 
   By \Cref{V-close-to-H}, there is some $S > 0$ such that for all $y \in \K \cap \inv$ and $T \geq S$, 
   \begin{equation}\label{ifvh}
   \Prb(Q_3^c) = \Prb\Big(\sup_{0 \leq t \leq T} \Big|V(X_t^y) - V(y) - \int_0^t H(X_s^y)ds\Big| > \frac{\alpha}{2}T\Big) \leq \frac{1}{8}.    
   \end{equation}
   By \Cref{H-big}, there is a $T \geq S$ and $\epsilon > 0$ such that for all $y \in \mcM$ with $d(y,x) < \epsilon$, it holds that 
   \begin{equation}
   \label{q1q2} \Prb(Q_1 \cap Q_2) = P_{y,T} > \frac{7}{8}\,.
   \end{equation}
   We may decrease $\epsilon > 0$ if necessary such that $d(y,x) < \epsilon$ implies $y \in \K$. We claim that $Q \subset R$. Indeed, for fixed $\omega \in Q$, if $\tau_{y,T}(\omega) = T$, then $\omega \in Q_1$ implies $\omega \in R$. If $\tau_{y,T}(\omega) < T$, then either $\Big|\int_0^{\tau_{y,T}} H(X_s^y)ds\Big| \geq \frac{\alpha}{2}T$, in which case $\omega \in Q_2$ implies $\omega \in R$, or $|V(X_{\tau_{y, T}}^y) - V(y)| \geq \alpha T$, in which case  $\omega \in Q_3$ yields that
 \begin{align*}
    \alpha T &\leq |V(X_{\tau_{y, T}}^y) - V(y)| \\
    &\leq  
\Big|V(X_{\tau_{y, T}}^y) - V(y) - \int_0^{\tau_{y, T}} H(X_s^y)ds\Big| + 
\Big|
 \int_0^{\tau_{y, T}} H(X_s^y)ds
\Big|
\\
&\leq \frac{\alpha}{2}T + \Big|
 \int_0^{\tau_{y, T}} H(X_s^y)ds
\Big| \,,
 \end{align*}
and again $\omega \in Q_2$ implies $\omega \in R$, as desired. 

Hence, by \eqref{ifvh} and \eqref{q1q2} we have for all $y \in \inv$ with $d(y,x) < \epsilon$ that
    \begin{equation}\label{p-big}
 \Prb(R) \geq \Prb(Q) = \Prb((Q_1 \cap Q_2) \setminus Q_3^c)
       \geq \Prb(Q_1 \cap Q_2) - \Prb(Q_3^c) \geq 
      \frac{7}{8} - \frac{1}{8} = \frac{3}{4}\,.
   \end{equation}
By the continuity of $t \mapsto \int_0^t H(X_s^y)ds$ and the definition of $\tau_{y,T}$ (\Cref{st1}) we have $\Big|\int_0^{\tau_{y,T}} H(X_s^y)ds \Big| \leq \frac{\alpha}{2}T$, and thus by \eqref{p-big} we conclude
\begin{align*}
        \E\Big[\int_0^{\tau_{y,T}} H(X_s^y)ds\Big] 
        \geq \Prb(R)\frac{\alpha}{2}T + (1-\Prb(R))\frac{-\alpha}{2}T \geq \frac{\alpha}{4}T \,.
    \end{align*}
 Since $\tau_{y,T}$ is a bounded stopping time, by \Cref{V-is-integral-of-H}
 $$
 \E[V(X_{\tau_{y,T}}^y) - V(y)] = \E\Big[\int_0^{\tau_{y,T}} H(X_s^y)ds\Big],
 $$ 
 which proves the claim. 
 \end{proof}

\subsection{Return Time} \label{return-time}
As mentioned at the beginning of this section, we wish to sample the paths of $X_t^y$ at a sequence of stopping times $\tau_n$ so that $V(X_{\tau_n}^y)$ satisfies \Cref{discrete-semimartingale-fact}. This means that we not only need a uniform lower bound on $\E[V(X^y_{\tau_1}) - V(y)]$, but also a uniform upper bound on $\E[(V(X^y_{\tau_1}) - V(y))^2]$. Given any $x \in \mcM_0$ and $S > 0$, \Cref{V-small} gives us the required uniform lower bound (namely, $\frac{\alpha}{4}S$) using $\tau_1 = \tau_{y,T_x}$ for some $T_x \geq S$. However, this is only a local result holding for $y$ close enough to $x$. In particular, if $\mcM_0$ is not compact there is no reason to believe that $\sup_{x \in \mcM_0} T_x < \infty$. Since it is unclear how to obtain an upper bound on $\E[(V(X^y_{\tau_{y,T}}) - V(y))^2]$ which is independent of $T$, we need to modify $\tau_{y,T}$ to ensure that $X_{\tau_n}^y$ stays in some fixed compact set. In particular, we let  the process run until  $X^y_t$ reenters a fixed compact set at time $\sigma_{x,T,m}$ (see \Cref{st2} below). The remainder of this section is dedicated to proving an upper bound on $\E[|V(X_{\sigma_{x,T,m}}^x) - V(X_{\tau_{x,T}}^x)|]$ which ensures that $\E[V(X_{\sigma_{x,T,m}}^x) - V(x)]$ is big.

\begin{deff}
    \label{st2} For $x \in \inv$, $T > 0$, and $m > 0$ we define the stopping time $$\sigma_{x,T,m} \coloneqq \inf\{t \geq \tau_{x,T} \mid W(X_t^x) < m\} \,,$$ where $\tau_{x,T}$ is as in \Cref{st1}.
\end{deff}

\begin{lem}
    \label{stopped-W-is-small}
    For $x \in \inv$, $T > 0$, and $m > 0$, $X_{\sigma_{x,T,m}}^x \in \inv \cap \{W \leq m\}$ a.s.
\end{lem}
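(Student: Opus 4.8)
The plan is to prove the two asserted memberships separately; both are soft consequences of the standing hypotheses, and essentially no work beyond continuity and right-continuity of the sample paths is needed.

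For $X_{\sigma_{x,T,m}}^x \in \inv$, I would simply invoke the invariance of $\inv$ postulated in \Cref{as1} (in the sense of \Cref{inv-set}): since $x \in \inv$, almost surely $X_t^x \in \inv$ for every $t \geq 0$, and in particular at the random time $t = \sigma_{x,T,m}$ on the event where it is finite.

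For $X_{\sigma_{x,T,m}}^x \in \{W \leq m\}$, the only point requiring care is that the sample paths may jump, so one cannot naively ``substitute the hitting time into $W$.'' First I would note that $W$ is continuous (it is proper; see \Cref{proper}), so that $O \coloneqq \{W < m\}$ is an open subset of $\mcM$ and, again by continuity, $\overline{O} \subseteq \{W \leq m\}$. By \Cref{st1} the time $\tau_{x,T}$ is a bounded stopping time, and by \Cref{st2} the time $\sigma_{x,T,m}$ is the first entrance of the cadlag process $X^x$ into $O$ at or after $\tau_{x,T}$. Working pathwise on the full-measure event where $t \mapsto X_t^x$ is cadlag, and restricting to $\{\sigma_{x,T,m} < \infty\}$ (the statement being vacuous on the complement), I would argue that $X_{\sigma_{x,T,m}}^x \in \overline{O}$: either $X_{\sigma_{x,T,m}}^x \in O$ already, or, by definition of the infimum, there is a sequence $t_n \downarrow \sigma_{x,T,m}$ with $t_n \geq \tau_{x,T}$ and $X_{t_n}^x \in O$, and right-continuity of the path gives $X_{\sigma_{x,T,m}}^x = \lim_{n \to \infty} X_{t_n}^x \in \overline{O}$. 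Since $\overline{O} \subseteq \{W \leq m\}$, this finishes the argument.

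I do not anticipate any real obstacle: the lemma is a bookkeeping statement whose content is that $X^x$ has genuinely returned to the closed set $\{W \leq m\}$ at time $\sigma_{x,T,m}$. The only subtlety worth flagging, namely a jump landing exactly on the boundary $\{W = m\}$, is precisely what the open-set-and-closure formulation accommodates; and the possible infiniteness of $\sigma_{x,T,m}$ is harmless for the present statement (finiteness, to the extent it is needed later, is addressed by the subsequent estimates of the section).
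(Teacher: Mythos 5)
Your proposal is correct and follows the same route as the paper's (much terser) proof: invariance of $\inv$ from \Cref{as1} for the first membership, and right-continuity of the cadlag paths together with the definition of $\sigma_{x,T,m}$ as a first entrance time into the open set $\{W < m\}$ for the second. The extra care you take with the closure $\overline{\{W<m\}} \subseteq \{W \leq m\}$ and with the event $\{\sigma_{x,T,m} < \infty\}$ is exactly the bookkeeping the paper leaves implicit.
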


\begin{proof}
    By \Cref{as1} and \Cref{inv-set}, $X_{\sigma_{x,T,m}}^x \in \inv$ a.s. By the right-continuity of the sample paths of $X_t^x$ and the definition of $\sigma_{x,T,m}$ (\Cref{st2}), $X_{\sigma_{x,T,m}}^x \in \{W \leq m\}$ a.s.
\end{proof}

For the rest of the section, recall that $K > 0$ is as in \Cref{as3}.
\begin{lem}
    \label{return-time-fast}    
    There is  $N > 0$ such that for all $m \geq N$ and $x \in \mcM$, 
    the stopping time $\eta_{m}(x) \coloneqq \inf\{t \geq 0 \mid W(X_t^x) < m\}$ satisfies
    $$
    \tilde{\eta}_m(x) \coloneqq \E[\eta_{m}(x)] \leq W(x).
    $$     
\end{lem}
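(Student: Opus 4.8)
The plan is to bound the hitting time $\eta_m(x)$ of the open set $\{W < m\}$ by applying the supermartingale-type estimate \Cref{ineq1} to the bounded stopping time $\eta_m(x)\wedge t$ and then letting $t\to\infty$. The one point that requires an idea is to see that while the process stays in $\{W\geq m\}$ the drift of $W$ is uniformly negative; for this we use only that $W'$ is proper (see \Cref{proper}), so $W'$ must be large off a large sublevel set of $W$ — no direct relation between $W$ and $W'$ is needed.

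First I would fix $N$. Since $W'$ is proper, the set $\{W'\leq K+1\}$ is compact, and $W$ is continuous, so $C\coloneqq \sup_{\{W'\leq K+1\}} W<\infty$; set $N\coloneqq C+1$ (if $\{W'\leq K+1\}=\emptyset$ any $N\geq 1$ works). Then for every $m\geq N$, any point $y$ with $W(y)\geq m$ lies outside $\{W'\leq K+1\}$, i.e. $W'(y)>K+1$. Now fix $m\geq N$ and $x\in\mcM$. By the definition of $\eta_m(x)$, for every $s<\eta_m(x)$ we have $W(X_s^x)\geq m$, hence $W'(X_s^x)>K+1$; since $[0,\eta_m(x)\wedge t)\subseteq[0,\eta_m(x))$ for all $t\geq 0$, this gives
$$
\int_0^{\eta_m(x)\wedge t} W'(X_s^x)\,ds \;\geq\; (K+1)\,\big(\eta_m(x)\wedge t\big).
$$

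Next, $\eta_m(x)\wedge t$ is a stopping time (the hitting time of an open set by a cadlag process, truncated at $t$, under the usual conditions on the filtration) with $\E[\eta_m(x)\wedge t]\leq t<\infty$, so \Cref{ineq1} applies with $\tau=\eta_m(x)\wedge t$. Combining it with the previous bound and with $W\geq 1>0$ yields
$$
(K+1)\,\E\big[\eta_m(x)\wedge t\big] \;\leq\; \E\Big[W(X_{\eta_m(x)\wedge t}^x) + \int_0^{\eta_m(x)\wedge t} W'(X_s^x)\,ds\Big] \;\leq\; W(x) + K\,\E\big[\eta_m(x)\wedge t\big].
$$
Subtracting $K\,\E[\eta_m(x)\wedge t]$ gives $\E[\eta_m(x)\wedge t]\leq W(x)$, and letting $t\to\infty$ the monotone convergence theorem gives $\tilde\eta_m(x)=\E[\eta_m(x)]\leq W(x)$ (in particular $\eta_m(x)<\infty$ a.s.).

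I do not expect a serious obstacle: once \Cref{ineq1} is available the computation is routine. The only mildly delicate step is the choice of $N$ in the first paragraph, which extracts the uniform lower bound $W'>K+1$ on $\{W\geq m\}$ purely from properness of $W'$; everything else is optional stopping plus a monotone limit.
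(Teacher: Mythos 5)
Your proof is correct and follows essentially the same route as the paper: choose $N$ so that $\{W\geq N\}$ lies outside the compact set $\{W'\leq K+1\}$ (using properness of $W'$), apply \Cref{ineq1} to $\eta_m(x)\wedge t$ to get $(K+1)\E[\eta_m(x)\wedge t]\leq W(x)+K\E[\eta_m(x)\wedge t]$, and conclude by monotone convergence. No gaps.
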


\begin{proof}
Since $W'$ is proper, there is a compact set $\K \subset \mcM$ such that $W'(x) \geq K+1$ for $x \notin \K$. Let $N$ be the maximum value of $W + 1$ on $\K$. Fix any $m \geq N$ and $x \in \mcM$ and we simplify the notation by setting $\eta \coloneqq \eta_{m}(x)$. Then by the definition of $\eta$ we have 
$$
\inf_{0 \leq t < \eta} W(X_t^x) \geq m \geq N \,.
$$ 
Since $X_t^x \notin \K$ implies $W'(X_t^x) \geq K+1$ for all $0 \leq t < \eta$, then  $W \geq 0$ and  \Cref{ineq1} applied to $\eta \wedge t$ yield for any $t \geq 0$
 $$
  (K+1)\E[\eta \wedge t] \leq \E\Big[W(X_{\eta \wedge t}^x) + \int_0^{\eta \wedge t} W'(X_s^x)ds\Big] 
    \leq W(x) + K\E[\eta \wedge t].
    $$ Thus, $\E[\eta \wedge t] \leq W(x)$ and the claim follows from the Monotone convergence theorem after passing $t \to \infty$.
\end{proof}

\begin{cor}
    \label{sigma-small}
    There is  $N > 0$ such that for $x \in \inv$, $T > 0$, and $m \geq N$, $$\E[\sigma_{x,T,m} - \tau_{x,T}] \leq W(x) + KT.$$
        In particular, $$\E[\sigma_{x,T,m}] \leq W(x) + (K+1)T.$$
\end{cor}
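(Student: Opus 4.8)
The plan is to reduce the claimed bound to the return-time estimate \Cref{return-time-fast} via the strong Markov property applied at the bounded stopping time $\tau_{x,T}$, and then to control the resulting $\E[W(X_{\tau_{x,T}}^x)]$ by the optional-stopping inequality \Cref{ineq1}. I will take $N$ to be the very constant furnished by \Cref{return-time-fast}.

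First I would note that, substituting $t = \tau_{x,T} + s$ in \Cref{st2}, for every $x \in \inv$, $T > 0$, and $m > 0$ one has $\sigma_{x,T,m} - \tau_{x,T} = \phi_m(X_{\tau_{x,T}+\cdot}^x)$, where $\phi_m : \D_{[0,\infty)}(\mcM) \to [0,\infty]$ is the hitting-time functional $\phi_m(w(\cdot)) = \inf\{s \geq 0 : W(w(s)) < m\}$. This $\phi_m$ is measurable because $W$ is continuous and the Borel $\sigma$-algebra on $\D_{[0,\infty)}(\mcM)$ is generated by the coordinate projections, and it is a.s.\ finite along the process by \Cref{return-time-fast}. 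Applying the strong Markov property (\Cref{strong-Markov}) at $\tau = \tau_{x,T}$ — legitimate since $\tau_{x,T} \leq T < \infty$ by \Cref{st1} — first to each truncation $\phi_m \wedge k$ and then letting $k \to \infty$ by monotone convergence, I get $\E[\sigma_{x,T,m} - \tau_{x,T} \mid \mathcal{F}_{\tau_{x,T}}] = \tilde\eta_m(X_{\tau_{x,T}}^x)$ a.s., with $\tilde\eta_m$ as in \Cref{return-time-fast}. Taking expectations and using $\tilde\eta_m \leq W$ for $m \geq N$ (\Cref{return-time-fast}) gives $\E[\sigma_{x,T,m} - \tau_{x,T}] \leq \E[W(X_{\tau_{x,T}}^x)]$. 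Finally, since $\E[\tau_{x,T}] \leq T < \infty$, \Cref{ineq1} applied to $W, W'$ and $\tau = \tau_{x,T}$ yields $\E[W(X_{\tau_{x,T}}^x)] \leq W(x) + K\,\E[\tau_{x,T}] \leq W(x) + KT$ (discarding the nonnegative term $\E[\int_0^{\tau_{x,T}} W'(X_s^x)\,ds]$), so $\E[\sigma_{x,T,m} - \tau_{x,T}] \leq W(x) + KT$. The ``in particular'' assertion then follows from $\E[\sigma_{x,T,m}] = \E[\sigma_{x,T,m} - \tau_{x,T}] + \E[\tau_{x,T}] \leq W(x) + (K+1)T$.

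The argument presents no genuine difficulty: the only point I would be careful about is the reduction in the second paragraph, namely that $\phi_m$ is a bona fide measurable functional of the shifted path so that \Cref{strong-Markov} applies verbatim (with the truncation $\phi_m \wedge k$ accommodating the fact that \Cref{strong-Markov} is stated for $[0,\infty)$-valued functionals). Once that is in place, the estimate is just the composition of \Cref{return-time-fast} with \Cref{ineq1}.
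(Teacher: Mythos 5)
Your proposal is correct and follows essentially the same route as the paper: apply the strong Markov property at $\tau_{x,T}$ to reduce to $\tilde\eta_m(X_{\tau_{x,T}}^x) \leq W(X_{\tau_{x,T}}^x)$ via \Cref{return-time-fast}, then bound $\E[W(X_{\tau_{x,T}}^x)]$ by $W(x)+KT$ using \Cref{ineq1} and $\tau_{x,T}\leq T$. The extra care you take with the measurability of the hitting-time functional and the truncation $\phi_m\wedge k$ is a welcome refinement of details the paper leaves implicit.
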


\begin{proof}

Let $N > 0$ be as in \Cref{return-time-fast}, fix any $m \geq N$ and let $\eta_{m}(x)$ be as in \Cref{return-time-fast}.
By \Cref{return-time-fast}, $\eta_m(x)$ is almost surely finite, 
and then from the strong Markov property (\Cref{strong-Markov}) it follows that
$$
\E[\sigma_{x,T,m} - \tau_{x,T} \mid \mathcal{F}_{\tau_{x,T}}] = \tilde{\eta}_m(X_{\tau_{x,T}}^x) \leq W(X_{\tau_{x,T}}^x).$$
    Since $\tau_{x,T} \leq T$, by \Cref{ineq1} $$\E[W(X_{\tau_{x,T}}^x)] \leq W(x) + KT,$$ which proves the first claim. The second one follows since $\tau_{x,T} \leq T$.
\end{proof}

\begin{lem}
\label{change-in-V-small}
    For every $\beta > 0$ there is  $N > 0$ such that for all $T > 0$, $m \geq N$, and $x \in \inv \cap \{W \leq m\}$,$$\E[V(X_{\sigma_{x,T,m}}^x) - V(X_{\tau_{x,T}}^x)] \geq -\beta (K + 1)(m + KT).$$
\end{lem}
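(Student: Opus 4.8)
The plan is to convert the quantity $\E[V(X_{\sigma_{x,T,m}}^x) - V(X_{\tau_{x,T}}^x)]$ into an integral of $H = \mathcal{L}V$ and then exploit that, during the interval $[\tau_{x,T},\sigma_{x,T,m})$, the process sits outside the level set $\{W < m\}$, where $|H|$ is small relative to $W'$ once $m$ is large. First I would record that $\tau_{x,T} \le T$ is bounded and that $\E[\sigma_{x,T,m}] \le W(x) + (K+1)T < \infty$ by \Cref{sigma-small} (this already forces $N \ge N_0$, where $N_0$ is the constant supplied by \Cref{return-time-fast}). Hence \Cref{V-is-integral-of-H} applies to both stopping times, and subtracting the two identities $\E[V(X_{\sigma_{x,T,m}}^x) - V(x)] = \E[\int_0^{\sigma_{x,T,m}} H(X_s^x)\,ds]$ and $\E[V(X_{\tau_{x,T}}^x) - V(x)] = \E[\int_0^{\tau_{x,T}} H(X_s^x)\,ds]$ gives
\[
\E[V(X_{\sigma_{x,T,m}}^x) - V(X_{\tau_{x,T}}^x)] = \E\Big[\int_{\tau_{x,T}}^{\sigma_{x,T,m}} H(X_s^x)\,ds\Big].
\]

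The second step is the key observation. By the definition of $\sigma_{x,T,m}$ (\Cref{st2}) we have $W(X_t^x) \ge m$ for every $t \in [\tau_{x,T},\sigma_{x,T,m})$ (if not, that $t$ would belong to the set whose infimum defines $\sigma_{x,T,m}$). Since $H$ vanishes over $W'$ (\Cref{W'-vanish}, \Cref{vanish}), for the given $\beta > 0$ there is a compact set $\K_\beta \subset \mcM$ such that $|H| \le \beta W'$ on $\mcM \setminus \K_\beta$. Choosing $N \ge 1 + \sup_{\K_\beta} W$ — finite since $W$ is continuous and $\K_\beta$ compact — guarantees $\{W \ge m\} \cap \K_\beta = \emptyset$ whenever $m \ge N$, so $H(X_s^x) \ge -\beta W'(X_s^x)$ for all $s \in [\tau_{x,T},\sigma_{x,T,m})$. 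Discarding the single endpoint $s = \sigma_{x,T,m}$ (a Lebesgue-null set, and all integrals here are finite by \Cref{V-is-integral-of-H}) and using $W' \ge 0$ to enlarge the interval to $[0,\sigma_{x,T,m}]$,
\[
\E\Big[\int_{\tau_{x,T}}^{\sigma_{x,T,m}} H(X_s^x)\,ds\Big] \ge -\beta\, \E\Big[\int_0^{\sigma_{x,T,m}} W'(X_s^x)\,ds\Big].
\]

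For the final step I would bound the last expectation by \Cref{ineq1} applied with the stopping time $\sigma_{x,T,m}$, together with \Cref{sigma-small}, using $W \ge 1$ and $W(x) \le m$:
\[
\E\Big[\int_0^{\sigma_{x,T,m}} W'(X_s^x)\,ds\Big] \le W(x) + K\,\E[\sigma_{x,T,m}] \le W(x) + K\big(W(x) + (K+1)T\big) \le (K+1)(m + KT).
\]
Combining the three displays with $N := \max\{N_0,\, 1 + \sup_{\K_\beta} W\}$ yields $\E[V(X_{\sigma_{x,T,m}}^x) - V(X_{\tau_{x,T}}^x)] \ge -\beta(K+1)(m+KT)$, as desired.

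I do not expect a genuine obstruction; the proof is essentially bookkeeping on top of the already-established lemmas. The one point that needs attention is the simultaneous choice of $N$: it must be large enough for \Cref{return-time-fast} and \Cref{sigma-small} to be applicable (so that $\E[\sigma_{x,T,m}] < \infty$ and \Cref{V-is-integral-of-H} is legitimate), and large enough that the compact ``exceptional set'' $\K_\beta$ from the vanishing of $H$ over $W'$ lies strictly below the level set $\{W = m\}$, which is what converts ``$X_t^x$ has not yet returned'' into ``$|H(X_t^x)| \le \beta W'(X_t^x)$''.
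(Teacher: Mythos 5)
Your proof is correct, and it takes a genuinely different route from the paper's. The paper conditions at $\tau_{x,T}$ via the strong Markov property and reduces the problem to a standalone estimate on the return leg: for $y \in \inv \cap \{W \geq m\}$ it shows $\E[V(X_{\eta_m(y)}^y) - V(y)] \geq -\beta(K+1)W(y)$ using \Cref{V-is-integral-of-H} and \Cref{ineq1} applied to the return time $\eta_m(y)$ from \Cref{return-time-fast}, and then averages this over the law of $X_{\tau_{x,T}}^x$ (inserting the indicator $\Id_{W(X_{\tau_{x,T}}^x)\geq m}$ to handle the degenerate case $\sigma_{x,T,m}=\tau_{x,T}$) and bounds $\E[W(X_{\tau_{x,T}}^x)] \leq m + KT$. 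You instead apply \Cref{V-is-integral-of-H} to both stopping times and subtract, turning the increment into $\E[\int_{\tau_{x,T}}^{\sigma_{x,T,m}} H(X_s^x)\,ds]$, and then exploit the pathwise fact that $W(X_s^x) \geq m$ on $[\tau_{x,T},\sigma_{x,T,m})$ together with $|H| \leq \beta W'$ off a compact set; this avoids the strong Markov property and the auxiliary return-leg lemma entirely, at the cost of only \Cref{ineq1} and \Cref{sigma-small}. Your bookkeeping is sound: the subtraction is legitimate because both $V(X_{\tau_{x,T}}^x)-V(x)$ and $V(X_{\sigma_{x,T,m}}^x)-V(x)$ are integrable by \Cref{V-is-integral-of-H}, the degenerate case $\sigma_{x,T,m}=\tau_{x,T}$ is handled automatically by the empty integral, and your final chain $W(x) + K(W(x)+(K+1)T) = (K+1)(W(x)+KT) \leq (K+1)(m+KT)$ reproduces exactly the paper's constant. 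The paper's route has the minor advantage of isolating a reusable statement about returns from high $W$-levels; yours is shorter and more self-contained.
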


\begin{proof}
Since $H$ vanishes over $W'$ (\Cref{W'-vanish}) and $W$ is proper (\Cref{as3}), there is $N > 0$ such that $W(y) \geq N - 1$ implies 
$|H(y)| \leq \beta W'(y)$. Increase $N$ if necessary such that the assumptions of \Cref{return-time-fast} are satisfied and fix $m \geq N$. 

For any $y \in \inv \cap \{W \geq m\}$, 
 let $\eta \coloneqq \eta_m(y)$ be as in \Cref{return-time-fast} and recall that $\E[\eta] \leq W(y)$. Since $W(X^y_t) \geq m \geq N$ for $t \in (0, \eta)$, we have $|H(X^y_t)| \leq \beta W'(X^y_t)$ for $t \in (0, \eta)$, and so it follows from \Cref{V-is-integral-of-H} that
 $$
 \E[V(X_{\eta}^y) - V(y)] = \E\Big[\int_0^{\eta} H(X_s^y)ds\Big] \geq \beta \E\Big[\int_0^{\eta} -W'(X_s^y)ds\Big].
 $$
By \Cref{ineq1} and $\E[\eta] \leq W(y)$,
$$
\E\Big[\int_0^{\eta} W'(X_s^y)ds\Big] \leq W(y) + K\E[\eta] \leq (K+1)W(y) \,,
$$ 
and therefore
\begin{equation} \label{ozov}
\E[V(X_{\eta}^y) - V(y)] \geq -\beta (K + 1)W(y).    
\end{equation}
Fix $T > 0$ and $x \in \inv \cap \{W \leq m\}$ and note that $x$ belongs to a different set than $y$ above.

By the definition of $\sigma_{x,T,m}$, the 
strong Markov property (\Cref{strong-Markov}), and \eqref{ozov} we obtain
\begin{align*}
\E[V(X_{\sigma_{x,T,m}}^x) - V(X_{\tau_{x,T}}^x)] &= 
\E[(V(X_{\sigma_{x,T,m}}^x) - V(X_{\tau_{x,T}}^x)) \Id_{W(X_{\tau_{x,T}}^x) \geq m}]
\\
&\geq -\beta(K + 1)\E[W(X_{\tau_{x,T}}^x)].    
\end{align*}
Noting that $\tau_{x,T} \leq T$ (see \Cref{st1}), applying \Cref{ineq1}, and recalling that $W(x) \leq m$ proves the claim.
\end{proof}

\begin{cor}
    \label{can-choose-M-big}
    There is $N > 0$ such that for all $m \geq N$ there is $S > 0$ such that for all $T \geq S$ 
    $$
    \inf_{x \in \inv \cap \{W \leq m\}} \E[V(X_{\sigma_{x,T,m}}^x) - V(X_{\tau_{x,T}}^x)] \geq -\frac{\alpha T}{8} \,,
    $$
    where $\alpha > 0$ is as in \Cref{as4}.
\end{cor}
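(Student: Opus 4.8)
The plan is to deduce this corollary directly from \Cref{change-in-V-small} by choosing the parameter $\beta$ appropriately and then tracking the quantifier order carefully. Recall \Cref{change-in-V-small} gives, for every $\beta>0$, a threshold $N=N(\beta)>0$ such that for all $T>0$, all $m\geq N$, and all $x\in\inv\cap\{W\leq m\}$,
$$
\E\big[V(X_{\sigma_{x,T,m}}^x) - V(X_{\tau_{x,T}}^x)\big] \geq -\beta (K+1)(m+KT) = -\beta(K+1)m - \beta(K+1)KT \,.
$$
The right-hand side has a term linear in $T$ and a term linear in $m$. The first step is to choose $\beta$ small enough, namely $\beta \leq \frac{\alpha}{16K(K+1)}$, so that the $T$-linear contribution is at most $\frac{\alpha T}{16}$; crucially this choice of $\beta$ (hence of $N=N(\beta)$) is independent of $m$ and $T$, which matches the quantifier structure of the statement (``there is $N$ such that for all $m\geq N$\dots'').

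For the second step, fix any $m\geq N$. We must absorb the $m$-linear term $\beta(K+1)m$ into a fraction of the budget $\frac{\alpha T}{8}$: setting $S := S(m) := \frac{16\beta(K+1)m}{\alpha}$, for every $T\geq S$ we have $\beta(K+1)m \leq \frac{\alpha T}{16}$. Combining the two bounds, for all $T\geq S$ and all $x\in\inv\cap\{W\leq m\}$,
$$
\E\big[V(X_{\sigma_{x,T,m}}^x) - V(X_{\tau_{x,T}}^x)\big] \geq -\beta(K+1)m - \beta(K+1)KT \geq -\frac{\alpha T}{16} - \frac{\alpha T}{16} = -\frac{\alpha T}{8}\,,
$$
and since the bound is uniform over $x\in\inv\cap\{W\leq m\}$, taking the infimum yields the claim.

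I do not anticipate any genuine obstacle here: the argument is a routine constant-chase on top of \Cref{change-in-V-small}. The only point requiring a little care is the order in which the parameters are selected — $\beta$ (and therefore $N$) must be pinned down before $m$ so that $N$ is genuinely uniform, and only afterwards is $S$ allowed to depend on $m$ — but this is exactly what the two-term structure of the bound in \Cref{change-in-V-small} permits.
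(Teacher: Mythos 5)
Your proposal is correct and follows essentially the same route as the paper's proof: both fix $\beta=\frac{\alpha}{16K(K+1)}$ in \Cref{change-in-V-small} and then choose $S$ proportional to $m$ (your $S=\frac{16\beta(K+1)m}{\alpha}$ simplifies to the paper's $S=\frac{m}{K}$) to absorb the $m$-linear term. The quantifier bookkeeping you emphasize is exactly the point, and it checks out.
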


\begin{proof}
 Let $\beta = \frac{\alpha}{16K(K+1)}$, let $N > 0$ be as in \Cref{change-in-V-small}, and fix $m \geq N$. For any $T > 0$, by \Cref{change-in-V-small} 
 $$
 \inf_{x \in \inv \cap \{W \leq m\}} \E[V(X_{\sigma_{x,T,m}}^x) - V(X_{\tau_{x,T}}^x)] \geq -\beta (K + 1)(m + KT).$$ Let $S = \frac{m}{K}$ so that $T \geq S$ implies $m + KT \leq 2KT$, which gives
 $$
   \beta (K + 1)(m + KT) \leq \frac{\alpha T}{8},
 $$ proving the claim. 
\end{proof}

\begin{deff}
    \label{N}
    For the rest of the paper, we fix $N > 0$ large enough to satisfy \Cref{return-time-fast}, \Cref{sigma-small}, and \Cref{can-choose-M-big}. In particular, recalling the proof of \Cref{return-time-fast} we may assume that $W(x) \geq N$ implies $W'(x) \geq K+1$ and thus $\mathcal{L}W(x) \leq -1$ by \Cref{as3}.
\end{deff}

For every $\epsilon > 0$ and any closed set $\mathcal{S} \subset \mcM$ we define 
\begin{equation}\label{eq:nedf}
    N_{\epsilon}(\mathcal{S}) := \{y \in \mcM \mid d(y, \mathcal{S}) = \inf_{x \in \mathcal{S}} d(y, x) < \epsilon\} \,.
\end{equation}

\begin{lem}
    \label{V-supermartingale}
    Let $N$ be as in \Cref{N}. 
    For every $m \geq N$ there is $\epsilon > 0$, $n \geq 1$, and $T_1,\dots,T_n > 0$ such that for all $y \in \inv \cap \{W \leq m\} \cap N_{\epsilon}(\mcM_0)$ there is some $1 \leq i \leq n$ such that $\E[V(X_{\sigma_{y,T_i,m}}^y) -V(y)] \geq \frac{\alpha T_i}{8}$.
\end{lem}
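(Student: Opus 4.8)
The plan is to build the required estimate by composing the local lower bound of \Cref{V-small} with the ``return-phase'' bound of \Cref{can-choose-M-big}, and then to trade the pointwise-in-$x$ neighbourhoods for a single uniform $\epsilon$ by a compactness argument on $\mcM_0 \cap \{W \leq m\}$, which is compact because $W$ is proper (\Cref{as3}) and $\mcM_0$ is closed.

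First, fix $m \geq N$ (with $N$ as in \Cref{N}) and apply \Cref{can-choose-M-big} to obtain $S > 0$ such that $\E[V(X_{\sigma_{x,T,m}}^x) - V(X_{\tau_{x,T}}^x)] \geq -\frac{\alpha T}{8}$ for every $T \geq S$ and every $x \in \inv \cap \{W \leq m\}$. Next, for each $x \in \mcM_0 \cap \{W \leq m\}$ I would invoke \Cref{V-small} with this $S$ to produce $\epsilon_x > 0$ and $T_x \geq S$ with $\E[V(X_{\tau_{y,T_x}}^y) - V(y)] \geq \frac{\alpha}{4}T_x$ for all $y \in \inv$ with $d(y,x) < \epsilon_x$. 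For such a $y$ that additionally satisfies $W(y) \leq m$, I would use the pathwise identity
\[
V(X_{\sigma_{y,T_x,m}}^y) - V(y) = \Big(V(X_{\sigma_{y,T_x,m}}^y) - V(X_{\tau_{y,T_x}}^y)\Big) + \Big(V(X_{\tau_{y,T_x}}^y) - V(y)\Big),
\]
where all three expectations are finite by \Cref{V-is-integral-of-H} (applied at the bounded stopping time $\tau_{y,T_x}$ and at $\sigma_{y,T_x,m}$, which has finite mean by \Cref{sigma-small}); the \Cref{can-choose-M-big} bound (used with initial point $y$ and parameter $T_x$) together with the \Cref{V-small} bound then yields $\E[V(X_{\sigma_{y,T_x,m}}^y) - V(y)] \geq -\frac{\alpha T_x}{8} + \frac{\alpha T_x}{4} = \frac{\alpha T_x}{8}$.

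It then remains to pass from the $x$-dependent neighbourhoods to a single $\epsilon$. The balls $\{z : d(z,x) < \epsilon_x\}$, $x \in \mcM_0 \cap \{W \leq m\}$, form an open cover of the compact set $\mcM_0 \cap \{W \leq m\}$, so there are $x_1,\dots,x_n$ in it with $\mcM_0 \cap \{W \leq m\} \subset \bigcup_{i=1}^n \{z : d(z,x_i) < \epsilon_{x_i}\}$; put $T_i := T_{x_i}$, which are positive since $T_{x_i} \geq S > 0$. I claim there is $\epsilon > 0$ with $N_\epsilon(\mcM_0) \cap \{W \leq m\} \subset \bigcup_{i=1}^n \{z : d(z,x_i) < \epsilon_{x_i}\}$ (recall $N_\epsilon(\cdot)$ from \eqref{eq:nedf}). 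If not, choose $y_k \in N_{1/k}(\mcM_0) \cap \{W \leq m\}$ lying outside this union; by compactness of $\{W \leq m\}$ pass to a subsequence with $y_k \to y_\infty$, and note $W(y_\infty) \leq m$ by continuity of $W$ and $d(y_\infty,\mcM_0) = 0$, so $y_\infty \in \mcM_0 \cap \{W \leq m\}$ because $\mcM_0$ is closed; then $y_\infty$ lies in the above open union while the nearby $y_k$ do not, a contradiction. With such an $\epsilon$, any $y \in \inv \cap \{W \leq m\} \cap N_\epsilon(\mcM_0)$ lies in some $\{z : d(z,x_i) < \epsilon_{x_i}\}$, and the estimate of the previous paragraph with $x = x_i$ gives $\E[V(X_{\sigma_{y,T_i,m}}^y) - V(y)] \geq \frac{\alpha T_i}{8}$, as claimed. (If $\mcM_0 \cap \{W \leq m\} = \emptyset$ the same argument produces $\epsilon > 0$ with $N_\epsilon(\mcM_0) \cap \{W \leq m\} = \emptyset$, and the statement holds vacuously with $n = 1$, $T_1 := 1$.)

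The only mildly delicate point I anticipate is the extraction of the uniform $\epsilon$ from the finite subcover; everything else is a direct composition of \Cref{V-small}, \Cref{can-choose-M-big}, \Cref{V-is-integral-of-H}, and \Cref{sigma-small}, so I do not expect a real obstacle.
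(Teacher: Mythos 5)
Your proposal is correct and follows essentially the same route as the paper: first fix $S$ via \Cref{can-choose-M-big}, then apply \Cref{V-small} locally around each point of $\mcM_0 \cap \{W \leq m\}$, add the two bounds, and extract a uniform $\epsilon$ by compactness of $\mcM_0 \cap \{W \leq m\}$ and $\{W \leq m\}$. The only cosmetic difference is that you obtain the uniform $\epsilon$ by a sequential-compactness contradiction while the paper uses an open-cover argument on $\{W \leq m\} \setminus N_{\epsilon'}(\mcM_0 \cap \{W \leq m\})$; both are equivalent here.
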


\begin{proof}
Fix any $m \geq N$. By \Cref{can-choose-M-big}, there is $S > 0$ such that for all $T \geq S$ 
$$
\inf_{x \in \inv \cap \{W \leq m\}} \E[V(X_{\sigma_{x,T,m}}^x) - V(X_{\tau_{x,T}}^x)] \geq -\frac{\alpha T}{8}.
$$

By \Cref{V-small}, for every $x \in \mcM_0$ there is $\epsilon_x > 0$ and $T_x \geq S$ such that for all $y \in \inv$ with $d(y,x) < \epsilon_x$, it holds that $\E[V(X_{\tau_{y,T_x}}^y) - V(y)] \geq \frac{\alpha T_x}{4}$. Thus, if $y \in \inv \cap \{W \leq m\}$ and $d(y,x) < \epsilon_x$, 
$$
\E[V(X_{\sigma_{y,T_x,m}}^y) -V(y)] \geq \frac{\alpha T_x}{8}.
$$
Hence, by the compactness of $\mcM_0 \cap \{W \leq m\}$, 
there is  $\epsilon' > 0$ and $T_1,\dots,T_n > 0$ such that for all $y \in \inv \cap \{W \leq m\} \cap N_{\epsilon'}(\mcM_0 \cap \{W \leq m\})$ there is some $1 \leq i \leq n$ such that $\E[V(X_{\sigma_{y,T_i,m}}^y) -V(y)] \geq \frac{\alpha T_i}{8}$. 
To finish the claim, we show that there is $\epsilon > 0$ such 
that 
$$
\{W \leq m\} \cap N_{\epsilon}(\mcM_0) \subset \{W \leq m\} \cap N_{\epsilon'}
(\mcM_0 \cap \{W \leq m\}) \,.
$$ 
Since $\{W \leq m\}$ is compact, then  $\{W \leq m\} \setminus N_{\epsilon'}(\mcM_0 \cap \{W \leq m\})$ is compact and is covered by the collection of open sets $(\{W \leq m\} \setminus \overline{N_{\epsilon}(\mcM_0)})_{\epsilon > 0}$, because  
$$
\bigcup_{\epsilon > 0} \{W \leq m\} \backslash \overline{N_{\epsilon}(\mcM_0)} = \{W \leq m\} \backslash \mcM_0 = \{W \leq m\} \backslash (\mcM_0 \cap \{W \leq m\})
$$ 
which clearly contains $\{W \leq m\} \backslash N_{\epsilon'}(\mcM_0 \cap \{W \leq m\})$. Thus there is $\epsilon > 0$ such that 
$$
\{W \leq m\} \backslash N_{\epsilon'}(\mcM_0 \cap \{W \leq m\}) \subset \{W \leq m\} \backslash \overline{N_{\epsilon}(\mcM_0)},
$$ 
which means that 
$$
\{W \leq m\} \cap N_{\epsilon'}(\mcM_0 \cap \{W \leq m\}) \supset \{W \leq m\} \cap \overline{N_{\epsilon}(\mcM_0)} \supset \{W \leq m\} \cap N_{\epsilon}(\mcM_0)
$$
and the proof is finished. 
\end{proof}
\subsection{Bounding the Variance} \label{bounding-the-variance}
In this section we upper bound $\E[(V(X_{\sigma_{x,T,m}}^x) - V(x))^2]$, where $\sigma_{x,T,m}$ was introduced in \Cref{st2}.  We start by bounding $\E[\sigma_{x,T,m}^2]$.
 
\begin{lem}
    \label{quadratic-moments}
    For every $T > 0$ and $m > N$ (see \Cref{N}) we have  
    $$
    \sup_{x \in \inv \cap \{W \leq m\}}\E[\sigma_{x,T,m}^2] < \infty \,,
    $$
    where $\sigma_{x,T,m}$ was defined in \Cref{st2}.
\end{lem}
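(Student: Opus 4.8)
The plan is to reduce the estimate, via the strong Markov property, to a second-moment bound on the return time $\eta_m(z) := \inf\{t\ge 0 : W(X_t^z) < m\}$ (as in \Cref{return-time-fast}), combined with uniform moment bounds on the position $X_{\tau_{x,T}}^x$. Since $\sigma_{x,T,m} = \tau_{x,T} + (\sigma_{x,T,m}-\tau_{x,T})$ with $\tau_{x,T}\le T$ (see \Cref{st1}), and since by \Cref{strong-Markov} (applied to the measurable functional $x(\cdot)\mapsto\big(\inf\{t\ge0:W(x(t))<m\}\big)^2$ on $\D_{[0,\infty)}(\mcM)$) one has $\E[(\sigma_{x,T,m}-\tau_{x,T})^2\mid\mathcal{F}_{\tau_{x,T}}] = \psi_2(X_{\tau_{x,T}}^x)$ where $\psi_2(z):=\E[\eta_m(z)^2]$, it follows that
\[
\E[\sigma_{x,T,m}^2] \le 2T^2 + 2\,\E\big[\psi_2(X_{\tau_{x,T}}^x)\big]\,.
\]
It then suffices to (i) bound $\psi_2(z)$ by a polynomial in $W(z)$ and $U(z)$, and (ii) bound the relevant moments of $W(X_{\tau_{x,T}}^x)$ and $U(X_{\tau_{x,T}}^x)$ uniformly over $x\in\inv\cap\{W\le m\}$.

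For (i), I would use the drift estimate underlying \Cref{N}: for $s<\eta_m(z)$ one has $W(X_s^z)\ge m > N$, hence $\mathcal{L}W(X_s^z)\le -1$. Since $W\in\Dme_2(\mcM)$, the process $t\mapsto W(X_t^z) = W(z) + \int_0^t\mathcal{L}W(X_s^z)ds + M_t^W(z)$ is a semimartingale whose martingale part $M^W(z)$ has predictable quadratic variation $\int_0^t\Gamma W(X_s^z)ds$. Integration by parts applied to $\big(W(X_t^z)+t\big)^2$ then shows that
\[
\big(W(X_{t\wedge\eta_m(z)}^z) + t\wedge\eta_m(z)\big)^2 - \int_0^{t\wedge\eta_m(z)}\Gamma W(X_s^z)\,ds
\]
is a local supermartingale, because the only remaining drift term is $2\big(W(X_s^z)+s\big)\big(\mathcal{L}W(X_s^z)+1\big)\le 0$ on $[0,\eta_m(z))$ (both factors have the right sign, using $W\ge 1>0$). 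Taking expectations (localizing, then applying Fatou), and using $\Gamma W\le KU'$ together with \Cref{ineq1} and \Cref{return-time-fast} to estimate $\E\big[\int_0^{t\wedge\eta_m(z)}U'(X_s^z)ds\big]\le U(z)+K\,\E[t\wedge\eta_m(z)]\le U(z)+KW(z)$, and finally letting $t\to\infty$, one obtains
\[
\psi_2(z) \le \E\big[(W(X_{\eta_m(z)}^z)+\eta_m(z))^2\big] \le W(z)^2 + K^2 W(z) + K\,U(z)\,.
\]

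For (ii), note $\{W\le m\}$ is compact ($W$ is proper by \Cref{as3}) and $U$ is continuous, so $U_m:=\sup_{\{W\le m\}}U<\infty$. By \Cref{ineq1}, $\E[W(X_{\tau_{x,T}}^x)]\le W(x)+KT\le m+KT$ and $\E[U(X_{\tau_{x,T}}^x)]\le U(x)+KT\le U_m+KT$, while \Cref{V-W-linearly-bdd} gives $\E[(M_{\tau_{x,T}}^W(x))^2]\le K(U(x)+K\E[\tau_{x,T}])\le K(U_m+KT)$. For the second moment of $W(X_{\tau_{x,T}}^x)$, write $W(X_{\tau_{x,T}}^x)=W(x)+Y+M_{\tau_{x,T}}^W(x)$ with $Y:=\int_0^{\tau_{x,T}}\mathcal{L}W(X_s^x)ds$; since $\mathcal{L}W\le K$ we get $Y\le KT$, and since $W\ge 1$ we get $-Y = W(x)+M_{\tau_{x,T}}^W(x)-W(X_{\tau_{x,T}}^x)\le m+|M_{\tau_{x,T}}^W(x)|$, so $|Y|\le KT+m+|M_{\tau_{x,T}}^W(x)|$ and $\E[Y^2]\le 2(KT+m)^2+2K(U_m+KT)$. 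Hence $\E[W(X_{\tau_{x,T}}^x)^2]\le 3m^2+3\E[Y^2]+3\E[(M_{\tau_{x,T}}^W(x))^2]$ is bounded by a constant depending only on $m,T,K,U_m$. Combining with (i) and the displayed reduction yields $\sup_{x\in\inv\cap\{W\le m\}}\E[\sigma_{x,T,m}^2]<\infty$.

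The main obstacle is step (i): obtaining the correct ``energy'' inequality for the second moment of the return time, which forces one to apply stochastic calculus (integration by parts) to $\big(W(X_t^z)+t\big)^2$ even though $W^2$ need not lie in any domain of the generator — so one must argue directly from the semimartingale decomposition supplied by $W\in\Dme_2(\mcM)$ and be careful with localization and integrability when passing to expectations. A secondary technical point is bounding $\E[W(X_{\tau_{x,T}}^x)^2]$ without a Lyapunov inequality for $W^2$, which is handled by the one-sided estimate above exploiting $W\ge 1$ and $\mathcal{L}W\le K$.
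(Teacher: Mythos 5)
Your argument is correct, but it takes a genuinely different route from the paper. You split $\sigma_{x,T,m}$ at $\tau_{x,T}$ via the strong Markov property and reduce everything to a Foster--Lyapunov-type second-moment bound on the pure return time, $\E[\eta_m(z)^2]\lesssim W(z)^2+U(z)$, which you obtain by integration by parts on $\big(W(X_t^z)+t\big)^2$: the drift $\mathcal{L}W\le -1$ above level $m>N$ makes the relevant process a local supermartingale after subtracting $\int_0^{\cdot}\Gamma W\,ds$, and $\Gamma W\le KU'$ plus \Cref{ineq1} and \Cref{return-time-fast} close the estimate. The paper instead never conditions at $\tau_{x,T}$ and never invokes It\^o's formula: it works with the single local martingale $(M_t^W(x))^2-\int_0^t\Gamma W(X_s^x)\,ds+KM_t^U(x)$ on all of $[0,\sigma_{x,T,m}]$, bounds it from below by an integrable quantity so that optional stopping applies, and then extracts $\sigma^2$ from the elementary lower bound $(M_{\sigma}^W(x))^2\gtrsim (\sigma-\tau_{x,T})^2-C$, which comes from $\int_0^\sigma \mathcal{L}W\,\Id_{W>N}\,ds\le -(\sigma-\tau_{x,T})$ and two applications of $(a+b)^2\ge \tfrac12 a^2-b^2$; the $+KM^U$ term is exactly what absorbs $\int_0^\sigma\Gamma W\,ds$. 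Both proofs exploit the same two mechanisms ($\mathcal{L}W\le-1$ above level $N$ and $\Gamma W\le KU'$), but yours buys a cleaner modular structure (a reusable second-moment return-time estimate, plus a separate uniform bound on $\E[W(X_{\tau_{x,T}}^x)^2]$ via your one-sided trick using $W\ge1$ and $\mathcal{L}W\le K$) at the price of general semimartingale calculus that the paper's framework deliberately avoids, since $W^2$ lies in no domain of the generator. The one place you must be careful --- and you flag it yourself --- is the It\^o step: you need the left-limit integrand $Y_{s-}$ to be locally bounded so that $\int Y_{s-}\,dM^W$ is a local martingale, and you need to pass from the pathwise bracket $[M^W]$ to the predictable one $\int_0^t\Gamma W\,ds$ supplied by \Cref{D2} (their difference is a local martingale), before localizing and applying Fatou. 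With those details written out, the proof is complete; the strong Markov reduction itself is unobjectionable, as the paper uses the identical functional (to the first power) in \Cref{sigma-small}.
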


\begin{proof}
Fix $x \in \inv \cap \{W \leq m\}$, denote
   $\sigma = \sigma_{x,T,m}$, and note that $\sigma$ is a.s. finite by \Cref{sigma-small}. By \Cref{as3} \ref{3.1} (see \Cref{D2}) and \Cref{as5} \ref{5.1} (see \Cref{D+}), 
   $$
   M_t \coloneqq (M_t^W(x))^2 - \int_0^t \Gamma W(X_s^x)ds + KM_t^U(x)
   $$ 
   is a local martingale since it is a sum of a martingale and a local martingale. By \Cref{as5} \ref{5.3} and \ref{5.2}, $\Gamma W \leq KU' \leq K(K - \mathcal{L}U)$, and therefore by the definition of $M_t^U(x)$ (see \eqref{Mtf}) we have for each $t \geq 0$ that
   \begin{equation} \label{mtlowerbound}
   M_t \geq (M_t^W(x))^2 - K^2t + K(U(X_t^x) - U(x)) \geq 
   - K^2t - KU(x).
   \end{equation}
In particular, 
$$
\inf_{t \geq 0}M_{t \wedge \sigma} \geq -K^2 \sigma -K U(x) 
$$
and by \Cref{sigma-small},
\begin{equation} \label{sigmabound}
\E[K^2 \sigma] \leq K^2(W(x) + (K+1)T) \,. \end{equation} 
Thus the martingale $M_{t \wedge \sigma}$ is uniformly bounded from below by an integrable random variable, so by Fatou's lemma,  optional stopping and \eqref{mtlowerbound}, we obtain
\begin{equation} \label{rnoi}
0 \geq \E[M_{\sigma}] \geq \E[(M_{\sigma}^W(x))^2 - K^2\sigma  - KU(x)].
\end{equation}
By applying the inequality $(a+b)^2 \geq \frac{1}{2}a^2 - b^2$ twice, 
\begin{align*}
(M_{\sigma}^W(x))^2 &\geq \frac{1}{4}\Big(\int_0^{\sigma} \mathcal{L}W(X_s^x)\Id_{W(X_s^x) > N}ds\Big)^2 - \frac{1}{2}\Big(\int_0^{\sigma} \mathcal{L}W(X_s^x)\Id_{W(X_s^x) \leq N}ds\Big)^2 \\
&\qquad - (W(X_{\sigma}^x) - W(x))^2.    
\end{align*}
 Observe $W(X_s^x) \geq m > N$ for all $s \in (\tau_{x,T}, \sigma)$, and let $A \coloneqq \sup_{W(x) \leq N} |\mathcal{L}W(x)|$, which is finite since $\mathcal{L}W$ is continuous (by \Cref{D+}) and $W$ is proper (by \Cref{as3}). Since $\tau_{x,T} \leq T$, 
 $$
 \Big(\int_0^{\sigma} \mathcal{L}W(X_s^x)1_{W(X_s^x) \leq N}ds\Big)^2 
 = \Big(\int_0^{\tau_{x, T}} \mathcal{L}W(X_s^x)1_{W(X_s^x) \leq N}ds\Big)^2
 \leq A^2T^2.
 $$ 
 Similarly, since $W(x) > N$ implies $\mathcal{L}W(x) \leq -1$ (recall \Cref{N}), we obtain
 $$
 \frac{1}{4}\Big(\int_0^{\sigma} \mathcal{L}W(X_s^x)1_{W(X_s^x) > N}ds\Big)^2 \geq \frac{1}{4}(\sigma - \tau_{x,T})^2 \geq \frac{1}{8}\sigma^2 - \frac{1}{4}T^2.
 $$ 
 Finally, since we assumed $W(x) \leq m$, then it follows from \Cref{stopped-W-is-small} that 
 $$
 (W(X_{\sigma}^x) - W(x))^2 \leq 4m^2 \,.
 $$  
 Overall,  
 $$
 (M_{\sigma}^W(x))^2 \geq \frac{1}{8}\sigma^2 - A'
 $$ 
 where $A' \coloneqq \frac{1}{4}T^2 + \frac{1}{2}A^2T^2 + 4m^2$ is a positive constant. A substitution into 
 \eqref{rnoi} gives us 
  $$
  0 \geq \E[M_{\sigma}] \geq \E\Big[\frac{1}{8}\sigma^2 - A' -K^2\sigma -KU(x)\Big].
  $$ 
  Rearranging and using \eqref{sigmabound} gives 
  $$
  \E[\sigma^2] \leq 8A' + 8K^2(W(x) + (K+1)T) + 8KU(x),
  $$ 
  which proves the claim since the right hand side is uniformly bounded for $x \in \inv \cap \{W \leq m\}$.
\end{proof}

\begin{lem}
    \label{change-in-V-squared-small}
    For every $T > 0$, $m > N$ (see \Cref{N}) 
    $$
    \sup_{x \in \inv \cap \{W \leq m\}}\E[(V(X_{\sigma_{x,T,m}}^x) - V(x))^2] < \infty \,.
    $$
\end{lem}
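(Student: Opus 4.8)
The plan is to start from the additive decomposition coming directly from the definition \eqref{Mtf} with $f=V$ and $\mathcal{L}V=H$ on $\inv$: writing $\sigma=\sigma_{x,T,m}$, we have $V(X_{\sigma}^x)-V(x)=M^V_{\sigma}(x)+\int_0^{\sigma}H(X_s^x)\,ds$, hence $(V(X_{\sigma}^x)-V(x))^2\le 2(M^V_{\sigma}(x))^2+2\big(\int_0^{\sigma}H(X_s^x)\,ds\big)^2$. All quantities make sense because $\E[\sigma]\le W(x)+(K+1)T<\infty$ by \Cref{sigma-small}, so both \Cref{V-is-integral-of-H} and \Cref{V-W-linearly-bdd} apply with stopping time $\sigma$. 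The strategy is to bound the second moment of each of the two terms uniformly over $x\in\inv\cap\{W\le m\}$.

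For the martingale term, \Cref{V-W-linearly-bdd} gives $\E[(M^V_{\sigma}(x))^2]\le K(U(x)+K\E[\sigma])$; since $U$ is continuous, it is bounded on the compact set $\{W\le m\}$, and $\E[\sigma]\le m+(K+1)T$, so this is bounded by a constant depending only on $m$ and $T$. For the drift term, I would use that $H$ vanishes over $W'$, so (as already noted in the proof of \Cref{V-is-integral-of-H}) there are $A,b>0$ with $|H|\le A+bW'$, whence $\int_0^{\sigma}|H(X_s^x)|\,ds\le A\sigma+b\int_0^{\sigma}W'(X_s^x)\,ds$. The key step is to control $\int_0^{\sigma}W'(X_s^x)\,ds$ through the martingale $M^W$: from $M^W_{\sigma}(x)=W(X_{\sigma}^x)-W(x)-\int_0^{\sigma}\mathcal{L}W(X_s^x)\,ds$, together with $\mathcal{L}W\le K-W'$ and $W\ge 0$, one obtains $\int_0^{\sigma}W'(X_s^x)\,ds\le K\sigma+W(x)+M^W_{\sigma}(x)$. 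Therefore $\int_0^{\sigma}|H(X_s^x)|\,ds\le (A+bK)\sigma+bW(x)+bM^W_{\sigma}(x)$, and squaring and using $(p+q+r)^2\le 3(p^2+q^2+r^2)$ reduces the problem to bounding $\E[\sigma^2]$, $W(x)^2\le m^2$, and $\E[(M^W_{\sigma}(x))^2]\le K(U(x)+K\E[\sigma])$, the last again bounded as above.

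The bound on $\E[\sigma^2]$, uniform over $x\in\inv\cap\{W\le m\}$, is exactly the content of \Cref{quadratic-moments}. Combining the three uniform bounds with the inequality $(V(X_{\sigma}^x)-V(x))^2\le 2(M^V_{\sigma}(x))^2+2\big(\int_0^{\sigma}H(X_s^x)\,ds\big)^2$ and taking expectations yields the claim. I do not expect a serious obstacle: the only genuinely nontrivial input is \Cref{quadratic-moments}, which is already proved, and the remainder is a routine assembly relying on \Cref{sigma-small}, \Cref{V-W-linearly-bdd}, \Cref{V-is-integral-of-H}, and the linear growth $|H|\le A+bW'$. The one point requiring care is tracking the finiteness conditions — all of which follow from $\E[\sigma]<\infty$ — so that the optional-stopping and $L^2$-convergence identities behind \Cref{V-W-linearly-bdd} and \Cref{V-is-integral-of-H} are legitimately applicable.
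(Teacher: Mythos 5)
Your proposal is correct and follows essentially the same route as the paper: the same splitting into $2(M^V_\sigma)^2 + 2\bigl(\int_0^\sigma H\,ds\bigr)^2$, the bound $|H|\le A+bW'$, control of $\int_0^\sigma W'\,ds$ through the $W$-martingale decomposition, and the reduction to \Cref{quadratic-moments}, \Cref{V-W-linearly-bdd}, and \Cref{sigma-small}. The only cosmetic difference is that you drop the $W(X_\sigma^x)$ term using $W\ge 0$ where the paper instead invokes \Cref{stopped-W-is-small} to bound $(W(X_\sigma^x)-W(x))^2\le 4m^2$; both work.
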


\begin{proof}
Fix $T > 0$, $m > N$, and $x \in \inv \cap \{W \leq m\}$. First note that for $M^V_t$ as in \eqref{Mtf} and $H$ as in \Cref{as4} \ref{4.3} (so $H$ agrees with $\mathcal{L}V$ on $\inv$) we have 
$$
\E[(V(X_{\sigma_{x,T,m}}^x) - V(x))^2] \leq 2\E[(M_{\sigma_{x,T,m}}^V(x))^2] +2\E\Big[\Big(\int_0^{\sigma_{x,T,m}} H(X_s^x)ds\Big)^2\Big] 
\,.
$$ 
 Since $H$ vanishes over $W'$ (see \Cref{W'-vanish}, \Cref{vanish}), there is $A > 0$ such that $|H| \leq A + W'$.  Thus, 
$$
\E\Big[\Big(\int_0^{\sigma_{x,T,m}} H(X_s^x)ds\Big)^2\Big] \leq 2A^2\E[\sigma_{x,T,m}^2] + 2\E\Big[\Big(\int_0^{\sigma_{x,T,m}} W'(X_s^x)ds\Big)^2\Big].
$$ 
Since $0 \leq W' \leq K - \mathcal{L}W$ (see \Cref{as3}), 
$$
\E\Big[\Big(\int_0^{\sigma_{x,T,m}} W'(X_s^x)ds\Big)^2\Big] \leq 2K^2\E[\sigma_{x,T,m}^2] + 2\E\Big[\Big(\int_0^{\sigma_{x,T,m}} \mathcal{L}W(X_s^x)ds\Big)^2\Big].
$$ 
Furthermore, by \eqref{Mtf}
$$
\E\Big[\Big(\int_0^{\sigma_{x,T,m}} \mathcal{L}W(X_s^x)ds\Big)^2\Big] \leq 2\E[(M_{\sigma_{x,T,m}}^W(x))^2] + 2\E[(W(X_{\sigma_{x,T,m}}^x) - W(x))^2] \,.
$$ 
By \Cref{stopped-W-is-small} and our assumption that $W(x) \leq m$,
$$
\E[(W(X_{\sigma_{x,T,m}}^x) - W(x))^2] \leq 4m^2 \,.
$$ 
Combining all of these inequalities, 
\begin{multline*}
\E[(V(X_{\sigma_{x,T,m}}^x) - V(x))^2] \\
\leq 2\E[(M_{\sigma_{x,T,m}}^V(x))^2] + (4A^2 + 8K^2)\E[\sigma_{x,T,m}^2] + 16\E[(M_{\sigma_{x,T,m}}^W(x))^2] + 64m^2.
\end{multline*}
We finish the proof by noting that by 
\Cref{quadratic-moments} it holds that 
$$
\sup_{x \in \inv \cap \{W \leq m\}} \E[\sigma_{x,T,m}^2] < \infty
$$
and by \Cref{V-W-linearly-bdd}
$$\E[(M_{\sigma_{x,T,m}}^V(x))^2] + \E[(M_{\sigma_{x,T,m}}^W(x))^2] \leq 2K(U(x) + K\E[\sigma_{x,T,m}]) \,,
$$ 
where the right hand side  is uniformly bounded for $x \in \inv \cap \{W \leq m\}$ due to \Cref{quadratic-moments} and 
the continuity of $U$ on the compact set $\{W \leq m\}$.
\end{proof}

\subsection{Proof of \Cref{part1}} \label{proof-of-part1}
In this section we prove \Cref{part1} and we start with an auxiliary lemma.

\begin{lem}
    \label{stopped-V-not-that-much-smaller}
    For $x \in \inv$, $T > 0$, and $m > 0$, 
    $$
    \inf_{0 \leq t < \sigma_{x,T,m}} (V(X_t^x) - V(x))\Id_{W(X_t^x) < m} \geq -\alpha T \text{ a.s.}
    $$
\end{lem}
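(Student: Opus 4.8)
The plan is a direct case analysis that splits the time interval $[0,\sigma_{x,T,m})$ at the stopping time $\tau_{x,T}$ from \Cref{st1}; recall from \Cref{st2} that $\sigma_{x,T,m}\ge\tau_{x,T}$. I first note that, since $x\in\inv$ and $\inv$ is invariant (\Cref{as1}), we have $X_t^x\in\inv$ for all $t$, and since $V$ is continuous on $\inv$ (\Cref{as4}), the path $t\mapsto V(X_t^x)$ is cadlag a.s.; so the pointwise manipulations below are valid for a.e. sample path.

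For $0\le t<\tau_{x,T}$: by the definition of $\tau_{x,T}$ in \Cref{st1} we have $\tau_{x,T}\le\inf\{s\ge 0:|V(X_s^x)-V(x)|>\alpha T\}$, so no $s<\tau_{x,T}$ can lie in the set $\{s:|V(X_s^x)-V(x)|>\alpha T\}$ (otherwise $s\ge\tau_{x,T}$, a contradiction). Hence $|V(X_t^x)-V(x)|\le\alpha T$, and in particular $V(X_t^x)-V(x)\ge-\alpha T$. Multiplying by the indicator $\Id_{W(X_t^x)<m}\in\{0,1\}$ preserves this lower bound, since the resulting value is either $V(X_t^x)-V(x)\ge-\alpha T$ or $0\ge-\alpha T$; thus $(V(X_t^x)-V(x))\Id_{W(X_t^x)<m}\ge-\alpha T$.

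For $\tau_{x,T}\le t<\sigma_{x,T,m}$: by the definition of $\sigma_{x,T,m}$ in \Cref{st2} as the first time at or after $\tau_{x,T}$ at which $W(X_t^x)<m$, every such $t$ satisfies $W(X_t^x)\ge m$, so $\Id_{W(X_t^x)<m}=0$ and the quantity in question vanishes, which is trivially $\ge-\alpha T$. Taking the infimum over $0\le t<\sigma_{x,T,m}$ across the two regimes yields the claim. I do not expect any real obstacle here; the only point that needs a word of care is the routine observation (used in the first case) that a cadlag path does not strictly exceed a level before the first time the open superlevel set is entered, which is immediate from the definition of the infimum.
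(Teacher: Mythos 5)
Your proof is correct and takes essentially the same route as the paper's: split $[0,\sigma_{x,T,m})$ at $\tau_{x,T}$, use the definition of $\tau_{x,T}$ in \Cref{st1} to get $V(X_t^x)-V(x)\geq-\alpha T$ before $\tau_{x,T}$, and note that the indicator vanishes on $[\tau_{x,T},\sigma_{x,T,m})$ by \Cref{st2}. The extra detail you supply (cadlag paths, the infimum argument) is fine but not needed beyond what the paper records.
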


\begin{proof} By \Cref{st1}
    $$
    \inf_{0 \leq t < \tau_{x,T}} (V(X_t^x) - V(x)) \geq -\alpha T \text{ a.s.}
    $$  The claim follows since $W(X_t^x) \geq m$ for $\tau_{x,T} \leq t < \sigma_{x,T,m}$ (see \Cref{st2}).
\end{proof}

Next, we prove \Cref{part1}:

\begin{proof} 
Let $N$ be as in \Cref{N} and fix $\delta > 0, M > N$.
By \Cref{V-supermartingale} there is $\epsilon \in \big(0, \frac{1}{M}\big)$, 
$n \in \N$,  and $T_1,\dots,T_n > 0$ such that for all 
\begin{equation}\label{N-epsilon}
    z \in \mathcal{N}_{\epsilon, M} := \inv \cap \{W \leq M\} \cap N_{\epsilon}(\mcM_0)
\end{equation} (see \eqref{eq:nedf}) there is $1 \leq i(z) \leq n$ such that 
\begin{equation}\label{eq:lbpd}
\E[V(X_{\sigma_{z,T_{i(z)},M}}^z) -V(z)] \geq \frac{\alpha T_{i(z)}}{8} \,.
\end{equation}
To simplify the notation, we set $\sigma_z := \sigma_{z,T_{i(z)},M}$ for
$z \in \mathcal{N}_{\epsilon, M}$ and $\sigma_z = 1$ for $z \notin \mathcal{N}_{\epsilon, M}$.

For $y \in \inv$, we define an increasing sequence of stopping times $\tau_n$ as $\tau_0 = 0$, $\tau_{n+1} = \tau_n + \sigma_{X_{\tau_n}^y}$. (To interpret $\tau_{n+1} = \tau_n + \sigma_{X_{\tau_n}^y}$ rigorously, we note that there is a measurable function $\psi: \D_{[0,\infty)}(\mcM) \to [0,\infty)$ such that $\sigma_z = \psi(X^z_\cdot)$ for all $z \in \mcM$, and we set $\tau_{n+1} = \tau_n + \psi(X^y_{\tau_n + \cdot})$.) It follows from the strong Markov property for $X_t^y$ (\Cref{strong-Markov}) that the discrete-time process $Y_n^y \coloneqq X_{\tau_n}^y$ is a Markov chain on $\inv$ with respect to the filtration $\mathcal{G}_n \coloneqq \mathcal{F}_{\tau_n}$.

Next, we claim that $\tau_n \to \infty$ a.s. as $n \to \infty$. Indeed, let $T^\ast$ be the random variable such that $\tau_n \uparrow T^\ast$. Suppose $T^\ast < \infty$ with positive probability, and we restrict to such event. To simplify the notation, we set $\tau_z \coloneqq \tau_{z,T_{i(z)}}$ for
$z \in \mathcal{N}_{\epsilon, M}$ and $\tau_z = 1$ for $z \notin \mathcal{N}_{\epsilon, M}$. Then $\sigma_{Y^y_n} = \tau_{n + 1} - \tau_n \to 0$, and since $\sigma_z \geq \tau_z$ (see \Cref{st2}), we obtain $\tau_{Y^y_n} \to 0$. Note that a.s. $t \mapsto \int_0^t H(X_s^y)ds$ is continuous (recall that $t \mapsto X_t^y$ is cadlag a.s.) and $t \mapsto M^V_t(y)$ is cadlag (see \Cref{D2}), so $t \mapsto V(X_t^y)$ is also cadlag. In particular, we have that the limit as $t \uparrow T^\ast$ of the aforementioned functions exist, so $\Big|\int_{\tau_n}^{\tau_n + \tau_{Y^y_n}} H(X_s^y)ds\Big|$ and $|V(X_{\tau_n + \tau_{Y^y_n}}^y) - V(X_{\tau_n}^y)|$ both $\to 0$ as $n \to \infty$. By \Cref{st1}, this implies that $\tau_{Y^y_n}$ is eventually equal to $T_{i(Y^y_n)}$, contradicting $\tau_{Y^y_n} \to 0$, so $T^\ast = \infty$ a.s.

We also define the discrete-time process 
$$
Z_n^y \coloneqq V(Y_{\tau' \wedge n}^y) + b(n - \tau')_+- V(y) \,,
$$ 
where $b \coloneqq \frac{\alpha}{8} \min_{1 \leq i \leq n} T_i$, 
$a_+ := a \vee 0$
for any $a \in \mathbb{R}$, and 
$$
\tau' := \tau'_y \coloneqq \inf\{n \geq 0 \mid Y_n^y \notin \mathcal{N}_{\epsilon, M} \}\,,
$$ which is a $\mathcal{G}_n$-stopping time.
Next, we show that $Z_n^y$ satisfies the assumptions of \Cref{discrete-semimartingale-fact}.  By \eqref{eq:lbpd}, 
\begin{equation}\label{b-ineq}
    \inf_{z \in \mathcal{N}_{\epsilon, M}} \E[V(X_{\sigma_z}^z) - V(z)] \geq b > 0
\end{equation}
and by \Cref{change-in-V-squared-small}, there is some $B > 0$ such that 
\begin{equation}\label{eq:smev}
\sup_{z \in \mathcal{N}_{\epsilon, M}} \E[(V(X_{\sigma_z}^z) - V(z))^2] \leq B.    
\end{equation}
 Furthermore, by the definition of $Z_n$, 
 \begin{equation} \label{z-decomp}
     Z_n^y - Z_{n-1}^y = (V(Y_n^y) - V(Y_{n-1}^y))\Id_{\tau' > n-1} + b
 \Id_{\tau' \leq n-1} \,.
 \end{equation}
 Let $f(y) = \E[(V(Y_1^y) - V(y))^2]$. Using \eqref{z-decomp}, recalling that $\tau'$ is a stopping time with respect to $\mathcal{G}_n$, and using the Markov property of $Y_n^y$, we have 
 $$
 \E[(Z_n^y - Z_{n-1}^y)^2 \mid \mathcal{G}_{n-1}] = \Id_{\tau' \leq n-1}b^2 + \Id_{\tau' > n-1}f(Y_{n-1}^y) \,.
 $$ 
 On the event $\tau' > n-1$, $Y_{n-1}^y \in \mathcal{N}_{\epsilon, M}$ and \eqref{eq:smev} implies
    $$
    \Id_{\tau' > n-1}f(Y_{n-1}^y) \leq \Id_{\tau' > n-1}\sup_{z \in  \mathcal{N}_{\epsilon, M}} \E[(V(X_{\sigma_z}^z) - V(z))^2] \leq \Id_{\tau' > n-1}B \,.
    $$
    Consequently,  
    $$
    \E[(Z_n^y - Z_{n-1}^y)^2 \mid \mathcal{G}_{n-1}] \leq b^2 + B.
    $$
    Similarly, by \eqref{b-ineq} \begin{align*}
    \E[Z_n^y - Z_{n-1}^y \mid \mathcal{G}_{n-1}] &= \Id_{\tau' \leq n-1}b + \Id_{\tau' > n-1}\E[V(Y_n^y) - V(Y_{n-1}^y) \mid \mathcal{G}_{n-1}] \\
    &\geq \Id_{\tau' \leq n-1}b + \Id_{\tau' > n-1}\inf_{z \in \mathcal{N}_{\epsilon, M}} \E[V(X_{\sigma_z}^z) - V(z)] \\
    &\geq b \,.
\end{align*} 
Then by \Cref{discrete-semimartingale-fact}, there is some $C > 0$ such that 
for all $y \in \inv$
\begin{equation}\label{Pomega*=1}
   \Prb \Big(Z_n^y \geq \frac{b}{2}n-C \text{ for all } n \geq 0\Big) > 1 - \delta \,,
\end{equation}
where we stress that $C$ is independent of $y$. 
Also, by \Cref{as4} \ref{4.1}, there is $A > 0$ such that \begin{equation} \label{A}
    V(x) \geq A \qquad \textrm{implies } \quad  x \in N_{\epsilon}(\mcM_0) \,.
\end{equation}
We set $D \coloneqq A+C+\alpha \max_{1 \leq i \leq n}T_i$ and fix $y \in \inv \cap \{V > D\} \cap \{W \leq M\}$. Denote $\Omega^*$ the event that $Z_n^y \geq \frac{b}{2}n-C$ for all $n \geq 0$. Then \eqref{Pomega*=1} implies that $\Prb(\Omega^*) > 1 - \delta$, so we conclude the proof by verifying the following claims:
\begin{gather}\label{eq:smta}
    \limsup_{t \to \infty, W(X_t^y) \leq M} V(X_t^y) = \infty \textrm{ a.s. on } \Omega^* \\ \label{eq:fmta}
    \sup_{t \geq 0}d(X_t^y,\mcM_0)\Id_{W(X_t^y) < M} \leq \frac{1}{M} \textrm{ a.s. on } \Omega^* 
\end{gather}
First, we note that by \Cref{stopped-W-is-small} and the definition of $\mathcal{N}_{\epsilon, M}$ in \eqref{N-epsilon} we have that
$$
\tau' = \inf\{n \geq 0 \mid Y_n^y \notin N_{\epsilon}(\mcM_0)\} \,,
$$
so by \eqref{A} we obtain $V(Y_{\tau'}^y) < A$ a.s. on the event $\{\tau' < \infty\}$. In particular, on the event $\{\tau' < \infty\}$ we have a.s. that

$$
Z_{\tau'}^y = V(Y_{\tau'}^y) - V(y) 
\leq A - D = - C - \alpha \max_{1 \leq i \leq n} T_i < -C \,.
$$
Since $Z_n^y \geq \frac{b}{2}n-C \geq -C$ for all $n \geq 0$ on $\Omega^*$,
\begin{equation}\label{tau'=infty}
\tau' = \infty \quad {and} \quad Z_n^y = V(Y_{n}^y) - V(y) \textrm{ for all } n \geq 0 \text{ a.s. on the event } \Omega^*\,.
\end{equation}
Consequently, on $\Omega^*$
\begin{equation}\label{hsfes}
V(Y_{n}^y) = Z_n^y + V(y) > \frac{b}{2}n-C + D = 
\frac{b}{2}n + A + \alpha \max_{1 \leq i \leq n} T_i 
\end{equation}
so that with $\U \coloneqq \inv \cap \{W \leq M\} \cap \{V > A + \alpha \max_{1 \leq i \leq n} T_i\} \subset \mathcal{N}_{\epsilon,M}$
we have \begin{equation}\label{always-in-U}
    Y_n^y \in \U \textrm{ for all } n \geq 0 \text{ a.s. on the event } \Omega^*\,.
\end{equation}
By passing $n \to \infty$ in \eqref{hsfes}, noting that $W(Y_n^y) \leq M$ by \eqref{always-in-U}, and recalling that $\tau_n \to \infty$ a.s., we obtain \eqref{eq:smta}. 

By \Cref{stopped-V-not-that-much-smaller}, if $z \in \U$, then 
almost surely
$$
\inf_{0 \leq t < \sigma_z} (V(X_t^z) - A)\Id_{W(X_t^z) < M} \geq \inf_{0 \leq t < \sigma_z} (V(X_t^z) - V(z) + \alpha \max_{1 \leq i \leq n} T_i)\Id_{W(X_t^z) < M} \geq 0 
$$ so that by \eqref{A} we have
\begin{equation*}
    \sup_{0 \leq t < \sigma_{z}} d(X_t^z,\mcM_0)\Id_{W(X_t^z) < M} \leq \epsilon < \frac{1}{M} \text{ a.s.}
\end{equation*}

In other words, defining $\tilde{\phi}: \mcM \to [0,1]$ by
$$\tilde{\phi}(z) \coloneqq \Prb\Big(\sup_{0 \leq t < \sigma_{z}} d(X_t^z,\mcM_0)\Id_{W(X_t^z) < M} < \frac{1}{M}\Big) \,,$$
we have $\tilde{\phi}(z) = 1$ for all $z \in \U$. Recall that $Y_n^y = X_{\tau_n}^y$, so applying the strong Markov property (\Cref{strong-Markov}) we have
\begin{align*}
\Prb\Big(\sup_{\tau_{n} \leq t \leq \tau_{n + 1}} d(X_t^y,\mcM_0)\Id_{W(X_t^y) < M} < \frac{1}{M} \text{ and } Y_n^y \in \U\Big) &= \E[\tilde{\phi}(Y_n^y)\Id_{Y_n^y \in \U}] \\
&= \Prb(Y_n^y \in \U) \,,
\end{align*}
or in other words
\begin{equation}\label{in-U}
    Y_n^y \in \U \qquad \textrm{implies }\quad \sup_{\tau_{n} \leq t \leq \tau_{n + 1}} d(X_t^y,\mcM_0)\Id_{W(X_t^y) < M} < \frac{1}{M} \text{ a.s.}
\end{equation}

Thus, \eqref{eq:fmta} follows from \eqref{always-in-U} and \eqref{in-U} by recalling that $\tau_n \to \infty$ a.s. and writing
$$\sup_{t \geq 0} d(X_t^y,\mcM_0)\Id_{W(X_t^y) < M} 
    = \sup_{n \geq 0} \sup_{\tau_{n} \leq t \leq \tau_{n + 1}} d(X_t^y,\mcM_0)\Id_{W(X_t^y) < M} \,.$$
\end{proof}

\subsection{Proof of \Cref{main2}} \label{proof-of-main2}
In this section we provide the proof of \Cref{main2}:

\begin{proof}
Since by enlarging $M$ we make the statement stronger, we can without loss of generality assume $M > N$, where $N$ is as in \Cref{part1}.
 By \Cref{part1}, for all $n \geq 1$ there is a $D_n > 0$ such that for all $y \in \inv \cap \{V \geq D_n\} \cap \{W \leq M + n\}$,
    \begin{multline}\label{sbpe}
    \Prb\Big(\sup_{t \geq 0}d(X_t^y,\mcM_0)\Id_{W(X_t^y) < M + n} \leq \frac{1}{M + n} \\
    \text{ and } \limsup_{t \to \infty, W(X_t^y) \leq M + n} V(X_t^y) = \infty \Big) \geq 1 - \frac{\delta}{n},    
    \end{multline}
    and we may without loss of generality assume that $D_n \uparrow \infty$. Fix $y \in \inv \cap \{V \geq D_1\} \cap \{W \leq M\}$ and define 
    $$
    \tau_n := \inf\{t \geq 0 \mid V(X_t^y) > D_{n+1}, W(X_t^y) < M+2\},
    $$ 
    and we claim that a.s. $\tau_n \uparrow \infty$. Indeed, 
    by \Cref{D2} $t \mapsto M^V_t(y)$ is cadlag and so by the continuity of $t \mapsto \int_0^t H(X_s^y)ds$ it follows that $t \mapsto V(X_t^y)$ is cadlag. However, on the event $\tau_n \to T^* < \infty$, have $V(X_{\tau_n}^y) \geq D_{n + 1} \to \infty$, and therefore such event has zero probability. 
    
      Next,  let 
    $$
    A_n = \{\tau_n < \infty\} \cap \Big\{\limsup_{t \to \infty}d(X_t^y,\mcM_0)\Id_{W(X_t^y) < M + n + 1} \leq \frac{1}{M + n + 1}\Big\},
    $$ 
    a decreasing sequence of events. Define $\tilde{\phi}: \inv \to [0,\infty)$  by 
    $$
    \tilde{\phi}(z) = \Prb\Big(\limsup_{t \to \infty}d(X_t^z,\mcM_0)\Id_{W(X_t^z) < M + n + 1} \leq \frac{1}{M + n + 1}\Big)
    $$ 
    and by \eqref{sbpe}, $\tilde{\phi}(z) \geq 1 - \frac{\delta}{n+1}$ for $z \in \inv \cap \{V \geq D_{n+1}\} \cap \{W \leq M + n + 1\}$. In particular, by the definition of $\tau_n$, 
    $$
    \Id_{\tau_n < \infty}\tilde{\phi}(X_{\tau_n}^y) \geq \Id_{\tau_n < \infty}\Big(1 - \frac{\delta}{n+1}\Big) \,,
    $$ 
    where we used that $X_{\tau_n}^y \in \inv \cap \{V \geq D_{n+1}\} \cap \{W \leq M + 2\}$ a.s. by the right continuity of $t \mapsto X_t^y$ and the invariance of $\inv$ (see \Cref{as1}). By the strong Markov property (\Cref{strong-Markov}), 
    $$
    \Prb(A_n) = \E[\Id_{\tau_n < \infty}\tilde{\phi}(X_{\tau_n}^y)] \geq \Big(1 - \frac{\delta}{n+1}\Big)\Prb(\tau_n < \infty).
    $$
Since $\tau_n < \infty$ on the event that $\limsup_{t \to \infty, W(X_t^y) \leq M + 1} V(X_t^y) = \infty$, then for each $n \geq 1$ we have by \eqref{sbpe} that 
    $$
    \Prb(\tau_n < \infty) \geq 1 -\delta \,.
    $$ 
    Thus,
    $$
    \Prb\Big(\bigcap_{n=1}^{\infty} A_n\Big) = \lim_{n \to \infty} \Prb(A_n) \geq 1 - \delta.
    $$
    Since 
    $$
    \bigcap_{n=1}^{\infty} A_n \subset \big\{\lim_{M \to \infty} \limsup_{t \to \infty}d(X_t^y,\mcM_0)\Id_{W(X_t^y) < M} = 0\big\},
    $$ 
    by \Cref{distance-small-makes-V-big} 
    $$
    \Prb\Big(\liminf_{t \to \infty} \frac{V(X_t^y)}{t} \geq \alpha\Big) \geq 1 - \delta \,,
    $$ 
proving the claim.
\end{proof}

\section{Global Extinction}\label{global}
In this section we prove technical facts related to accessibility (see \Cref{accessible}) and give a proof of \Cref{main3}. In the entire section we fix a Markov quadruple $(\mcM, \mcM_0, \inv, \{X_t^x\}_{x \in \mcM, t \geq 0})$ satisfying \Cref{as1}--\ref{as5}.

\begin{lem}\label{equal-acc}
    For $x \in \mcM$ and $\U \subset \mcM$ open the following are equivalent:
    \begin{enumerate}
        \item[(1)]     $\int_0^\infty e^{-t}\Prb(X_t^x \in \U)dt > 0$.
        \item[(2)] There exists $t \geq 0$ such that $\Prb(X_t^x \in \U) > 0$.
        \item[(3)] $\Prb(\exists t \geq 0 \text{ such that } X_t^x \in \U) > 0$.
    \end{enumerate}
\end{lem}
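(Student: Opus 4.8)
The plan is to prove the cycle of implications $(1)\Rightarrow(2)\Rightarrow(3)\Rightarrow(1)$. Before starting, I would record two preliminary observations. First, since $\U$ is open it is Borel, so $\Id_\U$ is bounded and measurable; hence $t\mapsto\Prb(X_t^x\in\U)=\Pp_t\Id_\U(x)$ is measurable by the standing measurability hypothesis on $(t,x)\mapsto\Pp_t f(x)$, and the integral in $(1)$ is well-defined. Second, because the sample paths of $X^x$ are cadlag (in particular right continuous) and $\U$ is open, whenever a path $t\mapsto X_t^x(\omega)$ visits $\U$ it in fact remains in $\U$ on a nondegenerate interval: if $X_{t_0}^x(\omega)\in\U$, then right continuity gives some $\epsilon>0$ with $X_t^x(\omega)\in\U$ for all $t\in[t_0,t_0+\epsilon)$. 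A corollary is that the event in $(3)$ coincides with $\bigcup_{q\in\mathbb{Q}\cap[0,\infty)}\{X_q^x\in\U\}$ and is therefore measurable.

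For $(1)\Rightarrow(2)$: if $\int_0^\infty e^{-t}\Prb(X_t^x\in\U)\,dt>0$, then since the integrand is nonnegative and $e^{-t}>0$, the set $\{t\ge0:\Prb(X_t^x\in\U)>0\}$ has positive Lebesgue measure, so in particular it is nonempty, and any $t$ in it witnesses $(2)$. For $(2)\Rightarrow(3)$: if $\Prb(X_{t_0}^x\in\U)>0$, then $\{X_{t_0}^x\in\U\}\subseteq\{\exists t\ge0:X_t^x\in\U\}$ and monotonicity of $\Prb$ gives $(3)$.

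The only step requiring the structure of the process is $(3)\Rightarrow(1)$, and this is where I would spend the care. Consider the random variable $\Phi:=\int_0^\infty e^{-t}\Id_\U(X_t^x)\,dt$, which is always nonnegative. By the preliminary observation above, on the event $\{\exists t\ge0:X_t^x\in\U\}$ one can pick such a $t_0$ together with an $\epsilon>0$ so that $X_t^x\in\U$ on $[t_0,t_0+\epsilon)$, whence $\Phi\ge\int_{t_0}^{t_0+\epsilon}e^{-t}\,dt>0$ on that event; since the event has positive probability, $\E[\Phi]>0$. On the other hand, since $X^x$ is adapted with cadlag paths it is progressively measurable, so $(t,\omega)\mapsto\Id_\U(X_t^x(\omega))$ is jointly measurable, and Tonelli's theorem gives $\E[\Phi]=\int_0^\infty e^{-t}\E[\Id_\U(X_t^x)]\,dt=\int_0^\infty e^{-t}\Prb(X_t^x\in\U)\,dt$. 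Combining the two displays yields $(1)$.

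I do not expect a serious obstacle here: the argument is essentially bookkeeping. The only points that need to be stated rather than glossed over are (i) that a cadlag path meeting the open set $\U$ spends a positive amount of Lebesgue time in $\U$, which is what couples the ``there exists a time'' statement of $(3)$ to the integral in $(1)$, and (ii) the joint (progressive) measurability of $(t,\omega)\mapsto X_t^x(\omega)$ needed to invoke Tonelli, which is standard given that the filtration is right continuous and complete and the paths are cadlag.
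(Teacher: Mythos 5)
Your proof is correct and follows essentially the same route as the paper's: the easy implications $(1)\Rightarrow(2)\Rightarrow(3)$ are handled by positivity and event inclusion, and $(3)\Rightarrow(1)$ uses right continuity of the cadlag paths together with openness of $\U$ to get a nondegenerate time interval inside $\U$, then Tonelli's theorem. Your added measurability remarks are sound but not a substantive difference.
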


\begin{proof}
    The statements (1) implies (2) and (2) implies (3) are straightforward. To prove that (3) implies (1), denote $f(t) = \Id_{\U}(X_t^x)$ so that (3) is equivalent to $\Prb(\exists t \geq 0 \text{ such that } f(t) > 0) > 0$. By the right continuity of $t \mapsto X_t^x$ and openness of $\U$, this is equivalent to the existence of random $t_2 > t_1 \geq 0$ such that $f(t) > 0$ for $t \in [t_1,t_2)$. In particular, the condition (3) implies $\Prb(\int_0^\infty e^{-t}f(t)dt > 0) > 0$, which by $f(t) \geq 0$, Tonelli's theorem, and $\E[f(t)] = \Prb(X_t^x \in \U)$ is equivalent to (1).
\end{proof}

\begin{lem}\label{accessibility-conditions}
    Suppose every point $x \in \inv$ satisfies \eqref{acc}.
  Then there is some $N > 0$ such that for all $x \in \inv, D > 0$ it holds that $\{V > D\} \cap \{W < N\}$ is accessible from $x$.
\end{lem}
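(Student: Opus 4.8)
The plan is to exploit that, near $\mcM_0$, the function $V$ is large and can only drift downward at a rate controlled by $W'$, while \Cref{return-time-fast} guarantees a quick return to any fixed sublevel set $\{W<N\}$. The only genuine issue is that the constant $M$ furnished by \eqref{acc} depends on the starting point $x$; so I would first steer the process (using \eqref{acc}) to a point where $V$ is so large that a single relaxation excursion back into a \emph{universal} compact set $\{W<N\}$ cannot bring $V$ below $D$, and then chain accessibility.

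First I would fix the universal constant $N$. Since $H$ vanishes over $W'$ (see \Cref{W'-vanish}) and $W$ is proper, there is $N_1>0$ with $W(y)\ge N_1 \Rightarrow |H(y)|\le W'(y)$ (as in the proof of \Cref{change-in-V-small}); let $N_2$ be the constant from \Cref{return-time-fast}, set $N_0:=\max(N_1,N_2)$ and $N:=N_0+1$. This depends only on the data of the problem, not on $x$ or $D$. Now fix $x\in\inv$ and $D>0$, let $M_x>0$ be as in \eqref{acc}, put $D':=D+(1+K)M_x+1$ so that $\U_1:=\{V>D'\}\cap\{W<M_x\}$ is accessible from $x$, and let $\U:=\{V>D\}\cap\{W<N\}$, which is open.

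The heart of the argument is to show that $\U$ is accessible from \emph{every} $z\in\U_1$. Given such $z$ (so $z\in\inv$, $V(z)>D'$, $W(z)<M_x$), I would run the process until $\eta:=\inf\{t\ge0:W(X_t^z)<N_0\}$. By \Cref{return-time-fast}, $\E[\eta]\le W(z)<\infty$, so $\eta<\infty$ a.s., and by right-continuity of paths together with continuity of $W$ one gets $W(X_\eta^z)\le N_0<N$ a.s.; also $X_\eta^z\in\inv$ a.s. by \Cref{as1}. Since $W(X_s^z)\ge N_0\ge N_1$ for $s\in[0,\eta)$, we have $|H(X_s^z)|\le W'(X_s^z)$ there, so \Cref{V-is-integral-of-H} and \Cref{ineq1} give
\begin{align*}
\E[V(X_\eta^z)] &= V(z)+\E\Big[\int_0^\eta H(X_s^z)\,ds\Big] \ge V(z)-\E\Big[\int_0^\eta W'(X_s^z)\,ds\Big]\\
&\ge V(z)-W(z)-K\E[\eta] \ge V(z)-(1+K)W(z) > D'-(1+K)M_x = D+1\,.
\end{align*}
Hence $\Prb(V(X_\eta^z)>D)>0$, and on that event $X_\eta^z\in\U$ (recall $W(X_\eta^z)<N$ and $X_\eta^z\in\inv$ a.s.); since $\eta<\infty$ a.s., condition (3) of \Cref{accessible} holds, so $\U$ is accessible from $z$.

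Finally I would chain the two accessibility statements. Choose $t_1\ge0$ with $\Prb(X_{t_1}^x\in\U_1)>0$ (condition (2) of \Cref{accessible}) and set $\psi(y):=\Prb(\exists t\ge0:X_t^y\in\U)$; this is measurable by \Cref{strong-Markov}, since $\{\text{path enters }\U\}$ is a measurable subset of $\D_{[0,\infty)}(\mcM)$ — a countable union over rational times of the sets $\{x(q)\in\U\}$, as $\U$ is open and paths are cadlag. The Markov property at $t_1$ then gives $\Prb(\exists t\ge0:X_t^x\in\U)\ge\E[\psi(X_{t_1}^x)]\ge\E[\psi(X_{t_1}^x)\Id_{\U_1}(X_{t_1}^x)]>0$, using $\psi>0$ on $\U_1$ by the previous paragraph. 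By \Cref{accessible}, $\U=\{V>D\}\cap\{W<N\}$ is accessible from $x$, with $N$ independent of $x$ and $D$, as required. I expect the only delicate point to be the displayed estimate on $\E[V(X_\eta^z)]$ — bounding how much $V$ can decrease during the relaxation excursion — which is precisely why $N_0$ is taken $\ge N_1$ (so $|H|\le W'$ along the excursion) and why \Cref{return-time-fast} is invoked (so the excursion is short on average); the rest is routine bookkeeping with \Cref{accessible} and the Markov property.
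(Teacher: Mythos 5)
Your proof is correct, and its overall skeleton matches the paper's: fix a universal $N$ tied to the return-time bound of \Cref{return-time-fast}, boost the target level from $D$ to a larger $D'$ depending on the point-specific $M$ from \eqref{acc}, show that from any point of $\{V>D'\}\cap\{W<M\}$ one reaches $\{V>D\}\cap\{W<N\}$ with positive probability, and chain the two accessibility statements via the Markov property. The one substantive difference is how you control the drop in $V$ during the relaxation excursion. The paper works on the deterministic horizon $T=3M$, bounds $\E[\sup_{t\le T}|V(X_t^y)-V(y)|]$ via Doob's inequality on $M^V$ together with the linear bound $|H|\le A+bW'$, and intersects the events $\{\sup_{t\le T}|V(X_t^y)-V(y)|\le 3C\}$ and $\{\eta_N(y)\le T\}$ to get a quantitative probability $\ge 1/3$. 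You instead evaluate $V$ at the random return time $\eta$ and bound only its expectation, using \Cref{V-is-integral-of-H}, \Cref{ineq1}, the first-moment bound $\E[\eta]\le W(z)$, and the sharper observation (borrowed from the proof of \Cref{change-in-V-small}) that $|H|\le W'$ wherever $W\ge N_1$, so the expected drop is at most $(1+K)W(z)$. This yields only positive probability of landing in $\U$ rather than a uniform $1/3$, but that is all accessibility requires, and it spares you the Doob/sup estimate; your use of a deterministic time $t_1$ from condition (2) of \Cref{accessible} for the chaining step is also slightly cleaner than stopping at the first entrance to an open set. All the supporting details you flag (measurability of the path event via rational times, $W(X_\eta^z)\le N_0$ by right-continuity, integrability of $V(X_\eta^z)$ from \Cref{V-is-integral-of-H}, invariance of $\inv$) check out.
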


\begin{proof}
Let $N$ be as in \Cref{return-time-fast} and fix $x \in \inv$. Let $M$ satisfy \eqref{acc} and let $D > 0$ be arbitrary. Set $T = 3M$ and choose any $y \in \{W \leq M\} \cap \inv$. Using the notation of \Cref{return-time-fast} we have 
$$
\Prb(\eta_N(y) > T) \leq \frac{\E[\eta_N(y)]}{T} \leq \frac{W(y)}{T} \leq \frac{1}{3}\,.
$$ 
With $M^V_t(y)$ as defined in \eqref{Mtf} and $H$ as in \Cref{as4} so that $H$ agrees with $\mathcal{L}V$ on $\inv$, we have by Doob's inequality, \Cref{V-W-linearly-bdd}, and \eqref{eq:puboh} that \begin{align*}
        \E[\sup_{t \leq T} |V(X_t^y) - V(y)|] &\leq \E\Big[\sup_{t \leq T} |M^V_t(y)|\Big] + \E\Big[\int_0^T |H(X_s^y)|ds\Big] \\
        &\leq 1 + 4\E\Big[|M^V_T(y)|^2\Big] + \E\Big[\int_0^T |H(X_s^y)|ds\Big] \\
        &\leq 1 + 4K(U(y) + KT) + bW(y) + (A+bK)T \\
        &\leq C\,,
    \end{align*}
where $A,b$ are as in the proof of \Cref{V-is-integral-of-H} and $C > 0$ is a constant depending only on $M$. Thus, for any $y \in \{W \leq M\} \cap \inv$ we have by Chebyshev inequality that
\begin{multline*}
    \Prb(\exists t \leq T \text{ such that } |V(X_t^y) - V(y)| \leq 3C \text{ and } W(X_t^y) \leq N) \\
    \geq \Prb(\sup_{t \leq T} |V(X_t^y) - V(y)| \leq 3C \text{ and } \eta_N(y) \leq T) \geq \frac{1}{3} \,.
\end{multline*}
Consequently, the strong Markov property (\Cref{strong-Markov}), applied to the stopping time $\tau = \inf\{t: V(X_t^x) > D + 3C \text{ and } W(X_t^x) < M\}$ implies 
\begin{multline*}
    \Prb(\exists t \geq 0 \text{ such that } V(X_t^x) > D \text{ and } W(X_t^x) < N + 1) \\
    \geq \frac{1}{3}\Prb(\exists t \geq 0 \text{ such that } V(X_t^x) > D + 3C \text{ and } W(X_t^x) < M) \,,
\end{multline*}
which is strictly positive by \eqref{acc} (using condition 3 of \Cref{accessible}).
Since $x \in \inv, D > 0$ were arbitrary, the claim is proven.
\end{proof}

Now we prove \Cref{main3}:

\begin{proof}
Let $N$ be as in \Cref{accessibility-conditions} and fix $x \in \inv$. By \Cref{main2}, there is $D > 0$ such that for all $y \in \Psi_{D, N} := \inv \cap \{V > D\} \cap \{W < N\}$ we have    
\begin{equation}\label{eq:Psi-io}
    \Prb\Big(\liminf_{t \to \infty} \frac{V(X_t^y)}{t} \geq \alpha\Big)  \geq \frac{1}{2}\,.
\end{equation}
Let $\tau_0' = 0$, $\tau_n' = \inf\{t \geq \tau_{n-1}' + 1 \mid X^x_t \in \Psi_{D, N}\}$, and  $\Omega^* \coloneqq \Big\{\liminf_{t \to \infty} \frac{V(X_t^x)}{t} \geq \alpha\Big\}$. Then the strong Markov property (\Cref{strong-Markov}) and \eqref{eq:Psi-io} imply that $\Prb(\Omega^* \mid \mathcal{F}_{\tau_n'}) \geq 1/2$ on the event $\{\tau_n' < \infty\}$, where $n \geq 1$. By Levy's 0-1 law, $\lim_{n \to \infty} \Prb(\Omega^* \mid \mathcal{F}_{\tau_n'}) = \Id_{\Omega^*}$ a.s. Thus, (up to a set of measure zero) if $\omega \in \Omega$ is such that $\tau_n'(\omega) < \infty$ for all $n \geq 1$, then $\Prb(\Omega^* \mid \mathcal{F}_{\tau_n'})(\omega) \geq 1/2$ for all $n \geq 1$, which implies that $\Id_{\Omega^*}(\omega) = 1$. To summarize, almost surely we have that 
    \begin{equation}\label{eq:osupb}
    \{X_t^x \in \Psi_{D, N} \text{ i.o.}\} \subset \Big\{\liminf_{t \to \infty} \frac{V(X_t^x)}{t} \geq \alpha\Big\} = \Omega^* \,,    
    \end{equation} 
    where by ``i.o." we mean there are times $t_n \uparrow \infty$ such that $X^x_{t_n} \in \Psi_{D, N}$.
    Thus, it suffices to show that almost surely \begin{equation}\label{equiv-io}
        (\Omega^*)^c \subset \{X_t^x \in \Psi_{D, N} \text{ i.o.}\} \,,
    \end{equation} 
    which combined with \eqref{eq:osupb} implies $\Prb((\Omega^*)^c) = \Prb((\Omega^*)^c \cap \Omega^*) = 0$. 

In the rest of the proof we show \eqref{equiv-io}. Let $K_n$ be as in \Cref{sigma-compact} applied to $A = \inv$. Let $\tau_n := \sum_{i=1}^n \sigma_i$, where $\sigma_i$ are iid exponentially distributed (with parameter 1) random variables independent of $X_t^x$, and $Y_n \coloneqq X_{\tau_n}^x$. We claim that almost surely 
\begin{equation}\label{Yn-io}
    (\Omega^*)^c \subset \bigcup_{m=1}^\infty\{Y_n \in K_m \text{ i.o.}\} \,.
\end{equation} 

By \Cref{muH-big} and \Cref{occ-is-tight}, almost surely on $(\Omega^*)^c$ we may assume there are $t_n \to \infty, \mu \in P(\mcM)$ such that $\mu_{t_n}^x \to \mu$ and $\mu(\mcM_0) < 1$ (where $\mu_t^x$ is as in \Cref{occ-meas}). Then, by Prohorov's Theorem and  $\mcM = \inv \cup \mcM_0$, it follows that there is some $m$ such that $\inf_n \mu_{t_n}^x(K_m) > 0$. With the notation $B = \{t \in [0,\infty) \mid X_t^x \in K_m\}$, the previous statement implies that
\begin{equation*}
\liminf_{n \to \infty} \frac{1}{t_n} \int_0^{t_n} \Id_B(t) \, dt = \liminf_{n \to \infty} \mu_{t_n}^x(K_m) > 0 \,.
\end{equation*}
In particular, $m(B) = \infty$, where $m(\cdot)$ denotes the Lebesgue measure on $[0,\infty)$. By the definition of Poisson process we have for any $t > 0$ that $|\{n : \tau_n \in B \cap [0,t]\}|$ is a Poisson random variable with parameter $m(B \cap [0,t])$, where $|S|$ denotes the cardinality of the set $S$. Thus, for any $N > 0$ we have $\lim_{t \to \infty} \Prb[|\{n : \tau_n \in B \cap [0,t]\}| \leq N] = 0$. It follows that $\Prb(|\{n : \tau_n \in B\}| = \infty) = 1$, and since $\tau_n \in B$ is equivalent to $Y_n \in K_m$, this shows \eqref{Yn-io} ($Y_n \in K_m$ i.o.)

Finally, we prove that \eqref{Yn-io} implies \eqref{equiv-io}. 
Fix $m$ and note that by Fatou's Lemma, \Cref{as2}, and Portmanteau's Theorem, the function $F : \inv \to \R$ defined as 
$$
F(y) \coloneqq \int_0^\infty e^{-t}\Prb(V(X_t^y) > D \text{ and } W(X_t^y) < N) \, dt = \int_0^\infty e^{-t}\Prb(X_t^y \in \Psi_{D, N}) \, dt
$$ 
is lower semicontinuous, where the equality follows from the invariance of $\inv$ (see \Cref{as1}). By \Cref{accessibility-conditions} (using condition (1) of \Cref{accessible}), we have $F(y) > 0$
for all $y \in K_m$ and thus $\inf_{y \in K_m} F(y) > 0$. By strong Markov property (\Cref{strong-Markov}) and exponential distribution of $\sigma_{n+1}$ (see definition of $\tau_n$), it follows that for each $n$
$$
\Prb(Y_{n+1} \in \Psi_{D, N} | Y_n) = F(Y_n)
$$
and so, similarly to the above, we have by Levy's 0-1 law that almost surely 
$$
\{Y_n \in K_m \text{ i.o.}\} \subset \{Y_n \in \Psi_{D, N} \text{ i.o.}\} \,,
$$ 
which by \eqref{Yn-io} shows \eqref{equiv-io}, completing the proof.
\end{proof}

\section{Generator Details} \label{generator-bs}
In this section, we first expand upon the exposition given in \Cref{generator-stuff} regarding the generator $\mathcal{L}$, the Carre Du Champ $\Gamma$, and the domain $\Dm(\mcM)$. Then we show that if an unbounded function can be approximated in a certain sense by functions in $\Dm(\mcM)$ (resp. $\Dm_2(\mcM)$), then it lies in the extended domain $\Dme_+(\mcM)$ (resp. $\Dme_2(\mcM)$). Finally, under general assumptions on $\mcM$ and the action of  the generator $\mathcal{L}$ on the domain $\Dm(\mcM)$, we show that  any smooth enough function that does not grow too quickly belongs to the extended domain. The proofs are mostly routine and can be skipped on the first reading.

\subsection{Generator for Bounded Functions} \label{bounded-gen}
In this subsection we fix a Feller process $\{X_t^x\}_{x \in \mcM, t \geq 0}$ (see \Cref{as2}) on a locally compact Polish space $\mcM$ with the Markov semigroup $\Pp_s$ (see \Cref{semigroup}). Recall \Cref{gen} and note that since $f \mapsto \mathcal{P}_tf$ is linear, it is standard to check that $\Dm(\mcM)$ is a subspace of $C_b(\mcM)$ and $\mathcal{L}:\Dm(\mcM) \to C_b(\mcM)$ is a linear map.

\begin{lem}
    \label{derivative-of-Pt-is-L}
    For $f \in \Dm(\mcM)$ and $t \geq 0$, $\Pp_tf \in \Dm(\mcM)$ and $\mathcal{L}\Pp_tf = \Pp_t\mathcal{L}f$.
\end{lem}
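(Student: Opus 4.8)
The plan is to verify the three defining conditions of $\Dm(\mcM)$ from \Cref{gen} for the function $\Pp_tf$, using the semigroup property $\Pp_{s+t} = \Pp_s\Pp_t = \Pp_t\Pp_s$ together with the fact that $f \in \Dm(\mcM)$. First I would fix $f \in \Dm(\mcM)$ and $t \geq 0$ and write, for $s > 0$,
\begin{equation*}
\frac{\Pp_s(\Pp_tf) - \Pp_tf}{s} = \Pp_t\Big(\frac{\Pp_sf - f}{s}\Big),
\end{equation*}
where the equality uses the semigroup property and linearity of $\Pp_t$. By \Cref{gen}\ref{first}, the function inside the parentheses converges pointwise to $\mathcal{L}f$ as $s \downarrow 0$, and by \Cref{gen}\ref{third} it is uniformly bounded in $s$ (with bound $\sup_{s>0}\|(\Pp_sf-f)/s\| < \infty$). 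Since $\Pp_t$ is given by integration against the law of $X_t^x$, the Dominated Convergence Theorem yields
\begin{equation*}
\lim_{s \downarrow 0}\frac{\Pp_s(\Pp_tf)(x) - \Pp_tf(x)}{s} = \Pp_t(\mathcal{L}f)(x)
\end{equation*}
for every $x \in \mcM$. This simultaneously establishes condition \ref{first} for $\Pp_tf$, identifies $\mathcal{L}(\Pp_tf) = \Pp_t\mathcal{L}f$, which is the asserted identity, and shows $\mathcal{L}(\Pp_tf) \in C_b(\mcM)$ (condition \ref{second}) because $\mathcal{L}f \in C_b(\mcM)$ by \Cref{gen}\ref{second} and $\Pp_t$ maps $C_b(\mcM)$ into $C_b(\mcM)$ by the Feller property \Cref{as2}.

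Next I would check condition \ref{third} for $\Pp_tf$: from the displayed identity above and the fact that $\Pp_t$ is a contraction on $C_b(\mcM)$ (which follows from $\|\Pp_t g\| \le \|g\|$ since $\Pp_t g(x) = \E[g(X_t^x)]$), we get
\begin{equation*}
\Big\|\frac{\Pp_s(\Pp_tf) - \Pp_tf}{s}\Big\| = \Big\|\Pp_t\Big(\frac{\Pp_sf - f}{s}\Big)\Big\| \leq \Big\|\frac{\Pp_sf - f}{s}\Big\|,
\end{equation*}
so taking the supremum over $s > 0$ gives $\sup_{s>0}\|(\Pp_s(\Pp_tf) - \Pp_tf)/s\| \leq \sup_{s>0}\|(\Pp_sf-f)/s\| < \infty$. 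This completes the verification that $\Pp_tf \in \Dm(\mcM)$, and the identity $\mathcal{L}\Pp_tf = \Pp_t\mathcal{L}f$ was already obtained in the previous step.

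The only genuinely delicate point is the interchange of limit and $\Pp_t$: one must justify applying dominated convergence to the measures $\Prb(X_t^x \in \cdot)$ uniformly over $x$ (for condition \ref{second} it suffices to have it for each fixed $x$, which is immediate, but boundedness and continuity of the resulting function need the Feller property). Since the integrand $(\Pp_sf - f)/s$ is bounded uniformly in both $s$ and the spatial variable by \Cref{gen}\ref{third}, dominated convergence applies pointwise without difficulty, and no uniformity in $x$ is actually required for any of the three conditions — so I do not expect any real obstacle here. If one wanted a cleaner route, one could alternatively use the resolvent or the integral representation $\Pp_tf - f = \int_0^t \Pp_s\mathcal{L}f\,ds$ (itself a standard consequence of \Cref{gen}), but the direct commutation argument above is the shortest and is what I would write.
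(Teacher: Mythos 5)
Your proposal is correct and follows essentially the same route as the paper: the identity $\frac{\Pp_s\Pp_tf - \Pp_tf}{s} = \Pp_t\big(\frac{\Pp_sf - f}{s}\big)$, dominated convergence justified by \Cref{gen}\ref{third} to pass the limit inside $\Pp_t$, the Feller property for condition \ref{second}, and the contraction property of $\Pp_t$ for condition \ref{third}. No gaps.
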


\begin{proof}
By \Cref{gen} \ref{first}, the definition of $\Pp_t$ (see \Cref{semigroup}),  \Cref{gen} \ref{third} and the Dominated convergence theorem, we have for any $t \geq 0$ that 
    $$
    \mathcal{L}\Pp_t f = 
    \lim_{s \downarrow 0} \frac{\Pp_s\Pp_tf - \Pp_tf}{s} = \lim_{s \downarrow 0} \Pp_t\Big (\frac{\Pp_sf - f}{s}\Big ) = \Pp_t\Big(\lim_{s \downarrow 0} \frac{\Pp_sf - f}{s}\Big) = \Pp_t\mathcal{L}f,
    $$ 
    where all limits are pointwise. Thus, \Cref{gen} \ref{first} holds with $f$ replaced by $\Pp_t f$. 
    Next, since $\mathcal{L}f \in C_b(\mcM)$ by \Cref{gen} \ref{second}, then $\mathcal{L}\Pp_t f = \Pp_t\mathcal{L}f  \in C_b(\mcM)$ by \Cref{as2} and so \Cref{gen} \ref{second} holds with $f$ replaced by $\Pp_t f$. Finally, since $\Pp_t$ is a contraction ($\|\Pp_t f\| \leq \|f\|$), then \Cref{gen} \ref{third} implies 
         $$\sup_{s>0} \Big\|\frac{\Pp_s\Pp_t f - \Pp_t f}{s} \Big\| = \sup_{s>0} \Big\|\Pp_t\Big (\frac{\Pp_s f - f}{s}\Big ) \Big\| \leq \sup_{s>0} \Big\|\frac{\Pp_s f - f}{s} \Big\| < \infty$$
     and \Cref{gen} \ref{third} with $f$ replaced by $\Pp_t f$ follows. 
\end{proof}

Before proceeding we need a preliminary lemma, called quasi-left continuity, which allows us to deduce that $t \mapsto P_tf(x)$ is continuous in $t$ (we only assumed it was right-continuous in \Cref{as2}):

\begin{lem}
    \label{quasi-left}
    If $x \in \mcM$ and $\tau$ is a predictable stopping time, meaning there are stopping times $\tau_n$ such that $\tau_n \uparrow \tau$ and $\tau_n < \tau$ almost surely on $\{\tau > 0\}$ for all $n$, then almost surely on $\{\tau < \infty\}$ we have $X_{\tau}^x = X_{\tau-}^x \coloneqq \lim_{t \uparrow \tau} X_t^x$. In particular, if $f \in C_b(\mcM)$ then $t \mapsto P_tf(x)$ is continuous in $t$.
\end{lem}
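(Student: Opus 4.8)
The plan is to prove the first assertion—that $X_\tau^x = X_{\tau-}^x$ a.s. on $\{\tau<\infty\}$ for every predictable stopping time $\tau$—by a standard argument comparing the law of $X_{\tau-}^x$ with that of $X_\tau^x$ via the announcing sequence $\tau_n \uparrow \tau$, and then deduce the continuity of $t\mapsto \mathcal{P}_t f(x)$ as a quick corollary. First I would fix $x\in\mcM$, a predictable stopping time $\tau$ with announcing sequence $(\tau_n)$, and (by stopping) reduce to the case $\tau<\infty$ a.s. and even $\tau\le T$ a.s. for a fixed $T$ (apply the result to $\tau\wedge T$ and let $T\to\infty$, using that cadlag paths have left limits everywhere on $[0,\infty)$). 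The key computation is to show that for every pair $f,g\in C_b(\mcM)$,
\begin{equation}\label{eq:qlc-key}
\E\big[f(X_{\tau-}^x)\,g(X_\tau^x)\big] = \E\big[f(X_{\tau-}^x)\,g(X_{\tau-}^x)\big],
\end{equation}
which forces $X_\tau^x = X_{\tau-}^x$ a.s. (take $g=f$ and a countable separating family). To get \eqref{eq:qlc-key}, fix $s>0$ and apply the strong Markov property at $\tau_n$ (\Cref{strong-Markov}, in the form \eqref{rem:gmp}) to write $\E[f(X_{\tau_n}^x)\,\mathcal{P}_s g(X_{\tau_n}^x)]$ in terms of the process restarted at $\tau_n$; since $\tau - \tau_n \downarrow 0$, a Feller/right-continuity argument (using \Cref{as2}, i.e.\ $\mathcal{P}_u g \to g$ pointwise as $u\downarrow 0$, together with $\mathcal{P}_s g\in C_b(\mcM)$ and dominated convergence) lets us pass $n\to\infty$ and then $s\downarrow 0$; along the way $X_{\tau_n}^x \to X_{\tau-}^x$ a.s.\ (definition of left limit) and $X_{\tau_n + s}^x \to X_{(\tau+s)-}^x = X_\tau^x$ for a.e.\ $s$ (cadlag paths have at most countably many jumps). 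Care is needed to keep $s$ and $n$ limits in the right order; the cleanest route is: first let $n\to\infty$ with $s$ fixed to obtain $\E[f(X_{\tau-}^x)\mathcal{P}_s g(X_{\tau-}^x)] = \E[f(X_{\tau-}^x)\mathcal{P}_s g(X_\tau^x)]$ (the left side by continuity of $\mathcal{P}_s g$ and $X_{\tau_n}^x\to X_{\tau-}^x$; the right side by the strong Markov identity $\mathcal{P}_s g(X_{\tau_n}^x) = \E[g(X_{\tau_n+s}^x)\mid\mathcal{F}_{\tau_n}]$ and $\tau_n + s \to \tau + s$, $X_{\tau+s}^x \to X_\tau^x$ along a subsequence of $s$'s), then let $s\downarrow 0$ using $\mathcal{P}_s g\to g$ boundedly and dominated convergence to reach \eqref{eq:qlc-key}.

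For the second assertion, fix $f\in C_b(\mcM)$ and $x\in\mcM$ and note that $t\mapsto \mathcal{P}_t f(x) = \E[f(X_t^x)]$ is already right-continuous: indeed, for $t_n\downarrow t$, by the Markov property $\mathcal{P}_{t_n}f(x) = \E[\mathcal{P}_{t_n-t}f(X_t^x)]$, and $\mathcal{P}_{t_n-t}f\to f$ pointwise and boundedly by \Cref{as2}, so dominated convergence gives $\mathcal{P}_{t_n}f(x)\to \mathcal{P}_t f(x)$. For left-continuity at any $t>0$, take $t_n\uparrow t$; then $\tau\equiv t$ is a predictable stopping time announced by the deterministic sequence $(t_n)$, so by the first part $X_{t_n}^x \to X_{t-}^x = X_t^x$ a.s., and since $f$ is bounded and continuous, dominated convergence yields $\mathcal{P}_{t_n}f(x) = \E[f(X_{t_n}^x)] \to \E[f(X_t^x)] = \mathcal{P}_t f(x)$. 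Hence $t\mapsto \mathcal{P}_t f(x)$ is continuous.

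I expect the main obstacle to be the rigorous bookkeeping in the double limit for \eqref{eq:qlc-key}: one must handle the fact that $s\mapsto X_{\tau+s}^x$ is only right-continuous (so $X_{\tau_n+s}^x\to X_{\tau+s}^x$ genuinely requires $s\downarrow$ or an exceptional-set argument), and one must invoke the strong Markov property at the $\tau_n$ (not at $\tau$, where we do not yet know continuity) in the precise form provided by \Cref{strong-Markov}. Everything else—reduction to bounded $\tau$, the separating family argument, and the deduction of continuity of $\mathcal{P}_t f(x)$—is routine. Note the proof uses only the $C_b$-Feller property \Cref{as2} and no additional hypotheses, matching the statement.
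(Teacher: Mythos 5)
Your proposal is correct and is essentially the paper's argument (the classical quasi-left-continuity proof via an announcing sequence, the strong Markov property at the $\tau_n$, the $C_b$-Feller continuity of $\Pp_s g$, and a double limit in $n$ and $s$, followed by the deterministic-time specialization for continuity of $t\mapsto \Pp_tf(x)$). The only difference is cosmetic: the paper tests with the single expression $(f(X_{\tau_n}^x)-f(X_{\tau_n+h}^x))^2$ rather than with products $f\otimes g$, which makes the pathwise double limit equal $(f(X_{\tau-}^x)-f(X_{\tau}^x))^2$ outright and thereby sidesteps the $X_{(\tau+s)-}^x$ versus $X_{\tau+s}^x$ exceptional-set bookkeeping that you correctly flag as the delicate point of your version (and note the identity you wrote there should read $X_{(\tau+s)-}^x=X_{\tau+s}^x$ for a.e.\ $s$, with $X_{\tau+s}^x\to X_{\tau}^x$ only as $s\downarrow 0$).
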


\begin{proof}
    We follow \cite[Proposition 17.29]{Kallenberg}. By \cite[Lemma 10.1v]{Kallenberg}, $\tau \wedge n$ is predictable for all $n > 0$, and since 
   $\{\tau < \infty\} = \cup_{n \geq 1}  \{\tau \leq n\}$ we may without loss of generality assume $\tau$ is finite. Since $\mcM$ is Polish, there is a countable collection of bounded continuous functions which separate the points, so it is enough to show that for all $f \in C_b(\mcM)$, $\E[(f(X_{\tau-}^x) - f(X_\tau^x))^2] = 0$.
   Since $f$ is continuous and $X_t^x$ is cadlag, then
   $$
   (f(X_{\tau-}^x) - f(X_\tau^x))^2 = \lim_{h \downarrow 0}\lim_{n \to \infty} (f(X_{\tau_n}^x) - f(X_{\tau_n + h}^x))^2 \text{ a.s.}
   $$
 Thus,  \eqref{rem:gmp} and the Dominated Convergence Theorem combined with \Cref{as2} imply
   \begin{align*}
       \E[(f(X_{\tau-}^x) &- f(X_\tau^x))^2] = \lim_{h \downarrow 0}\lim_{n \to \infty} \E \big[\E[(f(X_{\tau_n}^x) - f(X_{\tau_n + h}^x))^2 \mid \mathcal{F}_{\tau_n}] \big]\\
       &= 
       \lim_{h \downarrow 0}\lim_{n \to \infty} \E[f^2(X_{\tau_n}^x) + P_hf^2(X_{\tau_n}^x) - 2f(X_{\tau_n}^x)P_hf(X_{\tau_n}^x)] \\
       &= \lim_{h \downarrow 0} \E[f^2(X_{\tau-}^x) + P_hf^2(X_{\tau-}^x) - 2f(X_{\tau-}^x)P_hf(X_{\tau-}^x)] \\
       &= 0 \,,
   \end{align*}
   as desired. The remaining claim follows by taking $\tau = t$ and applying Dominated Convergence Theorem to deduce left-continuity of $t \mapsto P_tf(x)$, and then noting that we assumed right-continuity in \Cref{as2}.
\end{proof}

\begin{cor}
\label{martingale}
For all $f \in \Dm(\mcM)$, $x \in \mcM$ the process $M_t^f(x)$ given by \eqref{Mtf} is a martingale.
\end{cor}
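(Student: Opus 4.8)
The plan is to show directly that $M_t^f(x)$ has the martingale property using the semigroup property of $\Pp_t$, the definition of the generator, and Fubini's theorem. First I would fix $f \in \Dm(\mcM)$ and $x \in \mcM$, and observe that $M_t^f(x)$ is integrable for each $t$: indeed $f$ is bounded, and by \Cref{gen} \ref{second}, \ref{third} the function $\mathcal{L}f$ is bounded, so $\int_0^t |\mathcal{L}f(X_s^x)|\,ds \leq t\|\mathcal{L}f\|$; moreover the integrand $s \mapsto \mathcal{L}f(X_s^x)$ is a.s. measurable since $X^x$ is cadlag and $\mathcal{L}f$ is continuous, so the pathwise integral makes sense. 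The measurability of $(t,x) \mapsto \Pp_t f(x)$ from \Cref{semigroup} (together with Fubini) justifies swapping expectation and the $ds$ integral wherever needed.

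The key computation is to verify $\E[M_{t+h}^f(x) - M_t^f(x) \mid \mathcal{F}_t] = 0$ for $h > 0$. Using the Markov/homogeneity property in \Cref{semigroup}, $\E[f(X_{t+h}^x) \mid \mathcal{F}_t] = \Pp_h f(X_t^x)$, and similarly $\E[\int_t^{t+h}\mathcal{L}f(X_s^x)\,ds \mid \mathcal{F}_t] = \int_0^h \E[\mathcal{L}f(X_{t+u}^x)\mid\mathcal{F}_t]\,du = \int_0^h \Pp_u\mathcal{L}f(X_t^x)\,du$, where the interchange of conditional expectation and the $du$ integral is justified by the conditional Fubini theorem (valid since $\mathcal{L}f$ is bounded). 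Hence, writing $y := X_t^x$, it remains to show $\Pp_h f(y) - f(y) = \int_0^h \Pp_u \mathcal{L}f(y)\,du$ for every $y \in \mcM$. This identity is a direct consequence of \Cref{derivative-of-Pt-is-L}: the function $u \mapsto \Pp_u f(y)$ has right derivative $\mathcal{L}\Pp_u f(y) = \Pp_u \mathcal{L}f(y)$ by \Cref{derivative-of-Pt-is-L} and \Cref{gen} \ref{first}, this right derivative is continuous in $u$ because $u \mapsto \Pp_u \mathcal{L}f(y)$ is continuous by \Cref{quasi-left} (applied to $\mathcal{L}f \in C_b(\mcM)$), and a function that is right-differentiable with continuous right derivative is $C^1$ with that derivative; integrating from $0$ to $h$ gives the claim. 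Combining the two displays yields $\E[M_{t+h}^f(x)\mid\mathcal{F}_t] = M_t^f(x)$.

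Finally I would note that adaptedness of $M_t^f(x)$ is immediate (each term is $\mathcal{F}_t$-measurable since $X^x$ is adapted and $f,\mathcal{L}f$ are Borel), which together with integrability and the martingale property established above completes the proof.

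The main obstacle is the identity $\Pp_h f(y) - f(y) = \int_0^h \Pp_u\mathcal{L}f(y)\,du$: passing from the merely \emph{right}-sided differentiability guaranteed by \Cref{gen} \ref{first} to genuine integrability of the derivative requires continuity of $u \mapsto \Pp_u \mathcal{L}f(y)$, which is exactly why \Cref{quasi-left} was proved just before this corollary. All other steps (integrability bounds, Fubini interchanges, the conditional Markov property) are routine given the standing assumptions.
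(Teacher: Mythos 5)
Your proposal is correct and follows essentially the same route as the paper's proof: reduce via the Markov property and conditional Fubini to the identity $\Pp_h f = f + \int_0^h \Pp_u\mathcal{L}f\,du$, then obtain it from \Cref{derivative-of-Pt-is-L} together with the continuity supplied by \Cref{quasi-left} and the fact that a continuous function with continuous right derivative is differentiable (the paper cites Bruckner for this last step). No gaps.
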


\begin{proof}
For any $t \geq 0$, $M_t^f(x)$ is $\mathcal{F}_t$ adapted, and 
in addition $M_t^f(x)$ is integrable ($\E[|M_t^f(x)|] < \infty$) since $f, \mathcal{L}f \in C_b(M)$. By Fubini's theorem for conditional expectation and the Markov property, for $t,s \geq 0$
\begin{align*}
\E[M_{t+s}^f(x) - M_s^f(x) \mid \mathcal{F}_s] &= 
\E \Big[f(X^x_{t+s}) - f(X_s^x) - \int_s^{t+s} \mathcal{L}f(X_a^x)da\Big| \mathcal{F}_s \Big].
\\
&=
\Pp_tf(X_s^x) - f(X_s^x) - \int_0^t \Pp_a\mathcal{L}f(X_s^x)da.
\end{align*}
 Thus, to show that $M_t^f(x)$ is a martingale it suffices to show that $\Pp_tf = f + \int_0^t \Pp_s\mathcal{L}fds$ for all $t > 0$. By \Cref{derivative-of-Pt-is-L}, for fixed $x \in M$ the function $g(t) = \Pp_tf(x)$ has right derivative equal to $ t \mapsto \Pp_t\mathcal{L}f(x)$, and both $g$ and its right derivative are continuous by \Cref{quasi-left}. Thus, by \cite[Theorem 1.3]{Bruckner1978} $g$ is differentiable and $g'(t) = \Pp_t\mathcal{L}f(x)$. By fundamental theorem of calculus, $g(t) = g(0) + \int_0^t g'(s)ds$, which proves the claim.
\end{proof}

Next, we focus on the quadratic variation of $M_t^f(x)$ defined as follows:

\begin{deff}\label{def:sqrv}
Given a square integrable martingale $M_t$, we say that $\langle M \rangle_t$ is the (predictable) quadratic variation of $M_t$ if  $\langle M \rangle_t$ is the unique integrable, predictable, increasing process such that $M_t^2 - \langle M \rangle_t$ is a martingale. The existence and uniqueness of  $\langle M \rangle_t$ is guaranteed by Doob-Meyer decomposition theorem (see \cite[Theorem 10.5]{Kallenberg}).
\end{deff}

\begin{rem}
    Note that for $f \in \Dm(\mcM)$ (see \Cref{gen}) the martingale $M_t^f(x)$ defined in \eqref{Mtf} is bounded and thus square integrable, so \Cref{def:sqrv} applies.
\end{rem}
We also use the following definition.

\begin{deff}
\label{Gamma}
    Let $\Dm_2(\mcM)$ be the set of all $f \in \Dm(\mcM)$ such that $f^2 \in \Dm(\mcM)$. For $f \in \Dm_2(\mcM)$, let $\Gamma f \coloneqq \mathcal{L}(f^2) - 2f\mathcal{L}f$.
\end{deff}

\begin{rem}
    Note that $\Gamma f \geq 0$ since $\Gamma f = \lim_{s \downarrow 0} \frac{\Pp_sf^2 - (\Pp_sf)^2}{s}$. Indeed,
    \begin{align*}
 \mathcal{L}(f^2) - 2f\mathcal{L}f &= 
 \lim_{s \downarrow 0} \frac{\Pp_sf^2 - f^2}{s} - 2f\frac{\Pp_sf - f}{s} = \lim_{s \downarrow 0} \frac{\Pp_sf^2 - 2f\Pp_sf + f^2}{s} \\
 &=  \lim_{s \downarrow 0} \frac{\Pp_sf^2 - (\Pp_sf)^2}{s}
 + \lim_{s \downarrow 0} \frac{(\Pp_sf - f)^2}{s} 
 \\
 &= \lim_{s \downarrow 0} \frac{\Pp_sf^2 - (\Pp_sf)^2}{s} + 0\times (\mathcal{L}f)^2\,.
    \end{align*}
\end{rem}

\begin{lem}
\label{Gamma-is-quad-var}
    For $f \in \Dm_2(\mcM)$, $\langle M^f(x) \rangle_t = \int_0^t \Gamma f(X_s^x)ds$.
\end{lem}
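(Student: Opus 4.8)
The plan is to verify that $A_t := \int_0^t \Gamma f(X_s^x)\,ds$ is the predictable quadratic variation of $M^f(x)$ by checking the three defining properties from Definition~\ref{def:sqrv}: that $A_t$ is integrable, predictable, and increasing, and that $(M_t^f(x))^2 - A_t$ is a martingale; uniqueness in the Doob--Meyer decomposition then forces $\langle M^f(x)\rangle_t = A_t$. The first three properties are immediate: $\Gamma f \in C_b(\mcM)$ since $f, f^2 \in \Dm(\mcM)$ (so $\mathcal{L}f, \mathcal{L}(f^2) \in C_b(\mcM)$ by Definition~\ref{gen}), hence $t \mapsto A_t$ is continuous (so predictable), is increasing because $\Gamma f \geq 0$ (shown in the remark after Definition~\ref{Gamma}), and satisfies $\E[A_t] \leq t\|\Gamma f\| < \infty$. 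So the real content is the martingale property of $(M_t^f(x))^2 - A_t$.

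To prove that, first I would expand the square. Write $N_t := M_t^f(x)$ for brevity. Since $f^2 \in \Dm(\mcM)$, Corollary~\ref{martingale} gives that
$$
M_t^{f^2}(x) = f^2(X_t^x) - f^2(x) - \int_0^t \mathcal{L}(f^2)(X_s^x)\,ds
$$
is a martingale. Using the definition of $N_t$, one has $f^2(X_t^x) = (N_t + f(x) + \int_0^t \mathcal{L}f(X_s^x)\,ds)^2$, which one expands and substitutes into $M_t^{f^2}(x)$. After collecting terms, the identity $\mathcal{L}(f^2) = \Gamma f + 2f\mathcal{L}f$ should let the drift terms telescope so that
$$
N_t^2 - A_t = M_t^{f^2}(x) - \big(\text{a process of finite variation that is itself a martingale plus a remainder}\big),
$$
and the key cancellation is that the ``cross terms'' $2\big(f(x) + \int_0^t \mathcal{L}f(X_s^x)\,ds\big)N_t$ and $2\int_0^t \mathcal{L}f(X_s^x)\,ds$-against-$f(X_s^x)$-type integrals combine into a stochastic integral of a bounded predictable process against the martingale $N$, which is again a martingale. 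The cleanest bookkeeping is probably to use the elementary integration-by-parts / Itô-product identity in the form: for the cadlag martingale $N$ and the continuous finite-variation process $B_t := \int_0^t \mathcal{L}f(X_s^x)\,ds$, the process $(f(x)+B_t)N_t - \int_0^t N_{s-}\,\mathcal{L}f(X_s^x)\,ds$ is a martingale, which one can justify directly from the conditional-expectation computation already used in the proof of Corollary~\ref{martingale} (Fubini for conditional expectations plus the semigroup identity $\Pp_t g = g + \int_0^t \Pp_s \mathcal{L}g\,ds$ for $g \in \{f, f^2\}$), without invoking general semimartingale theory.

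The main obstacle I expect is the careful handling of the cross terms and making sure the conditional-expectation manipulation is valid despite $N$ being only cadlag rather than continuous: one must be slightly careful distinguishing $N_s$ from $N_{s-}$ inside the $ds$-integrals (which is harmless since $N$ has at most countably many jumps, so the two integrals agree) and must justify interchanging $\E[\cdot\mid\mathcal{F}_s]$ with the time integral via the Markov property and boundedness of $f, \mathcal{L}f, \Gamma f$. An alternative, shorter route — if one is willing to quote it — is to note that $N_t$ is a bounded martingale, hence a square-integrable semimartingale, and invoke the general fact that its predictable quadratic variation is the compensator of $\sum_{s\leq t}(\Delta N_s)^2$ plus the continuous part; but identifying this explicitly with $\int_0^t \Gamma f(X_s^x)\,ds$ still reduces to the same drift computation, so the self-contained conditional-expectation argument above is preferable. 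Once the martingale property of $N_t^2 - A_t$ is established, uniqueness in Definition~\ref{def:sqrv} completes the proof.
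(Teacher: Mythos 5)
Your argument is correct, and the computation goes through exactly as you sketch: writing $N_t=M^f_t(x)$ and $B_t=\int_0^t\mathcal{L}f(X_s^x)\,ds$, expanding $f^2(X_t^x)=(N_t+f(x)+B_t)^2$ inside the martingale $M^{f^2}_t(x)$ and using $\mathcal{L}(f^2)=\Gamma f+2f\mathcal{L}f$ together with $2\int_0^t(f(x)+B_s)\,dB_s=(f(x)+B_t)^2-f(x)^2$ leaves precisely $N_t^2-A_t=M^{f^2}_t(x)-2\bigl[N_t(f(x)+B_t)-\int_0^tN_s\,dB_s\bigr]$, and the bracketed term is a martingale by the Fubini-plus-tower-property computation you describe (all integrands being bounded on finite time intervals, and the $N_s$ versus $N_{s-}$ distinction being irrelevant in a $ds$-integral). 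The paper itself does not prove this lemma but only cites \cite[Lemma 9.1]{persistence}; your proposal is a correct, self-contained rendering of the standard computation behind that citation, with the Doob--Meyer uniqueness step handled properly since $A_t$ is continuous, adapted (hence predictable), increasing because $\Gamma f\geq 0$, and integrable because $\Gamma f$ is bounded.
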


\begin{proof}
    This is proved in \cite[Lemma 9.1]{persistence}.
\end{proof}

\subsection{Extending the Domain of the Generator} \label{extended-gen}
In this subsection we fix a Feller process $\{X_t^x\}_{x \in \mcM, t \geq 0}$ (see \Cref{as2}) on a locally compact Polish space $\mcM$ with the Markov semigroup $\Pp_s$ (see \Cref{semigroup}).

In lieu of \Cref{martingale} and \Cref{Gamma-is-quad-var}, we have shown that $\Dm_2(\mcM) \subset \Dme_2(\mcM)$ as defined in \Cref{D2} and that the two notions of $\mathcal{L}$ and $\Gamma$ agree. Similarly, $\Dm(\mcM) \cap \{f \in C_b(\mcM) \mid f \geq 0\} \subset \Dme_+(\mcM)$ as defined in \Cref{D+}. Next we show that $\Dme_2(\mcM), \Dme_+(\mcM)$ are in some sense the closures of $\Dm_2(\mcM),\Dm(\mcM) \cap \{f \in C_b(\mcM) \mid f \geq 0\}$ respectively.

In order to verify $f \in \Dme_+(A)$, we construct a localizing sequence of stopping times for $M_t^f(x)$ based on \Cref{sigma-compact}. We expand upon a similar proof given in \cite[Lemma 9.3]{persistence}.

\begin{lem}
    \label{localization}
    Suppose $A \subset \mcM$ is an open invariant set (see \Cref{inv-set}) with $K_n$ as in \Cref{sigma-compact}. Then for
    any $x \in A$, the sequence
$$\tau_n \coloneqq \inf\{t \geq 0 \mid X_t^x \notin K_n\} \wedge n$$
    is an increasing sequence of stopping times such that $\lim_{n \to \infty} \tau_n = \infty$ a.s.
\end{lem}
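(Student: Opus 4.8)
The plan is to first verify that $(\tau_n)$ is an increasing sequence of stopping times and then to prove $\tau := \lim_n \tau_n = \infty$ almost surely, by contradiction. Write $\rho_n := \inf\{t \geq 0 \mid X_t^x \notin K_n\}$, so that $\tau_n = \rho_n \wedge n$. Since $K_n$ is compact its complement is open, and since $t \mapsto X_t^x$ is right continuous and the filtration is complete and right continuous, $\rho_n$ is a stopping time; concretely $\{\rho_n < t\} = \bigcup_{q \in \mathbb{Q} \cap [0,t)} \{X_q^x \notin K_n\} \in \mathcal{F}_t$. From $K_n \subseteq K_{n+1}$ we get $\{t : X_t^x \notin K_{n+1}\} \subseteq \{t : X_t^x \notin K_n\}$, hence $\rho_n \leq \rho_{n+1}$, and together with $n \leq n+1$ this gives $\tau_n \leq \tau_{n+1}$. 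Note also that $x \in A = \bigcup_m K_m^\circ$ (using $K_m \subseteq K_{m+1}^\circ$), so by right continuity $\rho_n > 0$ for $n$ large, and hence $\tau > 0$.

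Now suppose for contradiction that $\Prb(\tau < \infty) > 0$. On $\{\tau < \infty\}$ one has $\rho_n < \infty$ for every $n$, and $\rho_n \wedge n = \rho_n$ once $n > \tau$, so $\rho_n \uparrow \tau$ there. The first key step is to show that $\tau$ is a \emph{predictable} stopping time announced by $(\tau_n)$, i.e.\ that $\tau_n < \tau$ a.s.\ on $\{\tau > 0\}$. On $\{\tau = \infty\}$ this is clear since $\tau_n \leq n$. On $\{0 < \tau < \infty\}$ it suffices to rule out, up to a null set, the event that $\rho_{n_0} = \tau$ for some $n_0$: on such an event $\rho_m = \tau$ for all $m \geq n_0$, while the invariance of $A$ (see \Cref{inv-set}) forces $X_\tau^x \in A = \bigcup_m K_m^\circ$, so $X_\tau^x \in K_M^\circ$ for some $M \geq n_0$; right continuity at $\tau$ then gives $\varepsilon > 0$ with $X_t^x \in K_M$ for $t \in [\tau,\tau+\varepsilon)$, and combined with $X_t^x \in K_{n_0} \subseteq K_M$ for $t < \rho_{n_0} = \tau$ we get $X_t^x \in K_M$ on $[0,\tau+\varepsilon)$, i.e.\ $\rho_M \geq \tau + \varepsilon > \tau = \rho_M$, a contradiction. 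Hence $\tau_n \leq \rho_n < \tau$ there, and $\tau$ is predictable.

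With $\tau$ predictable, quasi-left continuity (\Cref{quasi-left}) gives $X_\tau^x = X_{\tau-}^x$ a.s.\ on $\{\tau < \infty\}$. Using the invariance of $A$ once more, $X_{\tau-}^x = X_\tau^x \in A = \bigcup_m K_m^\circ$, so $X_{\tau-}^x \in K_m^\circ$ for some (random) $m$; since $X_t^x \to X_{\tau-}^x$ as $t \uparrow \tau$ and $X_\cdot^x$ is right continuous at $\tau$, there are $\delta,\varepsilon > 0$ with $X_t^x \in K_m^\circ$ for all $t \in (\tau-\delta,\tau+\varepsilon)$. Choosing $n^\ast$ with $\rho_{n^\ast} > \tau - \delta$ (possible since $\rho_n \uparrow \tau$) gives $X_t^x \in K_{n^\ast}$ for $t < \rho_{n^\ast}$, in particular on $[0,\tau-\delta]$, so with $N := \max(m,n^\ast)$ we get $X_t^x \in K_N$ on all of $[0,\tau+\varepsilon)$, whence $\rho_N \geq \tau + \varepsilon > \tau = \sup_n \rho_n \geq \rho_N$, a contradiction. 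Thus $\Prb(\tau < \infty) = 0$, and since $\tau = \lim_n \tau_n$, we conclude $\tau_n \to \infty$ a.s. The main obstacle is the predictability step: ruling out that the left limit of the path at $\tau$ escapes $A$ through $\partial A$ is exactly where invariance of $A$, right continuity of the paths, and the exhaustion $A = \bigcup_n K_n^\circ$ must be combined, and it is precisely quasi-left continuity that converts ``$X_\tau^x \in A$'' into ``$X_{\tau-}^x \in A$''; everything after that is a routine compactness argument.
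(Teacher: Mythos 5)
Your proof is correct and follows essentially the same route as the paper's: you first show that $\tau$ is announced by $(\tau_n)$ using right continuity together with the exhaustion $K_n \subset K_{n+1}^\circ$ and invariance of $A$, and then combine quasi-left continuity (\Cref{quasi-left}) with invariance to rule out $\tau < \infty$. The only cosmetic difference is in the final contradiction: the paper deduces $X_\tau^x \in \bigcap_n K_n^c = A^c$ from $X_{\tau_n}^x \in K_{n-1}^c$, whereas you deduce $X_\tau^x \in A$ and show the path must then remain in some $K_N$ strictly past $\tau$, contradicting $\rho_N \leq \tau$.
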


\begin{proof}
  Since $K_n^c$ is open, $X_t^x$ is cadlag, and the filtration is right-continuous, then 
$(\tau_n)_{n \geq 1}$ is a non-decreasing sequence of stopping times  (see \cite[Lemma 9.6iii]{Kallenberg}) and the limit $\tau = \lim_{n \to \infty} \tau_n$ is almost surely well defined.

We claim that $\tau_n < \tau$ almost surely. Indeed, for any $m\geq  n \geq 1$ denote
$$
\Omega_{n, m} := \{\omega \in \Omega \mid \tau_n = \tau < \infty, X_\tau \in  K_m\} \,.
$$ 
Then on $\Omega_{n, m}$, by right the continuity there is a random $h > 0$ such that $X_t^x \in K_{m+1}^\circ$ for all $\tau \leq t \leq \tau + h$, and since $\tau_n = \tau$ we have that $X_t^x \in K_n \subset K_{m+1}^\circ$ for all $t < \tau$. Thus, $X_t^x \in K_{m+1}$ for all $t \leq \tau + h$ and so $\infty > \tau \geq \tau_{m+1} \geq \tau + h > \tau$. Since $\Prb(\infty > \tau > \tau) = 0$,  $\Omega_{n, m}$ has zero probability. 

Hence, by the invariance of $A = \cup_{m \geq n} K_m$ for each $n \geq 1$ we have
   $$
   \{\tau_n =\tau < \infty\} \subset \bigcup_{m\geq n} \Omega_{n, m}
   $$ 
and the claim follows. 
In particular, we showed that $\tau$ is a predictable stopping time.

   We finish the proof by showing that $\Prb(\tau < \infty) = 0$. For a contradiction assume that there is $N > 1$ such that $\Prb(\tau < N) > 0$. On $\{\tau < N\}$, for any $n > N$ it holds that $\tau_n < n$ and thus $X_{\tau_n}^x \in \overline{K_{n}^c}$. Since $K_{n - 1} \subset K_n^\circ$, then 
   $$
   \overline{K_{n}^c} \subset \overline{(K_{n}^\circ)^c} = (K_{n}^\circ)^c \subset 
   K_{n - 1}^c \,, 
   $$
   and therefore $X_{\tau_n}^x \in K_{n-1}^c$ for any $n > N$. By \Cref{quasi-left}, $\lim_{n \to \infty} X_{\tau_n}^x = X_{\tau}^x$ almost surely on the event $\{\tau < N\}$, and thus $X_\tau^x \in \cap_n K_n^c = A^c$ which by \Cref{inv-set} has probability $0$, a contradiction to $\Prb(\tau < N) > 0$.
  \end{proof}

\begin{rem} \label{V-cadlag}
    Note that if $f: A \to \mathbb{R}$ is a continuous function and $t \mapsto X_t^x$ is a cadlag process, then $t \mapsto f(X_t^x)$ is almost surely cadlag, so all of the martingales defined in this section are cadlag (without any modification).
\end{rem}

\begin{lem}
       \label{local-martingale}
 Let $A \subset \mcM$ be an open invariant set (see \Cref{inv-set}) and assume $f:A \to [0,\infty)$ is such that:
    \begin{enumerate}
        \item[(1)] There exists $f_n \in \Dm(\mcM)$ such that $f_n \to f$ uniformly on compact subsets of $A$ and $f \geq f_n \geq 0$.
        \item[(2)] $\mathcal{L}f_n$ converges uniformly on compact subsets of $A$ to a continuous function from $A$ to $\R$, which we denote by $\mathcal{L}f$.
    \end{enumerate}

Then for all $x \in A$, $M_t \coloneqq M_t^f(x)$ defined in \eqref{Mtf} is a cadlag local martingale with the localizing sequence $(\tau_n)_{n\geq 1}$ as defined in \Cref{localization}. In particular, $f \in \Dme_+(A)$ (see \Cref{D+}).
\end{lem}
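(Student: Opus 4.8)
The plan is to fix $x \in A$ and to show that, for each $m \geq 1$, the stopped process $M_{\cdot \wedge \tau_m}^f(x)$ (with $M_t^f(x)$ as in \eqref{Mtf} and $(\tau_m)_{m \geq 1}$ as in \Cref{localization}) is a cadlag martingale. Since $\tau_m \uparrow \infty$ a.s.\ by \Cref{localization}, this exhibits $(\tau_m)_{m \geq 1}$ as a localizing sequence, so $M_t^f(x)$ is a cadlag local martingale; cadlag-ness follows from \Cref{V-cadlag} (the map $t \mapsto f(X_t^x)$ is cadlag and $t \mapsto \int_0^t \mathcal{L}f(X_s^x)\,ds$ is continuous). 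As $f : A \to [0,\infty)$ and $\mathcal{L}f : A \to \R$ are continuous by hypothesis, the conclusion $f \in \Dme_+(A)$ is then immediate from \Cref{D+}.

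The first step is to record uniform estimates on the compact set $K_m$. By hypothesis $\mathcal{L}f_n \to \mathcal{L}f$ uniformly on $K_m$ with $\mathcal{L}f$ continuous, so $C_m := \sup_n \sup_{K_m} |\mathcal{L}f_n|$ is finite, and $\sup_{K_m} f < \infty$; moreover $X_s^x \in K_m$ for every $s < \tau_m$ by the definition of $\tau_m$, and $\tau_m \leq m$. Since each $f_n \in \Dm(\mcM)$, the process $M_t^{f_n}(x)$ is a martingale by \Cref{martingale}, hence so is $M_{\cdot \wedge \tau_m}^{f_n}(x)$; taking expectations (recall $M_0^{f_n}(x) = 0$) and rearranging gives $\E[f_n(X_{t \wedge \tau_m}^x)] = f_n(x) + \E[\int_0^{t \wedge \tau_m} \mathcal{L}f_n(X_s^x)\,ds] \leq f(x) + C_m m$ for every $n$ and every fixed $t$. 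The key point — and the one step that is not routine — is that at the stopping time $\tau_m$ the cadlag path may have jumped out of $K_m$ to a point where $f$ is large, so $f(X_{t \wedge \tau_m}^x)$ is not obviously integrable; however, since $0 \leq f_n \leq f$ and $f_n \to f$ pointwise on $A = \bigcup_m K_m$, Fatou's lemma applied to the uniform bound just obtained yields $\E[f(X_{t \wedge \tau_m}^x)] \leq f(x) + C_m m < \infty$.

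With this integrability in hand, I would pass to the limit $n \to \infty$ in the martingale identity $\E[M_{t \wedge \tau_m}^{f_n}(x)\,\Id_B] = \E[M_{u \wedge \tau_m}^{f_n}(x)\,\Id_B]$, valid for all $0 \leq u \leq t$ and $B \in \mathcal{F}_u$. For the drift term, $|\mathcal{L}f_n(X_s^x)| \leq C_m$ for $s < \tau_m$ and $\mathcal{L}f_n(X_s^x) \to \mathcal{L}f(X_s^x)$, so dominated convergence with the constant dominating function $C_m m$ gives convergence of $\E[\int_0^{t \wedge \tau_m} \mathcal{L}f_n(X_s^x)\,ds\,\Id_B]$ to $\E[\int_0^{t \wedge \tau_m} \mathcal{L}f(X_s^x)\,ds\,\Id_B]$, and likewise with $u$. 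For the boundary term, $0 \leq f_n(X_{t \wedge \tau_m}^x) \leq f(X_{t \wedge \tau_m}^x)$ with the dominating function integrable by the previous paragraph, and $f_n(X_{t \wedge \tau_m}^x) \to f(X_{t \wedge \tau_m}^x)$ a.s., so dominated convergence applies once more (likewise with $u$, while $f_n(x) \to f(x)$ trivially). Hence $\E[M_{t \wedge \tau_m}^f(x)\,\Id_B] = \E[M_{u \wedge \tau_m}^f(x)\,\Id_B]$; together with adaptedness and the bound $|M_{t \wedge \tau_m}^f(x)| \leq f(X_{t \wedge \tau_m}^x) + f(x) + C_m m$, this shows $M_{\cdot \wedge \tau_m}^f(x)$ is a martingale, which completes the argument.
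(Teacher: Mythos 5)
Your proposal is correct and follows essentially the same route as the paper's proof: approximate $M^f$ by the martingales $M^{f_n}$, use the uniform bound on $\mathcal{L}f_n$ over $K_m$ together with Fatou's lemma (exploiting $0\le f_n\le f$) to get integrability of $f(X_{t\wedge\tau_m}^x)$, and then pass to the limit by domination. The only cosmetic difference is that you verify the martingale identity at each pair of times via dominated convergence, whereas the paper phrases the same step as uniform integrability of $\{M^{f_n}_{\tau_m}\}$ giving $L^1$ convergence.
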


\begin{proof}
    By \Cref{martingale}, for any $n \geq 1$
    \begin{equation} \label{eq:amtg}
    M_t^n \coloneqq f_n(X_t^x) - f_n(x) - \int_0^t \mathcal{L}f_n(X_s^x)ds    
    \end{equation}
    is a  martingale. Since for any $m \geq 1$ $(\mathcal{L}f_n)_{n \in \N}$ converges uniformly on the compact set $K_m$ and $X^x_{t} \in K_m$ for any $t < \tau_m \leq m$, then a.s.
    $\int_0^{t \wedge \tau_m} \mathcal{L}f_n(X_s^x)ds \to \int_0^{t \wedge \tau_m} \mathcal{L}f(X_s^x)ds$. Consequently, for any $m \geq 1$ we have that $M_{t \wedge \tau_m}^n \to M_{t \wedge \tau_m}$ almost surely as $n \to \infty$. 
 Thus, to show that $M$ is a local martingale it suffices to show that $M_{\tau_m}^n \to M_{\tau_m}$ in $L^1$, or equivalently that $\{M_{\tau_m}^n\}_{n \in \mathbb{N}}$ is uniformly integrable. By the uniform convergence $\mathcal{L}f_n \to \mathcal{L}f$ on $K_m$ again, we obtain that $\int_0^{\tau_m} \mathcal{L}f_n(X_s^x)ds$ is uniformly bounded. Since $f_n \leq f$, by \eqref{eq:amtg} we just need to show that $f(X_{\tau_m}^x)$ is integrable, which follows from Fatou's lemma and $M_t^n$ being a martingale: 
 \begin{equation}\label{unif-int}
 \begin{aligned}
      \E[f(X_{\tau_m}^x)] &\leq \liminf_{n \to \infty} \E[f_n(X_{\tau_m}^x)] \\
      &= \liminf_{n \to \infty} \E\Big[f_n(x) + \int_0^{\tau_m} \mathcal{L}f_n(X_s^x)ds\Big] < \infty.
\end{aligned}
 \end{equation}

\end{proof}

We can prove a similar claim for $\Dm_2(\mcM)$ which can be viewed as a generalization of \Cref{Gamma-is-quad-var}:

\begin{lem}
    \label{square-integrable-martingale}
     Let $A \subset \mcM$ be an open invariant set (see \Cref{inv-set}) and $f:A \to \R$ be such that:
    \begin{enumerate}[label=(\roman*)]
        \item \label{fst} There exists $f_n \in \Dm_2(\mcM)$ such that $f_n \to f$ uniformly on compact subsets of $A$ and $|f| \geq |f_n|$.
        \item \label{snd} $\mathcal{L}f_n$ converges uniformly on compact subsets of $A$ to a continuous function from $A$ to $\R$, which we denote by $\mathcal{L}f$.
        \item \label{thr} $\Gamma f_n$ converges uniformly on compact subsets of $A$ to a continuous function from $A$ to $[0,\infty)$, which we denote by $\Gamma f$.
        \item \label{fourth} $\Gamma f \leq KU'$ for some $U, U': \mcM \to [0,\infty)$ and $K > 0$ satisfying \Cref{as5} \ref{5.1}, \ref{5.2}. 
    \end{enumerate}

Then for all $x \in A$, $M_t \coloneqq M_t^f(x)$ from \eqref{Mtf} is a square-integrable cadlag martingale with $\langle M \rangle_t = \int_0^t \Gamma f(X_s^x)ds$ (see \Cref{def:sqrv}). In particular, $f \in \Dme_2(A)$ (see \Cref{D2}).
\end{lem}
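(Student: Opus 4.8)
The plan is to mimic the proof of \Cref{local-martingale}, applied first to the squares $f_n^2$, and then to identify the predictable quadratic variation by an It\^o computation, using the growth bound \ref{fourth} to upgrade ``local martingale'' to ``square-integrable martingale''. Fix $x \in A$ and let $(\tau_m)_{m\ge1}$ be the localizing sequence of \Cref{localization} for the exhaustion $(K_m)$ of $A$ from \Cref{sigma-compact}, so $\tau_m \uparrow \infty$ a.s.\ and $X_s^x \in K_m$ for every $s < \tau_m$. As a first step I would apply \Cref{local-martingale} to the sequence $(f_n^2)_{n\ge1}$: since $f_n \in \Dm_2(\mcM)$ we have $f_n^2 \in \Dm(\mcM)$ with $\mathcal{L}(f_n^2) = \Gamma f_n + 2f_n\mathcal{L}f_n$ by \Cref{Gamma}, and hypotheses \ref{fst}--\ref{thr} give $0 \le f_n^2 \le f^2$, that $f_n^2 \to f^2$ uniformly on compact subsets of $A$, and that $\mathcal{L}(f_n^2) \to \Gamma f + 2f\mathcal{L}f$ uniformly on compact subsets of $A$ (as products of locally bounded, uniformly convergent sequences). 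Hence $f^2 \in \Dme_+(A)$ with $\mathcal{L}(f^2) = \Gamma f + 2f\mathcal{L}f$, that is,
\[
M^{f^2}_t(x) = f^2(X_t^x) - f^2(x) - \int_0^t \big(\Gamma f + 2f\mathcal{L}f\big)(X_s^x)\,ds
\]
is a cadlag local martingale with localizing sequence $(\tau_m)$.

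Next I would show that $M^f_\cdot(x)$ from \eqref{Mtf} is a square-integrable martingale. By \Cref{martingale} and \Cref{Gamma-is-quad-var}, each $M^{f_n}_{\cdot\wedge\tau_m}(x)$ is a bounded martingale with $\E[(M^{f_n}_{t\wedge\tau_m}(x))^2] = \E[\int_0^{t\wedge\tau_m}\Gamma f_n(X_s^x)\,ds]$. Since $X_s^x \in K_m$ on $\{s<\tau_m\}$, hypotheses \ref{thr} and \ref{fourth} give $\Gamma f_n(X_s^x) \le KU'(X_s^x) + 1$ for all large $n$, and combining this with the optional-stopping inequality of \Cref{ineq1} applied to $U,U'$ (whose $U,U'$ case uses only \Cref{as5} \ref{5.1}--\ref{5.2}) yields a bound $\E[(M^{f_n}_{t\wedge\tau_m}(x))^2] \le K\big(U(x)+Kt\big)+t$ that is uniform in $n$ \emph{and} in $m$. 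Because $M^{f_n}_{t\wedge\tau_m}(x) \to M^f_{t\wedge\tau_m}(x)$ a.s.\ (uniform convergence on $K_m$ controls the drift integral, pointwise convergence on $A$ controls the endpoint), this $L^2$-boundedness forces $L^1$-convergence, so $M^f_{\cdot\wedge\tau_m}(x)$ is a martingale; letting $m\to\infty$ using $\tau_m\uparrow\infty$ and the $m$-uniform bound shows that $M^f_\cdot(x)$ is a cadlag square-integrable martingale.

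To identify the quadratic variation I would pass through It\^o's formula rather than a limiting argument. Write $Y_t = f(X_t^x)$, $A_t = \int_0^t\mathcal{L}f(X_s^x)\,ds$, $N_t = M^f_t(x)$, so $Y = Y_0 + A + N$ is a semimartingale with $A$ continuous of finite variation. Applying It\^o's formula to $y\mapsto y^2$ gives $Y_t^2 = Y_0^2 + 2\int_0^t Y_{s-}\,dA_s + 2\int_0^t Y_{s-}\,dN_s + [N]_t$, and comparing with the expression for $M^{f^2}_t(x)$ from the first step yields
\[
[N]_t - \int_0^t \Gamma f(X_s^x)\,ds \;=\; M^{f^2}_t(x) - 2\int_0^t Y_{s-}\,dN_s ,
\]
whose right-hand side is a local martingale since $Y_{s-}$ is locally bounded. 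Thus the left-hand side is a predictable finite-variation local martingale null at $0$, hence identically zero, so the predictable quadratic variation is $\langle M^f(x)\rangle_t = \int_0^t\Gamma f(X_s^x)\,ds$; since $\E[\int_0^t\Gamma f(X_s^x)\,ds] \le K(U(x)+Kt)<\infty$ by \Cref{ineq1}, the process $(M^f_t(x))^2 - \int_0^t\Gamma f(X_s^x)\,ds$ is a genuine martingale, which by \Cref{D2} is precisely the claim $f \in \Dme_2(A)$ with the stated $\mathcal{L}f$ and $\Gamma f$.

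The main obstacle I anticipate is this last step: one cannot simply pass the identity $\langle M^{f_n}(x)\rangle = \int_0^\cdot\Gamma f_n(X_s^x)\,ds$ to the limit, because $\Dm_2(\mcM)$ need not be closed under addition, so differences $f_n - f_k$ need not lie in $\Dm_2(\mcM)$ and the approximating martingales are not \emph{a priori} $L^2$-Cauchy; the It\^o route circumvents this but requires care with the jump terms in It\^o's formula and with the compensator argument (a predictable finite-variation local martingale vanishes). A secondary point worth attention is keeping the $L^2$ bounds in the second step genuinely uniform in \emph{both} $n$ and $m$, which is exactly what legitimizes removing the localization at the end.
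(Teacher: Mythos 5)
Your first two steps are sound, and the second one essentially reproduces the paper's argument: the paper likewise obtains square integrability of $M^f_\cdot(x)$ from the a.s.\ convergence $M^{f_n}_{t\wedge\tau_m}\to M^f_{t\wedge\tau_m}$ combined with $\E[(M^{f_n}_{t\wedge\tau_m})^2]=\E[\int_0^{t\wedge\tau_m}\Gamma f_n(X_s^x)\,ds]$, hypothesis \ref{fourth}, and \Cref{ineq1}. However, the obstacle you cite to justify the It\^o detour is not a real one: the paper \emph{does} pass the identity $\langle M^{f_n}(x)\rangle_t=\int_0^t\Gamma f_n(X_s^x)\,ds$ to the limit, not by proving the $M^{f_n}$ are $L^2$-Cauchy, but by showing that the martingales $(M^{f_n}_{t\wedge\tau_m})^2-\int_0^{t\wedge\tau_m}\Gamma f_n(X_s^x)\,ds$ converge a.s.\ and form a uniformly integrable family (the only nontrivial input being $\E[f(X^x_{\tau_m})^2]<\infty$, obtained from Fatou's lemma and the martingale property of the $M^{f_n}$), so the limit is again a martingale for each $m$ and one concludes by domination. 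No closure of $\Dm_2(\mcM)$ under differences is needed.

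The genuine defect is in your It\^o step. The identity you derive --- that $[N]_t-\int_0^t\Gamma f(X_s^x)\,ds$ equals the local martingale $M^{f^2}_t(x)-2\int_0^tY_{s-}\,dN_s$ --- is correct, but $[N]$ is the \emph{optional} quadratic variation, which contains the jump part $\sum_{s\le t}(\Delta N_s)^2$ and is therefore not predictable. So the left-hand side is not a ``predictable finite-variation local martingale'' and you cannot conclude that it vanishes; indeed it does not vanish for the jump processes that are central to this paper (for the Poisson subordination of a discrete chain in \Cref{discrete-time}, $[N]$ is a pure-jump increasing process while $\int_0^t\Gamma f(X_s^x)\,ds$ is continuous, so they cannot be equal). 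The statement you actually need concerns $\langle N\rangle$, and the correct way to extract it from your identity is to add the local martingale $N^2-[N]$: this shows that $N_t^2-\int_0^t\Gamma f(X_s^x)\,ds$ is a local martingale, which is dominated on compact time intervals by the integrable variable $\sup_{s\le t}N_s^2+\int_0^t\Gamma f(X_s^x)\,ds$ (using Doob and \Cref{ineq1}) and is hence a true martingale; uniqueness in the Doob--Meyer decomposition (\Cref{def:sqrv}) then gives $\langle N\rangle_t=\int_0^t\Gamma f(X_s^x)\,ds$. With that repair your route goes through and is a legitimate alternative to the paper's uniform-integrability argument, at the price of invoking It\^o's formula for discontinuous semimartingales, the identity $[Y]=[N]$, and the compensator formalism.
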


\begin{proof}
Let $(\tau_n)_{n\geq 1}$ be as defined in \Cref{localization}. 
Recall from \Cref{martingale} and \Cref{Gamma-is-quad-var} that $M_t^n$ defined in \eqref{eq:amtg} is a square-integrable martingale with $\langle M^n \rangle_t = \int_0^t \Gamma f_n(X_s^x)ds$, and from the proof of \Cref{local-martingale} that $M_{t \wedge \tau_m}^n \to M_{t \wedge \tau_m}$ a.s. as $n \to \infty$. Consequently  the Monotone Convergence Theorem, \Cref{localization}, Doob's Inequality, and Fatou's Lemma yield
\begin{align*}
    \E[\sup_{0 \leq s \leq t} M_s^2] &= \lim_{m \to \infty} \E[\sup_{0 \leq s \leq t \wedge \tau_m} M_s^2] \leq \liminf_{m \to \infty} 4\E[M_{t \wedge \tau_m}^2]  \\
         &\leq \liminf_{m \to \infty}\liminf_{n \to \infty} 4\E[(M_{t \wedge \tau_m}^n)^2] \,,
\end{align*}
and then \Cref{Gamma-is-quad-var}, Domintated convergence theorem,  Monotone convergence theorem, \Cref{localization}, and Tonelli's theorem imply
     \begin{equation} \label{eq:bsmar}
     \begin{aligned}
         \E[\sup_{0 \leq s \leq t} M_s^2] &\leq  
          \liminf_{m \to \infty}\liminf_{n \to \infty} 4\E\Big[\int_0^{t \wedge \tau_m} \Gamma f_n(X_s^x)ds\Big]  \\
         &= \liminf_{m \to \infty} 4\E\Big[\int_0^{t \wedge \tau_m} \Gamma f(X_s^x)ds\Big] = 4\E\Big[\int_0^t \Gamma f(X_s^x)ds\Big] 
         \\
         &= 4\int_0^t \Pp_s \Gamma f(x)ds 
         < \infty \,,
     \end{aligned}
     \end{equation}
     where in the last inequality we used our assumption \ref{fourth} and \Cref{ineq1}. 
 Thus, $M$ is a square-integrable martingale, so to finish the proof we show that 
  $M_t^2 - \int_0^t \Gamma f(X_s^x)ds$ is a martingale (see \Cref{def:sqrv}). Since $\Gamma f \geq 0$ and \eqref{eq:bsmar} imply
  $$
  \E\Big[\sup_{0 \leq s \leq t} \int_0^s \Gamma f(X_u^x)du\Big] = \E\Big[\int_0^t \Gamma f(X_s^x)ds\Big] < \infty \,,
  $$ 
  it is enough to establish that $M_t^2 - \int_0^t \Gamma f(X_s^x)ds$ is a local martingale. Similarly to the proof of \Cref{local-martingale}, we obtain 
  that that almost surely for any $m \geq 1$
  $$
  (M_{t \wedge \tau_m}^n)^2 - \int_0^{t \wedge \tau_m} \Gamma f_n(X_s^x)ds \to (M_{t \wedge \tau_m})^2 - \int_0^{t \wedge \tau_m} \Gamma f(X_s^x)ds \qquad 
  \textrm{as} \quad n \to \infty \,,
  $$ 
  and consequently it suffices to show that $\{(M_{\tau_m}^n)^2 - \int_0^{\tau_m} \Gamma f_n(X_s^x)ds\}_{n \in \mathbb{N}}$ is uniformly integrable. Since 
  $X_s^x \in K_m$ for any $s < \tau_m$ and $\Gamma f_n \to \Gamma f$ uniformly on 
  $K_m$, then $\sup_n \int_0^{\tau_m} \Gamma f_n(X_s^x)ds < \infty$.  We finish the proof by showing that $\{(M_{\tau_m}^n)^2\}_{n \in \mathbb{N}}$ is uniformly integrable. By QM-AM inequality we have 
  \begin{align*}
      (M_{\tau_m}^n)^2 &= \Big(f_n(X_{\tau_m}^x) - f_n(x) - \int_0^{\tau_m} \mathcal{L}f_n(X_s^x)ds\Big)^2 \\
      &\leq 3\Big[f_n(X_{\tau_m}^x)^2 + f_n(x)^2 + \Big(\int_0^{\tau_m} \mathcal{L}f_n(X_s^x)ds\Big)^2\Big],
  \end{align*} 
   and as above, since $f_n \to f$ and $\mathcal{L}f_n \to \mathcal{L}f$ uniformly on $K_m$,  the last two terms on the right hand side are bounded uniformly in $n$. Finally, since $f_n(X_{\tau_m}^x)^2 \leq f(X_{\tau_m}^x)^2$ it suffices to show that $\E[f(X_{\tau_m}^x)^2] < \infty$, and this follows similarly to the proof of \Cref{local-martingale}:
\begin{align*}
\E[f(X_{\tau_m}^x)^2] &\leq \liminf_{n \to \infty} \E[f_n(X_{\tau_m}^x)^2] \\
&\leq \lim_{n \to \infty}3\E\Big[(M^n_{\tau_m})^2 + f_n(x)^2 + \Big(\int_0^{\tau_m} \mathcal{L} f_n(X_s^x)ds\Big)^2\Big] \\
       &= \lim_{n \to \infty}3\E\Big[\int_0^{\tau_m} \Gamma f_n(X_s^x)ds\Big] + 3f_n(x)^2 + 3\E\Big[\Big(\int_0^{\tau_m} \mathcal{L}f_n(X_s^x)ds\Big)^2\Big] \\
       &= 3\E\Big[\int_0^{\tau_m} \Gamma f(X_s^x)ds\Big] + 3f(x)^2 + 3\E\Big[\Big(\int_0^{\tau_m} \mathcal{L}f(X_s^x)ds\Big)^2\Big] \\
       &< \infty \,,
     \end{align*}
as desired. 
\end{proof}

\subsection{Special Generator} \label{special-generator}
In this subsection we assume that the generator $\mathcal{L}$ can be written as $L_1 +  L_2$, where $L_1$ is a second order differential operator  representing the ``smooth" part of the process and $L_2$ is the generator for the ``jump" part of the process. Below in applications $L_1$ arises when Stochastic Differential Equations (SDEs) driven by Brownian motion are involved, whereas $L_2$ appears in the context of random switching and other jump processes. 
 Our goal is to show that the assumptions of \Cref{local-martingale} and \Cref{square-integrable-martingale} are satisfied if $L_1f, L_2f$ are well defined and continuous. We being with some preliminary definitions and a technical lemma.

\begin{deff}
\label{all-meas}
    For a Polish space $\mcM$, let $\PP(\mcM)$ denote the set of all finite positive Borel measures on $\mcM$. We endow $\PP(\mcM)$ with topology of weak convergence, that is, $\mu_n \to \mu$ if for all $f \in C_b(\mcM)$, $\mu_nf \to \mu f$.
\end{deff}

\begin{rem}
\label{meas-rem}
Note that the Portmanteau theorem is valid for $\PP(\mcM)$ as well. Indeed, if $\mu_n \in \PP(\mcM)$ and $\mu_n \to \mu$ weakly, then for $f = 1$ we have $\mu_n(\mcM) \to \mu(\mcM)$. Thus, if $\mu(\mcM) \neq 0$, then $\mu_n \to \mu$ if and only if $\mu_n / \mu_n(\mcM) \to \mu/\mu(\mcM)$. Since $\mu_n / \mu_n(\mcM)$ is a probability measure, Portmanteau theorem applies to $\mu_n \to \mu$. If $\mu(\mcM) = 0$, then the Portmanteau theorem theorem is straightforward to prove since all relevant limits vanish. 
\end{rem}

\begin{lem}\label{generalized-DCT} 
Let $\mcM$ be a Polish space, let $\mu: \mcM \to \PP(\mcM)$ and $f: \mcM \times \mcM \to \R$ be continuous maps, and fix $\epsilon > 0$. Then the function $x \mapsto \int f(x,y) d\mu_x(y)$ is continuous if $x \mapsto \int |f(x,y)|^{1 + \epsilon} d\mu_x(y)$ is bounded above by a continuous function.
\end{lem}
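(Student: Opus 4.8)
The plan is to verify sequential continuity: fix a sequence $x_n \to x$ in $\mcM$ and show that $\int f(x_n,y)\,d\mu_{x_n}(y) \to \int f(x,y)\,d\mu_x(y)$. Continuity of $\mu:\mcM\to\PP(\mcM)$ means that $\mu_{x_n}\to\mu_x$ weakly, and testing against the constant function $1$ shows in particular that the total masses $c_n:=\mu_{x_n}(\mcM)$ converge to $c:=\mu_x(\mcM)$. Let $g:\mcM\to[0,\infty)$ be a continuous function with $\int|f(x',y)|^{1+\epsilon}\,d\mu_{x'}(y)\le g(x')$ for all $x'\in\mcM$, so that $\sup_n g(x_n)<\infty$ by continuity of $g$. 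By Hölder's inequality, $\int|f(x',y)|\,d\mu_{x'}(y)\le g(x')^{1/(1+\epsilon)}\,\mu_{x'}(\mcM)^{\epsilon/(1+\epsilon)}$, which shows that the integrals in the statement are well defined and, in the degenerate case $c=0$ (so $\mu_x$ is the zero measure), gives $|\int f(x_n,y)\,d\mu_{x_n}(y)|\le g(x_n)^{1/(1+\epsilon)}c_n^{\epsilon/(1+\epsilon)}\to 0=\int f(x,y)\,d\mu_x(y)$. Hence I may assume $c>0$, and therefore $c_n>0$ for all sufficiently large $n$ (discarding finitely many terms does not affect convergence).

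For $c>0$, I would pass to probability measures: set $\bar\mu_n:=\mu_{x_n}/c_n$ and $\bar\mu:=\mu_x/c$, which are Borel probability measures on the Polish space $\mcM$ with $\bar\mu_n\to\bar\mu$ weakly. By Skorokhod's representation theorem (see \cite{Kallenberg}) there is a probability space carrying $\mcM$-valued random variables $Y_n,Y$ with $Y_n\sim\bar\mu_n$, $Y\sim\bar\mu$, and $Y_n\to Y$ almost surely; write $\tilde\E$ for the corresponding expectation. Then $\int f(x_n,y)\,d\mu_{x_n}(y)=c_n\,\tilde\E[f(x_n,Y_n)]$ and $\int f(x,y)\,d\mu_x(y)=c\,\tilde\E[f(x,Y)]$, so since $c_n\to c$ it suffices to prove $\tilde\E[f(x_n,Y_n)]\to\tilde\E[f(x,Y)]$. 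Because $x_n\to x$ and $Y_n\to Y$ a.s., joint continuity of $f$ gives $f(x_n,Y_n)\to f(x,Y)$ almost surely. Moreover $\tilde\E\big[|f(x_n,Y_n)|^{1+\epsilon}\big]=c_n^{-1}\int|f(x_n,y)|^{1+\epsilon}\,d\mu_{x_n}(y)\le g(x_n)/c_n$, which is bounded in $n$ since $g(x_n)$ is bounded and $c_n\to c>0$; a family bounded in $L^{1+\epsilon}$ with $\epsilon>0$ is uniformly integrable, so the Vitali convergence theorem (uniform integrability together with a.s.\ convergence implies $L^1$ convergence) yields $f(x_n,Y_n)\to f(x,Y)$ in $L^1$, hence $\tilde\E[f(x_n,Y_n)]\to\tilde\E[f(x,Y)]$, completing the argument.

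The estimates above are all routine; the only steps requiring attention are the reduction to probability measures — the mass normalization and the $c=0$ boundary case — after which Skorokhod representation converts the claim into a standard generalized dominated convergence statement, so I expect no real obstacle. One could instead argue directly with tightness of $\{\mu_{x_n}\}$, a truncation of $f$ at a large level $R$, uniform continuity of the truncation on a compact set, and a Hölder bound for the tails; but since $\mcM$ is only assumed to be Polish rather than locally compact, the Skorokhod route avoids the nuisance of producing continuous cutoff functions and is the cleanest way to present the proof.
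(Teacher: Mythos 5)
Your proof is correct, but it takes a genuinely different route from the paper's. The paper first shows that $x \mapsto \delta_x \otimes \mu_x$ is continuous into $\PP(\mcM \times \mcM)$ (by testing against bounded Lipschitz functions), pushes these measures forward to $\PP(\R)$ under $f$, and then invokes its own \Cref{lem:bhcon} on the real line: the identity map $a \mapsto a$ vanishes over the proper function $a \mapsto |a|^{1+\epsilon}$, and the assumed moment bound gives $\limsup$-boundedness of the $(1+\epsilon)$-moments, so the first moments converge. This reuses machinery already established (including \Cref{meas-rem}, which extends Portmanteau to finite, not-necessarily-probability measures) and so requires no case analysis on the total mass. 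You instead normalize $\mu_{x_n}$ to probability measures (treating the zero-mass limit separately via H\"older, which is handled correctly), apply the Skorokhod representation theorem, get a.s.\ convergence of $f(x_n,Y_n)$ from joint continuity, and conclude by Vitali using the uniform $L^{1+\epsilon}$ bound. The underlying mechanism --- the $(1+\epsilon)$-moment bound supplying uniform integrability --- is the same in both arguments, but your implementation is more self-contained probabilistically at the cost of the normalization bookkeeping, while the paper's is shorter by leaning on its earlier lemma. All the steps you use (H\"older for well-definedness, $c_n \to c$ from testing against the constant $1$, Skorokhod representation on a Polish space, de la Vall\'ee-Poussin, Vitali) check out.
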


\begin{proof}
First we show that the function from $\mcM$ to $\PP(\mcM \times \mcM)$ given by $x \mapsto \delta_x \otimes \mu_x$ is continuous, where $\delta_x$ is dirac delta measure and $\otimes$ denotes the product of measures. Indeed, for any bounded Lipschitz function $f: \mcM \times \mcM \to \R$ with the Lipschitz constant $C$ and any $x_n \to x$ we have 
\begin{align*}
|\delta_{x_n} \otimes \mu_{x_n}(f) - \delta_x \otimes \mu_x(f)| &=
|\mu_{x_n} f(x_n,\cdot) - \mu_x f(x,\cdot)| \\
&\leq C \mu_{x_n}(\mcM) d(x_n,x) + |\mu_{x_n} f(x,\cdot) - \mu_x f(x,\cdot)| \to 0 \,,    
\end{align*} 
where in the limit $n \to \infty$ we used \Cref{meas-rem}, continuity of $x\mapsto \mu_x$, and $x_n \to x$.  

Consequently, the function from $\mcM$ to $\PP(\R)$ given by $x \mapsto f^*(\delta_x \otimes \mu_x)$ is continuous, where $f^*$ denotes the pushforward by $f$. Since $a \mapsto a$ is a continuous function from $\R \to \R$ which vanishes over the proper function $a \mapsto |a|^{1+\epsilon}$, the result follows by the same argument as in the proof of \Cref{lem:bhcon}.
\end{proof}

\begin{deff}\label{smooth-functions}
Let $\mathcal{X}$ be a locally compact Polish space, $n \geq 0$, and $A \subset \R^n \times \mathcal{X}$. We define $\mathcal{C}^2(A)$ to be the set of all continuous functions $f:A \to \R$ such that there is an open set $\U \subset \R^n \times \mathcal{X}$ with $A \subset \U$ such that $f$ extends to a continuous function $\tilde{f}$ on $\U$ which is twice continuously differentiable with respect to $b \in \R^n$. For $(b,x) \in \U$, we denote $\partial_i f(b,x)$ the partial derivative of $\tilde{f}$ with respect to the $i$th coordinate of $b$. We define $\Cc^2(A)$ to be the space of  compactly supported functions in $\mathcal{C}^2(A)$. If $n = 0$, that is $\R^{n} = \{0\}$,  we abbreviate $\mathcal{C}^2(A)$ as $\mathcal{C}(A)$, the set of all continuous functions $f: A \to \R$.
\end{deff}

For the rest of the section, we fix a Feller quadruple $(\mcM, \mcM_0, \inv, \{X_t^x\}_{x \in \mcM, t \geq 0})$ (see \Cref{feller-quadruple}) such that $\mcM = B \times \mathcal{X}$ where $B$ is a closed subset of $\R^n$ for some $n \geq 0$ and $\mathcal{X}$ is a locally compact Polish space. We also fix a continuous function $\mu: B \times \mathcal{X} \to \PP(B \times \mathcal{X})$, and we use $\mu_{(b,x)}$ to denote $\mu(b,x)$.

   Let $\Sigma_{ij},F_i: B \times \mathcal{X} \to \R$ be continuous. We define operators $L_1,L_2,L,\Gamma$ for suitable functions $f$ by 
   \begin{align*}
       L_1f(b,x) &= \sum_{i,j = 1}^n \frac{1}{2}\Sigma_{ij}(b,x)\partial_i \partial_j f(b,x) + \sum_{i=1}^n F_i(b,x)\partial_if(b,x) \\
       L_2f(b,x) &= \int_{A \times \mathcal{X}} f(c,y) - f(b,x) d\mu_{(b,x)}(c,y) \\
       Lf(b,x) &= L_1f(b,x) + L_2f(b,x) \\
       \Gamma f(b,x) &= \sum_{i,j = 1}^n \Sigma_{ij}(b,x)\partial_i f(b,x) \partial_j f(b,x) \\&+ \int_{A \times \mathcal{X}} (f(c,y) - f(b,x))^2 d\mu_{(b,x)}(c,y) \,.
   \end{align*} 
 Note that if $f \in \Cc^2(\mcM)$, then $Lf, \Gamma f$ are well defined and continuous.
   
\begin{ass}
    \label{as6}
Assume  $\mathcal{C}_c^2(\mcM) \subset \Dm(\mcM)$ (see  \Cref{gen}) and for  all $f \in \mathcal{C}_c^2(\mcM)$ the equality $\mathcal{L}f = Lf$ holds.
\end{ass}

\begin{lem}\label{special-gen}
    Suppose the Feller quadruple $(\mcM, \mcM_0, \inv, \{X_t^x\}_{x \in \mcM, t \geq 0})$ satisfies \Cref{as6} and let $A = \mcM$ or $\inv$. Then for any $f:A \to \R$:
    \begin{enumerate}
        \item If $f \in \mathcal{C}^2(A)$, $f \geq 0$, and $L_2 f$ is finite and continuous, then $f \in \Dme_+(A)$ with $\mathcal{L}f = Lf$ (see \Cref{D+}).
        \item If $f \in \mathcal{C}^2(A)$, $\Gamma f$ is finite and continuous, and \Cref{square-integrable-martingale} \ref{fourth} holds then $f \in \Dme_{2}(A)$ with $\mathcal{L}f = Lf$ and $\Gamma f$ as above (see \Cref{D2}).
    \end{enumerate}
\end{lem}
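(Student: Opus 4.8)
The plan is to verify the hypotheses of \Cref{local-martingale} (for claim (1)) and of \Cref{square-integrable-martingale} (for claim (2)) by truncating $f$ with smooth cutoffs. Fix $A\in\{\mcM,\inv\}$ and an exhaustion $(K_n)_{n\ge 1}$ of the open set $A$ by compacts with $K_n\subset K_{n+1}^\circ$ and $\bigcup_n K_n=A$, as in \Cref{sigma-compact}. Exploiting the product structure $\mcM=B\times\mathcal{X}$ together with local compactness, I would choose for each $n$ a cutoff $\chi_n\colon\R^n\times\mathcal{X}\to[0,1]$ that is twice continuously differentiable in the $\R^n$-variable, has compact support contained in $K_{n+1}^\circ\subset A$, equals $1$ on a neighborhood of $K_n$, and satisfies $\chi_n\le\chi_{n+1}$; such $\chi_n$ can be assembled from a finite partition of unity subordinate to finitely many ``product boxes'' $O_i\times V_i$ (with $O_i\subset\R^n$ open and $V_i\subset\mathcal{X}$ open) that cover $K_n$ and have closures in $K_{n+1}^\circ$, using $C^\infty_c$ bumps in the $O_i$ and continuous bumps in the $V_i$. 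Set $f_n:=\chi_n f$, extended by $0$ off $\operatorname{supp}\chi_n$.

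Since $\chi_n$ is $\mathcal{C}^2$ in $b$ and compactly supported in $A$, while $f\in\mathcal{C}^2(A)$, both $f_n$ and $f_n^2=\chi_n^2 f^2$ lie in $\Cc^2(\mcM)$, hence in $\Dm(\mcM)$ by \Cref{as6}; thus $f_n\in\Dm_2(\mcM)$, $\mathcal{L}f_n=Lf_n$, and $\Gamma f_n=\mathcal{L}(f_n^2)-2f_n\mathcal{L}f_n$ equals the explicit expression for $\Gamma$ evaluated at $f_n$ (a direct Leibniz/It\^o computation with the formulas for $L$ and $\Gamma$). Any compact $\K\subset A$ is contained in some $K_N$, and $f_n=f$ on $\K$ for $n\ge N$, so $f_n\to f$ uniformly on compact subsets of $A$; moreover $f\ge f_n\ge 0$ in case (1) and $|f|\ge|f_n|$ in case (2), because $0\le\chi_n\le 1$. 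Since every $b$-derivative of $\chi_n$ vanishes on the neighborhood of $K_n$ where $\chi_n\equiv 1$, the Leibniz rule gives $L_1 f_n=L_1 f$ and $\sum_{i,j}\Sigma_{ij}\partial_i f_n\partial_j f_n=\sum_{i,j}\Sigma_{ij}\partial_i f\partial_j f$ on $\K$ for $n\ge N$; both of these are continuous on $A$ because $f\in\mathcal{C}^2(A)$ and $\Sigma_{ij},F_i$ are continuous. So the ``diffusion'' contributions to $\mathcal{L}f_n$ and $\Gamma f_n$ converge uniformly on compacts to the desired continuous limits.

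It then remains to handle the jump contributions. Fix a compact $\K\subset A$ and $n\ge N$, so $\chi_n\equiv1$ on $\K$; for $(b,x)\in\K$ one has $L_2 f_n(b,x)-L_2 f(b,x)=\int(\chi_n-1)f(c,y)\,d\mu_{(b,x)}(c,y)$, and, expanding the square, $\bigl|\int(\chi_n f(c,y)-f(b,x))^2-(f(c,y)-f(b,x))^2\,d\mu_{(b,x)}\bigr|\le 2\int(1-\chi_n)f(c,y)^2\,d\mu_{(b,x)}+2|f(b,x)|\int(1-\chi_n)|f(c,y)|\,d\mu_{(b,x)}$. In case (1), $f\ge0$ and the hypothesis that $L_2 f$ is finite and continuous give that $(b,x)\mapsto\int f\,d\mu_{(b,x)}=L_2 f(b,x)+f(b,x)\mu_{(b,x)}(\mcM)$ is finite and continuous; the functions $(b,x)\mapsto\int\chi_n f\,d\mu_{(b,x)}=L_2 f_n(b,x)+f_n(b,x)\mu_{(b,x)}(\mcM)$ are continuous, increase monotonically to $\int f\,d\mu_{(b,x)}$, so Dini's theorem yields $\sup_{(b,x)\in\K}\int(1-\chi_n)f\,d\mu_{(b,x)}\to0$, hence $L_2 f_n\to L_2 f$ uniformly on $\K$; and $Lf=L_1 f+L_2 f$ is continuous. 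In case (2), the hypothesis that $\Gamma f$ is finite and continuous, minus the continuous diffusion part, shows that the jump part $(b,x)\mapsto\int(f(c,y)-f(b,x))^2\,d\mu_{(b,x)}$ is finite and continuous; arguing as in the proof of \Cref{generalized-DCT} (continuity of $(b,x)\mapsto\delta_{(b,x)}\otimes\mu_{(b,x)}$, push-forward by $((b,x),(c,y))\mapsto f(c,y)-f(b,x)$, and the fact that continuity of the second moment upgrades weak convergence to convergence of the second-moment weighted measures), the family $\{(f(\cdot,\cdot)-f(b,x))^2\text{ under }\mu_{(b,x)}:(b,x)\in\K\}$ is uniformly integrable, and $\{\mu_{(b,x)}:(b,x)\in\K\}$ is tight (continuous image of the compact $\K$ under $\mu$), so $\sup_{(b,x)\in\K}\mu_{(b,x)}(\{\chi_n<1\})\to0$. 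Combining uniform integrability with this tightness gives $\sup_{(b,x)\in\K}\int(1-\chi_n)(1+f(c,y)^2)\,d\mu_{(b,x)}\to0$, which bounds both terms above, so $L_2 f_n\to L_2 f$ and the jump part of $\Gamma f_n\to$ the jump part of $\Gamma f$ uniformly on $\K$; continuity of $L_2 f$ itself then follows from \Cref{generalized-DCT}. Feeding these into \Cref{local-martingale} (using $f\ge f_n\ge0$) would give claim (1), and into \Cref{square-integrable-martingale} (hypothesis \ref{fourth} being exactly the assumed bound $\Gamma f\le KU'$) would give claim (2), in each case with $\mathcal{L}f=Lf$ and $\Gamma f$ as asserted.

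The main obstacle is exactly this last step: promoting the pointwise dominated or monotone convergence of the jump integrals to convergence uniform on compacts, which forces one to extract from the purely qualitative hypotheses (``$L_2 f$, resp. the jump part of $\Gamma f$, finite and continuous'') the quantitative uniform integrability of $f$ against the family $\{\mu_{(b,x)}\}_{(b,x)\in\K}$ and to combine it with tightness of that family. A secondary technical point, relevant only when $A=\inv$, is that $f$ is defined on $A$ but not on all of $\mcM$, so the identities and push-forwards above implicitly use that $\mu_{(b,x)}$ is supported in $\inv$ for $(b,x)\in\inv$; this follows from the invariance of $\inv$ (\Cref{as1}) and parallels the corresponding argument in \cite[Lemma 9.3]{persistence}.
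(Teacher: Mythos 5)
Your proposal follows the same architecture as the paper's proof: truncate $f$ by cutoffs that are $C^2$ in the $\R^n$-variable and compactly supported in $A$, invoke \Cref{as6} to place $f_n$ and $f_n^2$ in $\Dm(\mcM)$, establish uniform-on-compacts convergence of $f_n$, $Lf_n$, $\Gamma f_n$, and feed the result into \Cref{local-martingale} and \Cref{square-integrable-martingale}. The cutoff construction (partition of unity over product boxes vs.\ the paper's Urysohn-plus-mollification) and the treatment of the $L_1$ part are interchangeable.

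The one place you genuinely diverge is the uniform convergence of the jump integrals. The paper handles both claims at once by writing $f = f^+ - f^-$ and applying Dini's theorem separately to the monotone sequences $(b,x)\mapsto \mu_{(b,x)}f_n^{\pm}$, whose pointwise limits are continuous by \Cref{generalized-DCT}; this needs no tightness of the kernels. You use the same Dini device in case (1), but in case (2) you replace it by uniform integrability of the second moments combined with $\sup_{(b,x)\in\K}\mu_{(b,x)}(\{\chi_n<1\})\to 0$. That last supremum is the delicate point: weak compactness of $\{\mu_{(b,x)}\}_{(b,x)\in\K}$ in $\PP(\mcM)$ gives tightness with respect to compacts of $\mcM$, whereas $\{\chi_n<1\}$ only shrinks to $\mcM\setminus A$, so for $A=\inv$ you need uniform tightness relative to compacts of $\inv$. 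This is recoverable --- granting that each $\mu_{(b,x)}$ is carried by $\inv$, the functions $(b,x)\mapsto\mu_{(b,x)}(\mcM\setminus K_n^{\circ})$ are upper semicontinuous by Portmanteau and decrease pointwise to $0$, so the u.s.c.\ version of Dini yields the uniform statement --- but you should supply this step explicitly rather than appeal to tightness of the continuous image alone. The paper's $f^{\pm}$-splitting buys you exactly the avoidance of this issue, at no extra cost, so you may prefer to adopt it in case (2) as well.
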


\begin{proof}
We only prove the second claim, since the first one follows similarly, using \Cref{local-martingale} instead of \Cref{square-integrable-martingale}.

Let $\U, \tilde{f}$ be as in \Cref{smooth-functions}. Let $K_n \subset \U$, $n \geq 1$ be compact such that $K_n \subset K_{n+1}^\circ$, where the interior is with respect to the subspace topology on $\U$ inherited from $\R^n \times \mathcal{X}$, and $\cup_n K_n = \U$.

Next, we construct a continuous cut-off function which is smooth in $b \in \R^n$. Our argument is standard, but since we were not able to locate the exact statement in the literature, we provide details. 

For each $n \geq 1$, by Urysohn's lemma there is 
a continuous function $g_n: \U \to [0,1]$ such that $K_n \subset \{g_n = 1\}$ and $\{g_n \neq 0\} \subset K_{n+1}$. By convolving $g_n(b,x)$ with a positive mollifier $\phi_n(b)$, we may obtain a continuous function $h_n: \U \to [0,1]$ such that $h_n$ is twice continuously differentiable in $b$, $K_{n-1} \subset \{h_n = 1\}$, and $\{h_n \neq 0\} \subset K_{n+2}$. Define $f_n = h_n \tilde{f}$ on $K_{n+2}$  and $f_n = 0$ on $(\R^n \times \mathcal{X}) \setminus K_{n+2}$, and therefore $f_n$ is a bounded continuous function on $\R^n \times \mathcal{X}$. Restricting the domain of $f_n$ to $\mcM$, we have $\{f_n \neq 0\} \subset K_{n+2} \cap \mcM$.
 Since $\tilde{f}(b,x), h_n(b,x)$ are twice continuously differentiable in $b$, then $f_n \in \mathcal{C}_c^2(\mcM)$. Thus, by \Cref{as6}, $f_n \in \Dm(\mcM)$ and $\mathcal{L}f_n = Lf_n$. Applying the same argument to $f_n^2 = h_n^2f^2$ and noting that $h_n^2$ shares the same properties as $h_n$ shows that $f_n^2 \in \mathcal{C}_c(\mcM) \subset \Dm(\mcM)$, and thus $f_n \in \Dm_2(\mcM)$ and $\mathcal{L}f_n^2 = Lf_n^2$.

Since $\mcM \setminus \inv$ is closed in $\mcM$, it is closed in $\R^n \times \mathcal{X}$. Thus, if $A = \inv$ we may without loss of generality assume that $\U$ is disjoint from $\mcM \setminus \inv$ (otherwise intersect $\U$ with the open set $(\R^n \times \mathcal{X})\setminus (\mcM \setminus \inv)$), or equivalently that $\U \cap \mcM = \U \cap \inv$. This ensures that $|f| \geq |f_n| \geq 0$ on $\mcM$, since if $(b,x) \in \mcM$ then either $(b,x) \in \U \cap \mcM = \U \cap \inv$ in which case $f_n(b,x) = h_n(b,x)f(b,x)$ or $(b,x) \notin \U \cap \mcM$ in which case $f_n = 0$. Similarly, $f_n \geq 0$ on $\mcM$ if $f \geq 0$ on $\mcM$. Also note that $(f_n^+,f_n^-) \to (f^+,f^-)$ uniformly on compact subsets of $A$ since $K_{n-1} \subset \{h_n = 1\}$, so $f_n = f_n^+ - f_n^- \to f^+ - f^- = f$ uniformly on compact subsets of $A$ as well. Thus, we have verified \Cref{square-integrable-martingale} \ref{fst}.

Next, we  show that \Cref{square-integrable-martingale} \ref{snd} holds true, that is, $\mathcal{L}f_n$ converges uniformly on compact subsets of $A$ to $Lf$. Observe that $Lf = L_1f + L_2f$ is continuous since $f \in \mathcal{C}^2(A)$ implies that $L_1f$ is continuous, and $\Gamma f$ being continuous with \Cref{generalized-DCT} yields that $L_2 f$ is continuous. 
Next, since $L_1f_n(b,x) = L_1f(b,x)$ for $(b,x) \in K_{n-1}^{\circ} \cap A$, then $L_1f_n$ converges uniformly on compact subsets of $A$ to $L_1f$.  To treat $L_2 f_n$, by the continuity of $L_2 f$ and $\Gamma f$ we have that $(b,x) \mapsto \mu_{(b,x)}f^2$ is continuous. Consequently,  
by \Cref{generalized-DCT} the maps $(b,x) \mapsto \mu_{(b,x)}f^+$ and $(b,x) \mapsto \mu_{(b,x)}f_n^-$ are continuous. Since $(b,x) \mapsto \mu_{(b,x)}f_n^+$ is an increasing sequence of nonnegative continuous functions and by Monotone Convergence Theorem it converges pointwise to the continuous function $\mu_{(b,x)}f^+$, by Dini's Theorem the convergence is uniform on compact subsets of $A$. Similarly, $\mu_{(b,x)}f_n^- \to \mu_{(b,x)}f^-$ uniformly on compact subsets of $A$. This shows that $L_2f_n \to L_2f$ uniformly on compact subsets of $A$ and thus $Lf_n = \mathcal{L}f_n \to Lf$ uniformly on compact subsets of $A$.

Applying the same argument to $f_n^2 = h_n^2f^2$ shows that $\mathcal{L}f_n^2 \to Lf^2$ uniformly on compact subsets of $A$. Standard computations show that $\Gamma f = Lf^2 - 2fLf$ and thus $\Gamma f_n = Lf_n^2 - 2f_nLf_n \to Lf^2 - 2fLf = \Gamma f$ uniformly on compact subsets of $A$, verifying \Cref{square-integrable-martingale} \ref{thr}. Thus, by  \Cref{square-integrable-martingale} we conclude that $f \in \Dme_{2}(A)$, as desired.
\end{proof}

\section{Some Important Classes of Examples} \label{verify-assumptions}
In this section we introduce model examples of Markov processes to which our theory can be applied: switching diffusions, SDEs driven by Brownian motion, and discrete-time Markov chains. In each case, under general assumptions, we provide results that verify some or all of \Cref{as1} -- \ref{as5}.

\subsection{Switching Diffusions} \label{switching-diff-section}
In this section we consider a class of Markov processes given by solutions to switching diffusions, which are a generalization of SDEs driven by Brownian motion. In particular, we investigate 
the model from \cite[equations (2.2) and (2.3)]{hybridSwitching}:
\begin{equation}\label{switch-diff}
    \begin{aligned}
        dX^{(x,i)}(t) &= F(X^{(x,i)}(t), \alpha^{(x,i)}(t))dt + \sigma(X^{(x,i)}(t), \alpha^{(x,i)}(t))dw(t) \\
        \Prb(\alpha^{(x,i)}(t) = j) &= q_{ij}(x)t + o(t) \quad \text{ for } i \neq j
    \end{aligned}
\end{equation}
where $n,m,d \in \N$, $\mathcal{X} = \{1,\dots,m\}$, $(x,i) \in \R^n \times \mathcal{X}$ is the initial condition, $F: \R^n \times \mathcal{X} \to \R^n$ and $\sigma: \R^n \times \mathcal{X} \to \R^{n \times d}$ are locally Lipschitz, $w$ is an $\R^d$ Brownian motion, and $Q: \R^n \times \mathcal{X} \times \mathcal{X} \to \R$ is continuous, uniformly bounded, and such that $q_{ij}(x) \coloneqq Q(x,i,j)$ satisfies $q_{ij}(x) \geq 0$ if $i \neq j$ and $\sum_{j = 1}^m q_{ij} = 0$.

In other words, $\alpha$ is a pure jump process on the finite state $\mathcal{X}$ with $x$-dependent generator $q(x)$. We define the following operators as in  \cite[(2,4)]{hybridSwitching} for suitable functions $f$:
  \begin{equation}\label{switch-gen}\begin{aligned}
       L_1f(x,\alpha) &= \sum_{i,j = 1}^n \frac{1}{2}\Sigma_{ij}(x,\alpha)\partial_i \partial_j f(x,\alpha) + \sum_{i=1}^n F_i(x,\alpha)\partial_if(x,\alpha) \\
       L_2f(x,\alpha) &= \sum_{\beta \neq \alpha} q_{\alpha \beta}(x)(f(x,\beta) - f(x,\alpha)) \\
       Lf(x,\alpha) &= L_1f(x,\alpha) + L_2f(x,\alpha) \\
       \Gamma f(x,\alpha) &= \sum_{i,j = 1}^n \Sigma_{ij}(x,\alpha)\partial_i f(x,\alpha) \partial_j f(x,\alpha) \\&+ \sum_{\beta \neq \alpha} q_{\alpha \beta}(x)(f(x,\beta) - f(x,\alpha))^2 \,.
       \end{aligned}
   \end{equation} 
   where $\Sigma(x,\alpha) = \sigma(x,\alpha)\sigma(x,\alpha)^T$ (here $^T$ denotes the transpose).

\begin{lem}\label{switch-is-feller}
   If there is a proper $W: \R^n \times \mathcal{X} \to [0,\infty)$ which is twice continuously differentiable with respect to $x \in \R^n$ and $LW \leq \gamma_0 W$ for some constant $\gamma_0 > 0$, then there is a (unique) solution $(X_t^y,\alpha_t^y)$ to \eqref{switch-diff} for any initial condition $y = (x,i)$ and $(X_t^y,\alpha_t^y)$ satisfies \Cref{as2} (the Feller property).
\end{lem}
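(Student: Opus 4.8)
The plan is to argue in three steps: local existence and strong uniqueness of the solution, non-explosion using the Lyapunov function $W$, and finally the $C_b$-Feller property (\Cref{as2}).

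\emph{Step 1 (local solution).} I would use the usual interlacing construction. Since $Q$ is uniformly bounded, the switching component $\alpha^{(x,i)}$ has jump rates bounded by $\sup|Q|<\infty$, so on any finite interval it performs only finitely many jumps almost surely; realize its jumps via a marked Poisson measure on $[0,\infty)\times[0,\sup|Q|]$, picking out the transition to state $j$ from the current state $i'$ when the mark lands in a subinterval of $[0,\sup|Q|]$ of length $q_{i'j}(X_{s-})$, the subintervals $(I_{i'j})_{j\neq i'}$ arranged in a fixed order. Between consecutive jumps of $\alpha$, the component $X^{(x,i)}$ solves an ordinary It\^o SDE with the locally Lipschitz coefficients $F(\cdot,\alpha),\sigma(\cdot,\alpha)$ driven by a fixed Brownian motion $w$, hence has a unique strong solution up to explosion. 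Piecing these together yields a unique strong solution $(X_t^{(x,i)},\alpha_t^{(x,i)})$ defined up to $\zeta:=\lim_k\tau_k$, where $\tau_k:=\inf\{t\ge0:|X_t^{(x,i)}|\ge k\}$; strong uniqueness is the standard localized Gr\"onwall argument.

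\emph{Step 2 (non-explosion).} I would apply Dynkin's formula to $W$ and the generator $L=L_1+L_2$ from \eqref{switch-gen}, which is legitimate because $W$ is twice continuously differentiable in $x$ and $\alpha$ takes finitely many values. Using $LW\le\gamma_0 W$ and Gr\"onwall's inequality gives $\E[W(X_{t\wedge\tau_k}^{(x,i)},\alpha_{t\wedge\tau_k}^{(x,i)})]\le e^{\gamma_0 t}W(x,i)$ for every $k$. Since $W$ is proper (\Cref{proper}), the quantity $c_k:=\inf\{W(x',i'):|x'|\ge k,\ i'\in\mathcal{X}\}$ tends to $\infty$ as $k\to\infty$, and because $X^{(x,i)}$ has continuous paths we have $W(X_{\tau_k}^{(x,i)},\alpha_{\tau_k}^{(x,i)})\ge c_k$ on $\{\tau_k\le t\}$; Markov's inequality then yields $\Prb(\tau_k\le t)\le e^{\gamma_0 t}W(x,i)/c_k\to0$, so $\zeta=\infty$ a.s. and the solution is defined for all $t\ge0$. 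The same estimate, with $x$ ranging over a fixed compact set and $W$ bounded there, gives a bound on $\Prb(\tau_R\le t)$ that is uniform over compact sets of initial conditions, which will be needed below.

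\emph{Step 3 (Feller property).} The right-continuity $\Pp_t f\to f$ as $t\downarrow0$ is immediate from the (right-)continuity of the paths and bounded convergence, so it remains to show $\Pp_t f(\cdot,i)$ is continuous for each $f\in C_b(\mcM)$ and $i\in\mathcal{X}$ (continuity in the discrete variable is vacuous). Fix $x_n\to x$ and drive all the processes $X^{(x_n,i)}$, $X^{(x,i)}$ by the same Brownian motion and the same marked Poisson measure. Localizing at the exit time $\tau_R$ from a large ball $B_R$, on which $F,\sigma$ are Lipschitz and $Q$ is uniformly continuous, and using the uniform exit-time estimate from Step 2 to make $\Prb(\tau_R\le t)$ small, a coupling argument shows: up to the first disagreement time $\theta_n$ of the two switching components, the difference of the diffusion components obeys $\E[\sup_{s\le t\wedge\tau_R\wedge\theta_n}|X_s^{(x_n,i)}-X_s^{(x,i)}|^2]\le C_{R,t}|x_n-x|^2$ by Gr\"onwall; and $\Prb(\theta_n\le t\wedge\tau_R)$ is bounded by $\E\int_0^{t\wedge\tau_R}\sum_{j}|q_{i'j}(X_s^{(x_n,i)})-q_{i'j}(X_s^{(x,i)})|\,ds$, which tends to $0$ by uniform continuity of $Q$ on $B_R$ together with the preceding estimate. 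Hence $(X_t^{(x_n,i)},\alpha_t^{(x_n,i)})\to(X_t^{(x,i)},\alpha_t^{(x,i)})$ in probability, and since $f$ is bounded and continuous, $\Pp_t f(x_n,i)\to\Pp_t f(x,i)$ by bounded convergence, verifying \Cref{as2}. The main obstacle is exactly this last step: the state-dependent switching rates prevent driving the two switching chains in an identical way, so their first disagreement time must be controlled by the uniform-continuity-plus-localization estimate above, and because the coefficients are only locally (not globally) Lipschitz everything must be done inside a Gr\"onwall argument stopped at $\tau_R$, with the non-explosion bound of Step 2 supplying the needed uniformity. The remaining details are routine and essentially contained in \cite{hybridSwitching}.
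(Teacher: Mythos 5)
Your proposal is correct and follows essentially the same route as the paper: existence and uniqueness by the interlacing construction, non-explosion from the Lyapunov bound $LW \leq \gamma_0 W$ via Dynkin--Gr\"onwall--Markov giving an exit-time estimate that is uniform over compact sets of initial conditions, and the Feller property by localizing in a large ball where that estimate makes the exit probability negligible. The only difference is organizational: the paper truncates the coefficients and cites \cite{hybridSwitching} for the continuity of the truncated semigroups, whereas you prove that continuity directly by coupling the two processes through a common Brownian motion and marked Poisson measure and controlling the first disagreement time of the switching components.
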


\begin{proof}
The existence and uniqueness of 
 solution $(X_t^y,\alpha_t^y)$ to (2.2) and (2.3) for any initial condition $y = (x,i)$ follows from \cite[Proposition 2.20]{hybridSwitching}. To show \Cref{as2}, define the truncated process $X_t^{N,y}$ as in \cite[Proposition 2.20]{hybridSwitching}, and note that \cite[Proposition 2.20]{hybridSwitching} implies that 
 $$
 \Prb(\sup_{0 \leq t < \beta_N^x}|X_t^{N,y} - X_t^y| > 0) = 0\,,
 $$
where $\beta^x_N$ is the first exit time from a ball of radius $N$.
If we denote $W_N \coloneqq \inf_{|x| \geq N, i \in \mathcal{X}} W(x,i)$, then by \cite[proof of Theorem 2.7]{hybridSwitching}, for any $T > 0$ we have 
$$
W(y)e^{\gamma_0 T} = 
W(x,i)e^{\gamma_0 T} \geq W_N\Prb(\beta_N^y \leq T) \,,
$$
and since $W_N \to \infty$ as $N \to \infty$ ($W$ is proper) we obtain $\Prb(\beta_N^y \leq T) \to 0$ as $N \to \infty$ uniformly on compact subsets of $\R^n \times \mathcal{X}$ (for fixed $T$). Consequently, if $g \in C_b(\R^n \times \mathcal{X})$, then the functions $u_N(x,\alpha) = \E[g(X_t^{N,(x,\alpha)}, \alpha_t^{(x,\alpha)})]$ converge to $u(x,\alpha) = \E[g(X_t^{(x,\alpha)}, \alpha_t^{(x,\alpha)})]$ uniformly on compact sets for any $t > 0$. By \cite[Theorem 2.18]{hybridSwitching}, $u_N$ is continuous, and so $u$ is also continuous and thus \Cref{as2} holds.
\end{proof}

\begin{lem}\label{switch-domain}
     Supposed \Cref{as2} holds for \eqref{switch-diff}. Then if $A \subset \R^n \times \mathcal{X}$ is invariant (see \Cref{inv-set}) for $(X_t^y,\alpha_t^y)$ and $f \in  \mathcal{C}^2(A)$ (see \Cref{smooth-functions}), then
    \begin{enumerate}
        \item[(1)] If  $f \geq 0$, then $f \in \Dme_+(A)$ and $\mathcal{L}f = Lf$ (see \Cref{D+}).
        \item[(2)] If \Cref{square-integrable-martingale} \ref{fourth} holds then $f \in \Dme_{2}(A)$ (see \Cref{D2}),  $\mathcal{L}f = Lf$, and $\Gamma f$ is as above.
    \end{enumerate}
\end{lem}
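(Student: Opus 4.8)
The plan is to deduce \Cref{switch-domain} directly from the general results \Cref{special-gen}, \Cref{local-martingale}, and \Cref{square-integrable-martingale} by verifying that the switching diffusion \eqref{switch-diff} fits the framework of \Cref{special-generator}. First I would identify the structural data: set $B = \R^n$ (a closed subset of $\R^n$, taking the trivial closed set), $\mathcal{X} = \{1,\dots,m\}$ with its discrete topology (a locally compact Polish space), so that $\mcM = \R^n \times \mathcal{X}$ has exactly the product form required in \Cref{special-generator}. The continuous function $\mu: \R^n \times \mathcal{X} \to \PP(\R^n \times \mathcal{X})$ is taken to be $\mu_{(x,\alpha)} = \sum_{\beta \neq \alpha} q_{\alpha\beta}(x)\, \delta_{(x,\beta)}$; this is a finite positive Borel measure (with total mass $\sum_{\beta \neq \alpha} q_{\alpha\beta}(x) = -q_{\alpha\alpha}(x)$, which is finite and continuous in $x$ since $Q$ is continuous and uniformly bounded), and $(x,\alpha) \mapsto \mu_{(x,\alpha)}$ is continuous in the weak topology because each $q_{\alpha\beta}$ is continuous and the state space $\mathcal{X}$ is finite. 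With $\Sigma = \sigma\sigma^T$ and $F$ as given (both continuous, in fact locally Lipschitz), the operators $L_1, L_2, L, \Gamma$ from \eqref{switch-gen} are exactly the specializations of the operators in \Cref{special-generator} to this choice of $\mu$: indeed $L_2 f(x,\alpha) = \int (f(c,y) - f(x,\alpha))\,d\mu_{(x,\alpha)}(c,y) = \sum_{\beta \neq \alpha} q_{\alpha\beta}(x)(f(x,\beta) - f(x,\alpha))$, and similarly for $\Gamma$.

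Next I would verify \Cref{as6}, namely that $\mathcal{C}_c^2(\mcM) \subset \Dm(\mcM)$ and $\mathcal{L}f = Lf$ there. This is the one place where external input from the switching-diffusion literature is needed: by the standard theory (e.g.\ the It\^o formula for switching diffusions, as in \cite{hybridSwitching}), for compactly supported $f$ twice continuously differentiable in $x$, the process $f(X_t^y,\alpha_t^y) - f(y) - \int_0^t Lf(X_s^y,\alpha_s^y)\,ds$ is a martingale, which yields both that $f \in \Dm(\mcM)$ (the defining limit in \Cref{gen}\ref{first} exists and equals $Lf$, and $Lf \in C_b(\mcM)$ since $f$ has compact support and $Lf$ is continuous, and the difference quotients are uniformly bounded) and that $\mathcal{L}f = Lf$. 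Here I would cite the relevant statement from \cite{hybridSwitching} (the generator computation \cite[(2.4)]{hybridSwitching} together with the associated It\^o formula / Dynkin formula) — this is where a precise reference is essential and should be pinned down carefully.

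Once \Cref{as6} is in hand, the proof is a direct invocation: given an open invariant $A$ and $f \in \mathcal{C}^2(A)$, the hypotheses of \Cref{special-gen} are met. For part (1), $f \geq 0$ and $L_2 f$ is automatically finite and continuous (the sum defining $L_2 f$ is finite and each term is continuous, so no extra hypothesis is needed — this is a slight simplification over the general \Cref{special-gen} because the jump measure here has finite support), so \Cref{special-gen}(1) gives $f \in \Dme_+(A)$ with $\mathcal{L}f = Lf$. For part (2), we assume \Cref{square-integrable-martingale}\ref{fourth} ($\Gamma f \leq KU'$ for suitable $U,U',K$), and again $\Gamma f$ as written in \eqref{switch-gen} is finite and continuous for $f \in \mathcal{C}^2(A)$, so \Cref{special-gen}(2) gives $f \in \Dme_2(A)$ with $\mathcal{L}f = Lf$ and $\Gamma f$ as above. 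One subtlety to address: \Cref{special-generator} is set up for a Feller quadruple, so I should note that the requirement $A \subset \R^n \times \mathcal{X}$ invariant and the identification $A = \mcM$ or $\inv$ matches; in the statement of \Cref{switch-domain} $A$ is any invariant subset, and the cited \Cref{special-gen} is stated for $A = \mcM$ or $\inv$, but its proof only uses openness and invariance of $A$, so this causes no difficulty.

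The main obstacle I anticipate is purely bookkeeping: making the translation from the notation of \eqref{switch-diff}--\eqref{switch-gen} to that of \Cref{special-generator} completely precise, in particular confirming that the continuity of $\mu_{(x,\alpha)}$ in the weak topology holds (immediate from finiteness of $\mathcal{X}$ and continuity of $q_{\alpha\beta}$) and that \Cref{as6} genuinely follows from the It\^o/Dynkin formula in \cite{hybridSwitching} for the truncated or localized process together with the moment bound from \Cref{switch-is-feller}. There is no deep analytic difficulty here — everything reduces to \Cref{special-gen} — but the citation for \Cref{as6} must be exactly right, and one should double-check that the operator $L$ in \eqref{switch-gen} agrees term-by-term with the one in \cite[(2.4)]{hybridSwitching} (including sign conventions on $q_{\alpha\alpha}$).
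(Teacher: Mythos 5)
Your proposal is correct and follows essentially the same route as the paper: reduce to \Cref{special-gen} by verifying \Cref{as6}, where \Cref{as6} is obtained from the generalized It\^o formula of \cite{hybridSwitching} (the paper cites \cite[(2.8)]{hybridSwitching}) together with the converse of \Cref{martingale} (a bounded continuous $f$ for which $M^f_t$ with candidate generator $Lf$ is a martingale lies in $\Dm(\mcM)$ with $\mathcal{L}f=Lf$), which is exactly the argument you sketch via the averaged difference quotients. Your extra observations — the explicit identification $\mu_{(x,\alpha)}=\sum_{\beta\neq\alpha}q_{\alpha\beta}(x)\delta_{(x,\beta)}$ and the remark that continuity of $L_2f$ and of the jump part of $\Gamma f$ is automatic because $\mathcal{X}$ is finite — are consistent with (and slightly more detailed than) what the paper writes.
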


\begin{proof} 
To prove (1) and (2), by \Cref{special-gen}, it suffices to verify \Cref{as6} since $L_1 f$ and the continuous part of $\Gamma$ are clearly continuous for $f \in \mathcal{C}^2(A)$ and
$L_2 f$ and the jump part of $\Gamma f$ are continuous for continuous $f$ since $q_{ij}$ are continuous. 
 To verify \Cref{as6}, we use the generalized It\^ o formula from \cite[(2.8)]{hybridSwitching} that imply that $M^f_t$ in \eqref{Mtf} is a martingale with $\mathcal{L}f = Lf$. Then \Cref{as6} follows since
  the converse of \Cref{martingale} is true: if $f, g$ are bounded continuous functions such that \eqref{Mtf} with $\mathcal{L}f$ replaced with $g$ is a martingale for all $x$, then $f$ is in the domain of the generator and $\mathcal{L}f = g$ (see \Cref{gen}).
\end{proof}

\subsection{SDEs} \label{SDE}
As a special case of \Cref{verify-assumptions}, we suppose $\mathcal{X}$ is a singleton set so that \eqref{switch-diff} is just a SDE driven by Brownian motion and $L_2 = 0$ in \eqref{switch-gen}. Then we have the following sufficient conditions for verifying the bulk of our assumptions.

\begin{lem}\label{cts-paths}
    Suppose $(\mcM, \mcM_0, \inv, \{X_t^x\}_{x \in \mcM, t \geq 0})$ is a Feller quadruple, where $X_t^x$ is a solution to a SDE driven by Brownian motion (\eqref{switch-diff} with $\alpha$ constant). Let $V \in \C^2(\inv)$. Then  \Cref{as3}, \Cref{as4} \ref{4.2}, and \Cref{as5} are satisfied if there is a proper map $\UU: \mcM \to [1,\infty)$ in $\mathcal{C}^2(\mcM)$ and constants $K,c > 0$ such that $L\UU \leq K - c\UU$ and $|\mathcal{L}V| + \Gamma V \lesssim 2K - \frac{L\UU}{\UU} + \frac{\Gamma \UU}{\UU^2}$.
\end{lem}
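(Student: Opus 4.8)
The plan is to take $W = W' = U = U' = \UU$ and to directly verify the conditions of Assumptions~\ref{as3}, \ref{as5}, and \ref{as4}\ref{4.2}, using the special-generator machinery of \Cref{special-gen} (in the form \Cref{switch-domain}, since here $\mathcal{X}$ is a singleton and the jump part vanishes). First I would observe that $\UU \in \mathcal{C}^2(\mcM)$, $\UU \geq 1$ is proper, and $L\UU \leq K - c\UU$ is assumed, so to conclude \Cref{as3} it remains only to upgrade $\UU \in \mathcal{C}^2(\mcM)$ to $\UU \in \Dme_2(\mcM)$ with $\mathcal{L}\UU = L\UU$. By \Cref{switch-domain}(2) this amounts to checking the growth condition \Cref{square-integrable-martingale}\ref{fourth} for $\UU$, i.e.\ that $\Gamma \UU \leq K_1 \UU'$ for some admissible pair; this is where the hypothesis $\Gamma \UU/\UU^2 \lesssim \cdots$ enters, as I explain below. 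Note that since $\mathcal{X}$ is a singleton, $\Gamma \UU(x) = \sum_{ij}\Sigma_{ij}\partial_i\UU\,\partial_j\UU$ is automatically finite and continuous.

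The key device, which is standard for SDEs, is to apply the generator to $\log \UU$. Since $\UU \geq 1$ and $\UU \in \mathcal{C}^2(\mcM)$, the chain rule for the second-order operator $L = L_1$ gives
\begin{equation*}
L(\log \UU) = \frac{L\UU}{\UU} - \frac{1}{2}\frac{\Gamma \UU}{\UU^2}\,, \qquad \Gamma(\log \UU) = \frac{\Gamma \UU}{\UU^2}\,.
\end{equation*}
Hence the hypothesis $|\mathcal{L}V| + \Gamma V \lesssim 2K - \frac{L\UU}{\UU} + \frac{\Gamma\UU}{\UU^2}$ can be rewritten as $|\mathcal{L}V| + \Gamma V \lesssim 2K - L(\log\UU) + \tfrac12 \Gamma(\log\UU)$, so that $\log \UU$ (or a function comparable to it) plays the role of $U'$ up to the additive constant. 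More precisely, set $U' := (2K - L(\log \UU) + \Gamma(\log\UU))\vee 1 \geq 1$, which is continuous and nonnegative; from $L\UU \leq K - c\UU$ one gets $L(\log\UU) \leq K/\UU - c \leq K$, and from $\Gamma \UU \geq 0$ one gets $\Gamma(\log\UU)\geq 0$, so $U'$ is well-defined and, crucially, $\Gamma \UU = \UU^2\,\Gamma(\log\UU) \lesssim \UU^2 U'$. Since $\UU$ is bounded on sublevel sets, the quantity $\UU^2$ is dominated by $\UU'$ on each compact set after enlarging the constant, but to get a genuine global bound I would instead keep $W = \UU$ and check \Cref{square-integrable-martingale}\ref{fourth} directly: it suffices that $\Gamma\UU \leq K_1 U'$ globally, which may require replacing $\UU$ by a slowly growing modification such as $\tilde\UU := \sqrt{\UU}$ or using $U := \log \UU$ as the auxiliary Lyapunov function so that $\Gamma U = \Gamma(\log\UU) \leq U'$ holds with room to spare.

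Once the bookkeeping is fixed, the verification is routine: \Cref{as5}\ref{5.1} holds because $U := \log\UU$ (being in $\mathcal{C}^2(\mcM)$, nonnegative, with finite continuous $L_2$-part $=0$) lies in $\Dme_+(\mcM)$ by \Cref{switch-domain}(1); \Cref{as5}\ref{5.2} is $\mathcal{L}U = L(\log\UU) \leq K - U'$, which is the definition of $U'$ after absorbing signs; \Cref{as5}\ref{5.3}--\ref{5.4} are the bounds $\Gamma W, \Gamma V \lesssim U'$, which follow from the hypothesis once we know $\Gamma \UU \lesssim U'$ (handled by using the $\log$-transformed Lyapunov function) and from the assumed bound $\Gamma V \lesssim U'$ directly; finally \Cref{as4}\ref{4.2} is that $\mathcal{L}V$ vanishes over $W' = U'$, which follows from $|\mathcal{L}V| \lesssim U'$ together with properness of $U'$ (so $|\mathcal{L}V|/U'$ is bounded, hence vanishes over $U'$ in the sense of \Cref{vanish} after the standard compact-exhaustion argument). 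The main obstacle I anticipate is purely organizational: choosing the right single quadruple of functions $(W,W',U,U')$ — almost surely $W = W' = \UU$ and $U = U' = $ (a multiple of) $\log\UU$, or a common slowly-growing majorant — so that all six inequalities of \Cref{as5} and the vanishing condition hold simultaneously with a single constant $K$; the analytic content is entirely contained in the chain-rule identity for $L(\log\UU)$ and $\Gamma(\log\UU)$ above, and in \Cref{switch-domain} to pass from $\mathcal{C}^2$ regularity to membership in the extended domains.
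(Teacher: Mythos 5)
Your overall plan (reduce everything to the chain rule for $L$ applied to functions of $\UU$, and use \Cref{switch-domain} to pass from $\mathcal{C}^2$ regularity to the extended domains) is the right one, and your identities $L(\log\UU)=\frac{L\UU}{\UU}-\frac12\frac{\Gamma\UU}{\UU^2}$ and $\Gamma(\log\UU)=\frac{\Gamma\UU}{\UU^2}$ are correct. But the step you defer as ``purely organizational'' --- the simultaneous choice of $(W,W',U,U')$ --- is the actual mathematical content of the lemma, and none of the concrete candidates you propose works. First, with $U=\log\UU$ and $U'\coloneqq(2K-L(\log\UU)+\Gamma(\log\UU))\vee 1$ you get $\mathcal{L}U+U'\ge 2K+\Gamma(\log\UU)\ge 2K>K$, so $\mathcal{L}U\le K-U'$ fails; the signs do not ``absorb.'' (One can salvage \Cref{as5}\ref{5.2} by taking $U'=\epsilon\phi$ with $\phi\coloneqq 2K-\frac{L\UU}{\UU}+\frac{\Gamma\UU}{\UU^2}$ and $\epsilon$ small, but that makes the next problem worse.) Second, and fatally, \Cref{as5}\ref{5.3} requires $\Gamma W\le KU'$: for $W=\UU$ one has $\Gamma W=\Gamma\UU=\UU^2\,\Gamma(\log\UU)$, and for $W=\sqrt\UU$ one has $\Gamma W=\tfrac14\UU\,\Gamma(\log\UU)$, while any $U'$ built from $\log\UU$ is of order $\Gamma(\log\UU)+\dots$; the missing factor $\UU^2$ (resp.\ $\UU$) is unbounded, so the inequality fails globally whenever the noise is nondegenerate (e.g.\ $\UU=e^{|x|^2}$ gives $\Gamma\UU\approx|x|^2e^{2|x|^2}$ versus $\phi\approx|x|^2$). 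Third, \Cref{as4}\ref{4.2} asks that $\mathcal{L}V$ \emph{vanish} over $W'$ in the sense of \Cref{vanish}, i.e.\ $\sup_{x\notin V_n}|\mathcal{L}V|/W'\to 0$; mere boundedness of the ratio, which is all that $|\mathcal{L}V|\lesssim U'$ gives you, is not sufficient.

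The paper resolves all three points at once by using fractional powers rather than the logarithm: $U=\UU^{1/2}$, $W=\UU^{1/4}$, $U'=\frac{c}{8K}\UU^{1/2}\phi$, $W'=\frac{c}{16K}\UU^{1/4}\phi$. The computation $L\UU^{a}=\UU^{a}\bigl(a\frac{L\UU}{\UU}-\frac{a(1-a)}{2}\frac{\Gamma\UU}{\UU^2}\bigr)$ together with $\frac{L\UU}{\UU}\le\frac{K}{\UU}-c$ shows that for $0<a<1$ the drift of $\UU^{a}$ is bounded by $K'-\epsilon\,\UU^{a}\phi$, so $U'$ and $W'$ carry the extra factors $\UU^{1/2}$ and $\UU^{1/4}$ in front of $\phi$. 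Those factors are exactly what make $\Gamma W=\frac{1}{16}\UU^{1/2}\frac{\Gamma\UU}{\UU^2}\le\frac{1}{16}\UU^{1/2}\phi\lesssim U'$ and $|\mathcal{L}V|+\Gamma V\lesssim\phi=\frac{16K}{c}\,\UU^{-1/4}W'$, so that the ratio $|\mathcal{L}V|/W'\lesssim\UU^{-1/4}$ genuinely tends to zero at infinity. Your $\log$-transform is the degenerate limit $a\to 0$ of this family and loses precisely the factor $\UU^{a}$ that the argument needs; to close the proof you would have to rework it around a pair $\UU^{a}$, $\UU^{b}$ with $2b\le a<1$.
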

\begin{rem} \label{lesssim}
In what follows,
$f \lesssim g$ means that there is a constant $C > 0$ depending only on the parameters of the problem such that $f \leq Cg$.
\end{rem}
\begin{rem}
    \Cref{cts-paths} also applies in the case where $\mathcal{X}$ is not a singleton set but $U$ only depends on $x \in \R^n$ so that $L_2 U = 0$.
\end{rem}

\begin{proof}
Without loss of generality we can assume $c < 1 < K$. 
We show that  
\Cref{as3}, \Cref{as4} \ref{4.2}, and \Cref{as5} are satisfied for $U = \UU^{1/2}$,
$W = \UU^{1/4}$, $U' = \frac{c}{8K}\UU^{1/2}\phi$ and $W' = \frac{c}{16K}\UU^{1/4}\phi$, where
 \begin{equation}\label{phi}
\phi \coloneqq 2K - \frac{L\UU}{\UU} + \frac{\Gamma \UU}{\UU^2} = K + \Big(K - \frac{L\UU}{\UU}\Big) + \frac{\Gamma \UU}{\UU^2}  \geq K\,.    
\end{equation}

Since $\UU \geq 1$ and $\UU \in \mathcal{C}^2(\mcM)$, then $U, W \in \mathcal{C}^2(\mcM)$. By 
\Cref{switch-domain}, we obtain $U \in \Dme_+(\mcM)$ (\Cref{as5} \ref{5.1})
and $\mathcal{L}U = LU$.  
By calculus and \eqref{switch-gen}, 
\begin{equation*}
\mathcal{L}U = 
    L\UU^{1/2} = \frac{1}{2\UU^{1/2}}L\UU - \frac{1}{8\UU^{3/2}}\Gamma{\UU} = \UU^{1/2}\Big(\frac{1}{2}\frac{L\UU}{\UU} - \frac{1}{8}\frac{\Gamma \UU}{\UU^2}\Big) \,.
\end{equation*} 
Since $\UU$ is proper and satisfies $L\UU \leq K -c\UU$, then outside of a compact set we have $\frac{L\UU}{\UU} \leq -\frac{3c}{4}$, and so outside of the compact set we have $\frac{1}{2}\frac{L\UU}{\UU} \leq \frac{c}{8K}(\frac{L\UU}{\UU} - 2K)$, where we used that for all $0 < \gamma < \frac{1}{8}$ and $x \leq -\frac{3c}{4} \leq -\frac{2c}{3}$ it holds that $\frac{1}{2}x \leq \gamma x - \frac{c}{4}$. Then for large enough $K' > \frac{K}{4}$, by $\Gamma \UU \geq 0$ and $c \leq 1 \leq K$ we have 
\begin{equation*}
\begin{aligned}
    \mathcal{L} U &\leq K' + \UU^{1/2}\Big(\frac{c}{8K}\Big(\frac{L\UU}{\UU} - 2K - \frac{\Gamma \UU}{\UU^2}\Big) + \Big(\frac{c}{8K} - \frac{1}{8}\Big)\frac{\Gamma \UU}{\UU^2}\Big) \\
    &\leq K' - \frac{c}{8K}\UU^{1/2}\phi = K' - U' \,,
    \end{aligned}
\end{equation*}
and thus \Cref{as5} \ref{5.2} holds.

 Next, we focus on $W$:
\begin{equation*}
LW = 
    L(\UU)^{\frac{1}{4}} = \frac{1}{4 \UU^{\frac{3}{4}}}L\UU - \frac{3}{32\UU^{\frac{7}{4}}}\Gamma{\UU} = (\UU)^{\frac{1}{4}}\Big(\frac{1}{4}\frac{L\UU}{\UU} - \frac{3}{32}\frac{\Gamma \UU}{\UU^2}\Big)
\end{equation*} 
and by reusing the estimate $\frac{1}{2}\frac{L\UU}{\UU} \leq \frac{c}{8K}(\frac{L\UU}{\UU} - 2K)$ (which holds outside of a compact set) we obtain for sufficiently large $K'' > 0$ that 
\begin{equation}\label{eq:LWest}
\begin{aligned}
LW &\leq K'' + 
(\UU)^{\frac{1}{4}}\Big(
\frac{c}{16K}\Big(\frac{L\UU}{\UU} - 2K - \frac{\Gamma \UU}{\UU^2}\Big) + \Big(\frac{c}{16K} - \frac{3}{32}\Big) \frac{\Gamma \UU}{\UU^2}
\Big)
\\
&\leq K'' - \frac{c}{16K}(\UU)^{\frac{1}{4}} \phi = K'' - W' 
\,,
\end{aligned}
\end{equation}
where in the last inequality we used $\Gamma \UU \geq 0$ and $c \leq 1 \leq K$. 
Furthermore, since $\phi \geq \frac{\Gamma \UU}{\UU^2}$, by calculus \begin{equation}\label{GU}
  \Gamma W =  \Gamma (\UU^{\frac{1}{4}}) = \frac{\Gamma \UU}{16\UU^\frac{3}{2}} = \frac{1}{16} \UU^{1/2} \frac{\Gamma \UU}{\UU^2} \leq \frac{1}{16}\UU^{1/2} \phi = 
    \frac{1}{16}U'.
\end{equation}
Thus, $W$ satisfies \Cref{square-integrable-martingale}\ref{fourth}, and therefore \Cref{switch-domain} implies $W \in \Dme_2(\mcM)$ and $\mathcal{L}W = LW$. By \eqref{eq:LWest} and \eqref{GU}, 
$W$ satisfies \Cref{as3} and \Cref{as5}\ref{5.3}, where we note that $W$ and $W'$ are proper on account of $\UU$ being proper and \eqref{phi}.

Since $\UU \geq 1$, then $\Gamma V \lesssim \phi \lesssim  U'$. Thus, $V$ satisfies \Cref{square-integrable-martingale}\ref{fourth}, and therefore \Cref{switch-domain} implies $V \in \Dme_2(\inv)$ and \Cref{as5}\ref{5.4} holds true.
Finally, \Cref{as4}\ref{4.2} is a consequence of $|LV| \lesssim \phi \lesssim U' \lesssim (W')^2$.
\end{proof}

\subsection{Discrete Time} \label{discrete-time}

In this section, we investigate discrete time processes. 
 In \Cref{special-generator} we defined $\mcM = \R^n \times \mathcal{X}$, but since our process is discrete, we assume $n = 0$, that is $\R^n = \{0\}$. In the rest of the section we suppress any dependence on the ``continuous" variable $b \in \R^n$ and simply write $\mcM$ instead of $\mathcal{X}$. 

Suppose $\{X_n^x\}_{x \in \mcM, n \in \N}$ is a discrete-time Markov chain, which means that there is a filtered probability space $(\Omega, \mathcal{F}, \{\mathcal{F}_n\}_{n \in \N}, \Prb)$ and a family of $\mcM-$valued random variables $\{X_n^x\}_{x \in \mcM, n \in \N}$ such that:

\begin{itemize}
    \item $X_0^x = x$ a.s.
    \item $X^x_\cdot$ is adapted to $\{\mathcal{F}_n\}_{n \in \N}$, meaning $X_n^x$ is $\mathcal{F}_n$ measurable for each $n \in \N$.
    \item For all bounded measurable functions $f:\mcM \to \mathbb{R}$, the map
    $$
   \mcM \ni x \mapsto \Pp f(x) \coloneqq \E[f(X_1^x)] 
    $$ 
    is measurable and for any $n \geq 0$ we assume (homogeneity)
    that
    $$
    \Pp f(X_n^x) = \E[f(X_{n + 1}^x)|\mathcal{F}_n] \,.
    $$
\end{itemize}  
We also define the operators \begin{equation}\label{discrete-gen}
\begin{aligned}
     Lf(x) &\coloneqq \Pp f(x) - f(x) = \E[f(X_1^x) - f(x)] \\
    \Gamma f(x) &\coloneqq Lf^2(x) - 2f(x)Lf(x) = \E[(f(X_1^x) - f(x))^2]
\end{aligned}
\end{equation}
Again we let $\mcM_0 \subset \mcM$ be closed and $\inv \subset \mcM_0^c$ be open and dense. Let $\{N_t\}_{t \geq 0}$ be a Poisson process with rate $1$ independent of $X_n^x$ and define $Y_t^x \coloneqq X_{N_t}^x$. Then it is standard to show that $Y_t^x$ is a continuous-time Markov process on $\mcM$ with cadlag sample paths and semigroup $\Pp_t = e^{-t}e^{t\Pp}$.
Indeed, heuristically we have for any set $A$ that 
\begin{align*}
\Pp_t\Id_{A} &= 
\Prb(Y_t^x \in A) = \sum_{n=0}^\infty \Prb(X_n^x \in A \cap N_t = n) = \sum_{n=0}^\infty e^{-t}\frac{t^n}{n!}\Prb(X_n^x \in A) \\
&=  \sum_{n=0}^\infty e^{-t}\frac{t^n}{n!}\Pp^n \Id_A(x)
= e^{-t}e^{t\Pp} \Id_{A}
\,.
\end{align*}

The following lemma helps us translate between conditions on the discrete-time Markov chain $X^x_n$ and the continuous-time Markov process $Y_t^x$.

\begin{lem}\label{discretization}
     Let $\{X_n^x\}_{x \in \mcM, n \in \N}$ and $\{Y_t^x\}_{x \in \mcM, t \geq 0}$ be as above and $L, \Gamma$ be as in \eqref{discrete-gen}. Then
     \begin{enumerate}[label=(\roman*)]
        \item \label{discrete-inv-meas} 
        A probability measure $\mu$ is invariant for $Y_t$  if and only if $\mu$ is invariant for $X_t$. Specifically,
         $\mu \Pp f = \mu f$ for each $f \in C_b(\mcM)$ if and only if $\mu\Pp_t f = \mu f$ for each $t \geq 0$ and $f \in C_b(\mcM)$. 
        \item \label{disc-acc} $\U$ is accessible from $x$ in the sense of \Cref{accessible} applied to $Y_t$ if and only if one of the following equivalent conditions holds:
    \begin{enumerate}
        \item $\sum_{n=0}^\infty e^{-n}\Prb(X_n^x \in \U) > 0$.
        \item There exists $n \in \N$ such that $\Prb(X_n^x \in \U) > 0$.
        \item $\Prb(\exists n \in \N \text{ such that } X_n^x \in \U) > 0$.
    \end{enumerate}
        \item \label{discrete-as1} If for $A = \mcM_0$ and $A = \inv$ it holds that for all $x \in A,$ $\Pp \Id_A(x) = 1$, then \Cref{as1} holds for $Y_t^x$.
        \item \label{discrete-as2} If $\Pp f \in C_b(\mcM)$ for all $f \in C_b(\mcM)$, then \Cref{as2} holds for $Y_t^x$.
        \end{enumerate}
In particular, if \ref{discrete-as1} and \ref{discrete-as2} hold then $(\mcM, \mcM_0, \inv, \{Y_t^x\}_{x \in \mcM, t \geq 0})$ is a Feller quadruple (see \Cref{feller-quadruple}), which is assumed for the remaining items.
        \begin{enumerate}[label=(\roman*),resume]
        \item \label{discrete-gen-lem} If $f \in C_b(\mcM)$, then $f \in \Dm(\mcM)$ and $\mathcal{L}f = L f$ (see \Cref{gen}).
        \item \label{upsilon} Suppose $\Upsilon: \mcM \to [1,\infty)$ is proper (see \Cref{proper}) and $V: \inv \to \R$ is continuous. Suppose also that $\E[|V(X_1^x) - V(x)|^{2 + \epsilon}] \lesssim \sqrt{\Upsilon(x)}$ (see \Cref{lesssim}) and $\Pp \Upsilon \leq \rho^2 \Upsilon + C^2$ for some constants $\epsilon > 0, \rho \in (0,1), C > 0$. Then \Cref{as3}, \Cref{as4} \ref{4.2}, and \Cref{as5} hold for $Y_t^x$ with  $U = \sqrt{\Upsilon}, U' = (1 - \rho) U, W = \Upsilon^{1/4}, \text{ and } W' = (1 - \sqrt{\rho})W$.
        \item \label{disc-last} If \Cref{as3}--\ref{as5} hold for $(\mcM, \mcM_0, \inv, \{Y_t^x\}_{x \in \mcM, t \geq 0})$,  then  \Cref{main2} (and its corollaries \Cref{main}, \Cref{main3}, and \Cref{change-of-variables}) hold for $X_n^x$ (with $t$ replaced with $n$).
    \end{enumerate}
\end{lem}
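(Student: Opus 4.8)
The plan is to take the items of \Cref{discretization} in roughly the order stated, since almost all of them are formal consequences of the Poisson--subordination identity $\Pp_t = e^{-t}e^{t\Pp} = e^{-t}\sum_{n\ge 0}\tfrac{t^n}{n!}\Pp^n$, which holds because $Y_t^x = X_{N_t}^x$ with $N$ a rate-$1$ Poisson process independent of the chain (that $Y_t^x$ is a homogeneous c\`adl\`ag Markov process with this semigroup is routine from the independence, after equipping it with the filtration generated by $X$ and $N$). Since $C_b(\mcM)$ determines measures on the Polish space $\mcM$, a measure $\mu$ is invariant for $X$ iff $\mu P = \mu$ iff $\mu P^n = \mu$ for all $n$ iff $\mu \Pp_t = e^{-t}\sum_n\tfrac{t^n}{n!}\mu P^n = \mu$ for all $t$; conversely, if $\mu\Pp_t = \mu$ for all $t$ then $\mu\big(\tfrac{\Pp_t f - f}{t}\big) = 0$, and since $\tfrac{\Pp_t f - f}{t}\to \Pp f - f$ pointwise with $\big\|\tfrac{\Pp_t f - f}{t}\big\|\le 2\|f\|$ for all $t>0$, dominated convergence gives $\mu\Pp f = \mu f$ and hence $\mu P = \mu$ --- this is \ref{discrete-inv-meas}. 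The same pointwise limit and uniform bound, combined with \ref{discrete-as2}, show $f\in C_b(\mcM)$ lies in $\Dm(\mcM)$ with $\mathcal{L}f = \Pp f - f = Lf$, which is \ref{discrete-gen-lem}. For \ref{disc-acc} all Poisson weights are strictly positive, so $\Prb(Y_t^x\in\U) = \sum_n e^{-t}\tfrac{t^n}{n!}\Prb(X_n^x\in\U)$ is positive for some $t>0$ iff some $\Prb(X_n^x\in\U)>0$, and the three listed conditions are equivalent to this by elementary union/monotonicity arguments. Part \ref{discrete-as1}: $P(x,A)=1$ for $x\in A$ gives $P^n(x,A)=1$ by Chapman--Kolmogorov, hence $\Prb(X_n^x\in A\ \forall n) = 1$ by a countable union bound, hence $Y_t^x\in A$ for all $t$. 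Part \ref{discrete-as2}: the series for $\Pp_t f$ converges uniformly to an element of $C_b(\mcM)$ and $\|\Pp_t f - f\|\to 0$ as $t\downarrow 0$.

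The substance is \ref{upsilon}. The key initial observation is that, specializing \Cref{special-generator} to $n=0$, part \ref{discrete-gen-lem} is exactly \Cref{as6} with $L_1 = 0$ and $L_2 f(x) = \Pp f(x) - f(x) = \int(f(y)-f(x))\,d\mu_x(y)$, where $\mu_x$ is the law of $X_1^x$ (weakly continuous in $x$ by \ref{discrete-as2}); hence \Cref{special-gen} is available. Set $U = \sqrt\Upsilon$, $U' = (1-\rho)U$, $W = \Upsilon^{1/4}$, $W' = (1-\sqrt\rho)W$. Jensen's inequality and subadditivity of $t\mapsto t^{1/2}$, $t\mapsto t^{1/4}$ applied to $\Pp\Upsilon\le\rho^2\Upsilon + C^2$ give $\Pp U\le \rho U + C$ and $\Pp W\le\sqrt\rho\,W + \sqrt C$, i.e.\ $LU\le C - U'$ and $LW\le\sqrt C - W'$, which will be \Cref{as5} \ref{5.2} and \Cref{as3} \ref{3.2} once \Cref{special-gen} identifies $\mathcal{L}$ with $L$ (the normalization $W'\ge 1$ of \Cref{as3} is arranged by replacing $W'$ with $\max(W',1)$ and enlarging $K$, which only sharpens \Cref{as4} \ref{4.2}). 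The bound $(a^{1/4}-b^{1/4})^2\le a^{1/2}+b^{1/2}$ gives $\Gamma W = \E[(W(X_1^x)-W(x))^2]\le \Pp U + U\lesssim\sqrt\Upsilon\lesssim U'$, and the power-mean inequality turns $\E[|V(X_1^x)-V(x)|^{2+\epsilon}]\lesssim\sqrt\Upsilon$ into $\Gamma V\le\E[|V(X_1^x)-V(x)|^2]\lesssim\Upsilon^{1/(2+\epsilon)}\le\sqrt\Upsilon\lesssim U'$; these give \Cref{as5} \ref{5.3}--\ref{5.4}.

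Finiteness and continuity of $\Pp U$, $\Gamma W$, $\Gamma V$, and $L_2 V = \mathcal{L}V$ all follow from \Cref{generalized-DCT}: for $\Pp U$, use the integrand $\sqrt{\Upsilon(y)}$ with exponent $1$ and $\int\Upsilon\,d\mu_x = \Pp\Upsilon\le\rho^2\Upsilon+C^2$; for $\Gamma W$, use $(W(y)-W(x))^2$ with exponent $1$ and $\int (W(y)-W(x))^4 d\mu_x\lesssim \Pp\Upsilon + \Upsilon$; for $\Gamma V$, use $(V(y)-V(x))^2$ with exponent $\epsilon/2$ and the $(2+\epsilon)$-moment bound, working on the locally compact Polish space $\inv$, on which $\mu_x$ is supported for $x\in\inv$. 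With these in hand, \Cref{special-gen}(1)--(2) place $U\in\Dme_+(\mcM)$, $W\in\Dme_2(\mcM)$, and $V\in\Dme_2(\inv)$ with $\mathcal{L} = L$ and $\Gamma$ as in \eqref{discrete-gen}, completing \Cref{as5} \ref{5.1} and \Cref{as3} \ref{3.1}. Finally, \Cref{as4} \ref{4.2}: H\"older's inequality gives $|\mathcal{L}V(x)|\le\E[|V(X_1^x)-V(x)|]\lesssim\Upsilon(x)^{1/(2(2+\epsilon))}$, while $W'(x)$ is comparable to $\Upsilon(x)^{1/4}$ and $\tfrac{1}{2(2+\epsilon)}<\tfrac14$, so $|\mathcal{L}V|/W'\to 0$ along the exhaustion by the compact sublevel sets $\{\Upsilon\le n\}$.

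For the transfer statement \ref{disc-last}, apply \Cref{main2}, \Cref{main}, \Cref{main3}, \Cref{change-of-variables} to the Feller quadruple $(\mcM,\mcM_0,\inv,\{Y_t^x\})$ and translate through $Y_t^x = X_{N_t}^x$. The strong law of large numbers for $N$ gives $N_t/t\to 1$ almost surely, and $N_t$ is nondecreasing and attains every nonnegative integer, so almost surely $\liminf_{n}\tfrac{V(X_n^y)}{n} = \liminf_t\tfrac{V(X_{N_t}^y)}{N_t} = \liminf_t\tfrac{V(Y_t^y)}{t}$ and $d(X_n^y,\mcM_0)\to 0$ iff $d(Y_t^y,\mcM_0)\to 0$; the accessibility hypotheses for $X_n$ and for $Y_t$ agree by \ref{disc-acc}, and a quadruple map for the discrete chains lifts --- using the same Poisson clock $N$ --- to one for the subordinated processes, so all four conclusions transfer with $t$ replaced by $n$. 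I expect the main obstacle to be the bookkeeping in \ref{upsilon}: one must verify genuine membership in $\Dme_+$ and $\Dme_2$ (routing through \Cref{special-gen} and \Cref{generalized-DCT}, rather than merely writing down candidate operators) and obtain the drift and variance bounds in exactly the normalized form demanded by \Cref{as3} and \Cref{as5} --- the Jensen/subadditivity estimates are elementary but must be applied with the correct powers of $\Upsilon$, and the $(2+\epsilon)$-moment exponent must be tracked carefully so that \Cref{as4} \ref{4.2} comes out with a strictly negative power; everything else is a formal manipulation of the Poisson semigroup identity.
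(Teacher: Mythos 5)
The proposal is correct and follows essentially the same route as the paper: parts (i)--(iv) and (vii) via elementary Poisson-subordination and SLLN arguments, (v) via the power-series expansion of $e^{-t}e^{t\Pp}$ with the uniform bound $2\|f\|$, and (vi) by routing membership in $\Dme_+$ and $\Dme_2$ through \Cref{generalized-DCT} and \Cref{special-gen} (with $L_1=0$) and then applying Jensen/subadditivity to get the drift, variance, and vanishing-over-$W'$ bounds. The only deviations are cosmetic — e.g., bounding $\Gamma W$ by the pointwise inequality $(a^{1/4}-b^{1/4})^2\le a^{1/2}+b^{1/2}$ rather than the identity $\Gamma W = LU - 2W\Pp W + 2U$ used in the paper — and the student is in fact slightly more careful than the paper about normalizing $W'\ge 1$.
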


\begin{proof}
    The statements (i)--(iv) are standard, so we omit the proofs.
    
    For \ref{discrete-gen-lem}, fix $x \in \mcM$ and compute
    \begin{align*}
         \lim_{s \downarrow 0} \frac{\Pp_sf(x) - f(x)}{s} &= \lim_{s \downarrow 0} \frac{e^{-s}e^{s\Pp}f(x) - f(x)}{s} \\
&= \lim_{s \downarrow 0} \frac{se^{-s}(\sum_{n = 0}^{\infty} s^n\frac{\Pp^{n+1}f(x)}{(n+1)!}) - (1 - e^{-s})f(x)}{s} \\
&=  \Pp f(x) - f(x) \,,
    \end{align*}
    where the last equality is justified since $\Pp$ is a contraction and 
    $$
    e^{-s}\Big|\sum_{n = 1}^{\infty} s^n\frac{\Pp^{n+1}f(x)}{(n+1)!}\Big| \leq e^{-s}\Big(\frac{e^s - 1 - s}{s}\Big)\|f\| \to 0
    \qquad \textrm{as} \quad s \downarrow 0 \,.
    $$ 
    Similarly to the above, we obtain
\begin{align*}
\sup_{s > 0} \Big\|\frac{\Pp_sf - f}{s}\Big\| &= 
\sup_{s > 0} \Big\| e^{-s}\sum_{n = 0}^{\infty} s^n\frac{\Pp^{n+1}f}{(n+1)!} - \frac{(1 - e^{-s})}{s}f\Big\|    \\
&\leq
\sup_{s > 0}  \Big|\frac{e^{-s}(e^s - 1)}{s}\Big| \|f\|+ 
\sup_{s > 0} \Big|\frac{(1 - e^{-s})}{s}\Big| \|f\| \\
&= 2 \sup_{s > 0} \Big|\frac{(1 - e^{-s})}{s}\Big| \|f\| < \infty \,.
\end{align*}
    Thus, \ref{discrete-gen-lem} is verified, which shows that \Cref{as6} is satisfied with  $L_1 = 0$ and $\mu_x(A) = \Pp\Id_A(x)$.
    
   To prove \ref{upsilon} we first note that if $\mu_{x}$ is the law of $X_1^x$, then by the Feller property, $x \mapsto \mu_x$ is continuous. Hence, by $\E[|V(X_1^x) - V(x)|^{2 + \epsilon}] \lesssim U$ and \Cref{generalized-DCT}, the function $\Gamma V$ defined in \eqref{discrete-gen} is finite and continuous. In addition, by our assumption in \ref{upsilon}
   $$
   \E[|U(X^x_1) - U(x)|^2] \leq  2\Pp U^2(x) + 2U^2(x) \leq 2\rho^2\Upsilon(x) + 2C^2 + 2U^2(x) 
   $$
   and so the continuity of $x \mapsto LU(x)$ follows from \eqref{discrete-gen}, the continuity of $U$ and $x \mapsto \mu_x$, and \Cref{generalized-DCT}.
Analogously,  we establish that $\Gamma W$ is continuous. Thus, \Cref{special-gen} implies 
$U \in \Dme_+(\mcM)$ and $V, W \in \Dme_2(\mcM)$, and consequently \Cref{as3}\ref{3.1} and \Cref{as5}\ref{5.1} are satisfied. Next, by Jensen's inequality and $\sqrt{a+b} \leq \sqrt{a} + \sqrt{b}$ we have
$$
\Pp U = \Pp \sqrt{\Upsilon} \leq \sqrt{\Pp \Upsilon} \leq \rho U + C\,,
$$
and therefore
\begin{equation}\label{sqrt}
          LU = \Pp U - U \leq (\rho - 1)U + C = C - U'\,
\end{equation}
and \Cref{as5}\ref{5.2} follows. Similarly we obtain $L W \leq (\sqrt{\rho} - 1)W + \sqrt{C}$, which proves \Cref{as3}\ref{3.2}. Also, since $W = \sqrt{U} \geq 0$ and $\Pp W \geq 0$, using \eqref{sqrt} we have
$$
\Gamma W = LU - 2WL W = LU - 2W\Pp W + 2U \leq (\rho + 1)U + C \lesssim U 
$$
and \Cref{as5}\ref{5.3} holds true. Furthermore, by Jensen's inequality
$$
|LV| \leq (\E[|V(X_1^x) - V(x)|^{2 + \epsilon}])^{\frac{1}{2 + \epsilon}} \lesssim 
(W'(x))^{\frac{2}{2 + \epsilon}} 
$$
and since $\epsilon > 0$ and $W'$ is proper, $LV$ vanishes over $W'$, establishing \Cref{as4}\ref{4.2}. Finally, by Jensen's inequality and $U \geq 1$ we have 
$$
\Gamma V \leq (\E[|V(X_1^x) - V(x)|^{2 + \epsilon}])^{\frac{2}{2 + \epsilon}} \lesssim U^{\frac{2}{2 + \epsilon}} \lesssim U \,, 
$$
which implies \Cref{as5}\ref{5.4} and the proof of (vi) is finished.

  For \ref{disc-last}, recall that \Cref{as1}-\ref{as2} are met since  $(\mcM, \mcM_0, \inv, \{Y_t^x\}_{x \in \mcM, t \geq 0})$ is a Feller quadruple. Then
  by the strong law of large numbers we have that almost surely $\lim_{t \to \infty} \frac{t}{N_t} = 1$ and thus (almost surely) $$\liminf_{n \to \infty} \frac{V(X_n^x)}{n} = \liminf_{t \to \infty} \frac{V(Y_t^x)}{N_t} = \liminf_{t \to \infty} \frac{V(Y_t^x)}{t}\frac{t}{N_t} =  \liminf_{t \to \infty} \frac{V(Y_t^x)}{t}\,.$$
\end{proof}

\section{Examples} \label{examples}
In this section we illustrate how our theory can be applied to some characteristic examples. We recommend that the reader is familiar with the notation and definitions from \Cref{notation} and the results in \Cref{main-results}. The first example is a model of an epidemic using a switching diffusion, the second one is a SDE obtained by adding noise to the Lorenz system (a prototypical model for turbulence), the third one is a very general class of discrete-time ecological models, and the fourth is a very general class of continuous-time ecological models based on Stochastic Kolmogorov equations. We use the setup and results from \Cref{switching-diff-section} for the first example, from \Cref{SDE} for the second and fourth examples, and from \Cref{discrete-time} for the third example.

\subsection{Stochastic SIS epidemic model on network with Markovian switching}
In this example, analyzed in \cite{SIS},  the dynamics of $(x(t),s(t))$ with initial condition $(x_0,s_0)$ are given by the following switching diffusion: 
\begin{align*}
    dx_i &= [\beta(s)b_i(x,s)(1 - x_i) - \delta(s)x_i] dt + \sigma_i(x_i,s)b_i(x,s)(1 - x_i)dW_i(t) \\
    b_i(x, s) &\coloneqq \sum_{j=1}^N a_{ij}(s)x_j  \\
    \Prb(s(t) = s') &= q_{s_0s'}t + o(t) \quad \text{ for } s_0 \neq s' \,.
\end{align*}
Here, we set $\mcM = [0,1]^N \times S = [0,1]^N \times \{1,\dots,m\}$ for some $N,m \in \N$, $\mcM_0 = \{0\} \times S$, and $\mcM_+ = \mcM_0^c$. 
For $x \in [0,1]^N$ we write $(x_1,\dots,x_n)$ for the components of $x$ and use $(x,s)$ to denote an element of $\mcM$ (where $s \in S$). Also, $x_i$ represents the probability that the $i$th node is infected and $s$ represents a different regime/environment. The functions $\beta, \delta: S \to (0,\infty)$ represent the infection and recovery rates and the parameters $a_{ij}: S \to \{0,1\}$ for $1 \leq i,j \leq N$ characterize the connections in the network so that the matrix $A(s)$ with entries $a_{ij}(s)$ is the adjacency matrix of the network when the environment is in state $s$. The intensity of the noise is given by continuously differentiable functions $\sigma_i: [0,1] \times S \to [0,\infty)$ for $1 \leq i \leq N$. We assume that $\sigma_i(x_i,s) > 0$ for $x_i \in (0,1)$ in order to guarantee that $\mcM_0$ is accessible, and also that $\sigma_i(0,s) = 0$ so that $x_i$ remains nonnegative for all times. The driving  noise is an $N$-dimensional Brownian motion $(W_1(t),\dots,W_N(t))$.

$s(t)$ is an irreducible Markov process on $S$ which is independent of the Brownian motion and $q_{ss'}$ denotes the transition rate from state $s \in S$ to $s' \in S$, meaning that $q_{ss'} \geq 0$ if $s \neq s'$ and $\sum_{s' = 1}^m q_{ss'} = 0$. Since $s(t)$ is irreducible, it has a unique invariant probability measure which we denote as $\rho \in [0,1]^m$ satisfying $\sum_{s = 1}^m \rho_sq_{ss'} = 0$ and $\sum_{s = 1}^m \rho_s = 1$. 

In \cite[Theorem 1 and Remark 1]{SIS} it is shown that the solution $(x(t),s(t))$ exists and is unique, and furthermore $(x(t),s(t))$ remains in $\mcM$ (resp. $\inv$, $\mcM_0$) when the initial condition is in $\mcM$ (resp. $\inv, \mcM_0$) so that \Cref{as1} is satisfied. Since $\mcM$ is compact, it follows from \Cref{switch-is-feller} that the Feller property (\Cref{as2}) holds. Thus, $(\mcM, \mcM_0, \inv, \{(x(t),s(t))\}_{t \geq 0})$ is a Feller quadruple (we suppress the superscript for the initial condition).

In the notation of \eqref{switch-diff} and \eqref{switch-gen} we have
\begin{equation}\label{F-and-Sigma}
\begin{aligned}
    F_i(x,s) &= \beta(s)b_i(x,s)(1 - x_i) - \delta(s)x_i \\
    \Sigma_{ij}(x,s) &= \begin{cases}
   [\sigma_i(x_i,s)b_i(x,s)(1 - x_i)]^2        & \text{if } i = j \\
   0        & \text{otherwise} 
  \end{cases} \,.
\end{aligned}
\end{equation}

Our strategy is to use \Cref{change-of-variables} and \Cref{compact} to conclude that \Cref{main3} applies to the problem. For $(x,s) \in \inv$, we set $V(x,s) = -\frac{1}{2}\log{\|x\|^2}$, where $\|x\| \coloneqq \sqrt{\sum_{i=1}^N x_i^2}$ is the Euclidean norm, and observe that $V$ satisfies \Cref{as4}\ref{4.1}. Since $V$ is independent of $s$, from
\eqref{switch-gen} and \eqref{F-and-Sigma} follows
\begin{equation}\label{LV-SIS}
    \begin{aligned}
    L V(x,s) &= \sum_{i = 1}^n \frac{1}{2}[\sigma_i(x_i,s)b_i(x,s)(1 - x_i)]^2\frac{-\|x\|^2 + 2x_i^2}{\|x\|^4} \\
    &\qquad + \sum_{i=1}^n [\beta(s)b_i(x,s)(1 - x_i) - \delta(s)x_i]\frac{-x_i}{\|x\|^2} \\
    &= \delta(s) + \sum_{i = 1}^n \frac{1}{2}[\sigma_i(x_i,s)b_i(x,s)(1 - x_i)]^2\frac{-\|x\|^2 + 2x_i^2}{\|x\|^4} \\
    &\qquad + \sum_{i=1}^n [\beta(s)b_i(x,s)(1 - x_i)]\frac{-x_i}{\|x\|^2} \\
    \Gamma V(x,s) &= \sum_{i=1}^n [\sigma_i(x_i,s)b_i(x,s)(1 - x_i)]^2\Big(\frac{-x_i}{\|x\|^2}\Big)^2\,.
\end{aligned}
\end{equation} 
 Since $b_i(x,s)$ is a linear function of $x$, it satisfies $|b_i(x,s)| \lesssim \|x\|$ and thus $\Gamma V$ is uniformly bounded on $\inv$. It follows from \Cref{switch-domain} that $V \in \Dme_2(\inv)$ with $\mathcal{L} V = L V$ and $\Gamma V$ as above.
Since
$$\lim_{x \to 0} \sum_{i=1}^n [\beta(s)b_i(x,s)(1 - x_i)]\frac{-x_i}{\|x\|^2}$$ does not exist (it depends on the direction $x$ approaches $0$), we need to enlarge the state space. Let $\pi: S^{N-1} \times [0,\infty) \to \R^N$ denote the polar coordinate map, that is, $\pi(v, r) = rv$ for $v \in S^{N-1} \coloneqq \{v \in \R^N \mid \|v\| = 1\}$, $r \in [0,\infty)$. Let $\mathcal{N} = \pi^{-1}([0,1]^N) \times S$, $\mathcal{N}_0 = \pi^{-1}(\{0\}) \times S$, $\mathcal{N}_+ = \mathcal{N} \setminus \mathcal{N}_0$.

It is routine to show that there exists a Markov process $\{(v(t),r(t),s(t))\}_{t \geq 0}$ on $\mathcal{N}$ such that $(x(t),s(t)) = (\pi(v(t),r(t)),s(t))$, but we provide some details anyway. For $x = \pi(v,r)$ we have $r = \|x\|$, $v = \frac{x}{\|x\|}$, and then It\^ o's Formula (note that $dx_idx_j = 0$) gives us
\begin{equation}\label{eq:srer}
    \begin{aligned}
    dr &= \sum_{i=1}^n \frac{x_idx_i}{r} + \frac{1}{2}\sum_{i=1}^n \frac{r - x_i^2/r}{r^2}(dx_i)^2 \\
    &= r\Big[\sum_{i=1}^n v_i\frac{dx_i}{r} + \frac{1}{2}\sum_{i=1}^n (1 - v_i^2)\Big(\frac{dx_i}{r}\Big)^2\Big] 
\end{aligned}
\end{equation}
and
\begin{equation}\label{eq:srev}
\begin{aligned}
    dv_i &= x_idr^{-1} + r^{-1}dx_i + dx_idr^{-1}  \\
    &= v_i\Big(\frac{-dr}{r} + \Big(\frac{dr}{r}\Big)^2\Big) + \frac{dx_i}{r} - \frac{dx_i}{r}\frac{dr}{r} \,.
\end{aligned}
\end{equation}
Since 
$$
\frac{dx_i}{r} = [\beta(s)b_i(v,s)(1 - rv_i) - \delta(s)v_i] dt + \sigma_i(rv_i,s)b_i(v,s)(1 - rv_i)dW_i(t)
$$ 
we obtain that \eqref{eq:srer} and \eqref{eq:srev} define a Markov process $\{(v(t),r(t),s(t))\}_{t \geq 0}$ (again suppressing the superscript for the initial condition) on $\mathcal{N}$.
It is standard to verify that $(\mathcal{N}, \mathcal{N}_0, \mathcal{N}_+, \{(v(t),r(t),s(t))\}_{t \geq 0})$ is a Feller quadruple and $\tilde{\pi} \coloneqq (v,r,s) \mapsto (rv,s)$ is a quadruple map from $(\mathcal{N}, \mathcal{N}_0, \mathcal{N}_+, \{(v(t),r(t),s(t))\}_{t \geq 0})$ to $(\mcM, \mcM_0, \inv, \{(x(t),s(t))\}_{t \geq 0})$ (see \Cref{quadruple-map}).

Substituting $x = rv$ into the formula for $L V(x,s)$ \eqref{LV-SIS}, we define the continuous extension $H: \mathcal{N} \to \R$ of $\mathcal{L}V \circ \tilde{\pi}$ by \begin{align*}
    H(v,r,s) &\coloneqq \delta(s) + \sum_{i = 1}^n \frac{1}{2}[\sigma_i(rv_i,s)b_i(rv,s)(1 - rv_i)]^2\frac{-r^2 + 2v_i^2r^2}{r^4} \\
    &\qquad + \sum_{i=1}^n [\beta(s)b_i(rv,s)(1 - rv_i)]\frac{-rv_i}{r^2} \\
    &= \delta(s) + \sum_{i = 1}^n \frac{1}{2}[\sigma_i(rv_i,s)b_i(v,s)(1 - rv_i)]^2(-1 + 2v_i^2) \\
    &\qquad - \sum_{i=1}^n [\beta(s)b_i(v,s)(1 - rv_i)]v_i \,,
\end{align*}
where the equality is by the linearity of $b$ in $x$.
Since $\sigma(0, s) = 0$ and $b(v, x) = A(s)v$, for $(v,0,s) \in \ncN_0$ we have
\begin{align*}
    H(v,0,s) &= \delta(s) - \beta(s)\sum_{i=1}^n b_i(v,s)v_i 
    = \delta(s) - \beta(s)v^TA(s)v \,.
\end{align*}
Let $\lambda_1(s)$ denote the largest eigenvalue of $A(s)$. For any $\mu \in P_{inv}(\mathcal{N}_0)$, by using $\|v\|=1$, we have 
$$
\mu H \geq \mu(\delta(s) - \beta(s) \lambda_1(s)) = \sum_{s = 1}^m \rho_s(\delta(s) - \beta(s) \lambda_1(s)) \,,
$$ 
where recall that $(\rho_1, \dots, \rho_m)$ are components of the unique invariant measure for $s(t)$. Thus,
if $\sum_{s = 1}^m \rho_s(\delta(s) - \beta(s) \lambda_1(s)) > 0$ then
there is a constant $\alpha > 0$ such that $\mu H \geq \alpha$ for all $\mu \in P_{inv}(\ncN_0)$, and so \eqref{as4.3-sub} is satisfied.

Recall that $\sigma_i(x_i,s) > 0$ for $x_i \in (0,1)$ and $\sigma_i$ is continuously differentiable in $x_i$, and consequently by the Stroock-Varadhan support theorem (see \cite{SVSupport}), $(0,s)$ is accessible from $(x,s)$ in the sense of \Cref{suff-acc}, and thus \eqref{acc} is satisfied.

Furthermore, by \Cref{compact} we have that \Cref{as3}, \Cref{as4}\ref{4.2}, and \Cref{as5} are satisfied for the Feller quadruple $(\mcM, \mcM_0, \inv, \{X_t^x\}_{x \in \mcM, t \geq 0})$. 

Thus, \Cref{change-of-variables} and \Cref{main3} provide the following result. 

\begin{thm}\label{SIS}
    If $\sum_{s = 1}^m \rho_s(-\delta(s) + \beta(s) \lambda_1(s)) < 0$
    then for any initial condition almost surely 
    $$
    \limsup_{t \to \infty}\frac{\log{\|x\|}}{t} \leq \sum_{s = 1}^m \rho_s(-\delta(s) + \beta(s) \lambda_1(s)) < 0 \,,
    $$ 
    and in particular $x \to 0$ exponentially fast. 
\end{thm}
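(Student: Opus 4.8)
The plan is to assemble the ingredients already prepared above and invoke \Cref{main3} through \Cref{change-of-variables}. Recall that $(\mcM,\mcM_0,\inv,\{(x(t),s(t))\}_{t\ge 0})$ and its blow‑up $(\ncN,\ncN_0,\ncN_+,\{(v(t),r(t),s(t))\}_{t\ge 0})$ are Feller quadruples (so \Cref{as1}--\Cref{as2} hold), that $\tilde{\pi}$ is a quadruple map, that \Cref{as3}, \Cref{as4}\ref{4.1}--\ref{4.2}, and \Cref{as5} hold by \Cref{compact} together with $\sup_{\inv}\Gamma V<\infty$ and the blow‑up property of $V(x,s)=-\log\|x\|$, and that \eqref{as4.3-sub} was verified precisely because the standing hypothesis $\sum_s\rho_s(-\delta(s)+\beta(s)\lambda_1(s))<0$ makes $H(v,0,s)=\delta(s)-\beta(s)v^TA(s)v$ have strictly positive average against every $\mu\in P_{inv}(\ncN_0)$. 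Since moreover $\mcM=\inv\cup\mcM_0$ and every $x\in\inv$ satisfies the accessibility condition \eqref{acc} via \Cref{suff-acc} and the Stroock--Varadhan support theorem, \Cref{change-of-variables} applied to \Cref{main3} gives, for every $x\in\inv$,
$$
\Prb\Big(\liminf_{t\to\infty}\frac{-\log\|x(t)\|}{t}\ge \alpha\Big)=1 ,
$$
where $\alpha>0$ is any constant with $\mu H\ge\alpha$ for all $\mu\in P_{inv}(\ncN_0)$.

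The second step is to pin down the sharp value of $\alpha$. By \Cref{inv-is-compact} (applied to the Feller quadruple on $\ncN$) we may take $\alpha=\inf_{\mu\in P_{inv}(\ncN_0)}\mu H$, so it suffices to bound this infimum below by $\sum_{s=1}^m\rho_s(\delta(s)-\beta(s)\lambda_1(s))$. For any $v\in S^{N-1}$ the Rayleigh‑quotient bound $v^TA(s)v\le\lambda_1(s)$ gives $H(v,0,s)\ge \delta(s)-\beta(s)\lambda_1(s)$, a function of $s$ alone; hence $\mu H\ge \mu\big(\delta(\cdot)-\beta(\cdot)\lambda_1(\cdot)\big)$ for every $\mu\in P_{inv}(\ncN_0)$. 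The push‑forward of such $\mu$ under $(v,r,s)\mapsto s$ is invariant for the irreducible chain $s(t)$, hence equals $\rho$, so $\mu\big(\delta(\cdot)-\beta(\cdot)\lambda_1(\cdot)\big)=\sum_{s=1}^m\rho_s(\delta(s)-\beta(s)\lambda_1(s))$. Therefore $\alpha\ge \sum_{s=1}^m\rho_s(\delta(s)-\beta(s)\lambda_1(s)) = -\sum_{s=1}^m\rho_s(-\delta(s)+\beta(s)\lambda_1(s))>0$.

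Combining the two steps, for every $x\in\inv$ we get almost surely
$$
\limsup_{t\to\infty}\frac{\log\|x(t)\|}{t}=-\liminf_{t\to\infty}\frac{-\log\|x(t)\|}{t}\le -\alpha\le \sum_{s=1}^m\rho_s(-\delta(s)+\beta(s)\lambda_1(s))<0 ,
$$
which is exponential convergence $x(t)\to 0$; by \Cref{as4}\ref{4.1} one also reads off $x(t)\to \mcM_0$. For initial conditions in $\mcM_0$ the statement is trivial since $x(t)\equiv 0$ by \Cref{as1}.

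I do not expect a genuine obstacle: the substantive work (constructing the polar blow‑up, computing $H$ and $\Gamma V$, and checking the technical assumptions) has already been carried out before the statement. The only point that requires a little care is the sharpness of the rate — recognizing that \Cref{main3} actually delivers $\liminf_t V(x(t),s(t))/t\ge \inf_{\mu\in P_{inv}(\ncN_0)}\mu H$, and then evaluating this infimum by exploiting that the $S$‑marginal of every measure in $P_{inv}(\ncN_0)$ is forced to be the unique stationary law $\rho$ of the switching process.
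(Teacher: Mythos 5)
Your proposal is correct and follows essentially the same route as the paper: verify the technical assumptions via \Cref{compact} and the boundedness of $\Gamma V$, pass to polar coordinates with the quadruple map $\tilde{\pi}$, bound $\mu H$ from below using $v^TA(s)v\le\lambda_1(s)$ together with the fact that the $S$-marginal of any $\mu\in P_{inv}(\ncN_0)$ must be $\rho$, check accessibility via Stroock--Varadhan, and conclude by \Cref{change-of-variables} and \Cref{main3} with $\alpha=\sum_s\rho_s(\delta(s)-\beta(s)\lambda_1(s))$. The only cosmetic difference is that you route the identification of the sharp rate through \Cref{inv-is-compact}, whereas the paper simply takes the explicit lower bound as the constant $\alpha$ in \Cref{as4}\ref{4.3}; these are equivalent.
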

\begin{rem}
    Observe that \cite[Theorem 2]{SIS} shows the exponential convergence under the stronger assumption that  
    $$
    \sum_{s = 1}^m \rho_s\Big(-\delta(s) + \beta(s) \lambda_1(s) + \frac{M(s)^2\lambda_1(s)^2}{32}\Big) < 0 \,,
    $$ 
    where $M(s) > 0$ is a global bound on the strength of the noise $\frac{\sigma_i(x_i,s)}{x_i}$ for $x \in (0,1)^N$. Our \Cref{SIS} shows that the extra term $\frac{M(s)^2\lambda_1(s)^2}{32}$ is unnecessary. Heuristically, the term $\frac{M(s)^2\lambda_1(s)^2}{32}$ should not be needed,  since it depends on the behavior of the process everywhere on $\mcM$, even at points which are far from $\mcM_0$, which  should not be affecting the rate of convergence to $\mcM_0$, as long as the accessibility of $\mcM_0$ holds true.
\end{rem}

\begin{rem}
    If the adjacency matrix of the network $A(s) = A$ is independent of $s$ and the network is connected, then using the persistence theorem in \cite{persistence} and observing that the only invariant measure of $(v(t),s(t))$ on $\mathcal{N}_0$ is the one where $s(t)$ is distributed according to $\rho$ and $v(t)$ is constantly equal to the (unique) positive eigenvector of $A$ with norm 1, one obtains that the condition $\sum_{s = 1}^m \rho_s(-\delta(s) + \beta(s) \lambda_1) > 0$ gives persistence.
    In the general case, one could also take $V(x,s) = -\log{\sum_{i = 1}^N x_i}$ to obtain that the condition $\sum_{s = 1}^m \rho_s(-\delta(s) + \beta(s) d_{min}(s)) > 0$ gives persistence, where $d_{\min}(s)$ is the minimum degree of nodes of the network with adjacency matrix $A(s)$, an improvement over \cite[Theorem 6]{SIS}. 
\end{rem}

\subsection{Lorenz System} \label{example-lorenz}
In this example taken from \cite{Lorenz}, $\mcM = \R^3$, $\mcM_0 = \{(x,y,z) \in \R^3 \mid x = y = 0\}$ (the $z$-axis), and $\inv = \mcM_0^c$. The Markov process $(X_t,Y_t,Z_t)$ is given by the Lorenz system with constant noise in the $Z$ component. Specifically, there are constants $\sigma, \rho, \beta, \hat{\alpha_0} > 0$ and a (one-dimensional) Brownian motion $W_t$ such that $(X_t,Y_t,Z_t)$ satisfies the SDE \begin{align*}
    dX &= \sigma (Y - X)dt \\
    dY &= [X(\rho - Z) - Y]dt \\
    dZ &= [-\beta Z + XY]dt + \hat{\alpha_0}dW
\end{align*}
(We use $\hat{\alpha_0}$ instead of $\hat{\alpha}$ in order to avoid confusion with the $\alpha$ in \Cref{as4}.)

To consolidate the constants, \cite[(2.1)]{Lorenz} makes a linear change of variables $(x_t,y_t,z_t) = (c_1X_{\chi t}, c_2(Y_{\chi t} - X_{\chi t}), z_* - c_3Z_{\chi t})$, where $c_1,c_2,c_3,z_*,\chi > 0$ are constants so that the system is rewritten as in \cite[(2.2)]{Lorenz}:
\begin{align*}
    dx &= ydt \\
    dy &= [x(z - 2) - 2y]dt \\
    dz &= -[\gamma (z - z_*) + x(x + \eta y)]dt + \alpha_0 dW\,,
\end{align*}
where $\gamma, \eta > 0$ are constants and $\alpha_0 = c_4 \hat{\alpha_0}$ ($c_4 > 0$ is also a constant).

The operator $L$ in \eqref{switch-gen} is given by \cite[(2.5a)]{Lorenz}:
\begin{equation}\label{lorenz-L}
    L = y\partial_x + [x(z - 2) - 2y]\partial_y -[\gamma (z - z_*) + x(x + \eta y)]\partial_z + \frac{\alpha_0^2}{2}\partial_z^2 \,.
\end{equation}
As in \cite[(4.11)]{Lorenz}, we define 
$$
U(x, y, z) := \tilde{U}(X,Y,Z) := \exp(\frac{\beta}{2\hat{\alpha_0}^2}(X^2 + Y^2 + (Z - \sigma - \rho)^2)) 
$$
and we remark that the notation $V_1,\tilde{V_1}$ was used instead of $U,\tilde{U}$ in 
\cite{Lorenz}. By straightforward calculations (see \cite[(4.14)]{Lorenz}) we have $LU \leq K - c(1 + x^2 + y^2 + z^2)U$ for some constants $K, c > 0$.

\begin{rem}\label{indep-of-alpha}
    In fact, using $\tilde{U}$ with $\frac{\beta}{2\hat{\alpha_0}^2}$ replaced by a constant $A$ shows that for fixed $\sigma, \beta$ there are constants $A,K,c,p > 0$ for which $L\tilde{U} \leq K - c(1 + x^2 + y^2 + z^2)\tilde{U}$ as long as $\rho < 1$ and $\alpha_0 \leq p$.
\end{rem}

By standard arguments $\inv, \mcM_0$ are invariant, and consequently
 \Cref{switch-is-feller} with $W = \tilde{U}$ implies that 
 $(\mcM, \mcM_0, \inv, \{(x_t,y_t,z_t)\}_{t \geq 0})$ is a Feller quadruple (see \Cref{quadruple}). In particular, \Cref{as1}--\ref{as2} are satisfied. 
 
Also, \cite[(2.3)]{Lorenz} introduces a ``cylindrical" change of variables via the map $\pi: S^1 \times [0,\infty) \times \R \to \R^3$ given by $\pi(\theta, R, z) = (R\sin \theta, R(\cos \theta - \sin \theta), z)$, where $S^1$ is viewed here as $\R / 2\pi\mathbb{Z}$. With $\mathcal{N} = S^1 \times [0,\infty) \times \R$, $\mathcal{N}_0 = S^1 \times \{0\} \times \R$, and $\mathcal{N}_+ = \mathcal{N} \setminus \mathcal{N}_0$ we see that $\pi$ is a quadruple map (see \Cref{quadruple-map}) between the Feller quadruples $(\ncN, \ncN_0, \ncN_+, \{(\theta_t, R_t, z_t)\}_{t \geq 0})$ and $(\mcM, \mcM_0, \inv, \{(x_t,y_t,z_t)\}_{t \geq 0})$ (we suppress superscripts for the initial condition), where the dynamics of $(\theta_t, R_t, z_t)$ are governed by the SDE \begin{equation}\label{theta-R-z}\begin{aligned}
    d\theta &= [1 - z\sin^2 \theta] dt \\
    dR &= R[-1 + \frac{z}{2}\sin (2\theta)]dt \\
    dz &= -[\gamma (z - z_*) + R^2 \sin \theta(\sin \theta + \eta (\cos \theta - \sin \theta))]dt + \alpha_0 dW
\end{aligned}\end{equation}
We define $V: \inv \to \R$ by $V(x,y,z) = -\frac{1}{2}\log(x^2 + (x+y)^2) = -\log{R}$ and 
note that such $V$ satisfies \Cref{as4}\ref{4.1}. Since
$LV \circ \pi = -\frac{dR}{R} = 1 - \frac{z}{2}\sin(2\theta)$ and $\Gamma V = 0$, then \Cref{as3}, \Cref{as4} \ref{4.2}, and \Cref{as5} follow from \Cref{cts-paths} with our fixed $V$ and $\UU = U$. Also, $\mathcal{L} V \circ \pi$ extends continuously to the function $H(\theta, R, z) = 1 - \frac{z}{2}\sin(2\theta)$ on $\ncN$.

\begin{rem}\label{indep-of-alpha2}
In fact, by \Cref{indep-of-alpha} and the definitions of $W,W',K,U,U'$ in the proof of \Cref{cts-paths}, it follows that \Cref{as3} and \Cref{as5} \ref{5.1}-\ref{5.3} are satisfied using the same $W,W',K,U,U'$ (independent of $\alpha_0$) as long as $\alpha_0$ is small enough.
\end{rem}
 
Concerning the accessibility, \eqref{acc} is satisfied (see \Cref{suff-acc}), 
by \cite[Proposition 3.3]{Lorenz} and the Stroock-Varhadan support theorem \cite{SVSupport}.

\begin{rem}
    It is not hard to show that \eqref{acc} is satisfied, even without using \cite[Proposition 3.3]{Lorenz}. In the notation of \cite[Proposition 3.3]{Lorenz}, we only need $|(x(T),y(T))| < \epsilon$ and $|z(T)| \leq M$ for some $M, T$ that may depend on $(x_0,y_0,z_0)$ but not on $\epsilon > 0$. Finding a control $h$ so that such a condition holds is not difficult since we can choose a fixed $z$ so that the eigenvalues of
    $\begin{bmatrix}
0 & 1 \\
z - 2 & -2 
 \end{bmatrix}  $ have negative real parts and then have $h$ force $z(t)$ to eventually be equal to $z$.
\end{rem}

Finally, we focus on the assumption \eqref{as4.3-sub} from \Cref{change-of-variables} and recall that $H(\theta, R, z) = 1 - \frac{z}{2}\sin(2\theta)$. 
Since for $\alpha_0 > 0$ the set $P_{inv}(\ncN_0)$ consists of a single measure $\mu_{\alpha_0}$ (see the discussion preceding \cite[Theorem 4.1]{Lorenz}), 
the assumption $\mu H \geq \alpha > 0$ for all $\mu \in P_{inv}(\ncN_0)$ is equivalent to 
\begin{equation}\label{lorenz-condition}
   -\lambda_{\alpha_0} \coloneqq \int_{S^1 \times \R} 1 - \frac{z}{2}\sin(2\theta) d\mu_{\alpha_0}(\theta, z) > 0 \,.
\end{equation}
Note that this is precisely the condition considered in \cite[Theorem 4.1]{Lorenz}. 

Thus, assuming \eqref{lorenz-condition},  \Cref{change-of-variables} and \Cref{main3} imply
$$\Prb\Big(\limsup_{t \to \infty} \frac{\frac{1}{2}\log(x_t^2 + (x_t+y_t)^2)}{t} \leq \lambda_{\alpha_0} < 0\Big) = 1$$ for any initial condition in $\mcM_+$.

 If $\alpha_0 = 0$ then  all invariant measures $\mu_0$ on $\ncN_0$ have $z$ constantly equal to $z_*$ and, analyzing the eigenvalues of $ \begin{bmatrix}
0 & 1 \\
z - 2 & -2 \\
 
\end{bmatrix}  $, $$\lambda_0 \coloneqq -\inf_{\mu_0 \in P_{inv}(\ncN_0)} \mu_0 H = \begin{cases}
   \sqrt{z_* - 1} - 1       & \text{if } z_* > 1 \\
   -1       & \text{otherwise} 
  \end{cases} \,.$$
 Fixing $R_t \equiv 0$ and $\alpha_t \equiv \alpha_0$ and noting that the coefficients of \eqref{theta-R-z} are locally Lipschitz in $(\theta,z,\alpha_0)$, we may use standard arguments similar to those in \Cref{switch-is-feller} to verify the Feller property (\Cref{as2}) for the Markov process $(\theta_t,0,z_t,\alpha_t)$. Then it follows from \Cref{indep-of-alpha2} that \Cref{robust} applies to the Markov Processes $(\theta_t, 0, z_t)$ on $\ncN_0$ for $\alpha_0 \in \Theta \coloneqq [0,p]$ (where $p$ is as in \Cref{indep-of-alpha}), and we conclude that for $\rho < 1$ (which implies $z_* < 2$, see \cite[page 5]{Lorenz}) we have  $\limsup_{\alpha_0 \downarrow 0} \lambda_{\alpha_0} \leq \lambda_0 < 0$,
and therefore \eqref{lorenz-condition} is satisfied for all sufficiently small $\alpha_0$.
In particular we recover the following half of \cite[Theorem 1.1]{Lorenz}.

\begin{thm}
    If $\rho < 1$ then there is a constant $\alpha_* > 0$ (depending on $\sigma, \beta$) such that if $0 \leq \hat{\alpha_0} < \alpha_*$ then, regardless of initial condition, $(x_t,y_t) \to 0$ exponentially fast almost surely.
\end{thm}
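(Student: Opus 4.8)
The plan is essentially to assemble the ingredients prepared in the preceding discussion and to invoke \Cref{robust} to pass from the deterministic system ($\hat\alpha_0=0$) to small noise. Recall that for every value of the noise strength the text above already verifies \Cref{as1}--\ref{as5} (with \eqref{as4.3-sub} still to be checked in place of \Cref{as4}\ref{4.3}) for the blown-up Feller quadruple $(\ncN,\ncN_0,\ncN_+,\{(\theta_t,R_t,z_t)\}_{t\ge0})$, with the quadruple map $\pi(\theta,R,z)=(R\sin\theta,R(\cos\theta-\sin\theta),z)$ and the average Lyapunov function $V(x,y,z)=-\tfrac12\log(x^2+(x+y)^2)=-\log R$, that $\mathcal{L}V\circ\pi$ extends continuously to $H(\theta,R,z)=1-\tfrac z2\sin(2\theta)$, and that the accessibility hypothesis \eqref{acc} holds. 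Hence by \Cref{change-of-variables} and \Cref{main3} it suffices to establish that \eqref{lorenz-condition} holds --- i.e. $\inf_{\mu\in P_{inv}(\ncN_0)}\mu H>0$ --- once $\hat\alpha_0$ is small.

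For the endpoint $\alpha_0=0$ I would argue directly: every invariant measure on $\ncN_0$ has its $z$-coordinate frozen at $z_*$, so on $\ncN_0$ the process of \eqref{theta-R-z} with $R\equiv0$ is conjugate to the linear planar flow with matrix $\bigl[\begin{smallmatrix}0&1\\z_*-2&-2\end{smallmatrix}\bigr]$, whose eigenvalues are $-1\pm\sqrt{z_*-1}$; since $\rho<1$ forces $z_*<2$, the largest real part is $\sqrt{z_*-1}-1<0$ when $z_*>1$ and $-1$ when $z_*\le1$, so $\lambda_0:=-\inf_{\mu_0\in P_{inv}(\ncN_0)}\mu_0 H<0$. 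The main step is then the robustness argument: by \Cref{indep-of-alpha2} the Lyapunov data $W,W',K,U,U'$ of \Cref{as3} and \Cref{as5}\ref{5.1}-\ref{5.3} may be chosen independently of $\alpha_0$ for small $\alpha_0$, and the Feller property of the limiting process at $\alpha_0=0$ (together with the joint continuity in $(\theta,z,\alpha_0)$ needed for \Cref{as2} of the lifted process on $\Theta\times\ncN_0$) follows from the local Lipschitz continuity of the coefficients of \eqref{theta-R-z} by the standard argument behind \Cref{switch-is-feller}; thus \Cref{robust} applies to the family $(\theta_t,0,z_t)$ on $\ncN_0$, $\alpha_0\in\Theta:=[0,p]$. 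Since $H$ is continuous and grows only linearly in $z$ it vanishes over $W'$, so \Cref{robust} gives lower semicontinuity of $\alpha_0\mapsto\inf_{\mu\in P^{\alpha_0}_{inv}(\ncN_0)}\mu H$; as $P_{inv}(\ncN_0)=\{\mu_{\alpha_0}\}$ is a singleton for $\alpha_0>0$, this reads $\limsup_{\alpha_0\downarrow0}\lambda_{\alpha_0}\le\lambda_0<0$, whence $\lambda_{\alpha_0}<0$ (equivalently \eqref{lorenz-condition}) for all $\alpha_0\in[0,\alpha_0^\ast)$ for some $\alpha_0^\ast>0$.

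To conclude, for such $\alpha_0$ the consequence of \Cref{change-of-variables} and \Cref{main3} already recorded above gives, for every initial condition in $\inv$, $\Prb\bigl(\limsup_{t\to\infty}\tfrac1{2t}\log(x_t^2+(x_t+y_t)^2)\le\lambda_{\alpha_0}<0\bigr)=1$ --- hence exponential decay of $(x_t,y_t)$, using \Cref{as4}\ref{4.1} --- while on $\mcM_0$ the process remains at $x=y=0$. Undoing the linear time change and change of variables $(x_t,y_t,z_t)=(c_1X_{\chi t},c_2(Y_{\chi t}-X_{\chi t}),z_*-c_3Z_{\chi t})$ and setting $\alpha_*:=\alpha_0^\ast/c_4$ translates this into exponential convergence $(X_t,Y_t)\to0$ whenever $\hat\alpha_0<\alpha_*$, for all initial data. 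The genuinely delicate point is the lower semicontinuity of the Lyapunov exponent as $\alpha_0\downarrow0$: the invariant measures $\mu_{\alpha_0}$ on the cylinder are not explicit, so there is no hope of estimating $\lambda_{\alpha_0}$ by hand, and one must instead rely on \Cref{robust} together with the $\alpha_0$-uniform Lyapunov bounds of \Cref{indep-of-alpha}/\Cref{indep-of-alpha2} --- in particular verifying that the $\alpha_0=0$ process on $\ncN_0$ really is the $\alpha_0\to0$ limit in the topology (convergence of semigroups uniformly on compacts) required by \Cref{robust}.
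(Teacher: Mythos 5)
Your proposal is correct and follows essentially the same route as the paper: verify the hypotheses for the blown-up cylindrical quadruple with $V=-\log R$ and $H(\theta,R,z)=1-\tfrac z2\sin(2\theta)$, compute $\lambda_0<0$ at $\alpha_0=0$ from the eigenvalues of $\bigl[\begin{smallmatrix}0&1\\z_*-2&-2\end{smallmatrix}\bigr]$ using $z_*<2$, and invoke \Cref{robust} with the $\alpha_0$-uniform Lyapunov data of \Cref{indep-of-alpha2} to get $\limsup_{\alpha_0\downarrow 0}\lambda_{\alpha_0}\le\lambda_0<0$, then conclude via \Cref{change-of-variables} and \Cref{main3}. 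No gaps to report.
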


\begin{rem}
Using the exact same average Lyapunov function $V$, the second half of \cite[Theorem 1.1]{Lorenz} (persistence) follows from \cite{persistence}, \cite[Theorem 3.1]{Lorenz}, and \cite[Theorem 5.2]{Lorenz}. This shows that the only the construction of $V_1$ 
in \cite[Section 4]{Lorenz} is necessary for proving their main theorems. In particular,   there is no need to construct $V_0$.
\end{rem}

\subsection{Discrete Time Ecological Models} \label{example-ecological-discrete}

In this section we consider a general discrete-time ecological model discussed in \cite[(2.1)]{ecologicalGeneral}
\begin{equation}\label{eq:dtem}
\begin{aligned}
    X_i(t+1) &= X_i(t)F_i(Z(t),\xi(t)), \quad i = 1,\dots,n \\
    Y(t+1) &= G(Z(t),\xi(t)) \,.
\end{aligned}
\end{equation}

Here $\mcM = [0,\infty)^n \times \R^{\kappa_0}$, where each coordinate of $X = (X_1, \cdots, X_n) \in [0,\infty)^n$ represents the population of some species and $Y \in \R^{\kappa_0}$ represents auxiliary variables such as ``eco-environmental feedbacks, forcing, the structure of each species or other factors" \cite{ecologicalGeneral}. The stochasticity is represented by iid random variables $\{\xi(t)\}_{t \in \N}$ taking values in a Polish space $\Xi$ which models the state of the environment on the time interval $[t,t+1)$. The functions $F: \mcM \times \Xi \to (0,\infty)^n$ and $G: \mcM \times \Xi \to \R^{\kappa_0}$ are measurable in $(z,\xi) \in \mcM \times \Xi$ and continuous in $z$ for every fixed $\xi$. 

We also assume the existence of a proper function $\Upsilon: \mcM \to [1,\infty)$ and constants $\epsilon > 0, \rho \in (0,1), C > 0$ such that $\Pp \Upsilon \leq \rho^2 \Upsilon + C^2$ and \begin{equation}\label{eco-assumptions}
    \sum_{i=1}^n \E[|\log F_i(z, \xi(0))|^{2 + \epsilon}] \lesssim \Upsilon(z)^{\frac{1}{4}} \leq \Upsilon(z)^{\frac{1}{2}} \,,
\end{equation}
where the last inequality follows since $\Upsilon \geq 1$.

\begin{rem}
Compared to \cite{ecologicalGeneral}, we do not need to assume \cite[Assumption 2.2]{ecologicalGeneral}. Also, the existence of our $\Upsilon$ is weaker than \cite[Assumption 2.1 A3]{ecologicalGeneral} since for any fixed $\gamma > 0$, $|\log x|^4 \lesssim (x \vee \frac{1}{x})^\gamma$ for all $x > 0$.
\end{rem}
\begin{rem}
    By using more delicate arguments in \Cref{discretization} \ref{upsilon}, it suffices to assume $\E[|\log F_i(z, \xi(0))|^{2 + \epsilon}] \lesssim \Upsilon(z)^{\gamma}$ for some $\gamma < \frac{1}{2}$, but we choose $\gamma = \frac{1}{4}$ for ease and concreteness. Also, for \Cref{ecological-discrete-1} it is enough to take $\gamma < 1$.
\end{rem}

For $I \subset \{1,\dots,n\}$ denote 
\begin{align*}
    \mcM^I &\coloneqq \{(x,y) \in \mcM \mid x_i = 0 \quad \forall i \notin I\} \\
    \mcM_0^I &\coloneqq \{(x,y) \in \mcM^I \mid \exists i \in I \text{ such that } x_i = 0\} \\
    \mcM_+^I &\coloneqq \mcM^I \setminus \mcM_0^I
\end{align*}
When $I = \{1,\dots,n\}$ we suppress the superscript and write 
$\mcM$ and $\mcM_0$ instead of $\mcM^I$ and $\mcM^I_0$. Note that  $\mcM_0$ is then the set of states where at least one species is extinct. It is standard to verify (continuity of $F_i$ and the dominated convergence theorem) that \Cref{discretization} \ref{discrete-as1} and \ref{discrete-as2} are satisfied for $(\mcM,\mcM_0,\inv,\{Z(t)\}_{t \in \N})$ so that $(\mcM,\mcM_0,\inv,\{Z_t\}_{t \geq 0})$ is a Feller quadruple (see \Cref{feller-quadruple}).

\begin{rem}\label{notation-discrete}
    We slightly abuse notation by using notation $\{Z_t\}_{t \geq 0}$ also for the continuous-time Markov process $Z_t \coloneqq Z(N(t))$, where $N: [0,\infty) \to \N$ is a Poisson process. We keep this notation for the rest of this section, using $\cdot (t)$ for discrete-time Markov chains and $\cdot_t \coloneqq \cdot(N(t))$ for the continuous-time counterpart.
\end{rem}

For $i \in \{1,\dots,n\}$ we consider the functions $V_i: \inv \to \R$ defined by $V_i(x,y) = -\log{x_i}$. For any $(x,y) = z \in \inv$ we have 
\begin{equation}\label{eq:dovi}
V_i(Z(1)) - V_i(z) = -\log{x_iF_i(z,\xi(0))} + \log{x_i} = -\log{F_i(z,\xi(0))}
\end{equation}
and then by the definition of $\mathcal{L}$ in \eqref{discrete-gen} and our assumption \eqref{eco-assumptions}, it holds that 
\begin{equation}\label{V_i}
    \begin{aligned}
        \mathcal{L}V_i(z) &= -\E[\log F_i(z,\xi(0))] \\
        \E[|V_i(Z(1)) - V_i(z)|^{2 + \epsilon}] &\lesssim \Upsilon(z)^{1/4}
    \end{aligned}
\end{equation}
Recall that $F_i > 0$, and therefore 
$\log(F_i(z, \xi(0)))$ is well defined. 
Also, $\mathcal{L}V_i$ extends to the continuous function $H_i(z) \coloneqq -\E[\log F_i(z,\xi(0))]$ on $\mcM$, where the continuity follows from \eqref{eco-assumptions}, \Cref{generalized-DCT}, and \Cref{as2} (the Feller property requires continuity of $z \mapsto \mu^z_Z$, where $\mu^z_Z$ is the law of $Z_1^z$). Similarly, we define $\tilde{V}_i: \inv \to [0,\infty)$ by $\tilde{V}_i \coloneqq V_i \vee 0$. Since $a \mapsto a \vee 0$ is 1-Lipschitz, by \eqref{V_i}
\begin{equation} \label{V-tilde-bound}
    \E[|\tilde{V}_i(Z(1)) - \tilde{V}_i(z)|^{2 + \epsilon}] \lesssim \Upsilon(z)^{1/4} 
\end{equation}
and, as above, 
 $\mathcal{L}\tilde{V}_i$ extends to a continuous function $\tilde{H}_i$ on $\mcM$. Explicitly, since $\mathcal{L}\tilde{V_i}(z)$ is given by $-\E[\log F_i(z,\xi(0))] + \E[(-\log x_iF_i(z,\xi(0))) \wedge 0] - \E[(-\log x_i) \wedge 0]$, $\tilde{H}_i(z)$ can be defined by formally setting $(-\log 0) \wedge 0 = 0$, so in particular $\tilde{H}_i$ agrees with $H_i$ on $\{x_i = 0\}$.

Let us first focus on the case $I = \{1, \cdots, n\}$. 
If $p_i > 0$ are constants and $V \coloneqq \sum_{i=1}^n p_i V_i$, then by \eqref{V_i} and \Cref{discretization} \ref{upsilon} we have that \Cref{as3}, \Cref{as4} \ref{4.2}, and \Cref{as5} hold for $(\mcM,\mcM_0,\inv,\{Z_t\}_{t \geq 0})$. In addition, \Cref{as4}\ref{4.1} follows from the definition of $V$.

Next, we provide sufficient condition for \Cref{as4}\ref{4.3} to hold.
For ergodic $\mu \in P_{inv}(\mcM)$, \cite[(2.3)]{ecologicalGeneral} defines the ``expected per-capita growth rate of species i" 
\begin{equation}\label{invasion}
     r_i(\mu) \coloneqq -\mu H_i = -\mu \tilde{H}_i \,,
\end{equation} 
which is well defined by \Cref{inv-is-compact}. The second equality follows from \cite[Proposition 2.1]{ecologicalGeneral} and the fact that $\tilde{H}_i$ agrees with $H_i$ on $\{x_i = 0\}$. The following theorem (\cite[Theorem 2.4]{ecologicalGeneral}) is an immediate consequence of \Cref{discretization} \ref{disc-last} and \Cref{main3}:

\begin{thm}\label{ecological-discrete-1}
    Suppose there exist $p_i > 0$ such that for all ergodic $\mu \in P_{inv}(\mcM_0)$ 
    $$
    \sum_{i=1}^n p_ir_i(\mu) < 0 \,,
    $$ 
    so by \Cref{inv-is-compact} $V$ satisfies \Cref{as4}\ref{4.3}. 
    If $\mcM_0$ is accessible in the sense of \eqref{acc} (see \Cref{discretization} \ref{disc-acc}), then there exists an $\alpha > 0$ such that for all initial conditions $z \in \inv$ $$\Prb\Big(\limsup_{t \to \infty}\frac{\log \min_{1 \leq i \leq n} X_i^z(t)}{t} \leq -\alpha \Big) = 1 \,.$$
\end{thm}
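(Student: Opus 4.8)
The plan is to verify the hypotheses of \Cref{main3} for the Feller quadruple $(\mcM,\mcM_0,\inv,\{Z_t\}_{t\geq 0})$ with the average Lyapunov function $V=\sum_{i=1}^n p_iV_i$, and then translate the continuous-time conclusion back to the discrete-time chain via \Cref{discretization}\ref{disc-last}. First I would recall that \Cref{discretization}\ref{discrete-as1}--\ref{discrete-as2} were already checked above, so $(\mcM,\mcM_0,\inv,\{Z_t\}_{t\geq 0})$ is a Feller quadruple and \Cref{as1}--\ref{as2} hold. Next, using \eqref{eco-assumptions} together with \Cref{discretization}\ref{upsilon} (applied with $U=\sqrt{\Upsilon}$, $U'=(1-\rho)U$, $W=\Upsilon^{1/4}$, $W'=(1-\sqrt{\rho})W$), I would conclude that \Cref{as3}, \Cref{as4}\ref{4.2}, and \Cref{as5} hold for $\{Z_t\}_{t\geq 0}$; here one notes that each $V_i$ satisfies the moment bound in \eqref{V_i}, hence so does the finite positive combination $V$, and $\Upsilon$ is proper with $\Pp\Upsilon\leq\rho^2\Upsilon+C^2$ by assumption.

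The remaining two hypotheses of \Cref{main3} are \Cref{as4}\ref{4.1} and \Cref{as4}\ref{4.3}, plus the accessibility condition \eqref{acc}. Property \ref{4.1} is immediate from $V(x,y)=-\sum_i p_i\log x_i$: if $V(z_n)\to\infty$ with $z_n=(x^{(n)},y^{(n)})\in\inv$, then $\min_i x^{(n)}_i\to 0$, so $d(z_n,\mcM_0)\to 0$. For \ref{4.3}, I would use that $\mathcal{L}V$ extends to the continuous function $H=\sum_i p_iH_i$ on $\mcM$ (continuity of each $H_i$ was established above via \Cref{generalized-DCT} and \Cref{as2}); then by \Cref{inv-is-compact} it suffices to check $\mu H>0$ for every ergodic $\mu\in P_{inv}(\mcM_0)$, and by definition \eqref{invasion} of the per-capita growth rates this says exactly $\sum_i p_i r_i(\mu)=-\mu H<0$, which is precisely the hypothesis of the theorem. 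Thus $\alpha\coloneqq\inf_{\mu\in P_{inv}(\mcM_0)}\mu H>0$ exists. Accessibility \eqref{acc} is assumed outright (and \Cref{discretization}\ref{disc-acc} shows it is equivalent to the natural discrete-time accessibility). The condition $\mcM=\inv\cup\mcM_0$ required in \Cref{main3} holds here since $\mcM^c_0\cap\mcM=\inv$ when $I=\{1,\dots,n\}$.

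With all hypotheses of \Cref{main3} in force for $\{Z_t\}_{t\geq 0}$, that theorem yields $\Prb\big(\liminf_{t\to\infty}V(Z_t^z)/t\geq\alpha\big)=1$ for every $z\in\inv$. Finally, I would apply \Cref{discretization}\ref{disc-last}: since \Cref{as3}--\ref{as5} hold for the continuous-time process, \Cref{main3} also holds for the discrete chain $Z(t)$ with $t$ in place of the continuous time, giving $\liminf_{t\to\infty}V(Z^z(t))/t\geq\alpha$ a.s. Since $V(z)=-\sum_i p_i\log x_i\leq -(\min_i p_i)\log\min_i x_i$ on $\inv$, rearranging gives $\limsup_{t\to\infty}\frac{\log\min_{1\leq i\leq n}X_i^z(t)}{t}\leq -\alpha/(\sum_i p_i)$ after relabeling $\alpha$; to match the stated conclusion one simply replaces $\alpha$ by $\alpha/\sum_i p_i$. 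The main obstacle, though it is largely bookkeeping given the machinery already assembled, is making sure the moment/growth hypotheses feeding into \Cref{discretization}\ref{upsilon} are applied to the combined function $V$ rather than the individual $V_i$ — this uses only that a finite positive linear combination preserves the bound $\E[|V(Z(1))-V(z)|^{2+\epsilon}]\lesssim\Upsilon(z)^{1/4}$ via the triangle inequality in $L^{2+\epsilon}$ — and ensuring the identification $r_i(\mu)=-\mu H_i$ via \eqref{invasion} is used in the reduction of \ref{4.3}.
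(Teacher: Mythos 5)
Your proposal is correct and follows essentially the same route as the paper, which derives the theorem directly from \Cref{main3} and \Cref{discretization}\ref{disc-last} after the verification of \Cref{as1}--\ref{as5} for $V=\sum_i p_iV_i$ carried out in the surrounding text. The only blemish is the intermediate inequality $V(z)\leq -(\min_i p_i)\log\min_i x_i$, which should read $V(z)\leq -(\sum_i p_i)\log\min_i x_i$ (since $\sum_i p_i a_i\leq(\sum_i p_i)\max_i a_i$); your final constant $\alpha/\sum_i p_i$ is the right one, and since the theorem only asserts existence of some $\alpha>0$ this does not affect the argument.
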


\begin{rem}
    Note that the definition of accessibility in  \cite[pages 11-12]{ecologicalGeneral} implies \eqref{acc} as detailed in \Cref{suff-acc}.
\end{rem}

Next we focus on the case $I \subsetneq \{1,\dots,n\}$ and prove the assertions of \cite[Theorem 2.5]{ecologicalGeneral}. Note that as remarked in \cite{ecologicalGeneral}, by using \cite{ecologicalContinuous} it is possible to derive the (nontrivial) corollaries  \cite[Theorem 2.6, Theorem 2.7]{ecologicalGeneral}.

\begin{thm}\label{discrete-eco-thm-2}
    Suppose $I \subsetneq \{1,\dots,n\}$ is such that
    \begin{enumerate}[label=(\roman*)]
        \item \label{M+I} $P_{inv}(\mcM_+^I)$  is nonempty and all ergodic measures $\mu \in P_{inv}(\mcM_+^I)$ satisfy $\max_{i \notin I} r_i(\mu) < 0$.
        \item \label{M0I} For all $\nu \in P_{inv}(\mcM_0^I)$ (not necessarily ergodic), $\max_{i \in I} r_i(\nu) > 0$.
    \end{enumerate}
  Then there exists $\alpha_I > 0$ such that for any compact set $\K_I \subset \inv^I$, $$\lim_{z \to \K_I, z \in \inv} \Prb\Big(\limsup_{t \to \infty} \frac{\log \max_{i \notin I} X_i^z(t)}{t} \leq -\alpha_I\Big) = 1\,.$$
\end{thm}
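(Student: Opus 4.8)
The plan is to deduce this from \Cref{main2} (through \Cref{change-of-variables} and \Cref{robust}), applied to a Markov quadruple that ``forgets'' the faces where a species of $I$ dies. Put $k := n-|I|\ge 1$ and $\mcM_{>0}^I := \{(x,y)\in\mcM : x_i>0 \text{ for all } i\in I\}$; this set is open in $\mcM$ (hence a locally compact Polish space for a suitable complete metric) and invariant, since $X_i(t+1)=X_i(t)F_i>0$ when $X_i(t)>0$. Inside $\mcM_{>0}^I$ the set $\mcM_+^I$ is closed and invariant, and $\inv=\mcM_+$ is open and dense in $\mcM_{>0}^I$ and disjoint from $\mcM_+^I$, so $(\mcM_{>0}^I,\mcM_+^I,\mcM_+,\{Z_t\}_{t\ge0})$ is a Markov quadruple; \Cref{as1} and \Cref{as2} hold as in \Cref{example-ecological-discrete}. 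The extinction set is now exactly $\mcM_+^I$: convergence to it means the species outside $I$ die while those in $I$ survive.

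For the average Lyapunov function I would use the $\ell^p$-radius of the block $(x_i)_{i\notin I}$: for a parameter $p\in(0,1)$ set $r_p(z):=(\sum_{i\notin I}x_i^p)^{1/p}$ and $V_p:=-\log r_p$ on $\inv$. Then $V_p(z_n)\to\infty$ iff $\max_{i\notin I}x_i(z_n)\to0$, and since zeroing the $I^c$-coordinates of such a $z_n$ produces a point of $\mcM_+^I$ that is close to $z_n$ (in a complete metric of $\mcM_{>0}^I$ that blows up only along directions in which $z_n$ agrees with its projection), \Cref{as4} \ref{4.1} holds. Since $r_p\ge\max_{i\notin I}x_i$, the bound $\liminf_t V_p(Z_t)/t\ge\alpha$ forces $\limsup_t\frac{\log\max_{i\notin I}X_i(t)}{t}\le-\alpha$, i.e.\ the target conclusion with $\alpha_I=\alpha$. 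Moreover $|V_p(Z(1))-V_p(z)|\le\max_{i\notin I}|\log F_i(z,\xi(0))|$ by the weighted power-mean inequality, so \eqref{eco-assumptions} gives $\E[|V_p(Z(1))-V_p(z)|^{2+\epsilon}]\lesssim\Upsilon(z)^{1/4}$ with a constant independent of $p$. For \Cref{as3} I would take $W$ to be a combination of a small power of the given $\Upsilon$ (to control spatial infinity, as in the proof of \Cref{discretization} \ref{upsilon}) with a persistence Lyapunov function for the $I$-subsystem, the latter supplied by hypothesis \ref{M0I} together with the persistence theory of \cite{persistence} --- condition \ref{M0I} is precisely the invasion criterion making $\mcM_0^I$ repelling for $\{Z_t\}$ on $\mcM^I$ --- so that $W$ is proper on $\mcM_{>0}^I$; together with the moment bounds this yields \Cref{as3}, \Cref{as5}, and \Cref{as4} \ref{4.1}-\ref{4.2} by the routine checks of \Cref{special-generator} and \Cref{discretization}.

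The heart of the argument is \Cref{as4} \ref{4.3}, obtained via \Cref{change-of-variables} and \Cref{robust}. For $p\in(0,1)$ blow up the corner of the $I^c$-orthant: let $\Delta_p:=\{v\in[0,\infty)^{I^c}:\sum_{i\notin I}v_i^p=1\}$, $\ncN^p:=\Delta_p\times[0,\infty)\times\{(x_i)_{i\in I}:x_i>0\}\times\R^{\kappa_0}$, $\ncN_0^p:=\Delta_p\times\{0\}\times\cdots$, and $\pi_p(v,r,x_I,y):=((rv_i)_{i\notin I},x_I,y)$. One checks that $\pi_p$ is a quadruple map onto $(\mcM_{>0}^I,\mcM_+^I,\mcM_+,\{Z_t\})$ and that $\mathcal{L}V_p\circ\pi_p$ extends continuously across $\{r=0\}$ to $H_p(v,0,x_I,y)=\E[-\tfrac1p\log\sum_{i\notin I}v_i^pF_i((0)_{I^c},x_I,y,\xi)^p]$. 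Now let $p\downarrow0$: $\Delta_p$ collapses in Hausdorff distance onto the vertices $\{e_j:j\notin I\}$, and after a common $p$-dependent reparametrization of $\Delta_p$ the processes on $\ncN_0^p$ converge to a limit process $Y^0$ on $k$ disjoint copies of $\mcM_+^I$ in which the vertex label $e_j$ is frozen and the $(x_I,y)$-component runs the $I$-subsystem, with $H_p\to H_0$ locally uniformly and $H_0(e_j,0,x_I,y)=-\E[\log F_j((0)_{I^c},x_I,y,\xi)]=H_j$. Hence $\inf_{\mu\in P_{inv}(\ncN_0^0)}\mu H_0=\min_{j\notin I}\inf_{\nu\in P_{inv}(\mcM_+^I)}\nu H_j=-\max_{j\notin I}\sup_{\nu\in P_{inv}(\mcM_+^I)}r_j(\nu)$, which is strictly positive by hypothesis \ref{M+I} (it gives $\nu H_j=-r_j(\nu)>0$ for every $j\notin I$ and every ergodic $\nu$; pass to all $\nu$ by ergodic decomposition and get a positive infimum from \Cref{inv-is-compact} and continuity of $\nu\mapsto\nu H_j$). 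Applying \Cref{robust} with $\Theta=[0,p_0]$ to the reparametrized processes on the common state space --- having checked the joint Feller/semigroup-convergence hypothesis and $p$-uniform versions of \Cref{as3} and \Cref{as5} \ref{5.1}-\ref{5.3}, which is legitimate since $W,W',U,U'$ are built only from $\Upsilon$, the $I$-persistence function, and the compact factor $\Delta$ --- we conclude $\alpha_p:=\inf_{\mu\in P_{inv}^p(\ncN_0^p)}\mu H_p>0$ for all small $p$; fix such a $p$.

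With \eqref{as4.3-sub} in hand, \Cref{change-of-variables} and \Cref{main2} give, for every $M,\delta>0$, a $D>0$ such that $\Prb(\liminf_t V_p(Z_t^z)/t\ge\alpha_p)\ge1-\delta$ for all $z\in\inv\cap\{V_p\ge D\}\cap\{W\le M\}$. Given a compact $\K_I\subset\mcM_+^I$, it stays away from $\mcM_0^I$ and from spatial infinity, so $W<M$ on $\K_I$ for some $M$; and $V_p$ is continuous on $\inv$ with $V_p\to\infty$ at $\mcM_+^I$, so $V_p>D$ on a neighbourhood of $\K_I$ in $\inv$. Thus for $z$ close enough to $\K_I$ the probability bound applies, and $r_p\ge\max_{i\notin I}x_i$ turns it into $\Prb(\limsup_t\tfrac{\log\max_{i\notin I}X_i^z(t)}{t}\le-\alpha_p)\ge1-\delta$; letting $\delta\downarrow0$ yields the claim with $\alpha_I=\alpha_p$. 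The main obstacle is precisely the $p\downarrow0$ step just described --- rigorously identifying the limiting dynamics $Y^0$ and the notion of convergence under which the hypotheses of \Cref{robust} hold, together with the $p$-uniform Lyapunov estimates; this is the part the introduction flags as genuinely nontrivial.
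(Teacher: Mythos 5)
Your overall architecture (the blow-up $\pi_p$ of the $I^c$-block, the $p\downarrow 0$ limit identified via \Cref{robust}, then \Cref{change-of-variables} and \Cref{main2}) is the same as the paper's, and the final deduction from $r_p\geq\max_{i\notin I}x_i$ is fine. But there is a genuine gap in how you use hypothesis \ref{M0I}. You restrict the state space to $\mcM_{>0}^I$ and take the extinction set to be $\mcM_+^I$, which forces \Cref{as3} to hold \emph{on that open set}: you need a proper $W$ on $\mcM_{>0}^I$, i.e.\ one that blows up both at spatial infinity and as $\min_{i\in I}x_i\to 0$, satisfying the one-step drift inequality $\Pp W - W\leq K - W'$ with $W'$ also proper there. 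You assert this is ``supplied by hypothesis \ref{M0I} together with the persistence theory of \cite{persistence},'' but that theory does not hand you such a function: converting the average (invasion-rate) condition \ref{M0I} into a genuine Lyapunov function with a single-step drift bound and a proper $W'$ is exactly the hard ``$e^{cV}$-type'' construction that the introduction of this paper identifies as the obstacle its framework is built to avoid. Without that $W$, essentially everything downstream in your argument collapses: $\{W\leq M\}$ is no longer compact in the new space, \Cref{inv-is-compact} no longer gives compactness of $P_{inv}(\mcM_+^I)$ (so your claimed positive infimum $-\max_{j\notin I}\sup_{\nu}r_j(\nu)>0$ from \ref{M+I} is not justified, since invariant mass could leak to $\mcM_0^I$), and the hypotheses of \Cref{robust} and \Cref{main2} fail.

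The paper circumvents this entirely by a different decomposition. It keeps the full space $\mcM$ with $W=\Upsilon^{1/4}$ and declares the extinction set to be the whole face $\mcM^I$ (including $\mcM_0^I$). Hypothesis \ref{M0I} then enters through the \emph{average} Lyapunov function rather than through $W$: a Hahn--Banach separation argument on the compact convex sets $\{(r_i(\nu))_{i\in I}:\nu\in P_{inv}(\mcM_0^I)\}$ produces weights $p_i>0$ and $\hat p>0$ with
$\inf_{\mu\in P_{inv}(\mcM^I)}\bigl(\sum_{i\in I}p_ir_i(\mu)-\hat p\max_{i\notin I}r_i(\mu)\bigr)>0$,
and the Lyapunov function is taken to be $V_p=-\sum_{i\in I}p_i\tilde V_i-\frac{\hat p}{p}\log\sum_{i\notin I}x_i^p$, where $\tilde V_i=(-\log x_i)\vee 0$. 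The truncated terms $-p_i\tilde V_i$ are nonpositive, so \Cref{as4}\ref{4.1} still detects only the extinction of the $I^c$-species, yet their $H$-contribution over $P_{inv}(\mcM_0^I)$ supplies exactly the positivity that your approach tries to extract from a nonexistent proper $W$. You should adopt this device (or supply the missing Lyapunov construction, which would be a substantial separate argument); the rest of your $p\downarrow 0$ analysis can then proceed as you describe.
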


\begin{rem}
   Recall that $\inv^I$ is the set where the species in $I$ all have positive population while all other species are extinct. As explained in \cite{ecologicalGeneral}, the assumption \ref{M+I} asserts that when all species in $I$ are alive and a potential invading species not in $I$ has small population, on average the invading species will die out. The assumption \ref{M0I} is equivalent to saying that, in the absence of other species, the species in $I$ will coexist (persist). Thus, heuristically, if the initial populations are close to $\inv^I$, then with high probability,  the species not in $I$ disappear exponentially fast.
\end{rem}

\begin{proof}
As above, it is standard to verify that  \Cref{discretization} \ref{discrete-as1} and \ref{discrete-as2} are satisfied for $(\mcM,\mcM^I,\inv,\{Z(t)\}_{t \in \N})$, so $(\mcM,\mcM^I,\inv,\{Z_t\}_{t \geq 0})$ is a Feller quadruple.
    
    First note that by \Cref{inv-is-compact} the sets 
    $\{(r_i(\nu))_{i \in I} \mid \nu \in P_{inv}(\mcM_0^I)\} \subset \R^{|I|}$ and $\{(r_i(\nu))_{i \notin I} \mid \nu \in P_{inv}(\mcM_0^I)\}$ are compact and convex. Thus, by Hahn-Banach separation theorem and \ref{M0I} there are $p_i > 0$ such that 
    \begin{equation}\label{eq:cbozm}
    \inf_{\nu \in P_{inv}(\mcM_0^I)} \sum_{i \in I} p_ir_i(\nu) > 0\,, \qquad 
    \sup_{\nu \in P_{inv}(\mcM_0^I)} \max_{i \notin I} |r_i(\nu)| < \infty \,.
    \end{equation}
    Similarly, $\{(r_i(\mu))_{i \notin I} \mid \mu \in P_{inv}(\inv^I)\}$ is compact and so \cite[Proposition 2.1]{ecologicalGeneral} and \ref{M+I} imply
    \begin{equation}\label{eq:cbopm}
    \sup_{\mu \in P_{inv}(\mcM_+^I)} \max_{i \notin I} r_i(\mu) < 0\,, \qquad 
    r_i(\mu) = 0 \quad \textrm{for any} \quad  i \in I, \mu \in P_{inv}(\mcM_+^I) \,.
    \end{equation}
    Since $\mcM^I = \inv^I \cup \mcM^I_0$, then any $\mu \in P_{inv}(\mcM^I)$
    can be decomposed as a convex combination of invariant measures on $\inv^I$ and $\mcM^I_0$ respectively. Thus, by \eqref{eq:cbozm} and \eqref{eq:cbopm} 
     there is $\hat{p} > 0$ such that
    \begin{equation}\label{ri-mu-pos}
        \inf_{\mu \in P_{inv}(\mcM^I)} \sum_{i \in I} p_ir_i(\mu) - \hat{p}\max_{i \notin I} r_i(\mu) > 0 \,.
    \end{equation}
    
    This motivates the choice of average Lyapunov function 
    \begin{equation}\label{eq:defvi}
    V_p \coloneqq -\sum_{i \in I} p_i\tilde{V}_i - \frac{\hat{p}}{p}\log \sum_{i \notin I} x_i^p
    \end{equation}
    for some $1 > p > 0$ to be determined later. Since $\tilde{V}_i \geq 0$, then \Cref{as4} \ref{4.1} holds and
    $$
\limsup_{t \to \infty} -\frac{V_p(X_i^z(t))}{t} \geq
\limsup_{t \to \infty}\frac{\hat{p}}{p}\log \sum_{i \notin I} (X_i^z(t))^p \geq 
    \hat{p}\limsup_{t \to \infty} \frac{\log \max_{i \notin I} X_i^z(t)}{t}  \,, 
$$
so the proof will be finished once we show that \Cref{main2} applies.

We will use \Cref{change-of-variables} with a suitable change of coordinates for the variables $x_i$ where $i \notin I$. Specifically, we introduce the Feller quadruple $(\ncN, \ncN_0, \ncN_+,$ $\{(v_p(t),r_p(t),w_p(t),y_p(t))\}_{t \geq 0})$ and the quadruple map $\pi_p$ as follows.

   Without loss of generality, otherwise relabel the variables, we assume that $I$ are the last $|I|$ coordinates of $[0,\infty)^n$. Let $\ncN = \bigtriangleup^{n - |I| - 1} \times [0,\infty) \times [0,\infty)^{|I|} \times \R^{\kappa_0}$, where $$
   \bigtriangleup^{n - |I| - 1} = \Big\{v \in [0,1]^{n - |I|} \mid \sum_{i=1}^{n - |I|} v_i = 1\Big\}
   $$ 
   is a simplex. For $v \in \bigtriangleup^{n - |I| - 1}$, $r \in [0,\infty)$, $w \in [0,\infty)^{|I|}$, $y \in \R^{\kappa_0}$ define $\pi_p: \ncN \to \mcM$ by 
   $$
   \pi_p(v,r,w,y) = (x,y) \text{ where } x_I = w \text{ and } x_i = rv_i^{1/p} \text{ for } i \notin I \,,
   $$ 
   where $x_I$ is the element of $[0,\infty)^{|I|}$ with coordinates equal to the last $|I|$ coordinates of $x$. 
   With $\ncN_0 \coloneqq \pi_p^{-1}(\mcM^I) = \{r = 0\}$ and $\ncN_+ \coloneqq \pi_p^{-1}(\inv)$, we obtain a Feller quadruple $(\ncN, \ncN_0, \ncN_+, \{(v_{p,t},r_{p,t},w_{p,t},y_{p,t})\}_{t \geq 0})$, where $(v_{p,t},r_{p,t},w_{p,t},y_{p,t})$ is the Markov process corresponding to the Markov chain $(v_p(t),r_p(t),w_p(t),y_p(t))$ (see \Cref{notation-discrete}) given by setting $(X(t),Y(t)) = \pi_p(v_p(t),r_p(t),w_p(t),y_p(t))$ in \eqref{eq:dtem}:
\begin{equation}\label{vrwy}
      \begin{aligned}
        v_i(t+1) &= v_i(t)\frac{\tilde{F}_i(v(t),r(t),w(t),y(t),\xi(t))^p}{\sum_{i \not \in I} \tilde{F}_i(v(t),r(t),w(t),y(t),\xi(t))^p} \quad i \notin I \\
        r(t+1) &= r(t)\Big(\sum_{i \not \in I} v_i(t) \tilde{F}_i(v(t),r(t),w(t),y(t),\xi(t))^p\Big)^{1/p} \\
        w(t) &= X_I(t) \\
        y(t) &= Y(t)
    \end{aligned}
\end{equation}
where we omit the subscript $p$ and use $\tilde{F}_i$ to denote $F_i \circ \pi_p$.  
    Then clearly $\pi_p$ is a quadruple map (see \Cref{quadruple-map}) from $(\ncN, \ncN_0, \ncN_+, \{(v_{p,t},r_{p,t},w_{p,t},y_{p,t})\}_{t \geq 0})$ to  $(\mcM,\mcM^I,\inv,\{Z_t\}_{t \geq 0})$.

Next we compute $\mathcal{L}V_p$ in the new and old coordinates, where recall $V_p$ was defined in \eqref{eq:defvi}. Since we have already discussed $\mathcal{L}\tilde{V}_i$ above, we provide details for the $\frac{\hat{p}}{p}\log \sum_{i \notin I} x_i^p$ term. Note that for the initial condition $z = (x,y) = \pi_p(\vec{a}) \in \inv$ where $\vec{a} = (v,r,w,y) \in \ncN_+$ we have
\begin{equation}\label{eq:x1-x0}
    \begin{aligned}
        \log \sum_{i \notin I} X_i^z(1)^p -  \log \sum_{i \notin I} X_i^z(0)^p &= 
\log \sum_{i \notin I} x_i^pF_i(x, y, \xi(0))^p -  \log \sum_{i \notin I} x_i^p
\\
&= \log \frac{\sum_{i \notin I} x_i^p F_i(x, y, \xi(0))^p}{\sum_{i \notin I} x_i^p} \\
&= \log \sum_{i \notin I} v_i^p\tilde{F_i}(\vec{a}, \xi(0))^p \,.
    \end{aligned}
\end{equation}
Thus, we have
\begin{align*}
\mathcal{L}V_p(x,y) &= -\sum_{i \in I} p_i\tilde{H}_i(x,y) - \frac{\hat{p}}{p} \E[\log \Big(\frac{\sum_{i \notin I} x_i^pF_i(x,y,\xi(0))^p}{\sum_{i \notin I} x_i^p}\Big)] \\
\mathcal{L}V_p \circ \pi_p(\vec{a}) &= -\sum_{i \in I} p_i\tilde{H}_i \circ \pi_p(\vec{a}) 
- \frac{\hat{p}}{p} \E\Big[\log \sum_{i \notin I} v_i\tilde{F}_i(\vec{a},\xi(0))^p\Big]
\end{align*}
Then for any $v \in \bigtriangleup^{n - |I| - 1}$ and $0 < p < 1$, since $|\log|$ is quasiconvex,
\begin{equation}\label{eq:ublft}
\Big|\log \sum_{i \notin I} v_i\tilde{F}_i(\vec{a},\xi(0))^p\Big|
\leq 
\max_{i \notin I} |\log \tilde{F}_i(\vec{a},\xi(0))^p|
\leq p\sum_{i \notin I} |\log \tilde{F}_i(\vec{a},\xi(0))| \,.
\end{equation}

In particular, \eqref{eco-assumptions} and \Cref{generalized-DCT} imply that $\mathcal{L}V_p \circ \pi_p$ extends to a continuous function $H_p$ on $\ncN$. By combining the definition of $V_p$ in \eqref{eq:defvi} with \eqref{eq:x1-x0}, \eqref{eq:ublft}, \eqref{eco-assumptions}, and \eqref{V-tilde-bound} we obtain
\begin{equation*}
     \E[|V_p(Z(1)) - V_p(z)|^{2 + \epsilon}] \lesssim \Upsilon(z)^{1/4} \,,
\end{equation*}
and therefore by \Cref{discretization} \ref{upsilon} we have that \Cref{as3}, \Cref{as4} \ref{4.2}, and \Cref{as5} hold for $(\mcM,\mcM^I,\inv,\{Z_t\}_{t \geq 0})$.

It remains to choose $1 > p > 0$ so that the assumption \eqref{as4.3-sub} in \Cref{change-of-variables} holds, that is, so that there is a constant $\alpha > 0$ such that $\mu H_p \geq \alpha$ for all $\mu \in P_{inv}(\ncN_0)$. By \eqref{ri-mu-pos} it suffices to show that 
\begin{equation}\label{p-to-0}
\lim_{p \downarrow 0} \inf_{\mu \in P_{inv}^p(\ncN_0)} \mu H_p \geq \inf_{\mu \in P_{inv}(\mcM^I)} \sum_{i \in I} p_ir_i(\mu) - \hat{p}\max_{i \notin I} r_i(\mu) \,,
    \end{equation}
where $P_{inv}^p(\ncN_0)$ is the set of all invariant measures supported on $\ncN_0$ for \eqref{vrwy} (see \Cref{discretization} \ref{discrete-inv-meas}).

To show \eqref{p-to-0}, we will define a function $H_0$ and a Markov Process which are (in some sense) the limits of $H_p$ and \eqref{vrwy} as $p \downarrow 0$. Then, we verify \eqref{p-to-0} with $\lim_{p \downarrow 0}$ removed and $H_p$ replaced by $H_0$. Finally, by \Cref{robust} we will conclude \eqref{p-to-0}. Next we provide details.

In the remainder of the argument we only deal with Markov processes on $\ncN_0 = \{r = 0\}$. Since $\pi_p(v,0,w,y)$, $\tilde{F}_i(v,0,w,y,\xi(0))$, and $\tilde{G}(v,0,w,y,\xi(0))$ are independent of $v$ and $p$, we write $\pi(w,y)$, $\tilde{F}_i(w,y,\xi(0))$, and $\tilde{G}(w,y,\xi(0))$ instead. Similarly, we write $\tilde{\Upsilon}(w,y)$ for $\Upsilon \circ \pi_p(v,0,w,y)$. Additionally, we drop $r$ and use $(v,w,y)$ to denote $(v,0,w,y) \in \ncN_0$.

We define the function $H_0$ on $\ncN_0$ by  
$$
H_0(v,w,y) = -\sum_{i \in I} p_i\tilde{H}_i \circ \pi(w,y) + \hat{p} \sum_{i \notin I} v_i H_i \circ \pi(w,y)
$$
and the Markov chain $\{(v_0(t),w_0(t),y_0(t))\}_{t \in \N}$ on $\ncN_0$ by
\begin{equation}\label{0-chain}
    \begin{aligned}
        v_0(t+1) &= v_0(t) \\
        w_0(t) &= X_I(t) \\
        y_0(t) &= Y(t) \,.
\end{aligned}
\end{equation}
Overall, we  defined a collection of Markov processes $\{\{(v_{p,t},w_{p,t},y_{p,t})\}_{t \geq 0}\}_{p \in [0,1/2]}$  on $\ncN_0$ corresponding to the Markov chains given by (omitting the subscript $p$)
\begin{equation}\label{big-chain}
      \begin{aligned}
        v_{i}(t+1) &= v_i(t)\frac{\tilde{F}_i(w(t),y(t),\xi(t))^p}{\sum_{i\notin I} v_i(t) \tilde{F}_i(w(t),y(t),\xi(t))^p} \quad i \notin I \\
        w_{i-n+|I|}(t+1) &= \tilde{F}_i(w(t),y(t),\xi(t))w_{i-n+|I|}(t) \quad i \in I \\
        y(t+1) &= \tilde{G}(w(t),y(t),\xi(t))
    \end{aligned}
\end{equation}
and a function $H: [0,1/2] \times \ncN_0 \to \R$ given by $$H(p,v,w,y) = \begin{cases}
    -\sum_{i \in I} p_i\tilde{H}_i \circ \pi(w,y) - \frac{\hat{p}}{p} \E[\log \sum_{i \notin I} v_i\tilde{F}_i(w,y,\xi(0))^p] & \text{if } p > 0 \\
    -\sum_{i \in I} p_i\tilde{H}_i \circ \pi(w,y) + \hat{p} \sum_{i \notin I} v_i H_i \circ \pi(w,y) & \text{if } p = 0
\end{cases} $$

To apply \Cref{robust}, we note that \Cref{discretization} \ref{discrete-as2} implies \Cref{as2} for the Markov process $(p_t,v_{p,t},w_{p,t},y_{p,t})$ on $[0,1/2] \times \ncN_0$, where $p_t \equiv p$ (simply look at \eqref{big-chain} and note that the right hand side is continuous in $(p,v,w,y)$). Also, since we have already noted that by \Cref{discretization} \ref{upsilon} that $Z_t$ satisfies \Cref{as3} and \Cref{as5} \ref{5.1}-\ref{5.3} with $U = \sqrt{\Upsilon}, U' = (1 - \rho) U, W = \Upsilon^{1/4}, W' = (1 - \sqrt{\rho})W$, and some $K > 0$, it follows easily from $\pi(w_{p,t}^{\vec{b}},y_{p,t}^{\vec{b}}) = Z_t^z$ for all initial conditions $\vec{b} = (v,w,y) \in \ncN_0$, $z = \pi(w,y) \in M^I$ that each $(v_{p,t},w_{p,t},y_{p,t})$ satisfies \Cref{as3} and \Cref{as5} \ref{5.1}-\ref{5.3} with $U = \sqrt{\tilde{\Upsilon}}, U' = (1 - \rho) \sqrt{\tilde{\Upsilon}}, W = \tilde{\Upsilon}^{1/4}, \text{ and } W' = (1 - \sqrt{\rho})\tilde{\Upsilon}^{1/4}$, and $K$ the same as for $Z_t$. In particular, $W,W',K,U,U'$ are independent of $p$.

To satisfy the rest of the assumptions of \Cref{robust}, we claim that $H$ is a continuous function which vanishes over $(p,v,w,y) \mapsto \tilde{\Upsilon}^{1/4}(w,y)$. The continuity on $(0,1/2) \times \ncN_0$ follows from \eqref{eco-assumptions}, \eqref{eq:ublft}, and \Cref{generalized-DCT}. Also, $H_0$ is clearly continuous. Thus, for continuity of $H$ it suffices to show that $H_p \to H_0$ uniformly on compact sets.

As a preliminary calculation, for fixed $(v,w,y) \in \ncN_0$ and $p \in (0,1/2)$, we set $\tilde{F}_i := \tilde{F}_i(w,y,\xi(0))$ and estimate by the mean value theorem that \begin{equation}\begin{aligned}\label{MVT1}
    \Big|\frac{v_i\tilde{F}_i^{p}}{\sum_{i \not \in I} v_i \tilde{F}_i^{p}} - v_i\Big| &\leq p\sup_{0 < p* < p} \Big| \frac{v_i \tilde{F}_i^{p*} \log \tilde{F}_i}{\sum_{i \not \in I} v_i \tilde{F}_i^{p*}}\Big| 
    + \Big|\frac{v_i \tilde{F}_i^{p*} \sum_{i \not \in I} v_i \tilde{F}_i^{p*} \log \tilde{F}_i}{(\sum_{i \not \in I} v_i \tilde{F}_i^{p*})^2}\Big| \\
    &\leq p|\log \tilde{F}_i| + p\max_{i \not \in I} |\log \tilde{F}_i| \leq 2p\sum_{i \not \in I} |\log \tilde{F}_i|
    \end{aligned}
\end{equation}

Applying the mean value theorem to $p \mapsto \log \Big(\sum_{i \notin I} v_i\tilde{F}_i^p\Big)$ and recalling that $\sum_{i \not \in I} v_i = 1$ and $H_i \circ \pi = -\E[\log \tilde{F_i}]$, we obtain
\begin{align*}
    |H_p(v,w,y) - H_0(v,w,y)| &= \hat{p}\Big|\E\Big[\frac{1}{p}\Big[\log \Big(\sum_{i \notin I} v_i\tilde{F}_i^p\Big) - \log \Big(\sum_{i \notin I} v_i\Big)\Big]  - \sum_{i \notin I} v_i\log \tilde{F_i}\Big]\Big| \\
    &\leq \sup_{0 < p* < p} \hat{p}\E\Big[\Big|\sum_{i \notin I} \Big(\frac{v_i\tilde{F}_i^{p*}}{\sum_{i \not \in I} v_i \tilde{F}_i^{p*}} - v_i\Big)\log \tilde{F_i}\Big|\Big] \\
    &\lesssim \hat{p}p\E\Big[\Big(\sum_{i \not \in I} |\log \tilde{F}_i|\Big)^2\Big] \lesssim \hat{p}p(\tilde{\Upsilon}^{1/4}(w,y))^{2/(2+\epsilon)} \,,
\end{align*}
where the first $\lesssim$ is by \eqref{MVT1} and the second is by \eqref{eco-assumptions}. This shows that $H_p \to H_0$ uniformly on compacts, and therefore $H$ is continuous. Additionally, it shows that $H$ vanishes over $(p,v,w,y) \mapsto \tilde{\Upsilon}^{1/4}(w,y)$ if and only if $H_0$ vanishes over $\tilde{\Upsilon}^{1/4}$. The latter is a consequence of the fact that $H_i$ and $\tilde{H_i}$ vanish over $\Upsilon^{1/4}$ (in fact, over $\Upsilon^{1/2}$), which is shown above (see \eqref{V_i} and \eqref{V-tilde-bound}).

Thus, the assumptions of \Cref{robust} are satisfied and so we conclude that
\begin{equation}\label{exponent-cts-in-p}
    \lim_{p \downarrow 0} \inf_{\mu \in P_{inv}^p(\ncN_0)} \mu H_p \geq \inf_{\nu \in P_{inv}^0(\ncN_0)} \nu H_0 \,,
\end{equation}
where $P_{inv}^0(\ncN_0)$ is the set of all invariant measures for \eqref{0-chain} (see \Cref{discretization} \ref{discrete-inv-meas}). By \Cref{inv-is-compact}, the right hand side of \eqref{exponent-cts-in-p} is unchanged if we restrict $\nu \in P_{inv}^0(\ncN_0)$ to be ergodic, in which case $\nu$ is supported on $\{v = v_0\}$ for some $v_0\in \bigtriangleup^{n - |I| - 1}$ (recall from \eqref{0-chain} that $v_0(t+1) = v_0(t)$). In particular, there is $\mu \in P_{inv}(\mcM^I)$ (equal to the pushforward of $\nu$ by $\pi$) such that
$$\nu H_0 = -\sum_{i \in I} p_i\mu \tilde{H}_i + \hat{p} \sum_{i \notin I} (v_0)_i \mu H_i \geq \inf_{\mu \in P_{inv}(\mcM^I)} \sum_{i \in I} p_ir_i(\mu) - \hat{p}\max_{i \notin I} r_i(\mu) \,.$$ 
Thus, \eqref{exponent-cts-in-p} implies \eqref{p-to-0}, which finishes the proof.
\end{proof}

\subsection{Stochastic Kolmogorov Systems} \label{example-ecological-continuous}
In this subsection we mention how our results can be applied to Stochastic Kolmogorov Equations, that is, SDEs of the form
\begin{equation}\label{Kolmogorov}
    dX_i = X_if_i(X)dt + X_ig_i(X)dE_i \,,
\end{equation}
where $X_t \in [0,\infty)^n$, $f,g: [0,\infty)^n \to \R^n$ are locally Lipschitz functions, and $(E_1(t),\dots,E_n(t))^T = A^T W(t)$, where $A$ is a $d$ by $n$ matrix and $W(t)$ is a $d$-dimensional Brownian motion. We denote $A^T A$ by $\Sigma = (\Sigma_{ij})_{1 \leq i,j \leq n}$. Similarly to the previous example, we could have also included auxiliary variables $Y_t$, but we omit these for brevity. Our technical assumption is that there exists a proper $U: [0,\infty)^n \to [1,\infty)$ in $\C^2([0,\infty)^n)$ and $K,c > 0$ such that $\mathcal{L}U \leq K - cU$ and $\sum_{i=1}^n |f_i(x)| + g_i(x)^2 \lesssim 2K - \frac{\mathcal{L}U}{U} + \frac{\Gamma U}{U^2}$, where \begin{equation*}
    \begin{aligned}
        \mathcal{L}U(x) &= \sum_{i,j = 1}^n \frac{1}{2}\Sigma_{ij}x_ix_jg_i(x)g_j(x)\partial_i \partial_j U(x) + \sum_{i=1}^n x_if_i(x)\partial_iU(x) \\
        \Gamma U(x) &= \sum_{i,j = 1}^n \Sigma_{ij}x_ix_jg_i(x)g_j(x)\partial_i U(x) \partial_j U(x) \,.
    \end{aligned}
\end{equation*}
\begin{rem}
 If \cite[Assumption 1.1 (3)]{ecologicalContinuous} is satisfied then our technical assumption holds with $U = (1 + c^Tx)^\theta$ for some small enough $1 > \theta > 0$. Similarly to \cite[Assumption 3.1]{ecologicalGeneral}, if there exists a proper $U: [0,\infty)^n \to [1,\infty)$ in $\C^2([0,\infty)^n)$ such that $\mathcal{L}U \leq K - cU\Big(1 + \sum_{i=1}^n |f_i(x)| + g_i(x)^2\Big)$, then our technical assumption is satisfied.
 We have no need of \cite[Assumption 1.1 (1)]{ecologicalContinuous} (nondegeneracy), \cite[Assumption 1.4]{ecologicalContinuous}, or \cite[Assumption 3.2]{ecologicalGeneral}, although nondegeneracy is useful for deducing uniqueness of invariant measures on certain sets and for deducing accessibility.
\end{rem}

With $H_i(x) = \frac{1}{2}\Sigma_{ii}g_i(x)^2 - f_i(x)$ and $r_i(\mu)$ defined as in \eqref{invasion}, using \Cref{cts-paths} in place of \Cref{discretization} \ref{upsilon} one can show that the analogues of \Cref{ecological-discrete-1} and \Cref{discrete-eco-thm-2} hold for \eqref{Kolmogorov}. We omit the proofs since they are exactly the same as those given in \Cref{example-ecological-discrete} except that the details are easier (It\^ o's formula gives a closed-form expression for $H_p$ so we can directly compute $H_p - H_0$ as opposed to using the mean value theorem twice to bound $|H_p - H_0|$) and also instead of defining $\tilde{V_i}$ as $V_i \vee 0$ one should choose some smooth function $v: \R \to [0,\infty)$ with bounded first and second derivatives with $v(t) = t$ for $t \geq 1$ and instead set $\tilde{V_i} \coloneqq v \circ V_i$ (as in the proof of \cite[Theorem 5.1ii]{persistence}).

Finally, we remark that the results in \cite{rps} (under the weaker assumptions described above) are corollaries of our theory. For example, \cite[Theorem 6.2]{rps} can be proved by considering $V(x_1,x_2,x_3) = -p_1\ln{x_1} - p_2\ln{x_2} - p_3\ln{x_3} - \frac{p_0}{p}v(-\ln({x_1^p + x_2^p + x_3^p}))$ for small enough $p > 0$ and suitable $p_0,p_1,p_2,p_3 > 0$ (see \cite[Lemma 6.1]{rps}).

\newcommand{\etalchar}[1]{$^{#1}$}
\providecommand{\bysame}{\leavevmode\hbox to3em{\hrulefill}\thinspace}
\providecommand{\MR}{\relax\ifhmode\unskip\space\fi MR }
\providecommand{\MRhref}[2]{%
  \href{http://www.ams.org/mathscinet-getitem?mr=#1}{#2}
}
\providecommand{\href}[2]{#2}

\end{document}